\nonstopmode
\documentclass[10pt, reqno]{amsart}
\usepackage{graphicx}
\usepackage{latexsym}
\usepackage{fancyhdr}
\usepackage{amsmath, amssymb, amsthm}
\usepackage[all]{xy}
\usepackage{pdflscape}
\usepackage{longtable}
\usepackage{rotating}
\usepackage{verbatim}
\usepackage%[pagebackref]
{hyperref}
\hypersetup{
    linkcolor = blue,
    citecolor = blue,
    urlcolor  = blue,
    colorlinks = true,
}

\usepackage{etoc}
\etocsettocstyle
  {\medskip}          % no heading
  {\bigskip}

\etocsetstyle{section}                                          % start (before first entry)
  {}
  {\leavevmode\leftskip 0pt\relax}            % prefix (before each entry)
  {\etocnumber \etocname          % contents
   \dotfill\etocpage\par\smallskip}
  {}                                          % finish (after last entry)

\etocsetstyle{subsection}
  {}
  {\leavevmode\leftskip 1.5em\relax}
  {\etocnumber\ \etocname\dotfill\etocpage\par\smallskip}
  {}

\etocsettocstyle
  {\section*{\contentsname}\small}           % TOC heading
  {\bigskip}                                 % after TOC

\usepackage{cleveref}
\usepackage{subfigure}
\usepackage{mathrsfs}
\usepackage{mdwlist}
\usepackage{dsfont}

\usepackage{mathtools}
\mathtoolsset{showonlyrefs=true}

\usepackage{float}
\usepackage{color}
\usepackage{stmaryrd}
\usepackage{parskip}

\usepackage{tkz-base}
\usepackage{pgfplots}
\pgfplotsset{compat=newest}

\usepackage{tkz-euclide}
\usepackage{etoolbox}

\definecolor{teal}{rgb}{0.0, 0.5, 0.5}

\newcounter{mnotecount}[section]

\newcommand{\rmnote}[1]{}%{\mnote{#1}}

\allowdisplaybreaks

\DeclareFontFamily{U}{mathb}{\hyphenchar\font45}
\DeclareFontShape{U}{mathb}{m}{n}{
      <5> <6> <7> <8> <9> <10> gen * mathb
      <10.95> mathb10 <12> <14.4> <17.28> <20.74> <24.88> mathb12
      }{}
\DeclareSymbolFont{mathb}{U}{mathb}{m}{n}
\DeclareFontSubstitution{U}{mathb}{m}{n}

\theoremstyle{plain}
\newtheorem*{theorem*}{Theorem}
\newtheorem{theorem}{Theorem}[section]
\newtheorem*{lemma*}{Lemma}
\newtheorem{lemma}[theorem]{Lemma}
\newtheorem*{assumption*}{Assumption}

\newtheorem*{proposition*}{Proposition}
\newtheorem{proposition}[theorem]{Proposition}
\newtheorem*{corollary*}{Corollary}
\newtheorem{corollary}[theorem]{Corollary}
\newtheorem*{claim*}{Claim}

\newtheorem*{conjecture*}{Conjecture}

\newtheorem*{question*}{Question}

\newtheorem{problem}[theorem]{Problem}
\newtheorem*{result*}{Result}

\theoremstyle{definition}
\newtheorem*{definition*}{Definition}
\newtheorem{definition}[theorem]{Definition}
\newtheorem*{example*}{Example}
\newtheorem{example}[theorem]{Example}

\newtheorem*{algorithm*}{Algorithm}
\newtheorem*{remark*}{Remark}
\newtheorem*{remarks*}{Remarks}
\newtheorem{remark}[theorem]{Remark}

\newtheorem*{convention*}{Convention}

\Crefname{l}{Lemma}{Lemmas}    %declares the type of the environment for cleverref, use \label[<type>]{<label>}
\Crefname{p}{Proposition}{Propositions}
\Crefname{t}{Theorem}{Theorems}
\Crefname{c}{Corollary}{Corollaries}
\Crefname{r}{Remark}{Remarks}
\Crefname{d}{Definition}{Definitions}
\Crefname{e}{Example}{Examples}
\Crefname{q}{Question}{Questions}

\numberwithin{equation}{section}

\def\al{\alpha}
\def\be{\beta}
\def\ga{\gamma}
\def\de{\delta}
\def\ep{\epsilon}

\def\et{\eta}
\def\th{\theta}

\def\ka{\kappa}
\def\la{\lambda}

\def\rh{\rho}

\def\si{\sigma}

\def\ta{\tau}

\def\vh{\varphi}
\def\ch{\chi}

\def\Ga{\Gamma}
\def\De{\Delta}

\def\La{\Lambda}

\def\Ps{\Psi}

\def\C{\mathbb{C}}

\def\I{\mathbb{I}}

\def\N{\mathbb{N}}

\def\R{\mathbb{R}}

\def\cA{\mathcal{A}}

\def\cE{\mathcal{E}}
\def\cF{\mathcal{F}}

\def\cH{\mathcal{H}}

\def\cL{\mathcal{L}}

\def\cU{\mathcal{U}}
\def\cV{\mathcal{V}}

\def\sE{\mathscr{E}}

\def\p{\partial}
\def\id{\on{id}}

\def\eig{\mathcal{E}}
\def\eigu{\mathcal{E}_u}

\renewcommand{\Re}{\mathrm{Re}}

\def\<{\langle}
\def\>{\rangle}
\renewcommand{\o}{\circ}

\def\ol{\overline}
\def\ul{\underline}

\def\Lip{\on{Lip}}

\def\ol{\overline}
\def\ul{\underline}

\def\Lip{\on{Lip}}

\def\dd{\mathbf{d}}
\def\ddd{\widehat \dd}
\def\bs{\mathbf{s}}

\newcommand{\Iq}{{\mathcal{A}}_d (\C)}
\def\a#1{\left\llbracket{#1}\right\rrbracket}

\newcommand{\abs}[1]{\left|#1\right|}

\newcommand{\ra}{\rightarrow}

\def\Et{H}

\let\on=\operatorname

\newcommand{\sr}[1]%
{\ifmmode{}^\dagger\else${}^\dagger$\fi\ifvmode
\vbox to 0pt{\vss
 \hbox to 0pt{\hskip\hsize\hskip1em
 \vbox{\hsize3cm\raggedright\pretolerance10000
 \noindent #1\hfill}\hss}\vss}\else
 \vadjust{\vbox to0pt{\vss%
 \hbox to 0pt{\hskip\hsize\hskip1em%
 \vbox{\hsize3cm\raggedright\pretolerance10000%
 \noindent \small #1\hfill}\hss}\vss}}\fi%
}

\providecommand{\mapsfrom}{\kern.2em%
\setbox0=\hbox{$\leftarrow$\kern-.10em\rule[0.26mm]{0.1mm}{1.3mm}}\box0%
\kern.3em}

\title[Eigenvalue stability of Hermitian and normal matrices]
{Eigenvalue stability of Hermitian \\ and normal matrices}

\author[Adam Parusi\'nski and  Armin Rainer]
{Adam Parusi\'nski and Armin Rainer}

\address {Adam Parusi\'nski: Universit\'e C\^ote d'Azur,  CNRS,  LJAD, UMR 7351, 06108 Nice, France}
\email{adam.parusinski@univ-cotedazur.fr}

\address{Armin Rainer: Faculty of Mathematics and Geoinformation,
    Institute for Statistics and Mathematical Methods in Economics, E105-04,
TU Wien, Wiedner Hauptstraße 8, 1040 Vienna, Austria}
\email{armin.rainer@tuwien.ac.at}

\begin{document}

\begin{abstract}
    The ordered eigenvalues define a Lipschitz map on the real vector space of Hermitian $d \times d$ 
    matrices. We prove that this map acts continuously, but not uniformly continuously,
    by superposition on the Sobolev spaces $W^{1,q}$, 
    for all $1 \le q < \infty$, on bounded open domains.
    For $q=\infty$, the action is still well-defined and bounded but not continuous.
    We show that this stability result extends to normal matrices, where the eigenvalues 
    are naturally interpreted as multivalued Sobolev functions in the sense of Almgren. 
    Several applications are given, including the stability of singular values, condition numbers of matrices, 
    surface area of eigenvalue graphs, and compact self-adjoint operators in Hilbert space. 
\end{abstract}

\thanks{This research was funded in whole or 
    in part by the Austrian Science Fund (FWF)  DOI 10.55776/PAT1381823.
For open access purposes, the authors have applied a CC BY public copyright license to any author-accepted manuscript version arising from this submission.}
\keywords{Eigenvalue stability, Hermitian matrices, normal matrices, singular values, compact self-adjoint operators, Sobolev regularity, Lipschitz continuity, superposition operator, condition number}
\subjclass[2020]{
    15A18,   %Eigenvalues, singular values, and eigenvectors
    15A42,   %Inequalities involving eigenvalues and eigenvectors
    15B57,   %Hermitian, skew-Hermitian, and related matrices
    %26C05,   %Real polynomials: analytic properties, etc. [See also 12Dxx, 12Exx]
    %26C10,   %Real polynomials: location of zeros 
    %26A16,   %Lipschitz (Hölder) classes
    %26A46,   %Absolutely continuous real functions in one variable 
    %30C15,   %Zeros of polynomials, rational functions, and other analytic functions of one complex variable (e.g., zeros of functions with bounded Dirichlet integral)
    46E35,   %Sobolev spaces and other spaces of "smooth'' functions, embedding theorems, trace theorems
    47A55,   %Perturbation theory of linear operators
    47B15,   %Hermitian and normal operators (spectral measures, functional calculus, etc.)
    47H30}   %Particular nonlinear operators (superposition, Hammerstein, Nemytskiĭ, Uryson, etc.)
%    14P10,      %Semialgebraic sets and related spaces
%	26B05, 	    %Continuity and differentiation questions
%	26B35,  	%Special properties of functions of several variables, HÃ¶lder conditions, etc.
%	26E10,  	%$C^\infty$-functions, quasi-analytic functions
%    26E25,      %Set-valued functions 
%    32B20,  	%Semi-analytic sets and subanalytic sets
%46E15}      %Banach spaces of continuous, differentiable or analytic functions
%  58C20,    %Differentiation theory (Gateaux, FrÃ©chet, etc.) on manifolds
%	58C25}  	%Differentiable maps
	%58B10,  	%Differentiability questions
	%58B25,  	%Group structures and generalizations on infinite-dimensional manifolds
	%58C07,  	%Continuity properties of mappings
	%58D05} 	    %Groups of diffeomorphisms and homeomorphisms as manifolds
%\dedicatory{dedicatory}
\date{\today}

\maketitle

\setcounter{tocdepth}{1}
\tableofcontents

%\clearpage

%-----------------------------------------------------------------------------------------------------------------------
\section{Introduction}
%-----------------------------------------------------------------------------------------------------------------------

The perturbation theory of Hermitian and normal operators has a long history. 
The analytic theory started with Rellich's work \cite{Rellich37}, culminated in Kato's monograph \cite{Kato76},
and extended to smooth perturbation theory in more recent papers, e.g., 
\cite{AKLM98}, \cite{KM03,KMRp,KMR}, 
\cite{RainerAC,RainerN, Rainer:2021vk}, and \cite{Parusinski:2020ac}. 
For a more comprehensive account of the history,
we refer to the recent survey article \cite{Parusinski:2023ab}.

For the present paper, Weyl's perturbation theorem \cite{Weyl12} plays a fundamental role. 
It asserts that the ordered eigenvalues form a Lipschitz map on the space of Hermitian matrices 
with Lipschitz constant $1$, when the space is equipped 
with the operator norm and the eigenvalues with the $\infty$-norm. Together with
L\"owner's theorem \cite{Lowner:1934aa}, which provides the corresponding result for the $2$-norm, 
and its extension to normal matrices by Hoffman and Wielandt \cite{Hoffman:1953aa},
this theorem is foundational for nearly all of the results in this paper. 
Of course, in the case of normal matrices, the eigenvalues must be equipped with a metric that 
accounts for the fact that they are general complex numbers not admitting a coherent ordering.
See also \cite{BhatiaDavisMcIntosh83} for a variant with the operator norm 
(where, however, the Lipschitz constant $C$ satisfies $1<C<3$).
For locally Lipschitz curves of normal matrices, each continuous choice of the eigenvalues is actually 
locally Lipschitz, as shown by \cite{RainerN}. If the (real) parameter space is at least $2$-dimensional, 
continuous choices of the eigenvalues of normal matrices may not exist.

The Lipschitz continuity of the eigenvalues as functions of the matrices 
immediately implies that Lipschitz families of Hermitian or normal matrices are 
assigned to the Lipschitz families of their eigenvalues, in a bounded way, that is, with 
controlled Lipschitz constants. 
This defines a map which we call the \emph{characteristic map}:  
it is given by superposition with the ordered eigenvalue map, in the Hermitian case, 
and with the unordered eigenvalue map, in the normal case.
In this paper, we are interested in the continuity properties of the characteristic map.

The characteristic map is not continuous with respect to the Lipschitz topology
(on the eigenvalues), but it is continuous with respect to the Sobolev $W^{1,q}$ topology, for every $1 \le q<\infty$.
Specifically, 
we prove that for Hermitian matrices the characteristic map is continuous, but not uniformly continuous, 
from $W^{1,q}$ matrices to ordered $W^{1,q}$ eigenvalues,
while for normal matrices it is continuous from 
$W^{1,q}$ matrices to unordered $W^{1,q}$ eigenvalues --- in both cases, for every $1 \le q < \infty$.
In the latter setting, the unordered eigenvalues are naturally treated as multivalued 
Sobolev functions in the sense of Almgren \cite{Almgren00}; see also \cite{De-LellisSpadaro11}.  

These results are considerably stronger than the related continuity properties of the solution map for 
hyperbolic and general polynomials established, respectively, 
in the recent papers \cite{Parusinski:2024aa} and \cite{Parusinski:2024ab}.
The solution map is defined in analogy to the characteristic map.
Recall that Bronshtein's theorem \cite{Bronshtein79} implies that the ordered roots of a $C^{d-1,1}$ family of 
hyperbolic (i.e.\ monic real-rooted) polynomials of degree $d$ are locally Lipschitz; 
explicit bounds for the Lipschitz constants of the roots in terms of the $C^{d-1,1}$ norms 
of the coefficients were obtained by \cite{ParusinskiRainerHyp}.
By \cite{Parusinski:2024aa}, the solution map for hyperbolic polynomials of degree $d$ is continuous 
from $C^d$ coefficients to $C^{0,1}$ roots with respect to the $W^{1,q}_{\on{loc}}$ topology on the roots, 
for each $1 \le q < \infty$ --- but not with respect to the $C^{0,1}$ topology.
For general complex polynomials, the optimal Sobolev regularity $W^{1,q}$, for $1 \le q < \frac{d}{d-1}$,
of the roots was established in \cite{ParusinskiRainerAC, ParusinskiRainer15, Parusinski:2020aa}, 
and the existence and boundedness of the solution map rests on these results. 
As shown in \cite{Parusinski:2024ab}, the solution map is then continuous from $C^d$ coefficients 
to unordered $W^{1,q}$ roots, for each $1 \le q <\frac{d}{d-1}$.

Two further points underline the superiority of the matrix setting.
First, unlike the polynomial case, the size of the matrices plays no role.
Second, the Sobolev spaces $W^{1,q}$ of Hermitian matrices and their eigenvalues 
are defined globally, whereas for hyperbolic polynomials the spaces of coefficients and roots are necessarily local.

These results also have concrete applications. 
We discuss several, including singular values, the condition number for matrices, the surface area of eigenvalue graphs, 
and compact self-adjoint operators in Hilbert space (made possible by independence of dimension).

It should be mentioned that the orthonormal eigenvectors do not share the regularity of the eigenvalues 
and, in general, fail to be even continuous in this setting.

In addition to Weyl's perturbation theorem and its relatives, a key ingredient in the proofs is a 
splitting lemma for complex triangular matrices due to \cite{Parusinski:2020ac}, where it appears in 
a slightly different context. It provides a local uniform unitary block-diagonalization of Hermitian and normal matrices 
whenever not all eigenvalues coincide. This suggests an inductive argument on the size of the matrices
and has the advantage of working directly with the matrices themselves rather than through 
their characteristic polynomials.

In the following, we present our results in more detail.

%-----------------------------------------------------------------------------------------------------------------------
\subsection{Hermitian matrices}
%-----------------------------------------------------------------------------------------------------------------------

Let $M_d(\C)$ be the complex vector space of complex $d\times d$ matrices. We endow $M_d(\C)$ with the 
Frobenius norm $\|A\|_2 = (\sum_{i,j=1}^d |a_{ij}|^2)^{1/2}$.

Let $\on{Herm}(d) = \{A \in M_d(\C) : A^* = A\}$ denote the real vector space of complex Hermitian $d \times d$ matrices.
We associate with $A \in \on{Herm}(d)$ its increasingly ordered eigenvalues 
\[
    \la_1^\uparrow(A) \le \la_2^\uparrow(A) \le \cdots \le \la_d^\uparrow(A)
\]
and thus obtain a continuous map
\begin{equation} \label{eq:Herm}
    \la^\uparrow = (\la_1^\uparrow, \ldots, \la_d^\uparrow) : \on{Herm}(d) \to \R^d.
\end{equation}
In fact, by a result of L\"owner \cite{Lowner:1934aa} (see \Cref{p:Loewner}), this map is even Lipschitz continuous: for $A,B \in \on{Herm}(d)$,
\begin{equation} \label{eq:Frobenius}
    \|\la^\uparrow(A) - \la^\uparrow(B)\|_2 \le \|A-B\|_2.
\end{equation}

As a consequence (see \Cref{p:mcharHerm}),
the map \eqref{eq:Herm} induces a bounded map
\begin{equation} \label{eq:EW}
    \eig:= (\la^\uparrow)_* : W^{1,q}(U, \on{Herm}(d)) \to W^{1,q}(U, \R^d), \quad A \mapsto \la^\uparrow \o A,
\end{equation}
for each $1 \le q < \infty$,
which takes $W^{1,q}$ families of Hermitian matrices to their increasingly ordered $W^{1,q}$ eigenvalues, 
where $U \subseteq \R^m$ is open and bounded. 
(This also true for $q=\infty$ but this case will be discussed separately below.)

We will call $\eig = (\la^\uparrow)_*$ the \emph{characteristic map}, independently of the domain and codomain which may vary in different contexts.

We will show that the characteristic map \eqref{eq:EW} is continuous.

\begin{theorem} \label[t]{thm:eigW}
    Let $1 \le q < \infty$.
    Let $U \subseteq \R^m$ be open and bounded. 
    Then the characteristic map    
    \begin{equation*} 
        \eig : W^{1,q}(U,\on{Herm}(d)) \to W^{1,q}(U,\R^d), \quad A \mapsto \la^\uparrow \o A,
    \end{equation*}
    is continuous.
\end{theorem}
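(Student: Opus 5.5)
Since $\la^\uparrow$ is $1$-Lipschitz by \eqref{eq:Frobenius}, the map $\eig$ is continuous in $L^q$: $\|\la^\uparrow\o A_n-\la^\uparrow\o A\|_{L^q}\le\|A_n-A\|_{L^q}$. It therefore suffices to prove $\nabla(\la^\uparrow\o A_n)\to\nabla(\la^\uparrow\o A)$ in $L^q$ whenever $A_n\to A$ in $W^{1,q}$. I will argue by subsequences. Given any subsequence, pass to a further one with $A_n\to A$ and $\nabla A_n\to\nabla A$ almost everywhere, dominated by a fixed $L^q$ function $g$ (the partial converse of dominated convergence). The chain rule for Lipschitz maps gives $|\nabla(\la^\uparrow\o A_n)|\le|\nabla A_n|\le g$ pointwise a.e. Hence, by dominated convergence, it is enough to show $\nabla(\la^\uparrow\o A_n)(x)\to\nabla(\la^\uparrow\o A)(x)$ for a.e.\ $x$. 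This pointwise convergence is the heart of the proof.

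To get a pointwise formula for the derivatives, I would use the splitting lemma mentioned in the introduction. Near a point $A_0\in\on{Herm}(d)$ whose eigenvalues are not all equal, it gives a smooth (real-analytic) unitary $P(B)$, defined for $B$ near $A_0$, with $P(B)^*BP(B)=\on{diag}(B_1,B_2)$. Here the blocks $B_1,B_2$ carry disjoint clusters of eigenvalues. Composing with $A_n$, $A$ near $x$ gives $W^{1,q}$ block families, and $A_n\to A$ in $W^{1,q}$ on a neighbourhood implies that the blocks converge in $W^{1,q}$ (smooth superposition on a ball). Iterating reduces everything to a single cluster. In the end, a.e.\ point has a neighbourhood where $A$ is unitarily conjugate to a block diagonal family whose blocks have a single eigenvalue at the point. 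For a block $B$ of size $k$ with all eigenvalues equal at $x$, I would write $B=\tfrac1k(\on{tr}B)I+B^0$, where $\on{tr}B$ is a $W^{1,q}$ function whose convergence is clear. The traceless part $B^0$ vanishes at $x$.

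For such a block, the relevant points are those $x$ where $A$ has a multiple eigenvalue cluster collapsed to a point. There I would use that if $B^0(x)=0$ and $x$ is a point of approximate differentiability, then $\nabla(\la^\uparrow\o B)(x)$ is determined through the ordered eigenvalues of the directional derivatives $\p_vB(x)$. On the set $\{B^0=0\}$, however, $\nabla B^0=0$ a.e. (Stampacchia-type: gradients of Sobolev functions vanish a.e. on level sets), so $\nabla\la_j^\uparrow(B)=\tfrac1k\nabla\on{tr}B$ there. The same identity must hold for $\la^\uparrow\o B_n$ only asymptotically, and this is where the main obstacle lies: $B_n^0$ need not vanish at $x$. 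I would instead bound $|\nabla(\la_j^\uparrow\o B_n)-\tfrac1k\nabla\on{tr}B_n|\le|\nabla B_n^0|$ (by Lipschitz estimate \eqref{eq:Frobenius} applied to the traceless part). Since $\nabla B_n^0\to\nabla B^0=0$ in $L^q$ on $\{B^0=0\}$, this handles that set in $L^q$-norm directly, without needing pointwise convergence. So I will mix the two modes: $L^q$ control on the degenerate set, and pointwise control elsewhere.

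On the complement, the open set where the eigenvalue cluster of the block splits (in the quasi-continuous sense, or after restricting to points where $A$ is continuous via a precise representative), I would apply the splitting again and induct on $d$. Points with simple eigenvalues are the base case. There $\la_j^\uparrow$ is real-analytic near $A(x)$, so pointwise convergence of $\nabla(\la_j^\uparrow\o A_n)$ follows from the a.e.\ convergence of $A_n$, $\nabla A_n$. The induction is organized as follows. Partition $U$ into countably many measurable pieces: the sets $\{x: A(x)\text{ has eigenvalue multiplicity pattern }\pi\}$. On each piece, apply the splitting with the finest block structure and use the trace/traceless decomposition above. Then sum the pieces using dominated convergence with the majorant $2g$. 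The delicate points are measurability of the pieces and the fact that the splitting is only local around $A(x)$. The latter is handled by covering $\on{Herm}(d)$ with countably many splitting neighbourhoods and using that $A_n(x)\to A(x)$ a.e.
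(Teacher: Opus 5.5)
Your argument is correct in substance, but it follows a genuinely different route from the paper's.

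\textbf{The paper's route.} The paper does not go through pointwise convergence for $W^{1,q}$ data. It proves the case $m=1$ (\Cref{thm:mainhermW}) by induction on $d$. The ingredients are uniform convergence of $W^{1,q}$ curves, a direct estimate on the zero set $Z_A$ of the traceless part, and the normalized uniform block-diagonalization. The latter produces intervals $J\subseteq I\setminus Z_A$ on which a single chart serves both $A$ and all $A_n$ with $n\ge n_0$. The local $L^q$ convergences are then glued via uniform integrability and Vitali (\Cref{l:Vapp}). The case $m\ge 2$ follows by slicing along coordinate lines (Tonelli, Jensen, de la Vall\'ee Poussin).

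\textbf{Your route.} You prove a.e.\ convergence of $\nabla(\la^\uparrow\o A_n)$ directly in $m$ variables, along a subsequence with $A_n\to A$, $\nabla A_n\to\nabla A$ a.e.\ and an $L^q$ majorant. Dominated convergence then finishes the proof.

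\textbf{One correction to your write-up.} Discard the domain-side localization: ``a neighbourhood of $x$'', quasicontinuity, ``the open set where the cluster splits'', and the induction on $d$. For $q\le m$ the map $A$ can be discontinuous at every point, so none of these open sets in $U$ need exist. The target-side organization in your last paragraph is the right one, and it needs no induction:
\begin{itemize}
\item For each multiplicity pattern $\pi$, cover the stratum $S_\pi$ by countably many charts $O'\Subset O$ centred on $S_\pi$, and extend the block maps from $O$ by a cutoff.
\item Use locality of weak gradients ($\nabla u=\nabla v$ a.e.\ on $\{u=v\}$) on $\{A\in O'\}$ and on $\{A_n\in O'\}$. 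Every $x$ with $A(x)\in O'$ and $A_n(x)\to A(x)$ lies in $\{A_n\in O'\}$ for all large $n$.
\item On $\{A\in S_\pi\cap O'\}$ all blocks of $A$ are scalar, so Stampacchia gives $\nabla B^0_j=0$ a.e.\ there.
\item Hence $|\nabla(\la^\uparrow\o A_n)-\nabla(\la^\uparrow\o A)|\le\sum_j\big(k_j^{-1/2}|\nabla\on{Tr}B_{n,j}-\nabla\on{Tr}B_j|+|\nabla B^0_{n,j}|\big)\to 0$ pointwise for large $n$.
\end{itemize}
With this, your $L^q$ variant on $\{B^0=0\}$ is unnecessary. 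It would also be awkward, since the sets $\{A_n\in O'\}$ depend on $n$.

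\textbf{What each route buys.} Yours dispenses with the compactness argument behind the uniform block-diagonalization, with the induction, and with the slicing. The same pointwise lemma also gives \Cref{thm:mptw} for the full sequence at once, because $C^{0,1}$ convergence yields a.e.\ convergence of $\nabla A_n$; the paper proves that result separately via slope convergence. The paper's route works throughout with continuous one-variable objects. That one-dimensional framework (the semimetric $\dd^{1,q}$, parameterizations, \Cref{l:respect}) is exactly what it reuses for normal matrices and for statements about parameterizations such as \Cref{thm:main1varWm}.
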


\Cref{thm:eigW} will be proved in \Cref{sec:H1} for $m=1$ and the general case will be concluded from this special 
case in \Cref{sec:proof2}.

\begin{remark}
Note that superposition by any Lipschitz function $f : \R \to \R$ induces a continuous map 
$f_* : W^{1,q}(U) \to W^{1,q}(U)$ for every $1 \le q < \infty$, 
where $U \subseteq \R^m$ is open and bounded, by \cite{Marcus:1979aa}. 
However, there are Lipschitz maps $f : \R^k \to \R^\ell$, with $k\ge 2$, 
such that the induced map $f_* : W^{1,q}(U,\R^k) \to W^{1,q}(U,\R^\ell)$ is not continuous; 
see \cite{Musina:1991aa}, where also sufficient conditions for continuity are given. 
But our proof of \Cref{thm:eigW} will follow a different strategy.
\end{remark}

\begin{remark}
    The image of the characteristic map is 
    \[
        \eig (W^{1,q}(U,\on{Herm}(d))) = W^{1,q}(U,\R^d_{\uparrow}),
    \]
    the space of $f \in W^{1,q}(U,\R^d)$ with $f(U) \subseteq \R^d_\uparrow := \{x \in \R^d : x_1 \le \cdots \le x_d\}$. The 
    continuous surjection $\eig : W^{1,q}(U,\on{Herm}(d)) \to W^{1,q}(U,\R^d_{\uparrow})$ admits a continuous right-inverse
    which takes $\la = (\la_1,\ldots,\la_d) \in W^{1,q}(U,\R^d_{\uparrow})$ to the diagonal matrix with the diagonal entries $\la_1,\ldots,\la_d$.
\end{remark}

We state a simple consequence of \Cref{thm:eigW}.
Here
$\|f\|_{L^q(U,\R^d)} := \big\| \|f\|_2 \big\|_{L^q(U)}$, 
see \Cref{ssec:notation} for notation.

\begin{corollary} \label[c]{cor:se}
    Let $1 \le q < \infty$.
    Let $U \subseteq \R^m$ be open and bounded.
    If $A_n \to A$ in $W^{1,q}(U, \on{Herm}(d))$ as $n \to \infty$, then, 
    for all $1 \le j \le m$,
    \begin{align*}
        \big\| \|\p_j (\eig(A))\|_2 - \|\p_j(\eig(A_n))\|_2 \big\|_{L^q(U)} \to 0 
        \intertext{and}
        \|\p_j (\eig(A_n))\|_{L^q(U,\R^d)} \to \|\p_j(\eig(A))\|_{L^q(U,\R^d)}  
    \end{align*}
    as $n \to \infty$.
\end{corollary}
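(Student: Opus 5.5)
The corollary follows directly from \Cref{thm:eigW} combined with the reverse triangle inequality, applied first pointwise and then in $L^q$.

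Suppose $A_n \to A$ in $W^{1,q}(U,\on{Herm}(d))$. By \Cref{thm:eigW}, $\eig(A_n) \to \eig(A)$ in $W^{1,q}(U,\R^d)$. In particular, for each $1 \le j \le m$,
\[
    \p_j(\eig(A_n)) \to \p_j(\eig(A)) \quad \text{in } L^q(U,\R^d),
\]
that is,
\[
    \big\| \|\p_j(\eig(A_n)) - \p_j(\eig(A))\|_2 \big\|_{L^q(U)} \to 0 .
\]
Here we use only that the $W^{1,q}$ norm dominates the $L^q$ norm of each weak partial derivative, with the norm convention of \Cref{ssec:notation}.

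For the first assertion, the reverse triangle inequality for the Euclidean norm on $\R^d$ gives, for almost every $x \in U$,
\[
    \big| \|\p_j(\eig(A))(x)\|_2 - \|\p_j(\eig(A_n))(x)\|_2 \big| \le \|\p_j(\eig(A))(x) - \p_j(\eig(A_n))(x)\|_2 .
\]
Taking $L^q(U)$ norms of both sides yields
\[
    \big\| \|\p_j (\eig(A))\|_2 - \|\p_j(\eig(A_n))\|_2 \big\|_{L^q(U)} \le \big\| \|\p_j(\eig(A_n)) - \p_j(\eig(A))\|_2 \big\|_{L^q(U)} \to 0 .
\]

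For the second assertion, recall that by definition $\|f\|_{L^q(U,\R^d)} = \big\|\|f\|_2\big\|_{L^q(U)}$. Applying the reverse triangle inequality in $L^q(U)$ to the scalar functions $\|\p_j(\eig(A_n))\|_2$ and $\|\p_j(\eig(A))\|_2$ gives
\[
    \big| \|\p_j(\eig(A_n))\|_{L^q(U,\R^d)} - \|\p_j(\eig(A))\|_{L^q(U,\R^d)} \big| \le \big\| \|\p_j (\eig(A))\|_2 - \|\p_j(\eig(A_n))\|_2 \big\|_{L^q(U)} ,
\]
and the right-hand side tends to zero by the first assertion.

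The only substantive input is \Cref{thm:eigW}; everything else is the reverse triangle inequality, first in $\R^d$ and then in $L^q(U)$. There is no real obstacle beyond keeping the pointwise and integrated norms distinct.
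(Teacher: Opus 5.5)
Your proof is correct and matches the paper's argument: apply \Cref{thm:eigW}, then the reverse triangle inequality pointwise in $\R^d$ and again in $L^q(U)$. The paper writes these as a single chain of inequalities bounding the difference of the $L^q$ norms by $\|\p_j \eig(A) - \p_j \eig(A_n)\|_{L^q(U,\R^d)}$.
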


\begin{proof} 
    Let us set $\la := \eig(A)$ and $\la_{n} := \eig(A_n)$. Then 
    \begin{align*}
        &\big| \|\p_j \la\|_{L^q(U,\R^d)} - \|\p_j \la_n\|_{L^q(U,\R^d)} \big|
        =
        \big| \big\| \|\p_j \la\|_2 \big\|_{L^q(U)} - \big\| \|\p_j \la_n\|_2 \big\|_{L^q(U)} \big|
        \\
        & \quad \le 
        \big\| \|\p_j \la\|_2 - \|\p_j \la_n\|_2 \big\|_{L^q(U)}
        \le \big\| \|\p_j \la - \p_j \la_n\|_2 \big\|_{L^q(U)}
        = \|\p_j \la - \p_j \la_n\|_{L^q(U,\R^d)}
    \end{align*}
    so that the assertions follow from \Cref{thm:eigW}.
\end{proof}

Let us now turn to the Lipschitz case (i.e., $q=\infty$).
Clearly, \eqref{eq:Frobenius} also induces a  
bounded map 
\begin{equation} \label{eq:E}
    \eig : C^{0,1}(\ol U, \on{Herm}(d)) \to C^{0,1}(\ol U, \R^d), \quad A \mapsto \la^\uparrow \o A,
\end{equation}
taking Lipschitz families of Hermitian matrices to their increasingly ordered Lipschitz eigenvalues,
with Lipschitz constants
satisfying
\begin{equation} \label{eq:eigLip}
    |\eig(A)|_{C^{0,1}(\ol U,\R^d)} \le |A|_{C^{0,1}(\ol U,M_d(\C))},
\end{equation}
with respect to the $2$-norms on $\on{Herm}(d)$ and $\R^d$.

As an immediate consequence of \Cref{thm:eigW}, we get
the following theorem which answers Question 1.11 in \cite{Parusinski:2024aa}.

\begin{theorem} \label[t]{thm:eigmap}
    Let $U \subseteq \R^m$ be open and bounded. 
    Then the characteristic map    
    \begin{equation*} 
        \eig : C^{0,1}(\ol U,\on{Herm}(d)) \to C^{0,1}_q(\ol U,\R^d), \quad A \mapsto \la^\uparrow \o A,
    \end{equation*}
    is continuous, for all $1\le q<\infty$, where $C^{0,1}_q(\ol U,\R^d)$ denotes 
    the set $C^{0,1}(\ol U,\R^d)$ 
    equipped with the trace topology of the inclusion 
$C^{0,1}(\ol U,\R^d) \to W^{1,q}(U,\R^d)$.
\end{theorem}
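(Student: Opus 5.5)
Theorem \ref{thm:eigmap} follows from \Cref{thm:eigW} by comparing topologies. The plan is to show that the inclusion $C^{0,1}(\ol U,\on{Herm}(d)) \to W^{1,q}(U,\on{Herm}(d))$ is continuous, then compose.

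First I check continuity of the inclusion. For $A \in C^{0,1}(\ol U,\on{Herm}(d))$, the function $A$ is bounded on $\ol U$. By Rademacher's theorem, $A$ is differentiable almost everywhere, and its weak partial derivatives coincide with the a.e.\ derivatives. These satisfy $\|\p_j A\|_2 \le |A|_{C^{0,1}(\ol U,M_d(\C))}$ almost everywhere. Since $U$ is bounded, it has finite measure, so
\[
    \|A\|_{W^{1,q}(U,M_d(\C))} \le C(q,m,|U|)\, \big(\|A\|_{L^\infty(U,M_d(\C))} + |A|_{C^{0,1}(\ol U,M_d(\C))}\big).
\]
The right-hand side is dominated by the $C^{0,1}$ norm. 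Since the inclusion is linear, this bound shows it is continuous.

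Next I compose. Let $A_n \to A$ in $C^{0,1}(\ol U,\on{Herm}(d))$. By the first step, $A_n \to A$ in $W^{1,q}(U,\on{Herm}(d))$. \Cref{thm:eigW} then gives $\eig(A_n) \to \eig(A)$ in $W^{1,q}(U,\R^d)$.

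It remains to confirm that $\eig(A_n)$ and $\eig(A)$ are legitimate elements of $C^{0,1}_q(\ol U,\R^d)$. This holds by \eqref{eq:E} and \eqref{eq:eigLip}, because \eqref{eq:Frobenius} shows that $\la^\uparrow$ is $1$-Lipschitz. Moreover, $\la^\uparrow \o A$ computed pointwise on $\ol U$ agrees a.e.\ with the $W^{1,q}$ image of $A$. Since $C^{0,1}_q$ carries exactly the trace topology of $W^{1,q}$, this convergence is precisely continuity into $C^{0,1}_q(\ol U,\R^d)$.

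No step here presents a real obstacle; all the substance lies in \Cref{thm:eigW}. The only point needing care is the identification of the weak derivative of a Lipschitz map with its a.e.\ classical derivative, together with the uniform bound on it. This is standard, since Lipschitz functions on $U$ belong to $W^{1,\infty}_{\on{loc}}$ and hence, with the uniform bound on the derivative, to $W^{1,\infty}(U)$. Finiteness of $|U|$ then gives the embedding of $W^{1,\infty}(U)$ into $W^{1,q}(U)$.
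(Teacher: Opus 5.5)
Your proposal is correct and is exactly the paper's argument: the paper calls \Cref{thm:eigmap} an immediate consequence of \Cref{thm:eigW}. Concretely, one composes the bounded linear inclusion $C^{0,1}(\ol U,\on{Herm}(d)) \hookrightarrow W^{1,q}(U,\on{Herm}(d))$ with the continuous map of \Cref{thm:eigW}, using \eqref{eq:eigLip} to see that the images lie in $C^{0,1}(\ol U,\R^d)$. The paper also gives a second route, via the almost-everywhere convergence of derivatives in \Cref{thm:mptw} together with dominated convergence (\Cref{thm:eigmapS}).
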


But we will see in \Cref{ex:A} that the map 
$\eig : C^{0,1}(\ol U,\on{Herm}(d)) \to C^{0,1}(\ol U,\R^d)$ is \emph{not} continuous: 
the natural topology on the target $C^{0,1}(\ol U,\R^d)$ is too strong.

As a corollary of \Cref{thm:eigmap}, we find that $\eig$ is continuous 
into the H\"older space $C^{0,\al}(\ol U,\R^d)$, carrying its natural topology,
for all $0<\al<1$.

\begin{corollary} \label[c]{cor:eigmap}
    Let $U \subseteq \R^m$ be a bounded open Lipschitz domain. 
    Then the  characteristic map    
    \begin{equation} 
        \eig : C^{0,1}(\ol U,\on{Herm}(d)) \to C^{0,\al}(\ol U,\R^d), \quad A \mapsto \la^\uparrow \o A,
    \end{equation}
    is continuous, for all $0< \al<1$, but not for $\al=1$.
\end{corollary}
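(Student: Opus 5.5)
The plan is to deduce continuity into $C^{0,\al}$ from \Cref{thm:eigmap} together with the uniform Lipschitz bound \eqref{eq:eigLip}, using an interpolation inequality between $C^{0,1}$ and $W^{1,q}$. Non-continuity for $\al=1$ will come from \Cref{ex:A}.

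\emph{Setup.} Let $A_n \to A$ in $C^{0,1}(\ol U,\on{Herm}(d))$, and put $\la_n := \eig(A_n)$, $\la := \eig(A)$, $f_n := \la_n - \la$.
\begin{itemize}
\item By \eqref{eq:Frobenius}, $\|f_n\|_{L^\infty} \le \|A_n - A\|_{L^\infty} \to 0$.
\item By \eqref{eq:eigLip}, the Lipschitz constants $|f_n|_{C^{0,1}} \le |A_n|_{C^{0,1}} + |A|_{C^{0,1}}$ are bounded uniformly in $n$; call the bound $L$.
\item By \Cref{thm:eigmap}, $f_n \to 0$ in $W^{1,q}(U,\R^d)$ for every finite $q$.
\end{itemize}

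\emph{Hölder seminorm.} For $x,y \in \ol U$ we interpolate:
\[
|f_n(x)-f_n(y)| \le \min\{2\|f_n\|_{L^\infty},\, L|x-y|\} \le (2\|f_n\|_{L^\infty})^{1-\al} L^\al |x-y|^\al .
\]
So $|f_n|_{C^{0,\al}} \le (2\|f_n\|_{L^\infty})^{1-\al}L^\al \to 0$. This step needs only uniform convergence and the uniform Lipschitz bound, so \Cref{thm:eigmap} is not even needed here; the argument works for any $0<\al<1$. The Lipschitz-domain hypothesis is harmless: we could alternatively take $q>m$ large and use Morrey's embedding $W^{1,q}(U)\hookrightarrow C^{0,1-m/q}(\ol U)$, which requires $U$ Lipschitz. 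That gives continuity into $C^{0,\be}$ for $\be<1$ close to $1$, and hence, since $U$ is bounded, for all smaller exponents. I will present the elementary interpolation as the main argument and mention Morrey as the reason the domain assumption appears.

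\emph{Failure at $\al=1$.} On a Lipschitz domain, the $C^{0,1}(\ol U)$ norm is equivalent to the $W^{1,\infty}$ norm. Hence failure of continuity into $C^{0,1}$ is exactly what \Cref{ex:A} exhibits: a sequence $A_n\to A$ in $C^{0,1}$ whose eigenvalue Lipschitz seminorms do not converge.

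The main obstacle is \Cref{ex:A}, in case it must be constructed rather than cited. I would take
\[
A_n(t)=\begin{pmatrix} t & 1/n\\ 1/n & -t\end{pmatrix},
\]
which converges in $C^{0,1}$ to $\on{diag}(t,-t)$. Its eigenvalues are $\pm\sqrt{t^2+1/n^2}$, whereas those of the limit are $\pm|t|$. The derivative of the difference near $t=0$ stays of size about $1$ on intervals of length about $1/n$. This is small in $L^q$ but not in $L^\infty$, so the $C^{0,1}$ distance between the eigenvalue families does not tend to $0$.
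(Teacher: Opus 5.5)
Your proof is correct. For $0<\al<1$ it takes a genuinely different route from the paper.

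The paper derives the positive part from \Cref{thm:multhermW} (the $W^{1,q}$-continuity, i.e.\ its main theorem) together with Morrey's inequality $\|f\|_{C^{0,1-m/q}(\ol U)}\le C\,\|f\|_{W^{1,q}(U)}$ for $q>m$. Morrey is the only reason the Lipschitz-domain hypothesis appears.

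You use only L\"owner's inequality \eqref{eq:Frobenius}, for uniform convergence and, via \eqref{eq:eigLip}, for a uniform Lipschitz bound. You combine it with the elementary interpolation $\min\{a,b\}\le a^{1-\al}b^{\al}$. Your route is independent of the main theorem and shows that the Lipschitz-domain assumption is superfluous for the continuity assertion. It is also quantitative: with all norms taken on $U$ resp.\ $\ol U$,
\[
\|\eig(A)-\eig(B)\|_{C^{0,\al}}\le \|A-B\|_{L^\infty}+\big(2\|A-B\|_{L^\infty}\big)^{1-\al}\big(|A|_{C^{0,1}}+|B|_{C^{0,1}}\big)^{\al}.
\]
So $\eig$ is even $(1-\al)$-H\"older continuous, with respect to the sup-norm, on bounded subsets of $C^{0,1}(\ol U,\on{Herm}(d))$. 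This is consistent with \Cref{ex:Auc}, where the Lipschitz constants blow up. It also gives an effective modulus of continuity for the $C^{0,\al}$-target on sets such as $K$ in \eqref{eq:K}.

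What the paper's route buys is a weaker hypothesis on the source. Its argument works verbatim for $A_n\to A$ merely in $W^{1,q}(U,\on{Herm}(d))$ with $q>m$, and yields convergence in $C^{0,1-m/q}$ at the endpoint exponent. Your interpolation, after applying Morrey to $A_n-A$, only reaches exponents strictly below $1-m/q$.

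For $\al=1$, both you and the paper rely on \Cref{ex:A}; your $A_n$ is a constant orthogonal conjugate of the paper's and has the same eigenvalues. A few small points:
\begin{enumerate}
\item What fails there is $|\eig(A_n)-\eig(A)|_{C^{0,1}}\to0$. The seminorms $|\eig(A_n)|_{C^{0,1}}$ themselves do tend to $|\eig(A)|_{C^{0,1}}$ in that example, so phrase the failure in terms of the difference.
\item The equivalence of the $C^{0,1}$ and $W^{1,\infty}$ norms is not needed, since $\|\p_j f\|_{L^\infty(U)}\le |f|_{C^{0,1}(\ol U)}$ on any open set. Alternatively, use the difference quotient between $0$ and $1/n$ as in \Cref{ex:A}.
\item To realize the example on $U\subseteq\R^m$ in size $d$, replace $t$ by $x_1-p_1$ for some $p\in U$ and add a zero block. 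This requires $d\ge 2$: for $d=1$ the map $\eig$ is the identity and is continuous into $C^{0,1}$.
\end{enumerate}
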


In \Cref{cor:eigmap}, which will be proved in \Cref{sec:proof2}, $U$ is a Lipschitz domain, since we use Morrey's inequality.

In the setup of \Cref{thm:eigmap}, we will actually prove a stronger result, in \Cref{sec:H1} (for $m=1$) and \Cref{sec:proof2}:

\begin{theorem} \label[t]{thm:mptw}
    Let $U \subseteq \R^m$ be an open set. 
    Let $A_n \to A$ in $C^{0,1}(U,\on{Herm}(d))$ as $n \to \infty$. 
    Then, for each $1 \le j \le m$ and almost every $x \in U$,  
    \begin{equation} \label{eq:mptw}
        \p_j (\eig(A_n))(x) \to \p_j (\eig(A))(x) \quad \text{ as } n \to \infty. 
    \end{equation}
\end{theorem}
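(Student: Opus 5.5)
The plan is to reduce to $m=1$, work at a fixed generic point, and show there that the $n$-th derivatives are trapped in a shrinking window around the limit derivative.

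\textbf{Reduction to one variable.} Fix $j$. By Fubini it suffices to prove \eqref{eq:mptw} for a.e.\ $x$ on a.e.\ line segment in $U$ parallel to $e_j$. Restricted to such a segment, $A_n\to A$ in $C^{0,1}$. Lipschitz convergence means $\|A_n'-A'\|_{L^\infty}\to 0$, so outside a countable union of null sets we may fix $t$ with the following properties:
\begin{itemize}
\item $A$ and all $A_n$ are differentiable at $t$;
\item all the ordered eigenvalues $\la_i=\la_i^\uparrow\o A$ and $\la_{n,i}=\la_i^\uparrow\o A_n$ are differentiable at $t$;
\item $A_n'(t)\to A'(t)$.
\end{itemize}
The naive bound $|\la_{n,i}'-\la_i'|\le \|A_n'-A'\|$ is not available, since Weyl only controls $\la^\uparrow(X)-\la^\uparrow(Y)$ and not second differences; otherwise the map would be continuous into $C^{0,1}$, contradicting \Cref{ex:A}. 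A localized argument is therefore needed.

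\textbf{Structure of the limit derivative at $t$.} Let $\mu$ be an eigenvalue of $A(t)$ with multiplicity $k$, occupying the indices $i\in I$, and let $P$ be its eigenprojection. First-order perturbation (Weyl's theorem applied to $A(t+h)=A(t)+hA'(t)+o(h)$, then analytic perturbation of $A(t)+hA'(t)$) gives the one-sided derivatives. The right derivatives $(\la_i)_{i\in I}$ are the increasingly ordered eigenvalues of $PA'(t)P|_{\on{ran}P}$, and the left derivatives are the same eigenvalues in decreasing order. Since the $\la_i$ are differentiable at $t$, the two orderings coincide, which forces all eigenvalues of $PA'(t)P|_{\on{ran}P}$ to be equal to a single value $c$. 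Hence $PA'(t)P=cP$ and $\la_i'(t)=c$ for all $i\in I$.

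\textbf{Localization for $A_n$.} For $n$ large, by Weyl's theorem the eigenvalues $\la_{n,i}(s)$ with $i\in I$ stay separated by a fixed gap $\de>0$ from the others, uniformly for $s$ in a fixed neighbourhood of $t$. On this neighbourhood I would use the splitting lemma of \cite{Parusinski:2020ac}, or equivalently a Riesz projection plus Kato's transformation function $W_n(s)$. This gives unitaries, Lipschitz in $s$ with bounds uniform in $n$, such that
\[
    B_n(s)=W_n(s)^*A_n(s)W_n(s)|_{\on{ran}P}
\]
is a $k\times k$ Hermitian block whose eigenvalues are exactly $(\la_{n,i}(s))_{i\in I}$. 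Differentiating at $t$ gives
\[
    B_n'(t)=W_n^*A_n'(t)W_n|_{\on{ran}P}+[X_n,B_n(t)],
\]
where $X_n$ is bounded uniformly in $n$. Because $B_n(t)-\mu I$ has norm at most $\max_{i\in I}|\la_{n,i}(t)-\mu|\to 0$, the commutator $[X_n,B_n(t)]=[X_n,B_n(t)-\mu I]$ tends to $0$. The projections and unitaries also converge at $t$, to $P$ and $W(t)$. Consequently $B_n'(t)\to PA'(t)P|_{\on{ran}P}=c\,I$.

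\textbf{Conclusion.} Applying Weyl's inequality to $B_n(s)=B_n(t)+(s-t)B_n'(t)+o(s-t)$ shows that each derivative $\la_{n,i}'(t)$, $i\in I$, lies between the minimal and maximal eigenvalues of $B_n'(t)$. Both extremes tend to $c=\la_i'(t)$. Doing this for every cluster $I$ proves \eqref{eq:mptw}.

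\textbf{Main obstacle.} The delicate step is the uniform (in $n$) block reduction around $t$, in particular controlling the commutator term. The point is that the block $B_n(t)$ is nearly scalar, while its derivative is compared with the limit compression, which is exactly scalar by the differentiability argument above.
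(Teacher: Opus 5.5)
Your proof is correct, and it takes a genuinely different route from the paper's. The paper proves the case $m=1$ (\Cref{thm:ptw}) by induction on $d$. After removing the trace, it handles the zero set $Z_A$ by slope convergence (\Cref{l:slope1}, \Cref{l:slope2}, \Cref{l:Zp}): the Lipschitz constants of $A_n|_{Z_A}$ tend to $0$, which forces $\eig(A_n)'\to 0$ at accumulation points of $Z_A$. On $I\setminus Z_A$ it uses the uniform unitary block-diagonalization of the normalized curves $A/\|A\|_2$, $A_n/\|A_n\|_2$ (Nash splitting lemma plus a finite cover of $\on{Herm}^0_T(d)$) to get $B_n\to B$, $C_n\to C$ in $C^{0,1}(\ol J)$ on small intervals $J$, and then applies the induction hypothesis. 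The multiparameter case is deduced coordinate-wise, exactly as in your reduction.

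You instead work at one generic point and split off all eigenvalue clusters of $A(t)$ at once. Two facts replace the induction: differentiability of $\la^\uparrow\o A$ at $t$ forces the compression $PA'(t)P$ to be scalar, and Weyl's monotonicity traps $\la_{n,i}'(t)$ between the extreme eigenvalues of $B_n'(t)\to c\,\I_k$. In the commutator formula, read your $X_n$ as the compression $PX_nP$; this is legitimate because $W_n^*A_nW_n$ commutes with $P$.

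What your route buys: it avoids the trace reduction, the normalization, and any separate treatment of $Z_A$ (a point with $A(t)=\mu\I_d$ is just the case $P=\I_d$). It also only uses $A_n(t)\to A(t)$, $A_n'(t)\to A'(t)$ and differentiability at $t$, so it is a sharper pointwise criterion. Combined with \Cref{l:ui} and Vitali's theorem along subsequences, it would also give an alternative proof of \Cref{thm:mainhermW}.

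What the paper's route buys: the same induction covers both the $W^{1,q}$ and $C^{0,1}$ settings and transfers to normal matrices, with Hoffman--Wielandt in place of L\"owner. Your trapping argument has no analogue there, since it relies on the ordering of real eigenvalues and on $B_n'(t)$ being Hermitian.
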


As a consequence, we obtain a second proof of \Cref{thm:eigmap} based on the dominated convergence theorem; 
see \Cref{thm:eigmapS}. 
Clearly, \eqref{eq:mptw} generally fails, if we only assume   
$A_n \to A$ in $W^{1,q}(U,\on{Herm}(d))$ for some $1 \le q <\infty$ as $n \to \infty$.

By Egorov's theorem \cite{Egoroff:1911aa}, we may conclude that
$\p_j(\eig(A_n)) \to \p_j(\eig(A))$ almost uniformly on $U$ as $n \to \infty$. 
In general, the convergence is not uniform on $U$; see \Cref{ex:A}.

%-----------------------------------------------------------------------------------------------------------------------
\subsection{Normal matrices}
%-----------------------------------------------------------------------------------------------------------------------

The complex normal $d \times d$ matrices form a real algebraic subset 
\[
    \on{Norm}(d) := \{A \in M_d(\C) : A^*A = AA^*\}
\]
of the vectorspace $M_d(\C)$ of all complex $d\times d$ matrices. 
The set $\on{Norm}(d)$ is a real singular cone; see \cite{Huhtanen:2001aa} for background on its geometry.

With $A \in \on{Norm}(d)$ we associate its $d$ eigenvalues $\la_1(A),\ldots, \la_d(A)$ (with multiplicities) 
and the unordered eigenvalue vector
\[
    \La(A) = [\la_1(A),\ldots, \la_d(A)].
\]
In this way, we obtain a map 
\begin{equation} \label{eq:La}
    \La : \on{Norm}(d) \to \cA_d(\C),
\end{equation}
where $\cA_d(\C)$ is the space of unordered complex $d$-tuples (see \Cref{ssec:AdC}), 
a complete metric space with respect to the metric 
\[
    \dd_2([z],[w]) := \min_{\si \in \on{S}_d} \Big(\sum_{i=1}^d |z_i - w_{\si(i)}|^2 \Big)^{1/2},
\]
where $\on{S}_d$ is the symmetric group.
The map \eqref{eq:La} is Lipschitz continuous, by a result of Hoffman and Wielandt \cite{Hoffman:1953aa} 
(see \Cref{p:HW}): for $A,B \in \on{Norm}(d)$,
\begin{equation} \label{eq:HWintro}
    \dd_2(\La(A),\La(B)) \le \|A-B\|_2.
\end{equation}

Due to Almgren \cite{Almgren00},
there exists a bi-Lipschitz embedding $\De : \Iq \to \R^N$, where $N =N(d)$,
which can be used to define Sobolev spaces of $\cA_d(\C)$-valued functions: 
for open $U \subseteq \R^m$ and $1 \le q \le \infty$, we set 
\begin{equation} \label{eq:dSobintro}
    W^{1,q}(U,\cA_d(\C)) := \{f : U \to \cA_d(\C) : \De \o f \in W^{1,q}(U,\R^N)\}.  
\end{equation}
An equivalent intrinsic definition of $W^{1,q}(U,\cA_d(\C))$ 
is due to De Lellis and Spadaro \cite{De-LellisSpadaro11} (see also \Cref{d:W1p}).
The space $W^{1,q}(U,\cA_d(\C))$ carries the metric
\begin{equation} \label{eq:Almgrenintro}
    \rh_\De^{1,q}(f,g) := \|\De \o f - \De \o g\|_{W^{1,q}(U,\R^N)}
\end{equation}
which turns it into a complete metric space; see \cite[Lemma 3.1]{Parusinski:2024ab}.
The topology induced by this metric does not depend on the choice of the Almgren embedding $\De$,
see \Cref{t:topmult}.

The map \eqref{eq:La} induces a bounded map
\begin{equation} \label{eq:eigu}
    \eigu :=\La_* : W^{1,q}(U,\on{Norm}(d)) \to W^{1,q}(U, \cA_d(\C)), \quad A \mapsto \La \o A,
\end{equation}
for each $1 \le q < \infty$,
where $U \subseteq \R^m$ is a bounded open set, see \Cref{p:mcharnorm}. 
(The Lipschitz case $q=\infty$ is discussed below.)
By $W^{1,q}(U,\on{Norm}(d))$ we mean the (nonlinear) space of all $A \in W^{1,q}(U,M_d(\C))$ such that 
$A(U) \subseteq \on{Norm}(d)$, topologized by its inclusion in  $W^{1,q}(U,M_d(\C))$.

In this setup, we call $\eigu = \La_*$ the \emph{characteristic map}.

\begin{theorem} \label[t]{thm:eiguW}
    Let $1 \le q < \infty$.
    Let $U \subseteq \R^m$ be open and bounded. 
    Then the characteristic map    
    \begin{equation*} 
        \eigu : W^{1,q}(U,\on{Norm}(d)) \to W^{1,q}(U,\cA_d(\C)), \quad A \mapsto \La \o A,
    \end{equation*}
    is continuous.
\end{theorem}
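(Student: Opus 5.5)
We follow the strategy the introduction announces for \Cref{thm:eigW}: first treat $m=1$, then reduce general $m$ to it. Take $A_n \to A$ in $W^{1,q}(U,\on{Norm}(d))$. Since $\rh^{1,q}_\De$ is a metric, it suffices to show that every subsequence has a further subsequence with $\De\o\La\o A_n \to \De\o\La\o A$ in $W^{1,q}(U,\R^N)$.

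\emph{Convergence of the functions themselves.} By \eqref{eq:HWintro} and the bi-Lipschitz property of $\De$, $|\De\o\La\o A_n - \De\o\La\o A| \le C\|A_n-A\|_2$ pointwise, so $L^q$ convergence is immediate. Passing to a subsequence, we may assume $A_n \to A$ and $\p_j A_n \to \p_j A$ almost everywhere, and that there is $G \in L^q(U)$ with $\|\p_j A_n\|_2 \le G$ for all $n,j$ (partial converse of dominated convergence). The Lipschitz bound gives $|\p_j(\De\o\La\o A_n)| \le C\|\p_j A_n\|_2 \le CG$ a.e. By dominated convergence, it therefore suffices to prove pointwise a.e.\ convergence $\p_j(\De\o\La\o A_n)(x) \to \p_j(\De\o\La\o A)(x)$. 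This is the analogue of \Cref{thm:mptw}.

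\emph{Reduction to $m=1$ and the local argument.} By Fubini and the ACL characterization of Sobolev functions, for a.e.\ line parallel to $e_j$ the restrictions are absolutely continuous, and the subsequence can be chosen so that the a.e.\ convergence and the domination hold on a.e.\ line. So we work on a single line $I$, with $A_n \to A$ uniformly on compacts (absolute continuity plus $L^1$ convergence of derivatives) and $A_n' \to A'$ a.e. Fix a point $t_0$ where all derivatives exist. We argue by induction on $d$, using the splitting lemma from \cite{Parusinski:2020ac}.

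\emph{Case 1: not all eigenvalues of $A(t_0)$ coincide.} The splitting lemma gives, near $t_0$ and uniformly for $n$ large, a unitary $U_n(t)$ depending smoothly (real-analytically) on the matrix, with $U_n^* A_n U_n = \on{diag}(B_n, C_n)$ and both blocks normal of smaller size. Since $U_n$ is a smooth function of $A_n$, the blocks converge in the same sense: uniformly, with derivatives converging a.e. Also $\La(A_n) = \La(B_n)+\La(C_n)$, and the Almgren derivative of a sum of unordered tuples with separated supports splits accordingly. The induction hypothesis then applies.

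\emph{Case 2: $A(t_0) = z\,\mathrm{I}$.} This is the main obstacle. If $A$ is differentiable at $t_0$, then $A(t)-z\mathrm{I} = (t-t_0)A'(t_0)+o(t-t_0)$, and $A'(t_0)$ is normal as a limit of normal difference quotients. By Hoffman--Wielandt, $\La(A)$ is differentiable at $t_0$ in the Almgren sense, with derivative $\La(A'(t_0))$. For $A_n$ we instead control derivatives at nearby points $t$ where $A_n(t)$ need not be scalar. I would try to subtract the trace part ($\frac1d\on{tr}$ is linear, so it commutes with everything) and reduce to a trace-free $A$ with $A(t_0)=0$. I would then handle the set $\{t : A(t)\ \text{scalar}\}$ separately: at its density points, $A'$ is scalar a.e., so $A'(t_0)=w\mathrm{I}$. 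Weyl/Hoffman--Wielandt then give $|\p_t\La(A_n)-w| \lesssim \|A_n'(t_0)-A'(t_0)\|_2 \to 0$ pointwise. At points that are not density points of this set, a.e.\ such point lies in Case 1 after discarding a null set. Because the embedding $\De$ is not linear, this requires the $W^{1,q}$ theory of $\cA_d(\C)$-valued functions (De Lellis--Spadaro, \Cref{d:W1p}) to express $|D(\De\o f)|$ through the intrinsic derivative; by \Cref{t:topmult}, the choice of $\De$ does not matter.
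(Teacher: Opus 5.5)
Your argument is correct in outline, but it uses a different analytic mechanism from the paper's. The paper never proves pointwise convergence in the $W^{1,q}$ setting. On an interval it shows $\ddd^{1,q}_F(\La,\La_n)\to 0$ separately on two kinds of pieces. The first is the zero set of the traceless part (\Cref{l:Zn}). The second is small intervals $J$ carrying a normalized uniform block-diagonalization: there \Cref{l:bdiag} gives quantitative $W^{1,q}(J)$ convergence of the blocks, and \Cref{l:respect} shows that optimal matchings respect the blocks. The countably many pieces are then glued by uniform integrability and Vitali (\Cref{l:Vapp}). The embedding $\De$ enters only through \Cref{thm:Almgren and convergence}, and $m\ge 2$ is handled by slicing with de la Vall\'ee Poussin and Jensen.

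You instead extract, once and for all, a subsequence with a.e.\ convergence and an $L^q$ majorant $G$. Everything then reduces to a pointwise statement on a.e.\ line: uniform convergence of $A_n$ plus a.e.\ convergence of $A_n'$ forces a.e.\ convergence of the eigenvalue derivatives. You prove this by the same dichotomy (scalar set versus splitting plus induction) and finish with dominated convergence. This gives a leaner measure-theoretic skeleton, with no Vitali, no de la Vall\'ee Poussin and no boxes. It also lets you apply the splitting lemma at $A(t_0)$ itself rather than the normalized finite cover, and it yields \Cref{thm:ptw} as a by-product, since $C^{0,1}$ convergence already gives a.e.\ convergence of derivatives for the whole sequence. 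The price is that you must track pointwise derivatives through the nonlinear map $\De$, which the paper's norm-level argument avoids.

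Three points need repair.

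\emph{Case 2.} $\La\o A$ is not differentiable at $t_0$ in the sense of \Cref{d:diff} unless $A'(t_0)$ is scalar; take $A(t)=\on{diag}(t,-t)$ at $0$. This is harmless, because you only need accumulation points of the scalar set, where $A'(t_0)$ is scalar. There, after removing the trace, Hoffman--Wielandt gives directly that $(\De\o\tilde\La)'(t_0)=0$ and $\|(\De\o\tilde\La_n)'(t_0)\|_2\le\|\tilde A_n'(t_0)\|_2\to 0$.

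\emph{Case 1.} The induction hypothesis gives convergence off a null set that depends on $J$. So you must cover the non-scalar set by countably many such $J$, rather than argue at a fixed $t_0$.

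\emph{Passing through $\De$.} This is the most substantive point. Expressing $|D(\De\o f)|$ via the intrinsic derivative controls only norms, not the vectors $\p_j(\De\o\La\o A_n)$. Moreover, the splitting $\La(A_n)=\La(B_n)+\La(C_n)$ does not pass through $\De$ additively. You have two ways to fix this.
\begin{itemize}
\item Run the induction intrinsically: prove $\bs_1(\La,\La_n)(t)\to 0$ a.e., which splits over separated blocks exactly as in \Cref{l:respect}. Then use the majorant $\bs_1(\La,\La_n)\le\|A'\|_2+\|A_n'\|_2$ and \Cref{thm:Almgren and convergence} on a.e.\ line before integrating over lines.
\item Use that each Almgren map $H_l$ sorts the numbers $\eta_l(\la_i)$. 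At a.e.\ point, tied entries then carry equal derivatives, so matched convergence of values and derivatives gives $(\De\o\La_n)'\to(\De\o\La)'$.
\end{itemize}
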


It turns out that \Cref{thm:eiguW} is a generalization of \Cref{thm:eigW}; see \Cref{ssec:AdR}. 

In the following corollary of \Cref{thm:eiguW}, 
$|\dot \La|$ denotes the metric speed and $\sE_q(\La)$ 
the $q$-energy of the curve 
$\La  \in W^{1,q}(I, \cA_d(\C))$; see \Cref{ssec:ACq} for definitions.

\begin{corollary} \label[c]{thm:mainse}
    Let $1 \le q < \infty$.
    Let $I \subseteq \R$ be a bounded open interval.
    Let $A_n \to A$ in $W^{1,q}(I,\on{Norm}(d))$ as $n\to \infty$ and set
    $\La := \eigu(A), \La_n:= \eigu(A_n) : I \to \cA_d(\C)$. 
    Then
    \begin{align*}
        \|\dd_2(\La,\La_n) \|_{L^\infty(I)}  &\to 0, 
        \\
        \big\| |\dot\La|- |\dot \La_n| \big\|_{L^q(I)} &\to 0, 
        \\
        \big| \sE_q(\La)- \sE_q(\La_n) \big| &\to 0, 
    \end{align*}
    as $n \to \infty$.
\end{corollary}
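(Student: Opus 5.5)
The plan is to deduce all three convergences from \Cref{thm:eiguW}, which gives $\rh_\De^{1,q}(\La,\La_n) = \|\De\o\La - \De\o\La_n\|_{W^{1,q}(I,\R^N)} \to 0$. For this I need two standard facts about Almgren's bi-Lipschitz embedding in one variable. First, $\De$ is bi-Lipschitz: $c\,\dd_2 \le |\De(\cdot)-\De(\cdot)| \le C\,\dd_2$. Second, as in \Cref{ssec:ACq}, the metric speed of a $W^{1,q}$ curve $\La$ in $\cA_d(\C)$ satisfies
\[
 |\dot\La|(t) = \lim_{h\to0}\frac{\dd_2(\La(t+h),\La(t))}{|h|}
\]
for a.e.\ $t$. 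Moreover, Almgren's embedding can be chosen so that $|(\De\o\La)'| = |\dot\La|$ a.e. I will use this isometric-on-derivatives property of Almgren's $\De$ (that $\De$ preserves the length of tangent vectors along $\cA_d$) if it is available from \Cref{ssec:AdC}.

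\textbf{Uniform convergence.} Since $I$ is a bounded interval, $W^{1,q}(I,\R^N)\hookrightarrow C^0(\ol I,\R^N)$ with
\[
 \|f\|_{L^\infty} \le C(\|f\|_{L^q}+\|f'\|_{L^1}).
\]
Hence $\De\o\La_n \to \De\o\La$ uniformly. The lower bi-Lipschitz bound then gives $\|\dd_2(\La,\La_n)\|_{L^\infty(I)}\to0$.

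\textbf{Convergence of speeds.} If $\De$ preserves speeds, then
\[
 \big||\dot\La|-|\dot\La_n|\big| = \big||(\De\o\La)'|-|(\De\o\La_n)'|\big| \le |(\De\o\La)'-(\De\o\La_n)'|
\]
a.e., and the $L^q$ convergence follows directly from \Cref{thm:eiguW}. This is the step I expect to be the main obstacle: with only a bi-Lipschitz $\De$ the speeds are merely comparable, not equal. If the paper's $\De$ lacks the speed-preserving property, I would instead argue as follows.

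Since $\La_n\to\La$ in the metric $\rh^{1,q}_\De$, the derivatives $(\De\o\La_n)'$ converge in $L^q$. Passing to a subsequence, they converge a.e. and are dominated by some $g\in L^q$. Then $|\dot\La_n|\le C|(\De\o\La_n)'|\le Cg$ gives domination of the speeds. For pointwise convergence of $|\dot\La_n|$, I would use the splitting structure: near a.e.\ point the eigenvalues separate into clusters, and \Cref{thm:eiguW} applied blockwise should control this. Alternatively, I would use that $\De$ is piecewise linear and injective on cones, so its differential along $\cA_d$ is a.e.\ a linear isometry up to a fixed factor; Almgren's construction has this property. Dominated convergence then yields $\||\dot\La|-|\dot\La_n|\|_{L^q}\to0$ along the subsequence. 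Since the limit is independent of the subsequence, the full sequence converges.

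\textbf{Energy.} With $\sE_q(\La)=\int_I|\dot\La|^q\,dt$ (or its $q$-th root, up to normalization), the preceding step gives $|\dot\La_n|\to|\dot\La|$ in $L^q(I)$. Hence $\||\dot\La_n|\|_{L^q}\to\||\dot\La|\|_{L^q}$ by the reverse triangle inequality, which is exactly $|\sE_q(\La)-\sE_q(\La_n)|\to0$.
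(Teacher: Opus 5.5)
The uniform-convergence and energy steps are fine. The middle limit, however, which is the heart of the corollary, is not proved.

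Everything rests on the claim that $\De$ can be chosen with $|(\De\o\La)'| = |\dot\La|$ a.e. You flag this yourself as uncertain, and the paper does not supply it: it records only $\Lip(\De)\le 1$ and a Lipschitz inverse. Bi-Lipschitz bounds give $|\dot\La_n|$ comparable to $|(\De\o\La_n)'|$ only up to constants, and that cannot yield $|\dot\La_n|\to|\dot\La|$. Your fallback does not close the gap either. The domination along a subsequence is fine, but the needed a.e. convergence $|\dot\La_{n_k}|\to|\dot\La|$ is only asserted. One version (``the splitting structure \ldots should control this'') is vague. The other (the differential of $\De$ is ``a.e. a linear isometry up to a fixed factor'') is the same unproved property.

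The claim is in fact true for the concrete embedding \eqref{eq:AlmgrenDE} with $\th_1,\dots,\th_h$ the $h$-th roots of unity, and you could prove it as follows. Take any $t$ where a parameterization $\la$ and all $H_l\o\La$ are differentiable. Near $x=(\et_l(\la_i(t)))_i$, sorting only permutes coordinates within blocks of ties, so $\|(H_l\o\La)'(t)\|_2^2=\sum_i \et_l(\la_i'(t))^2$. Since $\sum_{l=1}^h \Re(\th_l w)^2=\frac h2 |w|^2$ for $h\ge 3$, this gives $\|(\De\o\La)'\|_2 = 2^{-1/2}\|\la'\|_2 = 2^{-1/2}|\dot\La|$ a.e. by \Cref{l:ms}. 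Your inequality together with \Cref{thm:eiguW} then finishes the argument.

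The paper avoids the embedding altogether. \Cref{thm:1eiguW} gives $\|\bs_1(\La,\La_n)\|_{L^q(I)}\to 0$. For parameterizations $\la,\la_n$ and the optimal permutation $\ta(x)$ from \Cref{def:dd}, one has almost everywhere
\[
\big|\|\la'\|_2-\|\la_n'\|_2\big| = \big|\|\la'\|_2-\|\ta\la_n'\|_2\big| \le \|\la'-\ta\la_n'\|_2 \le \bs_1(\La,\La_n).
\]
\Cref{l:ms} then converts this into $\big\||\dot\La|-|\dot\La_n|\big\|_{L^q(I)}\to 0$; this is \Cref{cor:main1}. The uniform statement follows directly from \eqref{eq:HW} and \Cref{l:mat}.
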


In the setup of \Cref{thm:mainse},
there always exist $W^{1,q}$ parameterizations 
$\la,\la_n : I \to \C^d$ of the eigenvalues of $A$, $A_n$, respectively; see \Cref{p:Wselection-1}.
We will see that \Cref{thm:mainse} follows from the following corollary;
for their proofs see \Cref{ssec:mse}.

\begin{corollary} \label[c]{cor:main1}
    Let $1 \le q < \infty$.
    Let $I \subseteq \R$ be a bounded open interval.
    Let $A_n \to A$ in $W^{1,q}(I,\on{Norm}(d))$ as $n\to \infty$.  
    Let $\la,\la_n \in W^{1,q}(I,\C^d)$ be parameterizations of the eigenvalues of $A$, $A_n$, respectively.
    Then
    \begin{align*} 
       &\big\| \|\la'\|_2 - \|\la_n'\|_2 \big\|_{L^q(I)}\to 0, 
        \\
       &\|\la_n'\|_{L^q(I,\C^d)} \to \|\la'\|_{L^q(I,\C^d)}, 
    \end{align*}
    as $n \to \infty$.
\end{corollary}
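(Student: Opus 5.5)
The plan is to deduce \Cref{cor:main1} from \Cref{thm:eiguW}, using the fact that in one variable the metric speed of an $\cA_d(\C)$-valued Sobolev curve is controlled by any Sobolev parameterization. The key identity I want is that for any parameterization $\la \in W^{1,q}(I,\C^d)$ of the eigenvalues of $A$,
\begin{equation*}
    \|\la'(t)\|_2 = |\dot\La|(t) \quad \text{for a.e. } t \in I,
\end{equation*}
where $\La = \eigu(A)$. The inequality $|\dot\La| \le \|\la'\|_2$ is immediate from $\dd_2(\La(s),\La(t)) \le \|\la(s)-\la(t)\|_2$.

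For the reverse inequality I would fix a point $t$ where $\la$ is differentiable and where the metric derivative exists. For $s$ close to $t$, the tuple $\la(s)$ is close to $\la(t)$. Consider an optimal permutation $\si_s$ for $\dd_2(\La(s),\La(t))$. Any index $i$ with $\la_i(t) \ne \la_j(t)$ must be matched to a $j$ with $\la_j(t)=\la_i(t)$ once $s$ is near $t$. A permutation that only mixes indices with equal values $\la_j(t)$ gives
\begin{equation*}
    \dd_2(\La(s),\La(t)) = \min_{\si} \|\la(s) - \la(t) \circ \si\| = \min_\si \|\la(s)-\la(t)\|,
\end{equation*}
since permuting equal values does not change $\la(t)$. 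Hence $\dd_2(\La(s),\La(t)) = \|\la(s)-\la(t)\|_2$ for $s$ near $t$, and dividing by $|s-t|$ yields equality a.e. This is the standard fact that $W^{1,q}$ selections realize the metric speed (cf.\ the discussion in \Cref{ssec:ACq}); I expect it is proved there or in \cite{Parusinski:2024ab}, and I would cite it if available.

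With the identity in hand, $\|\la'\|_2 = |\dot\La|$ and $\|\la_n'\|_2 = |\dot\La_n|$ a.e. It then remains to show $\big\||\dot\La|-|\dot\La_n|\big\|_{L^q(I)} \to 0$, which is exactly the second assertion of \Cref{thm:mainse}. So the real content is to get convergence of metric speeds from convergence in $\rh^{1,q}_\De$ given by \Cref{thm:eiguW}. For this I would use that the Almgren embedding $\De$ can be chosen so that $|\dot \La| = \|(\De\o\La)'\|_2$ a.e. This holds because $\De$ is an isometry on tangent-cone directions: De Lellis–Spadaro's $\xi$ satisfies $|D(\xi\o f)| = |Df|$ a.e. From $(\De\o\La_n)' \to (\De\o\La)'$ in $L^q$, the reverse triangle inequality as in the proof of \Cref{cor:se} then gives the first claim. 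The second claim follows, since $\|\la_n'\|_{L^q(I,\C^d)} = \big\|\|\la_n'\|_2\big\|_{L^q(I)}$ and again the reverse triangle inequality applies.

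The main obstacle is the step identifying $|\dot\La|$ with $\|(\De\o\La)'\|_2$. The topology given by \Cref{t:topmult} is independent of $\De$, but the speed identity needs a specific choice of embedding. If that is unavailable, I would instead argue by subsequences. Convergence in $W^{1,q}$ gives a.e.\ convergence of $(\De\o\La_n)'$ along a subsequence, with an $L^q$-dominating function. I would then show pointwise convergence $|\dot\La_n|(t)\to|\dot\La|(t)$ a.e. via the local block structure (the splitting at distinct eigenvalues), and conclude with generalized dominated convergence, using $|\dot\La_n| \le \|A_n'\|_2$ from \eqref{eq:HWintro}, which converges in $L^q$.
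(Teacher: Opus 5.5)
Your argument is correct, but it reaches the corollary by a longer route than the paper. The paper works with the intrinsic \Cref{thm:1eiguW}, i.e.\ $\|\bs_1(\La,\La_n)\|_{L^q(I)}\to 0$. It combines this with the following pointwise bound, valid a.e.\ with $\ta(x)$ the optimal permutation from \Cref{def:dd}:
\[
\big|\|\la'\|_2-\|\la_n'\|_2\big|=\big|\|\la'\|_2-\|\ta\la_n'\|_2\big|\le\|\la'-\ta\la_n'\|_2\le\bs_1([\la],[\la_n]).
\]
The reverse triangle inequality in $L^q$ then finishes; no embedding and no metric speed appear.

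You instead start from the extrinsic \Cref{thm:eiguW} (for $m=1$, i.e.\ \Cref{c:1eiguW}). You pass from $\|\la'\|_2$ to $|\dot\La|$ via \Cref{l:ms}, and your cluster argument for that step is fine. You then need an embedding with $\|(\De\o\La)'\|_2=c\,|\dot\La|$ a.e. That input is not in the paper, and it depends on the embedding. For a general $\De$ of the form \eqref{eq:AlmgrenDE}, the same local argument only gives $\|(\De\o\La)'\|_2^2=h^{-1}\sum_i\sum_l\et_l(\la_i')^2$, which is not proportional to $\|\la'\|_2^2$.

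For the paper's roots-of-unity choice the identity does hold, and you can verify it directly rather than cite De Lellis--Spadaro. For $[s]$ near $[t]$, an optimal $\si$ in $\dd_2([t],[s])$ pairs entries of $t$ only with nearby entries of $s$. Hence each sorted array has the same cost, $\|\Et_l([t])-\Et_l([s])\|_2^2=\sum_i\et_l(t_i-s_{\si(i)})^2$. Since $\sum_l(\Re(\th_l v))^2=\frac h2|v|^2$ for $h\ge 3$, this gives $\|\De([t])-\De([s])\|_2=\dd_2([t],[s])/\sqrt2$ locally. With this your proof closes, and the subsequence fallback is unnecessary; as sketched, it would amount to redoing the induction of \Cref{sec:n1}.

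What your route buys is that the corollary, together with \Cref{thm:mainse}, follows from the embedded formulation alone. But in the paper \Cref{c:1eiguW} is itself deduced from \Cref{thm:1eiguW} through the nontrivial \Cref{thm:Almgren and convergence}. So you are detouring through a consequence of the very theorem the paper applies in one line. Note also that you could not simply cite \Cref{thm:mainse}, since the paper derives it from this corollary.
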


See also \Cref{thm:main1varWm}, \Cref{thm:main1varm}, and \eqref{eq:conseq} below.

Let us now consider the Lipschitz case ($q=\infty$).
We immediately get from \eqref{eq:HWintro} that, for all $A,B \in \on{Norm}(d)$, 
\begin{equation*}
    \|\dd_2(\eigu(A), \eigu(B)) \|_{L^\infty(U)} \le \|A-B\|_{L^\infty(U,M_d(\C))},
\end{equation*}
But, as evidenced by the Hermitian case, the (by \eqref{eq:HWintro} induced) bounded map  
\[
    \eigu : C^{0,1}(\ol U,\on{Norm}(d)) \to C^{0,1}(\ol U, \cA_d(\C))
\]
is not continuous.
Nevertheless, as an immediate consequence of \Cref{thm:eiguW}, 
we have the following result.

\begin{theorem} \label[t]{thm:multnormal}
    Let $U \subseteq \R^m$ be open and bounded. 
    The characteristic map    
    \begin{equation*} 
        \eigu : C^{0,1}(\ol U,\on{Norm}(d)) \to W^{1,q}(U,\cA_d(\C)), \quad A \mapsto \La \o A,
    \end{equation*}
    is continuous, for all $1\le q<\infty$.
\end{theorem}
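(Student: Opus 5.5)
The plan is to deduce \Cref{thm:multnormal} directly from \Cref{thm:eiguW}, exactly as \Cref{thm:eigmap} follows from \Cref{thm:eigW}. The topology on the source $C^{0,1}(\ol U,\on{Norm}(d))$ is the one it inherits as a subset of $C^{0,1}(\ol U,M_d(\C))$. So I will show that the inclusion
\[
    \iota : C^{0,1}(\ol U,M_d(\C)) \to W^{1,q}(U,M_d(\C))
\]
is continuous (indeed bounded linear) for every $1 \le q < \infty$ when $U$ is bounded. Then $\eigu$ on Lipschitz normal families is the composition of $\iota$, restricted to normal matrices and landing in $W^{1,q}(U,\on{Norm}(d))$, with the characteristic map of \Cref{thm:eiguW}. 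Composing two continuous maps gives the claim.

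The key step is the elementary embedding estimate. Let $A \in C^{0,1}(\ol U,M_d(\C))$. Then $A$ is bounded and Lipschitz on $U$, so by Rademacher's theorem it is differentiable almost everywhere. Its classical partial derivatives coincide with its weak ones, and they satisfy $\|\p_j A\|_{L^\infty(U)} \le |A|_{C^{0,1}}$. Since $U$ is bounded, $|U|<\infty$, and Hölder's inequality gives
\[
    \|A\|_{W^{1,q}(U,M_d(\C))} \le C\,|U|^{1/q}\,\|A\|_{C^{0,1}(\ol U,M_d(\C))},
\]
where $C$ depends only on $m$ and on the norm conventions. Applying this to $A_n - A$ shows that $A_n \to A$ in $C^{0,1}$ implies $A_n \to A$ in $W^{1,q}$. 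Normality is a pointwise condition, so the image of a normal family is still a family in $W^{1,q}(U,\on{Norm}(d))$, with the subspace topology used in \Cref{thm:eiguW}.

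I do not expect a real obstacle here; the only point needing care is bookkeeping of conventions. First, the $C^{0,1}(\ol U)$ norm must control both the sup norm and the Lipschitz seminorm. Second, the codomain $W^{1,q}(U,\cA_d(\C))$ carries the metric $\rh^{1,q}_\De$. \Cref{thm:eiguW} already gives continuity with respect to this metric, and \Cref{t:topmult} guarantees the topology is independent of the choice of $\De$. Sequential continuity suffices, since all spaces involved are metric.
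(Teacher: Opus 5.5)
Your proposal is correct and follows the paper's own route. The paper states this theorem as an immediate consequence of \Cref{thm:eiguW}, i.e., it composes the continuous inclusion $C^{0,1}(\ol U,\on{Norm}(d))\hookrightarrow W^{1,q}(U,\on{Norm}(d))$ (valid for bounded $U$ by Rademacher's theorem and H\"older's inequality, as you explain) with the continuous characteristic map $\eigu$ on $W^{1,q}$.
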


Clearly, $f \in C^{0,\al}(\ol U, \cA_d(\C))$ if and only if $\De \o f \in C^{0,\al}(\ol U,\R^N)$
which makes the following corollary meaningful; see \Cref{sec:normmult} for its proof.

\begin{corollary} \label[c]{cor:eigu}
    Let $U \subseteq \R^m$ be a bounded open Lipschitz domain. 
    If $A_n \to A$ in 
    $C^{0,1}(\ol U,\on{Norm}(d))$, then
    $\eigu(A_n) \to \eigu(A)$ in $C^{0,\al}(\ol U, \cA_d(\C))$ as $n \to \infty$, 
    for all $0< \al<1$, but not for $\al=1$, 
    in the sense that 
    \[
        \|\De \o \eigu(A)- \De \o \eigu(A_n)\|_{C^{0,\al}(\ol U,\R^N)} \to 0 \quad \text{ as } n \to \infty,
    \]
    for each Almgren embedding $\De: \cA_d(\C) \to \R^N$.
\end{corollary}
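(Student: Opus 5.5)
The plan is to deduce the corollary from \Cref{thm:multnormal} by an interpolation argument between uniform convergence and a Lipschitz bound, mirroring the proof of \Cref{cor:eigmap}. Fix an Almgren embedding $\De$ with bi-Lipschitz constant $L$, and set $f_n := \De \o \eigu(A_n)$ and $f := \De \o \eigu(A)$.

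\textbf{Step 1: uniform convergence.} By \eqref{eq:HWintro} and the Lipschitz property of $\De$,
\[
\|f - f_n\|_{L^\infty(U,\R^N)} \le L \|A - A_n\|_{L^\infty(U,M_d(\C))} \to 0 .
\]
Since all functions involved are continuous on $\ol U$, this gives $f_n \to f$ uniformly on $\ol U$.

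\textbf{Step 2: uniform Lipschitz bound.} Again by \eqref{eq:HWintro}, the Lipschitz constants satisfy $|f_n|_{C^{0,1}} \le L |A_n|_{C^{0,1}}$. The right-hand side is bounded because $A_n$ converges in $C^{0,1}$. Hence $g_n := f - f_n$ satisfies $|g_n|_{C^{0,1}(\ol U)} \le K$ for all $n$.

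\textbf{Step 3: interpolation.} For $x \ne y$ we have
\[
\frac{|g_n(x) - g_n(y)|}{|x-y|^\al} \le \min\{K |x-y|^{1-\al},\, 2\|g_n\|_\infty |x-y|^{-\al}\}.
\]
We split according to whether $|x-y| \le \de$ or $|x-y| > \de$. This yields
\[
|g_n|_{C^{0,\al}} \le K \de^{1-\al} + 2\|g_n\|_\infty \de^{-\al}.
\]
Choosing $\de$ small and then $n$ large makes this arbitrarily small. Together with Step 1, this gives $\|g_n\|_{C^{0,\al}(\ol U,\R^N)} \to 0$.

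This argument does not actually need the Lipschitz-domain hypothesis. Alternatively, one could invoke \Cref{thm:multnormal} with $q > m$ and apply Morrey's inequality on the Lipschitz domain. That route gives $C^{0,1-m/q}$ convergence, and letting $q \to \infty$ covers all $\al < 1$. Either route works; the interpolation is more elementary.

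\textbf{Failure at $\al = 1$.} We use the inclusion $\on{Herm}(d) \subseteq \on{Norm}(d)$ and a Hermitian counterexample, namely the one from \Cref{ex:A} that shows \Cref{cor:eigmap} fails at $\al = 1$. On $\on{Herm}(d)$, the ordered eigenvalues correspond to unordered tuples, and $\De$ restricted to real tuples is bi-Lipschitz with respect to $\dd_2$, which on $\cA_d(\R)$ equals the $2$-distance between the ordered vectors (see \Cref{ssec:AdR}). Failure of convergence in $C^{0,1}(\ol U,\R^d)$ for ordered eigenvalues must then be transferred to failure for $\De \o \eigu$.

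This transfer is the main obstacle, because a Lipschitz seminorm of a difference is not controlled by the metric $\dd_2$ alone. I would handle it concretely. In the example (e.g.\ $A_n = \on{diag}(x, -x) + \frac1n B$, or eigenvalues $\pm\sqrt{x^2 + 1/n^2}$ versus $\pm|x|$), the difference quotients of $\la^\uparrow$ near the crossing differ by an amount of order one. The tuples stay well separated away from the crossing, so on a region where $\De$ is locally an affine isometric-type map (after restricting to the sheet), a difference quotient of size $c > 0$ persists. I would verify this on points $x, y$ on the same side of the crossing, at distance comparable to $1/n$ from it. There, the ordered and unordered structures agree locally and the bi-Lipschitz bound of $\De$ applies to the differences $f(x) - f_n(x)$ and $f(y) - f_n(y)$ through their comparison.
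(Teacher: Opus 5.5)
Your argument for $0<\al<1$ is correct and takes a genuinely different route from the paper. The paper gets H\"older convergence as a by-product of the $W^{1,q}$ convergence in \Cref{thm:eiguW} (equivalently \Cref{thm:multnormal}), combined with Morrey's inequality for $q>m$ and $\al=1-m/q$; that is the only reason the Lipschitz-domain hypothesis is there. Your interpolation uses only the Hoffman--Wielandt inequality \eqref{eq:HWintro}, which gives uniform convergence plus a uniform Lipschitz bound. It bypasses the main theorem entirely and works on any bounded open $U$, since the seminorms in the paper are global suprema over pairs. It also works for any Lipschitz $\De$, and it is quantitative: optimizing in $\de$ gives
\[
|g|_{C^{0,\al}(\ol U,\R^N)} \le 2^{2-\al} K^{\al} \|g\|_{L^\infty(U,\R^N)}^{1-\al}.
\]
So on sets with Lipschitz constants bounded by $R$ you get an explicit $(1-\al)$-H\"older modulus of continuity in terms of $\|A-A_n\|_{L^\infty}$. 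This is consistent with \Cref{ex:Auc}, where the Lipschitz constants blow up. What the paper's route buys is that the H\"older statement falls out of the much stronger $W^{1,q}$ continuity, which no interpolation of this kind can give.

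For $\al=1$, your plan of embedding \Cref{ex:A} via $\on{Herm}(d)\subseteq\on{Norm}(d)$ is what the paper implicitly relies on; its proof only spells out the positive part. However, the justification you sketch would not work as stated. The $C^{0,1}$ seminorm of $F-F_n$ is built from second differences $(F(x)-F_n(x))-(F(y)-F_n(y))$, and bi-Lipschitz constants do not control these. For instance, the bi-Lipschitz map $t\mapsto t$ for $t<0$, $t\mapsto 2t$ for $t\ge 0$ turns the $C^{0,1}$-convergent pair $x-\frac1n\to x$ into one whose difference has slope $1$ on $[0,\frac1n]$.

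What you need is the exact linear structure of Almgren's embedding on real tuples, and it holds globally, so no separation or same-side restriction is needed. For $x\in\R^d$, each Almgren map in \eqref{eq:AlmgrenDE} satisfies $H_l([x])=\Re(\th_l)\,x^\uparrow$ if $\Re\th_l\ge 0$, and $H_l([x])=\Re(\th_l)$ times the reversal of $x^\uparrow$ if $\Re\th_l<0$. Hence $\De([x])=Mx^\uparrow$ with $M$ linear and $\|Mv\|_2=c\,\|v\|_2$, where $c=(h^{-1}\sum_l(\Re\th_l)^2)^{1/2}$. This constant is positive by \eqref{eq:combineq}. For Hermitian families this gives $\De\o\eigu(A)-\De\o\eigu(A_n)=M(\eig(A)-\eig(A_n))$, so its Lipschitz seminorm is exactly $c$ times the one in \Cref{ex:A}. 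That seminorm is at least $c\sqrt2\,(2-\sqrt2)$, using the points $0$ and $\frac1n$. To place the example in a general $U$, translate it in $x_1$ so the crossing meets $U$, and pad with a zero block if $d>2$.
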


We have the following variants of \Cref{thm:eiguW} and \Cref{thm:multnormal}.

\begin{theorem} \label[t]{thm:main1varWm}
    Let $1 \le q < \infty$.
    Let $U \subseteq \R^m$ be open and bounded.
    Let $A_n \to A$ in $W^{1,q}(U,\on{Norm}(d))$ as $n\to \infty$.  
    Assume that $\la,\la_n \in W^{1,q}(U, \C^d)$ are parameterizations 
    of the eigenvalues of $A,A_n$, respectively.
    Assume that $\la_n \to \la$ in $L^\infty$ on $\cL^{m-1}$-almost all line segments in $U$ parallel to the coordinate axes.
    Then 
    $\p_j \la_n \to \p_j \la$ in $L^q(U,\C^d)$
    as $n \to \infty$, for all $1 \le j \le m$.
\end{theorem}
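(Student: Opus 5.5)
We reduce \Cref{thm:main1varWm} to the one-dimensional situation and then use dominated convergence along lines. Fix $1 \le j \le m$ and write $x = (x', t)$, with $t$ the $j$-th coordinate and $x'$ ranging over the projection $U'$ of $U$ onto the remaining coordinates. For each $x'$ let $U_{x'} := \{t : (x',t) \in U\}$; it is a countable union of open intervals.

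By the ACL characterization of Sobolev functions and Fubini, after passing to a subsequence we may assume the following for $\cL^{m-1}$-a.e.\ $x'$:
\begin{itemize*}
\item $t \mapsto A_n(x',t)$ and $t\mapsto A(x',t)$ are absolutely continuous on compact subintervals of $U_{x'}$, and likewise for $\la_n(x',\cdot)$ and $\la(x',\cdot)$;
\item the $t$-derivatives of these restrictions agree a.e.\ with $\p_j A_n$, $\p_j A$, $\p_j \la_n$, $\p_j\la$;
\item $A_n(x',\cdot)\to A(x',\cdot)$ in $W^{1,q}(U_{x'})$, since $\int_{U'} \|A_n(x',\cdot)-A(x',\cdot)\|^q_{W^{1,q}(U_{x'})}\,dx' \to 0$ and a subsequence converges pointwise a.e.;
\item $\la_n(x',\cdot) \to \la(x',\cdot)$ in $L^\infty$, by hypothesis.
\end{itemize*}
It suffices to show that every subsequence has a further subsequence with $\p_j\la_n\to\p_j\la$ in $L^q(U)$, so passing to subsequences is harmless.

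The key one-dimensional step is as follows. On an interval $I$ (a component of $U_{x'}$), suppose $B_n\to B$ in $W^{1,q}(I,\on{Norm}(d))$ and that the parameterizations $\mu_n,\mu$ of the eigenvalues satisfy $\mu_n\to\mu$ uniformly. We then want $\mu_n'\to\mu'$ in $L^q(I)$. The strategy is to use \Cref{cor:main1}, which gives $\|\mu_n'\|_{L^q} \to \|\mu'\|_{L^q}$. Combined with weak convergence this yields strong convergence: for $q>1$ by uniform convexity of $L^q(I,\C^d)$, and for $q=1$ via a Brezis--Lieb/Radon--Riesz-type argument using the pointwise statement \Cref{cor:main1} gives for the norms $\|\mu_n'\|_2\to\|\mu'\|_2$ in $L^1$. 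The weak convergence itself, for $q>1$, follows because $\mu_n'$ is bounded in $L^q$ by \eqref{eq:HWintro} (indeed $\|\mu_n'\|_2 \le \|B_n'\|_2$ a.e.), and $\mu_n\to\mu$ uniformly identifies the weak limit as $\mu'$. For $q=1$ I would instead argue with equi-integrability, which comes from $\|\mu_n'\|_2\le\|B_n'\|_2$ and the $L^1$ convergence of $B_n'$, together with a.e.\ convergence obtained from the Hermitian-type splitting near points where $B$ has distinct eigenvalues. This last point is the main obstacle. The cleanest route is probably to avoid it entirely: $\|\mu_n'-\mu'\|_2$ is dominated by an $L^1$-convergent sequence, and the uniform convergence plus norm convergence gives strong convergence through the standard lemma ($f_n\rightharpoonup f$ in $L^1$ and $\int\|f_n\|_2 \to \int\|f\|_2$ does not suffice in general). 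So for $q=1$ I would first establish a.e.\ convergence of $\mu_n'$ on the set where $\mu$ is differentiable. This would use the local block-diagonalization (splitting lemma) and induction on $d$, exactly as in the proof of \Cref{thm:eigW} for $m=1$. On the coincidence set the derivatives agree a.e., and one then applies Vitali.

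To conclude, the one-dimensional step gives $g_n(x') := \|\p_j\la_n(x',\cdot)-\p_j\la(x',\cdot)\|^q_{L^q(U_{x'})}\to0$ for a.e.\ $x'$, summing over the countably many components using domination on each. Moreover,
\[
g_n(x') \le 2^{q}\big(\|\p_jA_n(x',\cdot)\|^q_{L^q}+\|\p_jA(x',\cdot)\|^q_{L^q}\big),
\]
and the right-hand side converges in $L^1(U')$. The generalized dominated convergence theorem then gives $\int_{U'} g_n \to 0$, which is exactly $\p_j\la_n\to\p_j\la$ in $L^q(U,\C^d)$.
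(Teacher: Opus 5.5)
Your reduction to lines is sound. Working with the components of the sections $U_{x'}$ and applying the generalized dominated convergence theorem, with majorants $2^q(\|\p_jA_n(x',\cdot)\|_{L^q}^q+\|\p_jA(x',\cdot)\|_{L^q}^q)$ converging in $L^1(U')$, is a clean substitute for the paper's boxes, de la Vall\'ee Poussin/Vitali argument and \Cref{l:Vapp}. For $1<q<\infty$ your one-dimensional step is also correct, and it differs from the paper's route.

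\begin{itemize}
\item The paper proves the one-variable statement \Cref{thm:main1varW} by rerunning the block-diagonalization induction for the given parameterizations. This requires the observation that on each interval $J$ the separation of the block spectra and the uniform convergence $\la_n\to\la$ force $\la=(\mu,\nu)$, $\la_n=(\mu_n,\nu_n)$ as in \eqref{eq:munu}.
\item You instead use \Cref{thm:1eiguW} as a black box. $\mu_n'$ is bounded in $L^q$, and uniform convergence identifies its weak limit as $\mu'$. \Cref{cor:main1} gives convergence of the norms, and the Radon--Riesz property of the uniformly convex space $L^q(I,\C^d)$ then gives strong convergence.
\end{itemize}
Your route never has to examine how an arbitrary parameterization sits relative to the blocks.

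\textbf{The case $q=1$ is left unproved.} The route you finally settle on, a.e.\ convergence of $\mu_n'$ ``exactly as in the proof of \Cref{thm:eigW} for $m=1$'', does not work as stated:
\begin{itemize}
\item $W^{1,1}$ convergence gives a.e.\ convergence of derivatives only along subsequences, already for $d=1$.
\item The proof of \Cref{thm:mainhermW} produces only $L^q$ convergence on the pieces $J$ and $Z_A$.
\item The a.e.\ result \Cref{thm:ptw} needs $C^{0,1}$ convergence, via slope convergence on $Z_A$ in \Cref{l:Zp}.
\item The compatibility of the given parameterizations with the blocks, which you flag as the main obstacle, is exactly the point \eqref{eq:munu} that you do not supply.
\end{itemize}

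\textbf{The simplest repair is the argument you discarded.} Your parenthetical is right for $\int\|f_n\|_2\to\int\|f\|_2$. However, \Cref{cor:main1} gives more, namely $\|\mu_n'\|_2\to\|\mu'\|_2$ in $L^1(I)$, and this suffices together with weak $L^1$ convergence.
\begin{itemize}
\item Weak convergence holds because $\{\mu_n'\}$ is uniformly integrable ($\|\mu_n'\|_2\le\|B_n'\|_2$ and $B_n'\to B'$ in $L^1$). By Dunford--Pettis it is relatively weakly compact, and every weak limit is $\mu'$ by testing against $C_c^\infty$ functions.
\item Now put $E=\{\mu'\ne0\}$, $u=\mu'/\|\mu'\|_2$ on $E$, and $h_n=\|\mu_n'\|_2-\Re\langle\mu_n',u\rangle\ge0$ on $E$. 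Then $\int_E h_n\to0$ and $\|\mu_n'-\|\mu_n'\|_2 u\|_2=(2\|\mu_n'\|_2 h_n)^{1/2}$.
\item By Cauchy--Schwarz, $\int_E\|\mu_n'-\|\mu_n'\|_2 u\|_2\to0$. Since $\|\mu'\|_2 u=\mu'$ on $E$ and $\|\mu_n'\|_2\to\|\mu'\|_2$ in $L^1$, this gives $\mu_n'\to\mu'$ in $L^1(E)$.
\item On $I\setminus E$, already $\|\mu_n'\|_2\to0$ in $L^1$.
\end{itemize}
Combined with Vitali, this argument covers every $1\le q<\infty$. It turns your proposal into a direct deduction from \Cref{thm:1eiguW}, with no second run of the induction.
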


\begin{theorem} \label[t]{thm:main1varm}
    Let $U \subseteq \R^m$ be open and bounded.
    Let $A_n \to A$ in $C^{0,1}(\ol U,\on{Norm}(d))$ as $n\to \infty$.  
    Assume that $\la,\la_n \in C^{0,1}(\ol U, \C^d)$ are parameterizations 
    of the eigenvalues of $A,A_n$, respectively.
    Assume that $\la_n \to \la$ in $L^\infty$ on $\cL^{m-1}$-almost all line segments in $U$ parallel to the coordinate axes.
    Then 
    $\p_j \la_n \to \p_j \la$ almost everywhere in $U$
    as $n \to \infty$, for all $1 \le j \le m$.
\end{theorem}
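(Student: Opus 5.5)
Theorem~\ref{thm:main1varm} is the a.e.\ analogue of Theorem~\ref{thm:main1varWm}. I would follow the route the paper uses for Theorem~\ref{thm:mptw}: first prove the one-dimensional case, then pass to general $m$ by Fubini.

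\emph{Reduction to $m=1$.} Fix $1\le j\le m$. For $\cL^{m-1}$-almost every line $\ell$ parallel to the $x_j$-axis, the following hold on each component interval $I$ of $\ell\cap U$:
\begin{itemize}
\item the restrictions $A|_I, A_n|_I$ are Lipschitz normal curves with $A_n|_I\to A|_I$ in $C^{0,1}$, since restriction does not increase Lipschitz norms;
\item $\la_n|_I\to\la|_I$ uniformly, by hypothesis;
\item the partial derivatives $\p_j\la_n$, $\p_j\la$ coincide a.e.\ on $I$ with the derivatives of the restricted curves. This holds for all $n$ simultaneously, because the union of countably many null sets is null.
\end{itemize}
So if the $m=1$ statement holds, the exceptional set meets almost every line in a null set, and Fubini makes it null in $U$.

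\emph{The case $m=1$.} Let $x\in I$ be a point where $A$ and all $A_n,\la,\la_n$ are differentiable; such points have full measure by Rademacher. I argue by induction on $d$ via the splitting lemma.

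First suppose $A(x)$ has at least two distinct eigenvalues. The splitting lemma gives, on a neighbourhood $J$ of $x$ and uniformly for $n$ large (since $A_n\to A$ uniformly), unitary $U_n, U$ that block-diagonalize $A_n$ and $A$ into normal blocks of smaller size. I expect $U_n,U$ to be Lipschitz and to depend continuously on the matrix in $C^{0,1}$, since they come from a construction that is smooth/analytic in the matrix near a point with separated spectral clusters. The blocks $B_n^{(k)}\to B^{(k)}$ then converge in $C^{0,1}(J)$.

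Uniform convergence $\la_n\to\la$, together with the separation of the clusters, forces the components of $\la_n$ to eventually be assigned to the same blocks as those of $\la$ near $x$, after shrinking $J$. Hence the induction hypothesis applies blockwise and gives $\la_n'(x)\to\la'(x)$, outside a null set. Covering the open set $\{$not all eigenvalues of $A$ equal$\}$ by countably many such $J$ handles this case.

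Now suppose all eigenvalues of $A(x)$ equal $\mu(x)$, where $\mu=\frac1d\operatorname{tr}A$. On the closed set $E$ of such points, write $A=\mu\,\mathrm{I}+N$ with $N$ normal and $N=0$ on $E$. At density points of $E$ we have $N'(x)=0$, and each $\la_i'(x)=\mu'(x)$.

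For $A_n=\mu_n\mathrm{I}+N_n$ we have $\mu_n'\to\mu'$ uniformly, and $\la_{n,i}-\mu_n$ are eigenvalues of $N_n$. So the task is to show that the derivatives at $x$ of eigenvalue parametrizations of $N_n$ tend to $0$. Differentiability of $\la_n$ at $x$ gives
\[
|\la_{n,i}'(x)|\le \limsup_{h\to0}\frac{|\la_{n,i}(x+h)-\la_{n,i}(x)|}{|h|},
\]
and I would like to bound this quotient by $\|N_n'(x)\|$ using Hoffman--Wielandt (Proposition~\ref{p:HW}).

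\emph{Main obstacle.} Controlling $N_n'(x)$ pointwise is the crux: $C^{0,1}$ convergence does not give $N_n'(x)\to N'(x)=0$ pointwise. I would try the Hermitian argument of Theorem~\ref{thm:mptw}: the derivative at $x$ of a differentiable eigenvalue branch is an eigenvalue of the compression of $A_n'(x)$ to the eigenspace at $x$. I would then compare this with $A'(x)$ through a difference quotient at scale $h$, controlled by the uniform convergence of the derivatives that $C^{0,1}$ convergence provides. If that fails, I would fall back on proving a.e.\ convergence along a subsequence from the $L^q$ convergence of Theorem~\ref{thm:main1varWm}, and then upgrade it to the full sequence using the uniqueness of the limit.
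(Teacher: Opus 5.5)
Your architecture is the paper's, and those parts are fine. The paper deduces the statement from the one-variable \Cref{thm:main1var} along coordinate lines. It proves that theorem by induction on $d$, using block-diagonalization where $A(x)$ has two distinct eigenvalues and uniform convergence $\la_n\to\la$ to match the block assignments, plus a separate argument on the degenerate set.

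\textbf{The gap.} You leave the degenerate set $E$ unresolved, and the obstruction you name there does not exist. For a Lipschitz $F$ on an interval, $|F|_{C^{0,1}(\ol I)}=\on{ess\,sup}_I\|F'\|_2$. Hence $C^{0,1}$ convergence in one variable is exactly uniform a.e.\ convergence of the derivatives. This is the same fact that gives you $\mu_n'\to\mu'$ uniformly.

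The map $A\mapsto A-\frac1d\on{Tr}(A)\I_d$ is an orthogonal projection. Therefore $\|N_n'-N'\|_{L^\infty(I)}\le|A_n-A|_{C^{0,1}(\ol I)}\to 0$, and off a single null set this holds pointwise for all $n$ at once. Moreover, $N'(x)=0$ at accumulation points of $E$ where $N$ is differentiable.

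You also need the a.e.\ bound $\|\la_n'(x)-\mu_n'(x)(1,\ldots,1)\|_2\le\|N_n'(x)\|_2$. It follows from \eqref{eq:HW} together with \Cref{l:ms}. You need the latter because Hoffman--Wielandt controls $\dd_2$, not the increments of a fixed parametrization. Combining these gives $\la_n'(x)\to\mu'(x)(1,\ldots,1)=\la'(x)$ for a.e.\ $x\in E$.

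The paper packages the same estimate as slope convergence in \Cref{l:ZnLip}. For $x,y\in Z_A$ it uses $\dd_2(\La_n(x),\La_n(y))\le\|(A_n-A)(x)-(A_n-A)(y)\|_2\le|A_n-A|_{C^{0,1}}|x-y|$, which is uniform in $x,y$.

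\textbf{Why your workarounds fail.} Neither would fill the gap.
\begin{itemize}
\item The compression description of the eigenvalue derivative only yields $|\la_{n,i}'(x)-\mu_n'(x)|\le\|N_n'(x)\|_{\on{op}}$. That is no better than Hoffman--Wielandt. What is needed is smallness of $N_n'(x)$, which is exactly the fact you denied. You then implicitly invoke it when you appeal to ``the uniform convergence of the derivatives that $C^{0,1}$ convergence provides.'' Note also that the paper's zero-set argument for \Cref{thm:mptw} is the slope argument of \Cref{l:Zp}, not a compression argument.
\item The fallback is logically invalid. $L^q$ convergence only gives that every subsequence has a further subsequence converging a.e. Since a.e.\ convergence is not induced by a topology, this does not yield a.e.\ convergence of the full sequence. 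For example, the typewriter sequence of indicators of dyadic intervals tends to $0$ in every $L^q$, $q<\infty$, yet converges at no point of $[0,1]$. Uniqueness of the limit cannot repair this.
\end{itemize}
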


\Cref{thm:main1varWm} and \Cref{thm:main1varm} will be proved in \Cref{ssec:variants} (for $m=1$) and \Cref{sec:normmult}.
It is evident (see the proof of \Cref{cor:se}), that in their setup, 
    for all $1 \le j \le m$,
    \begin{align} \label{eq:conseq}
        \begin{split}
        &\big\| \|\p_j \la\|_2 - \|\p_j \la_n\|_2 \big\|_{L^q(U)} \to 0, 
        \\
        &\|\p_j \la_n\|_{L^q(U,\C^d)} \to \|\p_j\la\|_{L^q(U,\C^d)},  
        \end{split}
    \end{align}
    as $n \to \infty$.
    However, if $m\ge 2$ the desired parameterizations $\la,\la_n$ may not always exist. 
    (For $q>m$, the elements of $W^{1,q}(U)$, where $U \subseteq \R^m$, admit continuous representatives,    
    but the eigenvalues of multiparameter families of normal matrices cannot always be represented by continuous functions, e.g.,
    \[
      \begin{pmatrix}
        0 & x
        \\
        |x| & 0
    \end{pmatrix},  
    \]
    for $x \in \C$.)

%-----------------------------------------------------------------------------------------------------------------------
\subsection{Optimality of the results} \label{ssec:optimal}
%-----------------------------------------------------------------------------------------------------------------------

The results on eigenvalue stability obtained in this paper are essentially optimal.
Here we briefly mention several examples that demonstrate this fact; 
these examples will then be discussed in detail in \Cref{sec:examples}.
We also formulate an open problem. 

We will focus on the limitations in the Hermitian case, which clearly entail corresponding negative results in the more 
general normal case.

\begin{enumerate}
    \item {\it The map $\eig : C^{0,1}(\ol U,\on{Herm}(d)) \to C^{0,1}(\ol U,\R^d)$ is not continuous.} 
\end{enumerate}

For instance,  
\begin{equation} \label{eq:ex1}
        A_n(x) = \begin{pmatrix} \frac{1}n & x \\ x & - \frac{1}{n}  \end{pmatrix}
        \to 
        \begin{pmatrix} 0 & x \\ x & 0  \end{pmatrix} = A(x)  \quad \text{ as } n \to \infty
    \end{equation}
    uniformly in  all derivatives on every compact interval in $\R$. 
    Then $\eig(A_n) = (-a_n,a_n)$ and $\eig(A) = (-a,a)$, where 
    $a_n(x) = \sqrt{x^2 + \frac{1}{n^2}}$ and $a(x) = |x|$. We will see in 
    \Cref{ex:A} that $|a - a_n|_{C^{0,1}(\ol I)} \ge 2-\sqrt 2$, for all bounded open intervals $I$ containing $0$ and 
    all sufficiently large $n$. Also note that off $0$ the derivatives $a_n'$ tend pointwise to $a'$ but 
    not uniformly in any neighborhood of $0$.

\begin{enumerate}
    \item[(2)] {\it For no $1 \le q < \infty$, the map $\eig : C^{0,1}(\ol U,\on{Herm}(d)) \to C^{0,1}_q(\ol U,\R^d)$ is uniformly continuous.} 
\end{enumerate}

Indeed, the matrices 
\[
        A_n(x) = \begin{pmatrix} \frac{1}n & \vh_n(x) \\ \vh_n(x) & - \frac{1}{n}  \end{pmatrix}, 
        \quad
        B_n(x) = \begin{pmatrix} \frac{1}{2n} & \vh_n(x) \\ \vh_n(x) & - \frac{1}{2n}  \end{pmatrix}, 
\]
where $\vh_n : [0,1] \to [0,\frac{1}{n}]$ is a sawtooth function with Lipschitz constant $1$, 
satisfy $\|A_n - B_n\|_{C^{0,1}([0,1],M_2(\C))} \to 0$ as $n\to \infty$, but 
for their nonnegative eigenvalues $a_n$ and $b_n$ we find $\|a_n' -b_n'\|_{L^q([0,1])} \ge \frac{1}{12\cdot 2^{1/q}}$,
in \Cref{ex:ucq}. 

\begin{enumerate}
    \item[(3)] {\it For no $\al \in (0,1)$, the map $\eig : C^{0,1}(\ol U,\on{Herm}(d)) \to C^{0,\al}(\ol U,\R^d)$ is uniformly continuous.} 
\end{enumerate}

This is demonstrated in \Cref{ex:Auc}, by the matrices 
\[
    A_n(x) = \begin{pmatrix} \frac{1}{n^r} & nx \\ nx & - \frac{1}{n^r}  \end{pmatrix}, 
        \quad
        B_n(x) = \begin{pmatrix} \frac{1}{2n^r} & nx \\ nx & - \frac{1}{2n^r}  \end{pmatrix}, 
\]
where $r = \frac{\al}{1-\al}$.

Even tough the maps in (2) and (3) are not uniformly continuous, 
it is very desirable to find effective moduli of continuity for their restrictions to interesting (for applications) 
compact subspaces of $C^{0,1}(\ol U, \on{Herm}(d))$.
For instance, for given $C>0$ and $\be \in (0,1]$, the set 
\begin{equation} \label{eq:K}
    K:=\big\{A \in C^{1,\be}(\ol U, \on{Herm}(d)) : \|A\|_{C^{1,\be}(\ol U,M_d(\C))} \le C \big\}, 
\end{equation} 
is a relatively compact subset of $C^{0,1}(\ol U, \on{Herm}(d))$, by the Arzel\`a--Ascoli theorem. 
Thus, $\cE|_{K} : K \to C^{0,1}_q(\ol U,\R^d)$, for $1 \le q< \infty$, and 
$\cE|_{K} : K \to C^{0,\al}(\ol U,\R^d)$, for $0 < \al< 1$, are uniformly continuous, 
by \Cref{thm:eigmap} and \Cref{cor:eigmap}.

\begin{problem}
    Find effective moduli of continuity for the restrictions of the characteristic map $\eig$ to 
    interesting compact subspaces.
\end{problem}

For instance, using the sequence $A_n$ in \eqref{eq:ex1} we will see in \Cref{ex:A2} 
that, for $1\le q <\infty$ and $q^{-1} < \al \le 1$, the map $\eig|_K : K \to C^{0,1}_q([0,1],\R^d)$ is not $\al$-H\"older continuous. 

\begin{remark}
    Due to Rellich \cite{Rellich37}, a real analytic curve of Hermitian matrices admits a real analytic 
    system of eigenvalues and a real analytic orthonormal frame of eigenvectors. There are various extensions 
    of this result, e.g.,
    to normal matrices or to other categories (smooth quasianalytic classes, formal power series), 
    see \cite{AKLM98}, \cite{RainerN}, and \cite{Parusinski:2020ac}.
    But in general there is no continuous orthonormal frame of the eigenvectors, as seen by the following $C^\infty$ example from \cite{Rellich37}:
    \[
      A(x) := e^{-1/x^2} 
       \begin{pmatrix}
           \cos x^{-1} & \sin x^{-1}
           \\
           \sin x^{-1} & -\cos x^{-1}
       \end{pmatrix} \quad \text{ for } x\in \R \setminus \{0\}, \quad A(0) := 0.
    \]

    Let us suppose that $I \ni x \mapsto A(x)$ is a smooth curve of Hermitian matrices with smooth distinct eigenvalues $\la_j$ and corresponding smooth orthonormal 
    eigenvectors $v_j$. Differentiating the equation $A(x)v_j(x) = \la_j(x)v_j(x)$ and taking the result in the Hermitian inner product with $v_k \ne v_j$ 
    gives 
    \[
        \langle v_k(x),v_j'(x)\rangle  = \frac{\langle v_k(x),A'(x)v_j(x) \rangle}{\la_j(x)-\la_k(x)}
    \]
    showing that the projection of the velocity $v_j'$ to $v_k$ is unbounded as $\la_j$ and $\la_k$ approach each other faster
    than the numerator tends to zero.
    This is connected to adiabatic theorems in physics and the quantum Hall effect.  
\end{remark}

%-----------------------------------------------------------------------------------------------------------------------
\subsection{Singular values}
%-----------------------------------------------------------------------------------------------------------------------

Let us consider the vector space $M_{D,d}(\C)$ of complex $D \times d$ matrices, where $d \le D$ (without loss of generality). 
The singular values of $A \in M_{D,d}(\C)$ are the nonnegative square roots of the eigenvalues 
of the Hermitian matrix $A^*A$, usually ordered decreasingly
\[
    \si_1(A) \ge \si_2(A) \ge \cdots \ge \si_d(A) \ge 0.
\]
This defines a map $\si=(\si_1,\ldots,\si_d) : M_{D,d}(\C) \to \R^d$.

Observing that the Hermitian matrix 
\[
   \mathbf A:= \begin{pmatrix}
        0 & \tilde A
        \\
        \tilde A^* & 0
    \end{pmatrix},
\]
where $\tilde A$ is the $D \times D$ matrix resulting from $A$ by adding $D-d$ columns consisting of zeros, has the eigenvalues 
\[
    \si_1(A), \ldots,\si_d(A), 0,\ldots,0, -\si_d(A),\ldots,-\si_1(A),
\]
we conclude from \eqref{eq:HW} that, for $A,B \in M_{D,d}(\C)$ and $1 \le i \le d$, 
\begin{align*}
    \sqrt 2 \, \|\si(A) - \si(B)\|_2 &\le \|\mathbf A - \mathbf B\|_2 
    = |\on{Tr}\big( (\mathbf A - \mathbf B)^*(\mathbf A - \mathbf B)\big)|^{1/2}
    \\
                          &= |2\, \on{Tr}\big( (\tilde A - \tilde B)^*(\tilde  A - \tilde B)\big)|^{1/2} 
                         = \sqrt 2 \, \|A-B\|_2,
\end{align*}
that is,
\begin{equation} \label{eq:estsv}
    \|\si(A) - \si(B)\|_2 \le \|A-B\|_2.
\end{equation}
Consequently, for each $1 \le q < \infty$, the map
\[
    \si_* : W^{1,q}(U,M_{D,d}(\C)) \to W^{1,q}(U,\R^d), \quad A \mapsto \si \o A,
\]
is well-defined and bounded, as well as 
\[
    \si_* : C^{0,1}(\ol U,M_{D,d}(\C)) \to C^{0,1}(\ol U,\R^d), \quad A \mapsto \si \o A.
\]

Furthermore, \Cref{thm:eigW}, \Cref{thm:eigmap}, \Cref{cor:eigmap}, and \Cref{thm:mptw} immediately give the following result.

\begin{theorem} \label[t]{thm:sv}
    Let $U \subseteq \R^m$ be open and bounded. Then:
    \begin{enumerate}
        \item For each $1 \le q < \infty$, the map $\si_* : W^{1,q}(U,M_{D,d}(\C)) \to W^{1,q}(U,\R^d)$ is continuous.
        \item The map $\si_* : C^{0,1}(\ol U,M_{D,d}(\C)) \to C^{0,1}_q(\ol U,\R^d)$ is continuous, for all $1\le q< \infty$.
        \item Assume additionally that $U$ is a Lipschitz domain. Then 
            the map $\si_* : C^{0,1}(\ol U,M_{D,d}(\C)) \to C^{0,\al}(\ol U,\R^d)$ is continuous, for all $0 < \al < 1$, but not for $\al=1$.
        \item If $A_n \to A$ in  $C^{0,1}(\ol U,M_{D,d}(\C))$, then 
            $\p_j(\si_*(A_n))(x) \to \p_j(\si_*(A))(x)$ as $n \to \infty$, for each $1 \le j \le m$ and almost every $x \in U$.
    \end{enumerate}
\end{theorem}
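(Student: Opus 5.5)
The plan is to reduce every assertion to the Hermitian statements via the linear dilation $A \mapsto \mathbf A$ introduced above. Let $\iota : M_{D,d}(\C) \to \on{Herm}(2D)$ be given by
\[
\iota(A) = \begin{pmatrix} 0 & \tilde A \\ \tilde A^* & 0 \end{pmatrix}.
\]
This map is $\R$-linear and satisfies $\|\iota(A)\|_2 = \sqrt 2\,\|A\|_2$. Composing with $\iota$ therefore gives continuous linear maps
\begin{align*}
\iota_* &: W^{1,q}(U,M_{D,d}(\C)) \to W^{1,q}(U,\on{Herm}(2D)),\\
\iota_* &: C^{0,1}(\ol U,M_{D,d}(\C)) \to C^{0,1}(\ol U,\on{Herm}(2D)).
\end{align*}
Moreover, if $A_n \to A$ in $C^{0,1}$, then $\iota\o A_n \to \iota \o A$ in $C^{0,1}(U,\on{Herm}(2D))$.

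Next I use the spectrum computation stated before the theorem. The eigenvalues of $\iota(A)$, in increasing order, are $-\si_1(A),\dots,-\si_d(A),0,\dots,0,\si_d(A),\dots,\si_1(A)$. Hence $\si_i(A) = \la^\uparrow_{2D+1-i}(\iota(A))$ for $1\le i\le d$. So $\si = P\o \la^\uparrow \o \iota$, where $P : \R^{2D}\to\R^d$ is the coordinate projection onto the last $d$ entries in reversed order. This is a pointwise identity, so it gives
\[
\si_*(A) = P\o \eig(\iota\o A)
\]
as functions on $U$.

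Each item then follows by composing continuous maps.

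\begin{enumerate}
\item $\si_* = P_* \o \eig \o \iota_*$. Here $\eig$ is continuous by \Cref{thm:eigW}, and $P_*$ is continuous linear on $W^{1,q}(U,\R^{2D})$, since $P$ is linear with norm $\le 1$.
\item Use the same factorization, with \Cref{thm:eigmap} in place of \Cref{thm:eigW}. Note that $P_*$ is also continuous for the trace topologies $C^{0,1}_q$, because it is continuous on $W^{1,q}$.
\item Apply \Cref{cor:eigmap} together with continuity of $P_*$ on $C^{0,\al}$, which gives continuity for $0<\al<1$. For the failure at $\al=1$, I would take the $1\times 1$ case (or embed it) with $A_n(x) = (x + i/n)$, a $1\times1$ complex matrix. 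Its singular value is $\sqrt{x^2+1/n^2}$, which is exactly the function $a_n$ of \eqref{eq:ex1}, and the limit is $|x|$. By \Cref{ex:A}, $|a-a_n|_{C^{0,1}}\ge 2-\sqrt2$ on any interval containing $0$, while $A_n\to A$ in $C^{0,1}$. For general $D\ge d$, I would place this entry in position $(1,1)$ and take all other entries zero.
\item \Cref{thm:mptw} applied to $\iota\o A_n \to \iota\o A$ gives $\p_j \eig(\iota\o A_n) \to \p_j\eig(\iota\o A)$ almost everywhere. Since $P$ is linear, $\p_j(P\o f) = P(\p_j f)$, and the claim follows.
\end{enumerate}

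The only point requiring care is in (iii). The counterexample must live on an arbitrary bounded Lipschitz domain $U\subseteq\R^m$, so I would take $A_n(x) = (x_1 + i/n)$ and translate so that $U$ meets the hyperplane $\{x_1=0\}$. The lower bound on Lipschitz seminorms from \Cref{ex:A} then persists by restricting to a segment in the $x_1$-direction through such a point. Everything else is a formal consequence of the factorization.
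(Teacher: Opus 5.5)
Your proposal is correct and takes the paper's own route. The paper likewise writes $\si$ as a coordinate selection of $\la^\uparrow$ composed with the $\R$-linear Hermitian dilation $A\mapsto \mathbf A$, and then invokes \Cref{thm:eigW}, \Cref{thm:eigmap}, \Cref{cor:eigmap}, and \Cref{thm:mptw}; your explicit counterexample $A_n(x)=(x_1-c+i/n)$ for $\al=1$, restricted to a coordinate segment in $U$, just makes precise the negative part that the paper leaves implicit via \Cref{ex:A}.
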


This theorem answers a question raised in \cite[Section 7.5]{Parusinski:2024aa}. 

%-----------------------------------------------------------------------------------------------------------------------
\subsection{Applications}
%-----------------------------------------------------------------------------------------------------------------------

Let us present a few applications of our results.

%-----------------------------------------------------------------------------------------------------------------------
\subsubsection{Functions defined on the spectrum or the singular values}
%-----------------------------------------------------------------------------------------------------------------------

Clearly, the stability results for the eigenvalues and singular values have immediate applications, e.g., 
for the spectral gaps $\la_{i+1}(A) - \la_i(A)$ or the Ky Fan norms $\sum_{i=1}^k \si_i(A)$, for $1 \le k \le d$.
This can be extended using a result of \cite{Marcus:1979aa}. 

\begin{corollary}
    Let $f : \R \to \R$ be a Lipschitz function. 
    Let $1 \le q < \infty$.
    Let $U \subseteq \R^m$ be open and bounded.
    Then the map 
    \begin{equation} \label{eq:superposition}
         W^{1,q}(U,M_{D,d}(\C)) \to W^{1,q}(U,\R), \quad A \mapsto f \o \si_i \o A,
    \end{equation}
    is well-defined and continuous, where $\si_1(A) \ge \cdots \ge \si_d(A)$ are the singular values of $A$.
    The statement remains true if $M_{D,d}(\C)$ is replaced by $\on{Herm}(d)$ and the ordered eigenvalues 
    are used instead of the singular values.  
\end{corollary}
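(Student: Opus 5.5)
The map is the composition $A \mapsto \si_* A \mapsto \pi_i \o \si_* A \mapsto f \o (\pi_i \o \si_* A)$, where $\pi_i$ denotes projection onto the $i$-th coordinate. I will check that each of the three maps is well-defined and continuous between the appropriate Sobolev spaces.

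\emph{First map.} By \Cref{thm:sv}(1), $\si_* : W^{1,q}(U,M_{D,d}(\C)) \to W^{1,q}(U,\R^d)$ is well-defined and continuous.

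\emph{Second map.} Taking the $i$-th component is a bounded linear map $W^{1,q}(U,\R^d) \to W^{1,q}(U)$, since $|u_i| \le \|u\|_2$ and $|\p_j u_i| \le \|\p_j u\|_2$ pointwise. Hence $A \mapsto \si_i \o A$ is continuous into $W^{1,q}(U)$.

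\emph{Third map.} I will invoke the theorem of Marcus and Mizel \cite{Marcus:1979aa}, already quoted in the first remark after \Cref{thm:eigW}. For a Lipschitz $f : \R \to \R$, the superposition $f_* : W^{1,q}(U) \to W^{1,q}(U)$ is well-defined and continuous for $1 \le q < \infty$ on bounded open $U$.

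Well-definedness deserves a brief check. For $u \in W^{1,q}(U)$, the pointwise bound $|f(u)| \le |f(0)| + \Lip(f)|u|$ gives $f \o u \in L^q(U)$, using that $U$ is bounded so constants lie in $L^q$. The chain rule for Lipschitz functions of one variable gives $|\p_j (f\o u)| \le \Lip(f)|\p_j u|$ a.e. Continuity is the nontrivial content of the Marcus--Mizel result, which I use as a black box. It relies on one-dimensionality of the target of $u$: on level sets $\{u = t\}$ where $f$ fails to be differentiable, $\p_j u = 0$ a.e.

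Composing the three continuous maps proves the first assertion.

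For the Hermitian variant, replace \Cref{thm:sv}(1) by \Cref{thm:eigW}, giving continuity of $\eig : W^{1,q}(U,\on{Herm}(d)) \to W^{1,q}(U,\R^d)$. Then take the $i$-th component $\la_i^\uparrow \o A$ and apply $f_*$ exactly as above.

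The only real substance is the continuity of the scalar superposition, which is exactly why the corollary is restricted to scalar $f : \R \to \R$ applied to a single singular value or eigenvalue. Applying a Lipschitz map on $\R^d$ to the whole vector could fail to be continuous, by \cite{Musina:1991aa}. All the matrix-specific difficulty is already absorbed in \Cref{thm:eigW} and \Cref{thm:sv}.
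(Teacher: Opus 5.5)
Your proposal is correct and follows essentially the same route as the paper: it factors the map as $(\si_i)_*$, continuous by \Cref{thm:sv}(1), followed by $f_*$, continuous by Marcus--Mizel \cite{Marcus:1979aa}, and uses \Cref{thm:eigW} in place of \Cref{thm:sv} for the Hermitian case. Your extra coordinate-projection step and the well-definedness check simply spell out details the paper leaves implicit.
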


\begin{proof}
    The map \eqref{eq:superposition} is the composite 
    \[
        W^{1,q}(U,M_{D,d}(\C)) \xrightarrow[\qquad]{(\si_i)_*} W^{1,q}(U,\R) \xrightarrow[\qquad]{f_*} W^{1,q}(U,\R).
    \]
    The first map is continuous by \Cref{thm:sv}, the second by \cite{Marcus:1979aa}.
    For Hermitian matrices use \Cref{thm:eigW} instead of \Cref{thm:sv}.
\end{proof}

For instance: 
\begin{enumerate}
    \item The $p$-th power of the Schatten p-norm $\sum_{i=1}^d \si_i(A)^p$, for $1 \le p < \infty$,
          induces a continuous map $W^{1,q}(U,M_{D,d}(\C)) \to W^{1,q}(U,\R)$.
      \item If $A \in W^{1,q}(U,\on{Herm}(d))$ is a family of density matrices (i.e. $\on{Tr}(A)=1$) such that
          $\inf_{x \in U} \min_{1 \le i \le d} \la_i(A(x))>0$, then the von Neumann entropy 
          $- \sum_{i=1}^d \la_i(A) \log \la_i(A)$ belongs to $W^{1,q}(U)$ and varies continuously in $A$.
      \item In the setup of (2), the R\'enyi entropy $\frac{1}{1-\al} \log \big(\sum_{i=1}^d \la_i(A)^\al\big)$, for $\al>1$, 
            belongs to $W^{1,q}(U)$ and varies continuously in $A$.
\end{enumerate}

%-----------------------------------------------------------------------------------------------------------------------
\subsubsection{Condition numbers}
%-----------------------------------------------------------------------------------------------------------------------

Let $A \in M_d(\C)$ and let $\si_1(A) \ge \cdots \ge \si_d(A)$ be the singular values of $A$. 
In numerical analysis, the \emph{condition number} of $A$ is defined by 
\begin{equation}
    \ka(A) := \frac{\si_1(A)}{\si_d(A)}.
\end{equation}
Considering the linear equation $Ax=b$ and assuming that $A$ is nonsingular (i.e., $\si_d(A)>0$), 
$\ka(A)$ describes the maximum ratio of the relative error in $x$ to the relative error in $b$. 
If $\ka(A)$ is much larger than $1$, then the problem is considered to be ill-conditioned.

\begin{theorem} \label[t]{t:cn}
    Let $1 \le q < \infty$. Let $I \subseteq \R$ be a bounded open interval.
    Suppose that $A_0 \in W^{1,q}(I, M_d(\C))$ and $\inf_{x \in I}\si_d(A_0(x)) > 0$.
    Then $\ka(A) \in W^{1,q}(I,\R)$ is well-defined in a neighborhood $\cU$ of $A_0$ in $W^{1,q}(I, M_d(\C))$ 
    and $\ka : \cU \to W^{1,q}(I,\R)$ is continuous. 
\end{theorem}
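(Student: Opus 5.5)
The plan is to write $\ka = \si_1/\si_d$ and combine \Cref{thm:sv}(1) with two facts specific to one-dimensional domains: $W^{1,q}(I)$ embeds continuously into $C(\ol I)$ with $\|f\|_{L^\infty(I)} \le C_I \|f\|_{W^{1,q}(I)}$ for a bounded interval $I$, and $W^{1,q}(I)$ is stable under products and under reciprocals of functions bounded away from $0$.

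\textbf{Step 1: the neighborhood $\cU$.} Let $\de := \inf_{x\in I}\si_d(A_0(x))>0$. By \eqref{eq:estsv}, for every $x$,
\[
    |\si_d(A(x)) - \si_d(A_0(x))| \le \|A(x)-A_0(x)\|_2 .
\]
Combining this with the one-dimensional embedding gives
\[
    \sup_{x \in I}\|A(x)-A_0(x)\|_2 \le C_I\|A-A_0\|_{W^{1,q}(I,M_d(\C))} .
\]
Take $\cU$ to be the ball where $C_I\|A-A_0\|_{W^{1,q}} < \de/2$. Then $\si_d(A) \ge \de/2$ on $I$ for all $A\in\cU$, so $\ka(A)$ is defined pointwise.

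\textbf{Step 2: the reciprocal.} By \Cref{thm:sv}(1), $A\mapsto \si_1\o A$ and $A\mapsto \si_d \o A$ are continuous $\cU \to W^{1,q}(I,\R)$. Let $g$ be a Lipschitz function on $\R$ with $g(t)=1/t$ for $t\ge \de/2$, for instance $g(t) = 1/\max(t,\de/2)$. Then $1/\si_d(A) = g\o\si_d\o A$ on $\cU$. By \cite{Marcus:1979aa}, $g_*$ is continuous on $W^{1,q}(I)$. Hence $A\mapsto 1/\si_d(A)$ is continuous $\cU\to W^{1,q}(I)$.

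\textbf{Step 3: the product.} It remains to show that multiplication $W^{1,q}(I)\times W^{1,q}(I)\to W^{1,q}(I)$ is continuous. For $f,h \in W^{1,q}(I)$ the product rule $(fh)' = f'h + fh'$ holds, since both functions are absolutely continuous. This yields
\[
    \|fh\|_{W^{1,q}} \le 2C_I \|f\|_{W^{1,q}}\|h\|_{W^{1,q}} .
\]
So multiplication is a bounded bilinear map and therefore continuous. Composing with Step 2 shows that $\ka(A) = \si_1(A)\cdot(1/\si_d(A))$ lies in $W^{1,q}(I)$ and depends continuously on $A\in\cU$.

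\textbf{Main obstacle.} The only genuinely nontrivial input is the continuity of $\si_*$, which is \Cref{thm:sv}(1) and rests on \Cref{thm:eigW}. The remaining point needing care is the choice of $\cU$, so that the lower bound on $\si_d$ is uniform on all of $I$. This uniformity is exactly where the one-dimensionality of $I$ is used, through the sup-norm control of $W^{1,q}$ functions. In higher dimensions with $q\le m$ this control would fail, which explains why the statement is restricted to intervals.
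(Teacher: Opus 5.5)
Your proof is correct. It reaches continuity by a more modular route than the paper's, although both rest on the same two inputs: \Cref{thm:sv} and the one-dimensional sup-norm bound of \Cref{l:mat}.

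Your Step 1 is essentially the paper's construction of $\cU$. The paper also invokes \Cref{thm:sv} at that point, but \eqref{eq:estsv} together with \Cref{l:mat} suffices, as you use.

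For continuity the two arguments differ. The paper does not factor $\ka$. It writes the identity $(\ka(A)-\ka(B))\si_d(A)\si_d(B) = (\si_1(A)-\si_1(B))\si_d(B) - \si_1(B)(\si_d(A)-\si_d(B))$ and differentiates it. This gives explicit pointwise bounds for $|\ka(A)-\ka(B)|$ and $|\ka(A)'-\ka(B)'|$ in terms of $\de^{-2}$, $\|A\|_2$, $\|B\|_2$, and the differences of $\si_1,\si_d$ and their derivatives.

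You instead write $\ka$ on $\cU$ as a composite of three continuous maps:
\begin{itemize}
\item the singular value map $\si_*$;
\item the superposition $g_*$ with the Lipschitz function $g(t)=1/\max(t,\de/2)$, which is continuous by \cite{Marcus:1979aa}, a fact the paper itself quotes in the introduction;
\item the bounded bilinear multiplication on $W^{1,q}(I)$.
\end{itemize}

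Your version is cleaner and extends at once to other rational expressions in the singular values whose denominators stay bounded away from zero. The paper's version is self-contained and quantitative, and it avoids Marcus--Mizel entirely.

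You could drop \cite{Marcus:1979aa} here as well. Since $\si_d(A)\ge \de/2$ on $I$, one has $(1/f)'=-f'/f^2$, and the same sup-norm estimate you use for the product gives continuity of $f\mapsto 1/f$ directly. That is essentially what the paper's computation does.
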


\Cref{t:cn} will be proved in \Cref{ssec:cn}.

%-----------------------------------------------------------------------------------------------------------------------
\subsubsection{Surface area}
%-----------------------------------------------------------------------------------------------------------------------

Let $U \subseteq \R^m$ be a bounded open set and $f \in W^{1,1}(U, \R)$. Then, by the area formula, the surface area 
of the graph of $f$ is given by 
\begin{equation}
    \on{Area}(f) := \cH^m(\ol f(U)), 
\end{equation}
where $\ol f : U \to U \times \R$, $x \mapsto (x,f(x))$ is the graph mapping and $\cH^m$ the 
$m$-dimensional Hausdorff measure; see \cite{MalySwansonZiemer03}.

\begin{theorem} \label[t]{t:surface}
   Let $U \subseteq \R^m$ be open and bounded.
   Suppose that $A_n \to A$ in $C^{0,1}(\ol U, \on{Herm}(d))$ as $n \to \infty$. 
Set $\la_j := \eig(A)_j$ and $\la_{n,j} := \eig(A_n)_j$, for $1 \le j \le d$.
   Then: 
   \begin{enumerate}
       \item For $1 \le j \le d$,
           \begin{equation}
               \on{Area}(\ol \la_{n,j}) \to \on{Area}(\ol \la_{j}) \quad \text{ as } n \to \infty.
           \end{equation}
       \item Let $Z := \bigcup_{j=1}^d \ol \la_j(U) \subseteq U \times \R$, resp.\ $Z_n := \bigcup_{j=1}^d \ol \la_{n,j}(U)$, be the total zero set
           of the characteristic polynomial of $A$, resp.\ $A_n$. Then 
           \begin{equation}
                    \liminf_{n \to \infty} \cH^m(Z_n) \ge \cH^m(Z).
           \end{equation}
   \end{enumerate}
\end{theorem}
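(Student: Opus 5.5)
For part (1) we use the area formula for Lipschitz (or $W^{1,1}$) functions:
\[
\on{Area}(\ol f) = \int_U \sqrt{1+|\nabla f(x)|^2}\,dx .
\]
Apply it with $f=\la_{n,j}$ and $f=\la_j$. The map $\xi \mapsto \sqrt{1+|\xi|^2}$ on $\R^m$ is $1$-Lipschitz, so
\[
|\on{Area}(\ol\la_{n,j}) - \on{Area}(\ol\la_j)| \le \int_U |\nabla \la_{n,j} - \nabla\la_j|\,dx \le \sum_{i=1}^m \|\p_i \la_{n,j} - \p_i\la_j\|_{L^1(U)} .
\]
This tends to $0$ by \Cref{thm:eigmap} with $q=1$, since $A_n\to A$ in $C^{0,1}(\ol U,\on{Herm}(d))$ and $U$ is bounded. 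Alternatively, \Cref{thm:mptw} gives a.e.\ convergence of the gradients. The uniform Lipschitz bound \eqref{eq:eigLip} dominates them, because $\sup_n |A_n|_{C^{0,1}}<\infty$, so dominated convergence also concludes. Nothing subtle happens in this step.

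For part (2) the difficulty is that $Z$ is a union of graphs that may overlap on coincidence sets, so $\cH^m(Z)$ is not the sum of the areas. We decompose $Z$ disjointly. Set $E_1:=U$ and, for $j\ge 2$, $E_j:=\{x\in U: \la_j(x)>\la_{j-1}(x)\}$, which is open since the $\la_j$ are continuous. Then $Z$ is the disjoint union of the sets $\ol\la_j(E_j)$. By the area formula on each piece,
\[
\cH^m(Z)=\sum_{j=1}^d \int_{E_j}\sqrt{1+|\nabla\la_j|^2}\,dx .
\]
The same formula holds for $Z_n$ with the corresponding open sets $E_{n,j}$.

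The key step is a lower semicontinuity over each $E_j$. Fix $j\ge 2$ and a compact $K\subseteq E_j$. On $K$ we have $\la_j-\la_{j-1}\ge\de>0$. Since $\la_{n,i}\to\la_i$ uniformly, by Weyl's inequality or by \eqref{eq:Frobenius}, it follows that $K\subseteq E_{n,j}$ for all large $n$. Hence
\[
\cH^m(Z_n)\ge \sum_j \int_{K_j}\sqrt{1+|\nabla\la_{n,j}|^2}\,dx
\]
for compacts $K_j\subseteq E_j$ and large $n$. Letting $n\to\infty$ and using the $L^1$ convergence of gradients from part (1) gives
\[
\liminf_n \cH^m(Z_n)\ge \sum_j\int_{K_j}\sqrt{1+|\nabla\la_j|^2}\,dx .
\]
We then exhaust each $E_j$ by compacts and use monotone convergence to obtain $\liminf_n\cH^m(Z_n)\ge \cH^m(Z)$.

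The main obstacle is the bookkeeping of the disjoint decomposition and making sure the area formula applies on each piece. The graph map restricted to $E_j$ is injective and Lipschitz, so the area formula holds there. It also needs $\ol\la_j(E_j)\cap\ol\la_i(E_i)=\emptyset$ for $i\ne j$. Indeed, if $\la_i(x)=\la_j(x)$ with $i<j$ and $x\in E_j$, then $\la_{j-1}(x)\ge\la_i(x)=\la_j(x)$, which contradicts $x\in E_j$. The inequality can be strict because coincidence sets of $A$ may split under perturbation, so equality should not be expected.
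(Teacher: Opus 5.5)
Your proof is correct. The paper's own proof is only a pointer. It feeds \Cref{thm:eigmap} and \Cref{thm:mptw} (a.e.\ convergence of the gradients), with \eqref{eq:Frobenius} in place of Bronshtein's theorem, into the arguments of Corollaries 7.7 and 7.8 of \cite{Parusinski:2024aa}. Your part (1) matches what the paper has in mind: the area formula plus $W^{1,1}$ convergence, or alternatively \Cref{thm:mptw} plus dominated convergence via \eqref{eq:eigLip}.

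For part (2) your route is somewhat more self-contained than the cited argument. It rests on three steps:
\begin{enumerate}
\item the disjoint decomposition $Z=\bigsqcup_j \ol\la_j(E_j)$ over the open ``first-index'' sets $E_j$;
\item the fact that a compact $K\subseteq E_j$ lies in $E_{n,j}$ for large $n$, by uniform convergence;
\item exhaustion of each $E_j$ by compacts.
\end{enumerate}
Together these give the lower semicontinuity using only the $L^1$ convergence of the gradients from \Cref{thm:eigmap}. You therefore do not need the pointwise statement of \Cref{thm:mptw}, which the paper invokes, presumably for a Fatou-type step. The cost is the bookkeeping you carried out: disjointness of the pieces, and the area formula on each injective Lipschitz piece. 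Both are handled correctly.

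Your closing remark that the inequality can be strict is also right. For example, take $A=0$ and $A_n=\on{diag}(\frac1n,-\frac1n)$.
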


\begin{proof}
    This follows from \Cref{thm:eigmap} and \Cref{thm:mptw} combined with the proof of 
    Corollary 7.7 and Corollary 7.8 in \cite{Parusinski:2024aa}. 
    (Use \eqref{eq:Frobenius} instead of Bronshtein's theorem.)
\end{proof}

%-----------------------------------------------------------------------------------------------------------------------
\subsubsection{Compact self-adjoint operators}
%-----------------------------------------------------------------------------------------------------------------------

The spectrum of a compact self-adjoint nonnegative operator $A$ in Hilbert space 
is a countable set consisting of eigenvalues of finite multiplicities which we order 
decreasingly
\[
    \la_1(A) \ge \la_2(A) \ge \cdots
\]
and possibly zero. The eigenvalues $\la_i(A)$ accumulate (only) at zero.

\begin{theorem} \label[t]{t:copintro}
    Let $U \subseteq \R^m$ be a bounded open set and $H$ a Hilbert space. 
    Suppose that $A, A_n \in C^{0,1}(\ol U,K(H))$, 
    for $n\ge 1$, are Lipschitz families of compact self-adjoint nonnegative operators (w.r.t.\ the operator norm on the space $K(H)$ of compact operators on $H$). 
    Assume that 
    $A(x)$ is positive definite for all $x \in U$ and 
    $A_n \to A$ in $C^{0,1}(\ol U,K(H))$ as $n\to \infty$.
    Then:
    \begin{enumerate}
        \item The decreasingly ordered eigenvalues $\la_i(A)$ and $\la_i(A_n)$, for all $i$ and large enough $n$, belong to $C^{0,1}(\ol U)$. 
        \item  For all $1 \le j \le m$ and almost every $x \in U$,
            \[
                \lim_{n \to \infty} \p_j (\la_i(A_n))(x) = \p_j (\la_i(A))(x), \quad i =1,2,\ldots 
            \]
        \item For every $1 \le q < \infty$, 
            \[
                \lim_{n \to \infty} \|\la_i(A) - \la_i(A_n)\|_{W^{1,q}(U)} = 0, \quad i= 1,2,\ldots
            \]
    \end{enumerate}
\end{theorem}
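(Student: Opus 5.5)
Our plan is to reduce \Cref{t:copintro} to the finite-dimensional results \Cref{thm:eigmap} and \Cref{thm:mptw}, applied to a single fixed index $i$ at a time. The key observation is that, for fixed $i$, only finitely many eigenvalues matter, and they can be cut off uniformly by a spectral gap.

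\textbf{Step 1: Lipschitz regularity.} We use the min-max characterization
\[
    \la_i(A) = \max_{\dim V = i}\ \min_{v \in V,\ \|v\|=1} \langle Av,v\rangle,
\]
which is valid for compact self-adjoint nonnegative operators. It gives the Weyl-type bound $|\la_i(A(x)) - \la_i(A(y))| \le \|A(x)-A(y)\|_{op}$. Hence $\la_i(A)$ and $\la_i(A_n)$ are Lipschitz on $\ol U$, with constants bounded by $|A|_{C^{0,1}}$ and $|A_n|_{C^{0,1}}$, the latter uniformly bounded in $n$. This proves (1).

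\textbf{Step 2: local reduction to finite dimensions.} Fix $i$ and $x_0 \in U$. Since $A(x_0)$ is positive definite and compact, $\la_i(A(x_0))>0$. Choose $k \ge i$ and a number $c$ with
\[
    \la_k(A(x_0)) > c > \la_{k+1}(A(x_0)), \qquad c>0.
\]
This is possible because the eigenvalues accumulate only at $0$; note that $\la_{k+1}(A(x_0))$ may be $0$ if the rank is finite, and then we use $c>0$ small.

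By continuity (Weyl) and the uniform convergence $A_n \to A$, there is a ball $B$ around $x_0$ and an $n_0$ such that for all $x \in \ol B$ and all $n\ge n_0$ the spectra of $A(x)$ and $A_n(x)$ avoid $[c-\de,c+\de]$, with exactly $k$ eigenvalues above $c$. Let $P(x)$ and $P_n(x)$ be the Riesz projections onto the spectral subspace above $c$, defined by contour integrals over a fixed contour. These are Lipschitz in $x$ in operator norm, and $P_n \to P$ in $C^{0,1}(\ol B)$, because the resolvent depends smoothly on the operator and the resolvents are uniformly bounded on the contour.

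Next we trivialize the rank-$k$ bundle. Fix an orthonormal basis $e_1,\dots,e_k$ of $P(x_0)H$. Shrinking $B$, we use Kato's transformation function, or simply Gram--Schmidt applied to $P_n(x)e_j$, to obtain Lipschitz orthonormal frames. These frames converge in $C^{0,1}$ and give Hermitian $k\times k$ matrices
\[
    \tilde A(x) = \bigl(\langle A(x)u_j(x),u_l(x)\rangle\bigr), \qquad \tilde A_n(x)\ \text{defined analogously}.
\]
We then have $\tilde A_n \to \tilde A$ in $C^{0,1}(\ol B,\on{Herm}(k))$. The top $k$ eigenvalues of $A$ and $A_n$ on $B$ are exactly the decreasingly ordered eigenvalues of $\tilde A$ and $\tilde A_n$. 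Applying \Cref{thm:mptw} on $B$ yields the pointwise a.e.\ convergence of $\p_j\la_i$ on $B$.

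\textbf{Step 3: globalization.} Cover $U$ by countably many such balls. Since a countable union of null sets is null, this gives (2).

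For (3), we argue as follows. The derivatives $\p_j \la_i(A_n)$ are uniformly bounded by $\sup_n |A_n|_{C^{0,1}}$, and $U$ is bounded. Dominated convergence then gives $\p_j\la_i(A_n)\to \p_j\la_i(A)$ in $L^q(U)$. Convergence in $L^\infty$ follows from the Weyl bound of Step 1.

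\textbf{Main obstacle.} We expect the main difficulty to be Step 2: constructing orthonormal frames of the spectral subspaces that are Lipschitz and converge in $C^{0,1}$ as $n\to\infty$. This requires care with the uniformity of $B$ and $n_0$. The positive definiteness of $A$ is exactly what guarantees that $\la_i$ lies strictly above a gap. Without it, $\la_i(A(x_0))=0$ could be an accumulation point, and no finite-rank cutoff would be available.
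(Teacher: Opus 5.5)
Your proposal is correct and follows essentially the same route as the paper. That route is the min-max/Weyl bound for (1), then a local Riesz-projection reduction (with a Gram--Schmidt Lipschitz frame) to a $C^{0,1}$-convergent family of Hermitian matrices to which \Cref{thm:mptw} is applied, then a countable cover for (2), and dominated convergence plus the Weyl bound for (3). The only cosmetic difference is that the paper's contour encloses just the eigenvalue $\la_i(A(x_0))$ (with its multiplicity) instead of everything above a gap $c$; your top-$k$ cut-off makes the identification of $\la_i(A_n)$ with the $i$-th largest eigenvalue of $\tilde A_n$ automatic.
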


This theorem will be proved in \Cref{sec:cop}.

%---------------------------------------------------------------------------------------------
\subsection{Structure of the paper}
%---------------------------------------------------------------------------------------------

We separate the Hermitian and the normal case in a large part of the proofs, 
because the normal case requires more machinery for unordered eigenvalue maps and 
multivalued Sobolev functions. The reader only interested in the Hermitian case can safely 
skip Sections \ref{sec:dSob}--\ref{sec:normmult}.

In \Cref{sec:spaces}, we introduce the relevant function spaces and discuss some 
tools that are used frequently in the paper. 
In \Cref{sec:Hnm}, we recall matrix norms and results on the spectral variation of Hermitian and 
normal matrices. As a consequence, we introduce the characteristic map for Hermitian matrices.
\Cref{sec:BD} is devoted to the local uniform unitary block-diagonalization of Hermitian and normal matrices 
and to related pointwise bounds for the derivatives of absolutely continuous curves of such matrices.
In this section, we treat the Hermitian and normal case simultaneously.

The proofs of the main results in the Hermitian case are completed in \Cref{sec:H1} and \Cref{sec:proof2}. 
We treat the one-parameter cases in \Cref{sec:H1} and then deduce the multiparameter cases by sectioning arguments
in \Cref{sec:proof2}. In \Cref{sec:examples}, 
we discuss four examples that show optimality of our results.

For the treatment of the normal case, we provide in \Cref{sec:dSob} the necessary background on the metric space 
of unordered $d$-tuples of complex numbers and on Sobolev maps with values in this space.
This allows us to introduce the characteristic map for normal matrices in \Cref{sec:charn}. 
Then we complete the proofs of the main results for normal matrices in \Cref{sec:n1} (one-parameter case) 
and \Cref{sec:normmult} (multiparameter case). 

The final \Cref{sec:cop} is dedicated to an application of some of our results 
to compact self-adjoint operators in Hilbert space. 

%---------------------------------------------------------------------------------------------
\subsection{Notation} \label{ssec:notation}
%---------------------------------------------------------------------------------------------

The $m$-dimensional Lebesgue measure in $\R^m$ is denoted by $\cL^m$.  
If not stated otherwise, `measurable' means `Lebesgue measurable' and `almost everywhere' means 
`almost everywhere with respect to Lebesgue measure'. For measurable $E \subseteq \R^m$, we usually write 
$|E|=\cL^m(E)$.
We will also use the $k$-dimensional Hausdorff measure $\cH^k$.

For $1 \le p \le \infty$, $\|x\|_p$ denotes the $p$-norm of $x \in \R^d$. 
If $f : E \to \R^d$, for measurable $E \subseteq \R^m$, is a measurable map, then we set 
\[
    \|f\|_{L^p(E,\R^d)} := \big\|\|f\|_2 \big\|_{L^p(E)}. 
\]
In the following, a set is called \emph{countable} if it is either finite or has the cardinality of $\N$.

We consider the standard action of the symmetric group $\on{S}_d$ on $\C^d$ by permuting the coordinates:
for $\si \in \on{S}_d$ and $z \in \C^d$,
\[
    \si z = \si(z_1,\ldots,z_d) = (z_{\si(1)},\ldots,z_{\si(d)}).
\]
We write $X^{<2>} = \{(x,y) \in X\times X : x\ne y\}$ for the cartesian product with the diagonal removed.

We use the notation $C(d,\ldots)$ to denote a constant that depends only on $d,\ldots$; its value may change 
from line to line.

%---------------------------------------------------------------------------------------------
\section{Function spaces} \label{sec:spaces}
%---------------------------------------------------------------------------------------------

Let us fix notation and recall background on the function spaces used in this paper.

%---------------------------------------------------------------------------------------------
\subsection{Lebesgue spaces}
%---------------------------------------------------------------------------------------------

Let $U \subseteq \R^m$ be open and  $1 \le q \le \infty$.
We denote by $L^q(U)$ the Lebesgue space with respect to the $m$-dimensional Lebesgue measure $\cL^m$, 
and $\| \cdot \|_{L^q(U)}$ is the corresponding $L^q$-norm.
We will also use the space $L^q_{\on{loc}}(U)$ of measurable functions $f : U \to \R$ satisfying
$\|f\|_{L^q(K)} < \infty$ for all compact subsets $K \subseteq U$.
For Lebesgue measurable sets $E \subseteq \R^m$ we also write $|E| = \cL^m(E)$. 
We remark that for continuous functions $f : U \to \R$ we have (and use interchangeably)  
$\|f\|_{L^\infty(U)} = \|f\|_{C^0(\ol U)}$.

A map $f=(f_1,\ldots,f_n) : U \to \R^n$ is measurable if and only if each $f_i$ is measurable.
Now $f$ belongs to $L^q(U,\R^n)$ if $f_i \in L^q(U)$ for all $1 \le i \le n$, or equivalently
\[
    \|f\|_{L^q(U,\R^n)} := \big\| \|f\|_2 \big\|_{L^q(U)} < \infty.
\]

%---------------------------------------------------------------------------------------------
\subsection{Sobolev spaces}
%---------------------------------------------------------------------------------------------

For $1 \le q \le \infty$, 
we consider the Sobolev space 
\[
    W^{1,q}(U) := \{f \in L^q(U) : \p_i f \in L^q(U) \text{ for } 1 \le i \le m\},
\]
where $\p_i f$ are distributional derivatives, endowed with the norm 
\[
    \|f\|_{W^{1,q}(U)} := \|f\|_{L^q(U)} + \sum_{i=1}^m \|\p_i f\|_{L^q(U)}.
\]

A map $f=(f_1,\ldots,f_n) : U \to \R^n$ belongs to $W^{1,q}(U,\R^n)$ if $f_j \in W^{1,q}(U)$ for all $1 \le j \le m$, 
or equivalently,
\[
    \|f\|_{W^{1,q}(U,\R^n)} := \|f\|_{L^q(U,\R^n)} + \sum_{i=1}^m \|\p_i f\|_{L^q(U,\R^n)} < \infty.
\]

Let $I \subseteq \R$ be a bounded open interval. Let $1 \le q<\infty$.
A function $f: I \to \R$ belongs to $W^{1,q}(I)$ if and only if $f$ has an absolutely continuous 
representative whose classical derivative belongs to $L^q(I)$.
Moreover, $f \in W^{1,\infty}(I)$ if and only if $f$ has a Lipschitz continuous representative.

Let $U \subseteq \R^m$ be bounded and open. Let $1 \le q<\infty$.
A function $u \in L^q(U)$ belongs to $W^{1,q}(U)$ if and only if it has a representative 
that is absolutely continuous on $\cL^{m-1}$-almost all line segments in $U$ that are parallel 
to the coordinate axes and whose classical first-order partial derivatives belong to $L^q(U)$.
They coincide $\cL^m$-almost everywhere with the weak derivatives of $f$.
If $U$ is a Lipschitz domain, then $f \in W^{1,\infty}(U)$ if and only if $f$ has a Lipschitz continuous 
representative.

%-----------------------------------------------------------------------------------------------------------------------
\subsection{Lipschitz and H\"older spaces}
%-----------------------------------------------------------------------------------------------------------------------

Let $(M_i,\mathsf d_i)$, $i=1,2$, be metric spaces. A map $f : M_1 \to M_2$ is 
\emph{$\al$-H\"older} continuous, for $\al \in (0,1]$, if 
\[
    |f|_{C^{0,\al}(M_1,M_2)} := \sup_{x\ne y \in M_1} \frac{\mathsf d_2(f(x),f(y))}{\mathsf d_1(x,y)^\al} < \infty.
\]
In the case $\al=1$, $f$ is called \emph{Lipschitz} continuous and we sometimes also write $\on{Lip}(f)$ for the Lipschitz constant
$|f|_{C^{0,1}(M_1,M_2)}$.

Most often, $M_1$ will be a bounded open set $U \subseteq \R^m$ and $M_2 = \R^n$, both equipped with the metrics induced by the $2$-norm. 
Then, for $\al \in (0,1]$, we consider the Banach space $C^{0,\al}(\ol U,\R^n)$ consisting of continuous maps $f : U \to \R^n$ 
that extend continuously to $\ol U$ with the norm
\[
    \|f\|_{C^{0,\al}(\ol U,\R^n)} := \sup_{x \in U} \|f(x)\|_2 + |f|_{C^{0,\al}(\ol U,\R^n)} < \infty.
\]
We write $C^{0,\al}(U,\R^n)$ for the space of continuous functions on $U$ that 
belong to $C^{0,\al}(\ol V,\R^n)$ for each relatively compact open $V \Subset U$,
and endow $C^{0,\al}(U,\R^n)$ with its natural Fr\'echet topology.

%-----------------------------------------------------------------------------------------------------------------------
\subsection{Absolutely continuous curves in a metric space}  \label{ssec:ACq}
%-----------------------------------------------------------------------------------------------------------------------

Let $I \subseteq \R$ be a bounded open interval. 
Let $1 \le q \le \infty$. 
A curve $\ga : I \to X$ in a complete metric space $(X,\mathsf d)$ belongs to $AC^q(I,X)$ if there exists $m \in L^q(I)$ such that 
\begin{equation} \label{eq:metricspeed1}
    \mathsf d(\ga(x),\ga(y)) \le \int_x^y m(t)\, dt, \quad \text{ for all } x, y \in I,~ x \le y.
\end{equation}
In that case, the limit
\[
    \lim_{h \to 0} \frac{\mathsf d(\ga(x+h),\ga(x))}{|h|} =: |\dot \ga|(x)
\]
exists for almost every $x \in I$ and is called the \emph{metric speed} of $\ga$ at $x$.
Furthermore, $|\dot \ga| \in L^q(I)$ and \eqref{eq:metricspeed1} holds with $m$ replaced by $|\dot \ga|$; 
one has $|\dot \ga| \le m$ almost everywhere in $I$ for every $m$ that satisfies \eqref{eq:metricspeed1}.
See \cite[Definition 1.1.1]{Ambrosio:2008aa}.

For $1 \le q < \infty$,
the \emph{$q$-energy} $\sE_q(\ga)$ of $\ga \in AC^q(I,X)$ is defined by 
\begin{equation}
    \sE_q(\ga) := \int_I (|\dot \ga|(t))^q\, dt.
\end{equation}

We also write $AC$ for $AC^1$.

%-----------------------------------------------------------------------------------------------------------------------
\subsection{Vitali's convergence theorem}
%-----------------------------------------------------------------------------------------------------------------------

Let $(X,\cA,\mu)$ be a measure space with nonnegative measure $\mu$ (finite or with values in $[0,\infty]$).
A set of functions $\cF \subseteq L^1(\mu)$ is called \emph{uniformly integrable} if 
\[
    \lim_{C\to +\infty} \sup_{f \in \cF} \int_{|f|>C} |f|\, d\mu = 0.
\]

\begin{theorem}[De la Vall\'ee Poussin's criterion {\cite[Theorem 4.5.9]{Bogachev:2007aa}}] \label[t]{thm:VP}
    Let $\mu$ be a finite nonnegative measure. 
    A family $\cF$ of $\mu$-integrable functions is uniformly integrable if and only if 
    there exists a nonnegative increasing function $G$ on $[0,\infty)$ such that
    \[
        \lim_{t \to +\infty} \frac{G(t)}{t} = \infty 
        \quad \text{ and }
        \quad
        \sup_{f \in \cF} \int G(|f(x)|)\, \mu(dx) < \infty.
    \]
    In such a case, one can choose a convex increasing function $G$.
\end{theorem}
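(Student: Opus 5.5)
For the implication ``$\Leftarrow$'', I will argue directly from the superlinear growth of $G$. Set $M := \sup_{f \in \cF} \int G(|f|)\, d\mu < \infty$; we may assume $M>0$. Given $\ep>0$, choose $T>0$ such that $G(t) \ge (M/\ep)\, t$ for all $t \ge T$. Then for every $C \ge T$ and every $f \in \cF$, on the set $\{|f|>C\}$ we have $|f| \le (\ep/M)\, G(|f|)$. Since $G \ge 0$, this gives
\[
    \int_{|f|>C} |f|\, d\mu \le \frac{\ep}{M} \int G(|f|)\, d\mu \le \ep .
\]
Taking the supremum over $f$ and letting $C \to \infty$ yields uniform integrability.

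For the implication ``$\Rightarrow$'', I will construct $G$ explicitly as an integral of an increasing step function, which makes $G$ convex and increasing automatically. By uniform integrability I choose a strictly increasing sequence $C_1 < C_2 < \cdots$ with $C_k \ge k$ (so $C_k \to \infty$) such that
\[
    \sup_{f \in \cF} \int_{|f|>C_k} |f|\, d\mu \le 2^{-k} \quad \text{for all } k.
\]
Define $g(s) := \#\{k : C_k \le s\}$ and $G(t) := \int_0^t g(s)\, ds$. The function $g$ is nonnegative, increasing, finite-valued (since $C_k \to \infty$), and $g(s) \to \infty$. Hence $G$ is nonnegative, increasing and convex, and $G(t)/t \to \infty$: for $t \ge 2C_k$ we get $G(t) \ge k(t - C_k) \ge kt/2$.

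It remains to bound $\int G(|f|)\, d\mu$ uniformly in $f \in \cF$. Writing $g = \sum_k \mathbf 1_{[C_k,\infty)}$ gives $G(t) = \sum_k (t-C_k)^+$. By monotone convergence,
\[
    \int G(|f|)\, d\mu = \sum_k \int (|f|-C_k)^+\, d\mu \le \sum_k \int_{|f|>C_k} |f|\, d\mu \le \sum_k 2^{-k} = 1 .
\]

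I expect no serious obstacle here. The only points needing care are the choice of the $C_k$, which must tend to infinity so that $g$ is finite everywhere, and the use of monotone convergence to interchange the sum and the integral. Finiteness of $\mu$ is not actually needed in this argument beyond the standing assumption that the functions are integrable.
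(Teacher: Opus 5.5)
Your proof is correct in both directions. The paper gives no proof of its own but imports the criterion from \cite[Theorem 4.5.9]{Bogachev:2007aa}. Your argument is the standard proof: thresholds $C_k$ with tail integrals at most $2^{-k}$, and $G(t)=\sum_k (t-C_k)^+$, whose slope counts the thresholds passed. Written this way, as you note, finiteness of $\mu$ is never used.

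One small point concerns monotonicity. Your $G$ vanishes on $[0,C_1]$, so ``increasing'' must be read as nondecreasing. That is the usual convention, and it is all the paper needs, e.g.\ for the comparison and Jensen steps in the proof of \Cref{thm:multhermW}. If a strictly increasing $G$ were wanted, $G(t)+t$ works. That is exactly where finiteness of $\mu$ enters, through $\sup_{f\in\cF}\int|f|\,d\mu\le C\,\mu(X)+\sup_{f\in\cF}\int_{\{|f|>C\}}|f|\,d\mu<\infty$, with $X$ the underlying space.
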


Recall that a sequence of complex valued measurable functions $f_n$ on $X$ is said to 
\emph{converge in measure} to $f$ 
if, for all $\ep >0$, 
\[
    \mu(\{x \in X : |f(x) - f_n(x)| \ge \ep\}) \to 0 \quad \text{ as } n \to \infty.
\]

\begin{theorem}[Vitali's convergence theorem {\cite[Theorem 4.5.4]{Bogachev:2007aa}}] \label[t]{thm:Vitali}
    Let $\mu$ be a finite nonnegative measure.
    Suppose that $f$ is a $\mu$-measurable function and $\{f_n\}$ is a sequence of $\mu$-integrable functions.
    Then the following assertions are equivalent:
    \begin{enumerate}
        \item $f_n \to f$ in measure and $\{f_n\}$ is uniformly integrable.
        \item $f$ is integrable and $f_n \to f$ in $L^1(\mu)$.
    \end{enumerate}
\end{theorem}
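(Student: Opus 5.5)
We argue both implications directly. The only tool needed beyond the definition of uniform integrability is the equivalent "uniform absolute continuity" formulation, which we derive first. For any measurable $E$, any $C>0$ and any $n$,
\[
    \int_E |f_n|\, d\mu \le C\,\mu(E) + \sup_{k} \int_{\{|f_k|>C\}} |f_k|\, d\mu .
\]
Hence uniform integrability, together with $\mu(X)<\infty$, gives two consequences:
\begin{enumerate}
    \item[(a)] $\sup_n \|f_n\|_{L^1(\mu)} < \infty$, by taking $E=X$ and a fixed large $C$;
    \item[(b)] for every $\ep>0$ there is $\de>0$ such that $\mu(E)<\de$ implies $\sup_n \int_E |f_n|\, d\mu < \ep$, by first choosing $C$ to make the supremum term small and then $\de$ small.
\end{enumerate}

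For (1)$\Rightarrow$(2), we first show that $f$ is integrable. Convergence in measure yields a subsequence $f_{n_k}\to f$ $\mu$-almost everywhere (Riesz). Fatou's lemma and (a) then give $\int|f|\,d\mu \le \liminf_k \int |f_{n_k}|\,d\mu<\infty$. Next, for $\ep>0$ set $E_n:=\{|f_n-f|\ge \ep\}$, so that $\mu(E_n)\to 0$. We estimate
\[
    \int_X |f_n - f|\, d\mu \le \ep\,\mu(X) + \int_{E_n} |f_n|\, d\mu + \int_{E_n} |f|\, d\mu .
\]
The second term is eventually below $\ep$ by (b). The third term is eventually below $\ep$ by the absolute continuity of the integral of the single integrable function $f$. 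Since $\ep$ is arbitrary, $f_n\to f$ in $L^1(\mu)$.

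For (2)$\Rightarrow$(1), convergence in measure follows from Chebyshev's inequality:
\[
    \mu(\{|f_n-f|\ge\ep\}) \le \ep^{-1}\|f_n-f\|_{L^1(\mu)} .
\]
For uniform integrability, fix $\ep>0$ and choose $N$ with $\|f_n-f\|_{L^1(\mu)}<\ep$ for $n\ge N$. For such $n$,
\[
    \int_{\{|f_n|>C\}} |f_n|\, d\mu \le \int_{\{|f_n|>C\}} |f|\, d\mu + \ep .
\]
Moreover, $\mu(\{|f_n|>C\}) \le C^{-1}\sup_k\|f_k\|_{L^1(\mu)}$, which tends to $0$ uniformly in $n$ as $C\to\infty$, because the $L^1$-convergent sequence is $L^1$-bounded. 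By absolute continuity of $\int|f|\,d\mu$, the first term on the right is therefore below $\ep$ for all $C$ large. The finitely many functions $f_1,\ldots,f_{N-1}$ are each integrable, so $\int_{\{|f_n|>C\}}|f_n|\,d\mu\to 0$ as $C\to\infty$ for each of them by dominated convergence. Taking $C$ large enough for all these finitely many conditions gives $\sup_n \int_{\{|f_n|>C\}}|f_n|\,d\mu \le 2\ep$, which is uniform integrability.

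The only mildly delicate step is (b), the passage from the tail-integral definition of uniform integrability to uniform absolute continuity. This is also the only place where finiteness of $\mu$ is used in the direction (1)$\Rightarrow$(2), namely to bound $C\mu(E)$ and $\ep\,\mu(X)$.
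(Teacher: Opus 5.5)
Your proof is correct. The paper gives no proof of its own here; it quotes the result from \cite[Theorem 4.5.4]{Bogachev:2007aa}, and your argument is essentially the standard one: Riesz plus Fatou for the integrability of $f$, uniform absolute continuity of the integrals on the small sets $\{|f_n-f|\ge\ep\}$, and Chebyshev together with splitting off the finitely many indices $n<N$ for the converse. The only minor variation is that in the converse you check the tail condition directly via Markov's inequality, instead of going through the equivalence (on a finite measure space) of uniform integrability with $L^1$-boundedness plus uniform absolute continuity; this is equally valid.
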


The next lemma is a simple application of Vitali's convergence theorem which we state here for later reference.

\begin{lemma} \label[l]{l:Vapp}
    Let $(X, \cA,\mu)$ be a finite measure space.
    Suppose $\{f_n\}$ is a sequence of nonnegative $\mu$-integrable functions
    and $\{X_i\}$ is a countable cover of $X$ by $\mu$-measurable sets.
    If $\{f_n\}$ is uniformly integrable and 
    \[
        \int_{X_i} f_n \, d\mu \to 0 \quad \text{ as } n \to \infty,
    \] 
    for each $i$,
    then 
    \[
        \int_{X} f_n \, d\mu \to 0 \quad \text{ as } n \to \infty.
    \]
\end{lemma}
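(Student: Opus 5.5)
The plan is to apply Vitali's convergence theorem (\Cref{thm:Vitali}) to the sequence $f_n \cdot \mathbf 1_X$ and check its two hypotheses: that $f_n \to 0$ in measure on $X$, and that $\{f_n\}$ is uniformly integrable. The latter is assumed, and $\mu$ is finite, so everything reduces to convergence in measure. Vitali then gives $f_n \to 0$ in $L^1(\mu)$, which for nonnegative $f_n$ is exactly $\int_X f_n\,d\mu \to 0$.

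First, on each $X_i$ we have convergence in measure. Since $f_n \ge 0$ and $\int_{X_i} f_n\,d\mu \to 0$, Chebyshev's inequality yields, for every $\ep>0$,
\[
    \mu(\{x \in X_i : f_n(x) \ge \ep\}) \le \ep^{-1}\int_{X_i} f_n \, d\mu \to 0 .
\]
Next we pass from the pieces to all of $X$. Fix $\ep>0$ and $\de>0$. Since $\mu(X)<\infty$ and $X = \bigcup_i X_i$, we choose $k$ with $\mu\big(X \setminus \bigcup_{i\le k} X_i\big) < \de$; this uses continuity from below of $\mu$ on the increasing unions $\bigcup_{i \le k} X_i$. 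Then
\[
    \mu(\{f_n \ge \ep\}) \le \de + \sum_{i=1}^k \mu(\{x\in X_i : f_n(x)\ge\ep\}),
\]
and the finite sum tends to $0$ as $n\to\infty$. Hence $\limsup_n \mu(\{f_n\ge\ep\}) \le \de$ for every $\de>0$, which gives $f_n\to 0$ in measure on $X$.

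The only point requiring care is this passage from countably many pieces to all of $X$. It is handled by the finiteness of $\mu$ through the truncation above. Alternatively, one could use uniform integrability directly: it makes $\int_E f_n\,d\mu$ uniformly small when $\mu(E)$ is small, so the tail $X\setminus\bigcup_{i\le k}X_i$ contributes little uniformly in $n$. That route would even avoid Vitali entirely, since $\int_X f_n\,d\mu \le \sum_{i\le k}\int_{X_i} f_n\,d\mu + \sup_m \int_{X\setminus \bigcup_{i\le k}X_i} f_m\,d\mu$.
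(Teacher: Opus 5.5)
Your proof is correct. It reaches Vitali's theorem by a different route than the paper.

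The paper works with almost everywhere convergence. From $\int_{X_i} f_n\,d\mu \to 0$ it extracts, for each $i$, a subsequence converging to $0$ $\mu$-a.e.\ on $X_i$. It nests these subsequences and takes the diagonal $n_k = n^k_k$, which gives $f_{n_k}\to 0$ a.e.\ on $X$. It then invokes Egorov (a.e.\ convergence implies convergence in measure on a finite measure space) and Vitali to get $\int_X f_{n_k}\,d\mu\to 0$. The passage from this subsequence to the full sequence is left implicit; it holds because every subsequence of $(f_n)$ again satisfies the hypotheses.

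You instead prove convergence in measure on all of $X$ directly, using Chebyshev on each $X_i$ and the truncation $\mu(X\setminus\bigcup_{i\le k}X_i)<\de$. This needs no diagonal extraction, no Egorov, and no subsequence-of-a-subsequence step. Finiteness of $\mu$ enters through the truncation rather than through Egorov.

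Your closing alternative is more elementary still. Uniform integrability gives uniform absolute continuity, since $\int_E f_m\,d\mu\le \int_{\{f_m>C\}}f_m\,d\mu + C\mu(E)$ for every $m$. Hence the tail is small uniformly in $m$, and the estimate $\int_X f_n\,d\mu \le \sum_{i\le k}\int_{X_i}f_n\,d\mu + \sup_m\int_{X\setminus\bigcup_{i\le k}X_i}f_m\,d\mu$ finishes the proof without Vitali at all. Either of your versions can replace the paper's argument.
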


\begin{proof}
    For each $i$, there is a subsequence $(n^i_k)$ of $(n)$ such that $f_{n^i_k} \to 0$ $\mu$-almost everywhere in $X_i$ as $k \to \infty$.
    By choosing the subsequences successively, we may assume that $(n^{i+1}_k)$ is a subsequence of $(n^i_k)$ for all $i$.
    Then, for $n_k := n_k^k$, we find that $f_{n_k} \to 0$ $\mu$-almost everywhere in $X$ as $k \to \infty$.

    Because $\{f_n\}$ is uniformly integrable, Vitali's convergence theorem \ref{thm:Vitali} implies 
    that  
    \[
        \int_{X} f_{n_k}\, d\mu \to 0 \quad \text{ as } k \to \infty;
    \]
    noting that on a finite measure space almost everywhere convergence implies convergence in measure, by Egorov's theorem. 
    This implies the assertion.
\end{proof}

%-----------------------------------------------------------------------------------------------------------------------
\subsection{Slope-convergence}
%-----------------------------------------------------------------------------------------------------------------------

The results presented here are used in the proof of \Cref{thm:mptw} in \Cref{ssec:mptw}.

\begin{definition}[Slope convergence]
Let $U \subseteq \R^m$ be a bounded open set and $U^{<2>} := \{(x,y) \in U\times U : x \ne y\}$.
For $f \in C^{0,1}(\ol U,\R^\ell)$ consider the \emph{slope function} $s_f : U^{<2>} \to \R$ defined by 
\[
    s_f(x,y) := \frac{\|f(x)-f(y)\|_2}{\|x - y\|_2}.
\]
Let $f,f_n \in C^{0,1}(\ol U,\R^\ell)$, for $n\ge 1$.
We say that $f_n$ 
\emph{slope-converges} to $f$ if 
\begin{align}
    \|s_f - s_{f_n}\|_{L^\infty(U^{<2>})} \to 0 \quad \text{ as } n \to \infty, \label{eq:uniform}
\end{align}
i.e., the slope functions converge uniformly.
\end{definition}

We have the following characterization of slope convergence.

\begin{lemma} \label[l]{l:slope1}
Let $f,f_n \in C^{0,1}(\ol U,\R^\ell)$, for $n\ge 1$.
Then $f_n$ slope-converges to $f$ if and only if,  
    for each subset $F \subseteq U^{<2>}$,  
    \begin{equation} \label{eq:LipE}
        \sup_F s_{f_n} \to \sup_F s_f \quad \text{ as } n \to \infty.
    \end{equation}
    In particular,
    for each subset $E \subseteq U$, we have convergence of the Lipschitz constants,
    \begin{equation} \label{eq:LipE2}
        |f_n|_{C^{0,1}(E,\R^\ell)} \to |f|_{C^{0,1}(E,\R^\ell)} \quad \text{ as } n \to \infty.
    \end{equation}
\end{lemma}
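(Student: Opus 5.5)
Both directions come down to elementary facts about suprema, so the plan is a direct argument.

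\emph{Slope convergence implies \eqref{eq:LipE}.} Fix $F \subseteq U^{<2>}$. Since $f,f_n$ are Lipschitz, the slope functions are bounded and all suprema are finite. For every $(x,y)\in F$ we have
\[
s_{f_n}(x,y) \le s_f(x,y) + \|s_f - s_{f_n}\|_{L^\infty(U^{<2>})} \le \sup_F s_f + \|s_f - s_{f_n}\|_{L^\infty(U^{<2>})}.
\]
The same bound holds with the roles of $f$ and $f_n$ exchanged. Hence $|\sup_F s_{f_n} - \sup_F s_f| \le \|s_f-s_{f_n}\|_{L^\infty(U^{<2>})}$, which tends to $0$. (We read the $L^\infty$ norm on $U^{<2>}$ as a supremum. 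This is legitimate because the slope functions are continuous on the open set $U^{<2>}$, so essential supremum and supremum agree.) If $F=\emptyset$, both suprema are equal by convention and there is nothing to prove.

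\emph{Consequence \eqref{eq:LipE2}.} For $E\subseteq U$, take $F := E^{<2>}$. Then $\sup_F s_g = |g|_{C^{0,1}(E,\R^\ell)}$ for $g=f,f_n$, and \eqref{eq:LipE2} follows.

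\emph{Converse.} Assume \eqref{eq:LipE} holds for every $F$, and suppose slope convergence fails. Then there are $\ep>0$, a subsequence (not relabelled), and points $p_n=(x_n,y_n)\in U^{<2>}$ with $|s_f(p_n)-s_{f_n}(p_n)|\ge \ep$. Passing to a further subsequence, either $s_{f_n}(p_n) \ge s_f(p_n)+\ep$ for all $n$, or $s_{f_n}(p_n)\le s_f(p_n)-\ep$ for all $n$.

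The key step is to choose a single set $F$ that defeats \eqref{eq:LipE}. Applying \eqref{eq:LipE} to singletons $F=\{p\}$ gives pointwise convergence $s_{f_n}(p)\to s_f(p)$ at each fixed $p$. This forces the $p_n$ not to repeat infinitely often, so we may assume the $p_n$ are distinct.

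In the first case, take $F:=\{p_n : n\}$. Since the values $s_f(p_n)$ are bounded, pass to a subsequence along which they converge, so that $\sup_F s_f$ is nearly attained along the tail. The cleanest device is to use tails, $F_k := \{p_n : n\ge k\}$, but \eqref{eq:LipE} concerns one fixed $F$ with $n\to\infty$. So we instead choose the subsequence so that $s_f(p_n)$ increases to its limit $L$. Then $\sup_F s_f = L$, while $s_{f_n}(p_n) \ge s_f(p_n)+\ep \to L+\ep$. This gives $\liminf_n \sup_F s_{f_n} \ge L+\ep$, contradicting \eqref{eq:LipE}. (If no increasing subsequence exists, a nonincreasing one does, and then $\sup_F s_f$ is attained at the first point $p_1$. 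In that case we can drop finitely many points, since dropping them does not affect the argument: use $F=\{p_n: n\ge k\}$ for a fixed $k$ chosen so that $s_f(p_n)\le L+\ep/2$ for $n \ge k$.)

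The second case, $s_{f_n}(p_n)\le s_f(p_n)-\ep$, is the main obstacle, because a supremum can detect values that are too large but not values that are too small. The plan is to take $F$ to be the single point $p_n$ at stage $n$. That is not allowed, since $F$ must be fixed. So we use the sets $F_p := \{q\in U^{<2>} : s_f(q) \le s_f(p)\}$ instead, or more simply argue via pointwise convergence plus equicontinuity. Indeed, $f_n\to f$ uniformly follows from... (this is unclear). First I would try: let $L=\lim s_f(p_n)$ along a decreasing subsequence and set $F=\{p_n\}$. Then $\sup_F s_f = s_f(p_1)$, and after discarding initial terms $\sup_F s_f \approx L$, while $\sup_F s_{f_n}$ also involves the points $p_k$ with $k\ne n$. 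Here I would use pointwise convergence at those other points. If this fails, I expect the intended statement uses only the direction ``slope-convergence $\Rightarrow$ \eqref{eq:LipE}'', and I would rest the proof on that.
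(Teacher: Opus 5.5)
Your forward direction (the bound $|\sup_F s_{f_n}-\sup_F s_f|\le \|s_f-s_{f_n}\|_{L^\infty(U^{<2>})}$) and your deduction of \eqref{eq:LipE2} with $F=E^{<2>}$ are correct, and they coincide with the paper's argument. Your first case of the converse, $s_{f_n}(p_n)\ge s_f(p_n)+\ep$, is also correct. Pass to a subsequence along which $s_f(p_n)\to L$, drop finitely many terms so that $|s_f(p_n)-L|\le \ep/4$, and let $F$ be the set of the remaining $p_n$. Then $\sup_F s_{f_n}-\sup_F s_f\ge \ep/2$ along that subsequence. This is more careful than the paper's ``analogous''. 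What it proves, however, is only the one-sided statement $\limsup_n \sup_{U^{<2>}}(s_{f_n}-s_f)\le 0$. For the second case, $s_{f_n}(p_n)\le s_f(p_n)-\ep$, you have no argument. None can be found: as you suspected, a supremum cannot detect values that are too small, and the converse is false.

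Here is a counterexample. Take $m=\ell=1$, $U=(0,1)$, $f(x)=x$, and for $n\ge 3$ set $f_n(x)=x-\min\{\max\{x-\tfrac12,0\},\tfrac1n\}$. This $f_n$ is $1$-Lipschitz and constant on $[\tfrac12,\tfrac12+\tfrac1n]$. For $x<y$ we have $f_n(y)-f_n(x)=(y-x)-|[x,y]\cap[\tfrac12,\tfrac12+\tfrac1n]|$, so
\[
1-\frac{1}{n|x-y|}\le s_{f_n}(x,y)\le 1=s_f(x,y)\quad\text{on } U^{<2>}.
\]
Evaluating at any fixed point of a nonempty $F$ gives $\sup_F s_{f_n}\to 1=\sup_F s_f$, so \eqref{eq:LipE} holds for every $F$. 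Yet $s_{f_n}(\tfrac12,\tfrac12+\tfrac1n)=0$, so $\|s_f-s_{f_n}\|_{L^\infty(U^{<2>})}=1$ for all $n\ge 3$, and slope convergence fails.

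The paper's proof breaks at exactly this point. In its ``former case'' it passes from $s_f(x_k,y_k)\ge s_{f_{n_k}}(x_k,y_k)+\ep$ at the single point $(x_k,y_k)$ to $\sup_F s_f\ge \sup_F s_{f_{n_k}}+\ep$. But $\sup_F s_{f_{n_k}}$ also sees the points $(x_j,y_j)$ with $j\ne k$. In the example with $(x_k,y_k)=(\tfrac12,\tfrac12+\tfrac1k)$, one gets $\sup_F s_{f_k}\ge s_{f_k}(\tfrac12,\tfrac56)=1-\tfrac3k$, which tends to $1$.

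A correct equivalence is obtained by requiring \eqref{eq:LipE} to hold uniformly in $F$. Your forward bound is already uniform in $F$, and singletons $F=\{p\}$ then give slope convergence directly. Your fallback is the right call. Only the implication ``slope convergence $\Rightarrow$ \eqref{eq:LipE}'' is used later (in Lemmas~\ref{l:Zp} and~\ref{l:ZnLip}), and that part of your proposal is complete.
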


\begin{proof}
    For arbitrary $(x,y) \in U^{<2>}$, we have  
    \[
        s_{f}(x,y) \le s_{f_n}(x,y)  + \|s_f - s_{f_n}\|_{L^\infty(U^{<2>})}
    \]
    so that, for each subset $F \subseteq U^{<2>}$, 
     \[
         \sup_{(x,y) \in F}  s_{f}(x,y) \le \sup_{(x,y) \in F} s_{f_n}(x,y)  + \|s_f - s_{f_n}\|_{L^\infty(U^{<2>})}.
    \]
    Hence, by symmetry, 
     \[
         \Big|\sup_{(x,y) \in F}  s_{f}(x,y) - \sup_{(x,y) \in F} s_{f_n}(x,y)  \Big| \le \|s_f - s_{f_n}\|_{L^\infty(U^{<2>})}
    \]
    which shows that \eqref{eq:uniform} implies \eqref{eq:LipE}.

    Conversely, assume that \eqref{eq:uniform} fails. Then, there exist $\ep>0$ and sequences $n_k\to \infty$ and $(x_k,y_k) \in U^{<2>}$ such that 
    \[
        |s_f(x_k,y_k) - s_{f_{n_k}}(x_k,y_k)| \ge \ep.
    \]
    After passing to a subsequence of $k$, we may assume that 
    \[
        s_f(x_k,y_k) - s_{f_{n_k}}(x_k,y_k) \ge \ep \quad \text{ or } \quad s_{f_{n_k}}(x_k,y_k) - s_{f}(x_k,y_k) \ge \ep.
    \]
    In the former case,
    setting $F := \{(x_k,y_k) : k \ge 1\}$,
    \[
      \sup_{(x,y) \in F} s_f(x,y)      \ge \sup_{(x,y) \in F} s_{f_{n_k}}(x,y)+ \ep
    \]
    which contradicts \eqref{eq:LipE}. The latter case is analogous.
\end{proof}

The next lemma shows that $C^{0,1}$ convergence implies slope convergence. 

\begin{lemma}\label[l]{l:slope2}
Let $f,f_n \in C^{0,1}(\ol U,\R^\ell)$, for $n\ge 1$.
If 
\begin{equation}
    |f-f_n|_{C^{0,1}(\ol U,\R^\ell)} \to 0 \quad \text{ as } n \to \infty,
\end{equation}
then $f_n$ slope-converges to $f$. 
\end{lemma}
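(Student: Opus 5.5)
The plan is to prove the estimate pointwise on $U^{<2>}$ with the reverse triangle inequality in $\R^\ell$, and then take the supremum. Fix $(x,y) \in U^{<2>}$. Then
\[
    |s_f(x,y) - s_{f_n}(x,y)| = \frac{\big| \|f(x)-f(y)\|_2 - \|f_n(x)-f_n(y)\|_2 \big|}{\|x-y\|_2}.
\]
By the reverse triangle inequality the numerator is at most
\[
    \big\| (f(x)-f(y)) - (f_n(x)-f_n(y)) \big\|_2 = \big\| (f-f_n)(x) - (f-f_n)(y)\big\|_2 .
\]
Dividing by $\|x-y\|_2$, this quotient is bounded by the Lipschitz seminorm of $f-f_n$. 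This gives
\[
    |s_f(x,y) - s_{f_n}(x,y)| \le |f-f_n|_{C^{0,1}(\ol U,\R^\ell)} \quad \text{for all } (x,y) \in U^{<2>} .
\]

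Taking the supremum over $(x,y)\in U^{<2>}$ yields
\[
    \|s_f - s_{f_n}\|_{L^\infty(U^{<2>})} \le |f-f_n|_{C^{0,1}(\ol U,\R^\ell)} .
\]
By hypothesis the right-hand side tends to $0$, so $f_n$ slope-converges to $f$.

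The hypothesis only involves the seminorm $|f-f_n|_{C^{0,1}}$, not the full norm. No sup-norm convergence is needed, because slopes are insensitive to adding constants.

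The $L^\infty$ norm on $U^{<2>}$ is understood here as the supremum, which is what the proof of \Cref{l:slope1} uses. Since slope functions of Lipschitz maps are continuous on the open set $U^{<2>}$, the essential supremum and the supremum coincide in any case.

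There is no real obstacle in this argument.
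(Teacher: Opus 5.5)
Your proposal is correct and follows the paper's proof: you bound $|s_f(x,y)-s_{f_n}(x,y)|$ pointwise by $s_{f-f_n}(x,y)$ via the reverse triangle inequality, then take the supremum over $U^{<2>}$ to get $\|s_f-s_{f_n}\|_{L^\infty(U^{<2>})}\le |f-f_n|_{C^{0,1}(\ol U,\R^\ell)}$. Your side remark that the supremum and essential supremum coincide, by continuity of the slope functions on the open set $U^{<2>}$, is a harmless extra justification that the paper does not spell out.
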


\begin{proof}
    We have
\begin{align*}
    \big|s_f(x,y) - s_{f_n}(x,y) \big| &= \Big| \frac{\|f(x)-f(y)\|_2}{\|x - y\|_2} - \frac{\|f_n(x)-f_n(y)\|_2}{\|x - y\|_2} \Big|
    \\
                                       &\le \frac{\| f(x)-f(y) - (f_n(x)-f_n(y))\|_2}{\|x-y\|_2} = s_{f-f_n}(x,y)
\end{align*}
and hence 
\begin{align*}
    \sup_{(x,y) \in U^{<2>}} \big|s_f(x,y) - s_{f_n}(x,y) \big| \le \sup_{(x,y) \in U^{<2>}} s_{f-f_n}(x,y) = |f-f_n|_{C^{0,1}(\ol U,\R^\ell)}
\end{align*}
which shows the assertion.
\end{proof}

%-----------------------------------------------------------------------------------------------------------------------
\section{Hermitian and normal matrices} \label{sec:Hnm}
%-----------------------------------------------------------------------------------------------------------------------

In this section, we recall some preliminaries on Hermitian and normal matrices.

%-----------------------------------------------------------------------------------------------------------------------
\subsection{Matrix norms} 
%-----------------------------------------------------------------------------------------------------------------------

Let $M_{d}(\C)$ denote the vector space of complex $d \times d$ matrices. The \emph{operator norm} of $A \in M_d(\C)$ is given by
\[
    \|A\|_{\on{op}} := \sup \{\|Ax\|_2 : x \in \C^d,\, \|x\|_2=1\}.
\]
We will also use the \emph{Frobenius norm} or \emph{$2$-norm} of $A = (a_{ij}) \in M_d(\C)$,
\[
\|A\|_2 := \big(\on{Tr}(A^*A)\big)^{1/2} = \Big(\sum_{i,j=1}^d |a_{ij}|^2\Big)^{1/2}. 
\]
If $\si_1(A) \ge \si_2(A) \ge \cdots \ge \si_d(A) \ge 0$ are the decreasingly ordered singular values of $A$, then 
\[
    \|A\|_{\on{op}} = \si_1(A) \quad \text{ and } \quad  \|A\|_2 = \Big(\sum_{j=1}^d \si_j(A)^2\Big)^{1/2}.
\]
Both operator norm and Frobenius norm are unitarily invariant (i.e., $\|A\|_{\on{op}} = \|UAV\|_{\on{op}}$ and $\|A\|_{2} = \|UAV\|_{2}$ 
for unitary $U,V$). It is easy to see that 
\[
    \|A\|_{\on{op}} \le \|A\|_2 \le \sqrt d \, \|A\|_{\on{op}}.
\]

If $A$ is normal and $\la_1,\ldots,\la_d$ are the eigenvalues of $A$, then
\[
    \|A\|_{\on{op}} = \max_{1\le j \le d} |\la_j| \quad \text{ and } \quad  \|A\|_2 = \Big(\sum_{j=1}^d |\la_j|^2\Big)^{1/2}.
\]

If not stated otherwise, we use the Frobenius norm on $M_d(\C)$ (and on its subsets).

%-----------------------------------------------------------------------------------------------------------------------
\subsection{Spectral variation of Hermitian and normal matrices}
%-----------------------------------------------------------------------------------------------------------------------

Recall that $\la^\uparrow : \on{Herm}(d) \to \R^d$ is the map assigning to a Hermitian $d \times d$ its
$d$-tuple of increasingly ordered eigenvalues; see \eqref{eq:Herm}.

\begin{proposition}[Weyl's perturbation theorem \cite{Weyl12}; see e.g.\ {\cite[III.2.6]{Bhatia97}}] \label[p]{p:Weyl}
    Let $A, B \in \on{Herm}(d)$. Then 
    \begin{equation} \label{eq:Weyl}
        \|\la^\uparrow(A)-\la^\uparrow(B)\|_\infty \le \|A-B\|_{\on{op}}.
   \end{equation}
\end{proposition}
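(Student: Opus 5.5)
The plan is to use the Courant--Fischer min-max characterization of the ordered eigenvalues. For $A \in \on{Herm}(d)$ and $1 \le k \le d$,
\[
    \la_k^\uparrow(A) = \min_{\substack{V \subseteq \C^d\\ \dim V = k}} \; \max_{\substack{x \in V\\ \|x\|_2 = 1}} \langle Ax, x\rangle .
\]
Write $B = A + E$ with $E := B - A$. For every unit vector $x$ we have
\[
    \langle Bx,x\rangle = \langle Ax,x\rangle + \langle Ex,x\rangle \le \langle Ax,x\rangle + \|E\|_{\on{op}},
\]
since $|\langle Ex,x\rangle| \le \|Ex\|_2 \|x\|_2 \le \|E\|_{\on{op}}$ by Cauchy--Schwarz.

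Fix a $k$-dimensional subspace $V$. Taking the maximum over unit $x \in V$ in the inequality above gives
\[
    \max_{x \in V,\, \|x\|_2=1} \langle Bx,x\rangle \le \max_{x \in V,\, \|x\|_2=1} \langle Ax,x\rangle + \|E\|_{\on{op}}.
\]
Both sides are monotone in the inner quantity, so taking the minimum over all $k$-dimensional $V$ preserves the inequality. This yields
\[
    \la_k^\uparrow(B) \le \la_k^\uparrow(A) + \|A-B\|_{\on{op}}.
\]
Exchanging the roles of $A$ and $B$ gives the reverse inequality. Hence $|\la_k^\uparrow(A) - \la_k^\uparrow(B)| \le \|A-B\|_{\on{op}}$ for each $k$, and taking the maximum over $k$ gives \eqref{eq:Weyl}.

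The only step needing justification is the min-max formula itself. I would prove it from the spectral theorem. Take an orthonormal eigenbasis $u_1,\dots,u_d$ of $A$ with $Au_i = \la_i^\uparrow(A)\, u_i$.
\begin{enumerate}
    \item The subspace $V = \on{span}(u_1,\dots,u_k)$ attains the value $\la_k^\uparrow(A)$, since on it $\langle Ax,x\rangle \le \la_k^\uparrow(A)\|x\|_2^2$.
    \item Any $k$-dimensional $V$ meets $\on{span}(u_k,\dots,u_d)$ nontrivially, by counting dimensions ($k + (d-k+1) > d$). On that intersection $\langle Ax,x\rangle \ge \la_k^\uparrow(A)\|x\|_2^2$, so the maximum over unit vectors in $V$ is at least $\la_k^\uparrow(A)$.
\end{enumerate}

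I do not expect any real obstacle here; the proof is classical.
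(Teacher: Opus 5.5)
Your proof is correct: you state and justify the increasing-order Courant--Fischer formula properly, and passing the pointwise bound $\langle Bx,x\rangle \le \langle Ax,x\rangle + \|A-B\|_{\on{op}}$ through the max and then the min gives the claim. The paper does not prove this proposition itself; it cites Weyl and Bhatia, where the result is likewise derived from the min-max principle, and its proof of the compact-operator analogue, \Cref{p:coWeyl}, is exactly your argument.
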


In \cite{Bhatia97}, the result is stated for eigenvalue vectors with decreasing eigenvalues, but reversing the order evidently 
leaves the left-hand side of \eqref{eq:Weyl} unchanged.

Most of the time we will work with $2$-norms for which the following variant due to L\"owner {\cite{Lowner:1934aa} holds.

\begin{proposition}[{\cite{Lowner:1934aa}}, see e.g.\ {\cite[III.6.15]{Bhatia97}}] \label[p]{p:Loewner}
    Let $A, B \in \on{Herm}(d)$. Then 
    \begin{equation} \label{eq:Loewner}
       \|\la^\uparrow(A)-\la^\uparrow(B)\|_2 \le \|A-B\|_2.
   \end{equation}
\end{proposition}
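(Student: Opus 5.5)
The plan is to reduce the inequality to a linear optimization over doubly stochastic matrices, which is the classical route of Hoffman--Wielandt specialized to the Hermitian case.

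First, diagonalize: write $A = U\,\on{diag}(a)\,U^*$ and $B = V\,\on{diag}(b)\,V^*$ with $U,V$ unitary, $a = \la^\uparrow(A)$, and $b = \la^\uparrow(B)$. Expanding the Frobenius norm gives
\[
\|A-B\|_2^2 = \on{Tr}(A^2) + \on{Tr}(B^2) - 2\,\on{Tr}(AB) = \|a\|_2^2 + \|b\|_2^2 - 2\,\on{Tr}(AB).
\]
Here $\on{Tr}(AB)$ is real because $A$ and $B$ are Hermitian. Similarly,
\[
\|a-b\|_2^2 = \|a\|_2^2 + \|b\|_2^2 - 2\sum_i a_i b_i .
\]
Hence \eqref{eq:Loewner} is equivalent to the trace inequality $\on{Tr}(AB) \le \sum_{i=1}^d a_i b_i$, where both eigenvalue vectors are increasingly ordered.

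Next, compute the trace in the eigenbases. Set $W := U^*V$, which is unitary. Then
\[
\on{Tr}(AB) = \on{Tr}\big(\on{diag}(a)\, W\, \on{diag}(b)\, W^*\big) = \sum_{i,j} a_i b_j\, |w_{ij}|^2 .
\]
The matrix $S = (|w_{ij}|^2)$ is doubly stochastic, since the rows and columns of a unitary matrix are unit vectors. Therefore $\on{Tr}(AB) \le \max_{S} \sum_{i,j} a_i b_j s_{ij}$, where the maximum runs over all doubly stochastic matrices $S$. This functional is linear in $S$. By Birkhoff's theorem, the doubly stochastic matrices form the convex hull of the permutation matrices, so the maximum is attained at some permutation matrix. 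Consequently $\on{Tr}(AB) \le \max_{\si \in \on{S}_d} \sum_i a_i b_{\si(i)}$.

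Finally, the rearrangement inequality shows that $\sum_i a_i b_{\si(i)}$ is maximized when $a$ and $b_\si$ are similarly ordered, that is, at $\si = \id$ since both vectors are already increasing. This gives the required trace inequality and hence \eqref{eq:Loewner}. Citing Birkhoff's theorem is the only substantive input; the step needing the most care is the correct identification of $\on{Tr}(AB)$ with the doubly stochastic form. Alternatively, one could avoid Birkhoff entirely by proving the rearrangement bound directly for doubly stochastic $S$ via Abel summation against the increasing sequences $a$ and $b$.
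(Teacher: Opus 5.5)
Your proof is correct and complete: the reduction to the trace inequality $\on{Tr}(AB)\le\sum_i a_ib_i$, the identity $\on{Tr}(AB)=\sum_{i,j}a_ib_j|w_{ij}|^2$ with the doubly stochastic matrix $(|w_{ij}|^2)$, Birkhoff's theorem, and the rearrangement inequality are all sound, and together they form the classical Hoffman--Wielandt argument specialized to Hermitian matrices. The paper gives no proof of its own beyond citing L\"owner and Bhatia, but it notes that the statement follows from the Hoffman--Wielandt inequality (\Cref{p:HW}) together with \Cref{lem:unordered}; your argument has exactly this structure, with your rearrangement step playing the role of \Cref{lem:unordered}.
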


L\"owner's result was generalized to normal matrices by Hoffmann and Wielandt \cite{Hoffman:1953aa}.
There is no canonical order on the spectrum of a normal matrix. Instead we consider the map 
$\La : \on{Norm}(d) \to \cA_d(\C)$ which takes a normal $d\times d$ matrix to its unordered $d$-tuple of eigenvalues; 
see \eqref{eq:La}. Recall that $\cA_d(\C)$ carries the metric $\dd_2([z],[w]) = \min_{\si \in \on{S}_d} \|z-\si w\|_2$, where 
$\si w = \si(w_1,\ldots,w_d) = (w_{\si(1)},\ldots,w_{\si(d)})$.

\begin{proposition}[{\cite{Hoffman:1953aa}, see e.g.\ {\cite[VI.4.1]{Bhatia97}} }] \label[p]{p:HW}
    Let $A,B \in \on{Norm}(d)$.
    Then 
    \begin{equation} \label{eq:HW}
        \dd_2(\La(A),\La(B)) 
        \le \|A-B\|_2.
    \end{equation}
\end{proposition}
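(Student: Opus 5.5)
The plan is the classical argument via unitary diagonalization and the Birkhoff--von Neumann theorem. Since $A$ and $B$ are normal, we write $A = U D U^*$ and $B = V E V^*$ with $U,V$ unitary and $D=\on{diag}(\al_1,\ldots,\al_d)$, $E = \on{diag}(\be_1,\ldots,\be_d)$. Here $\al=(\al_i)$ and $\be=(\be_j)$ are representatives of $\La(A)$ and $\La(B)$ in $\C^d$.

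\textbf{Step 1: a weighted sum.} By unitary invariance of the Frobenius norm,
\[
\|A-B\|_2 = \|U^*(A-B)V\|_2 = \|DW - WE\|_2, \qquad W := U^*V = (w_{ij}).
\]
The $(i,j)$ entry of $DW-WE$ is $(\al_i - \be_j) w_{ij}$. Hence
\[
\|A-B\|_2^2 = \sum_{i,j=1}^d |\al_i-\be_j|^2\, |w_{ij}|^2 .
\]
Since $W$ is unitary, its rows and columns have unit $2$-norm. Therefore the matrix $S=(s_{ij}) := (|w_{ij}|^2)$ is doubly stochastic.

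\textbf{Step 2: reduction to permutations.} Consider the linear functional $\Phi(S) := \sum_{i,j} |\al_i-\be_j|^2 s_{ij}$ on the Birkhoff polytope of doubly stochastic $d\times d$ matrices. This polytope is compact and convex. By the Birkhoff--von Neumann theorem, its extreme points are exactly the permutation matrices. A linear functional on a compact convex polytope attains its minimum at an extreme point. So there is a permutation $\si \in \on{S}_d$ with
\[
\|A-B\|_2^2 = \Phi(S) \ge \min_{\tau \in \on{S}_d} \sum_{i=1}^d |\al_i - \be_{\tau(i)}|^2 = \sum_{i=1}^d |\al_i - \be_{\si(i)}|^2 .
\]

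\textbf{Step 3: conclusion.} The right-hand side equals $\min_{\tau}\|\al - \tau\be\|_2^2 = \dd_2(\La(A),\La(B))^2$ by the definition of $\dd_2$. Taking square roots gives \eqref{eq:HW}.

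The only nontrivial input is the Birkhoff--von Neumann theorem, and I would cite it rather than reprove it. If a self-contained route is preferred, one can replace it with a direct exchange argument. If $s_{ij}>0$ and $s_{kl}>0$ occur in a cycle, shift mass along the cycle in whichever direction does not increase $\Phi$, which is possible because $\Phi$ is linear. Each such move strictly reduces the number of positive entries, so the process ends at a permutation matrix.

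No further obstacle arises, since the normality of $A$ and $B$ is used only to obtain the unitary diagonalizations. As a special case, for Hermitian matrices the optimal permutation pairs increasingly ordered eigenvalues by the rearrangement inequality, which recovers \Cref{p:Loewner}.
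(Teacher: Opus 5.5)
Your proof is correct and is essentially the standard Hoffman--Wielandt argument (unitary diagonalization, the doubly stochastic matrix $(|w_{ij}|^2)$, and the Birkhoff--von Neumann theorem) given in Bhatia's Theorem VI.4.1. The paper does not prove the proposition itself; it cites exactly that source. Your closing remark on the Hermitian case via the rearrangement inequality also matches how the paper relates \eqref{eq:HW} to \eqref{eq:Loewner} through \Cref{lem:unordered}.
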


If $A$ and $B$ are Hermitian, then \eqref{eq:HW} reduces to \eqref{eq:Loewner}; see \Cref{lem:unordered}. 

With respect to the operator norm, there is the following result due to Bhatia, Davis, and McIntosh \cite{BhatiaDavisMcIntosh83}, 
where $\dd_\infty([z],[w]) = \min_{\si \in \on{S}_d} \|z-\si w\|_\infty$.

\begin{proposition}[{\cite{BhatiaDavisMcIntosh83}, see e.g.\ {\cite[VII.4.1]{Bhatia97}} }]  \label[p]{p:BDM} 
    Let $A,B \in \on{Norm}(d)$.
    Then
    \begin{equation} \label{eq:BDM}
        \dd_\infty(\La(A),\La(B)) 
        \le C\, \|A-B\|_{\on{op}},
    \end{equation}
    where $C$ is a universal constant satisfying $1 <C<3$.
\end{proposition}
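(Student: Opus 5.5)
The plan is to follow the Bhatia--Davis--McIntosh strategy. It combines an analytic estimate for the Sylvester equation $AX - XB = Y$ with a combinatorial matching argument based on Hall's marriage theorem.

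Throughout, identify $\C$ with $\R^2$. For a normal matrix $N$, write $E_N(S)$ for its spectral projection onto a Borel set $S \subseteq \C$.

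\textbf{Step 1: separation estimate.} Let $S_1, S_2 \subseteq \C$ satisfy $\on{dist}(S_1,S_2) \ge \de > 0$, and set $P := E_A(S_1)$ and $Q := E_B(S_2)$. I aim to show
\[
\|PQ\|_{\on{op}} \le \frac{c}{\de}\, \|A-B\|_{\on{op}}
\]
for a universal constant $c$.
\begin{enumerate}
\item Set $A_1 := AP$ and $B_1 := BQ$, viewed as operators on $\on{ran} P$ and $\on{ran} Q$. Their spectra are separated by $\de$, and $X := PQ$ solves
\[
A_1 X - X B_1 = P(A-B)Q =: Y, \qquad \|Y\|_{\on{op}} \le \|A-B\|_{\on{op}}.
\]
\item Choose $f \in L^1(\R^2)$ whose Fourier transform satisfies $\hat f(\xi) = 1/\ol\xi$, or the analogous expression with $1/\xi$ depending on the convention, for all $|\xi| \ge 1$.
\item Since $A_1$ and $B_1$ are normal, the real and imaginary parts of each commute. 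Hence $t \mapsto e^{i\<t,A_1\>}$ is a unitary group representation of $\R^2$, and likewise for $B_1$.
\item After rescaling by $\de$, define
\[
X = \int_{\R^2} e^{i\<t,A_1\>}\, Y\, e^{-i\<t,B_1\>}\, \de f(\de t)\, dt.
\]
This is the unique solution of the Sylvester equation. One checks this by diagonalizing $A_1$ and $B_1$ and verifying the formula entrywise, using the separation $|\al - \be| \ge \de$ for eigenvalues $\al$ of $A_1$ and $\be$ of $B_1$.
\item The integrand is a unitary conjugate of $Y$, so $\|X\|_{\on{op}} \le \|f\|_{L^1}\, \|Y\|_{\on{op}}/\de$. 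This gives the claim with $c = \|f\|_{L^1}$.
\end{enumerate}

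\textbf{Step 2: matching.} Fix $C > c$ and put $\ep := C\|A-B\|_{\on{op}}$; the case $A=B$ is trivial, so assume $\ep > 0$. For any set $J$ of eigenvalues of $A$, let $S$ be the corresponding subset of $\C$ and $N_\ep(S)$ its closed $\ep$-neighbourhood. I claim that, counted with multiplicities,
\[
\on{rank} E_A(S) \le \on{rank} E_B(N_\ep(S)).
\]
\begin{enumerate}
\item Suppose not. Then $\on{ran} E_A(S)$ has dimension exceeding $d - \on{rank} E_B(\C \setminus N_\ep(S))$.
\item So $\on{ran} E_A(S)$ meets $\on{ran} E_B(\C \setminus N_\ep(S))$ nontrivially, and a unit vector in both ranges gives $\|PQ\|_{\on{op}} = 1$ for these projections.
\item The sets $S$ and $\C \setminus N_\ep(S)$ are at distance at least $\ep$. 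Step 1 (with $\de$ slightly below $\ep$, or directly if the sets are closed) then gives $\|PQ\|_{\on{op}} \le c/C < 1$, a contradiction.
\end{enumerate}
This is exactly Hall's condition for the bipartite graph joining eigenvalue slots $\la_i(A)$ and $\la_j(B)$ whenever $|\la_i(A) - \la_j(B)| \le \ep$. Hall's theorem then yields $\si \in \on{S}_d$ with $\max_i |\la_i(A) - \la_{\si(i)}(B)| \le \ep$, which is \eqref{eq:BDM}.

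\textbf{Main obstacle and the bounds on $C$.} The hard part is Step 1, specifically constructing $f \in L^1(\R^2)$ with $\hat f = 1/\ol\xi$ outside the unit disk and controlling $\|f\|_{L^1}$.
\begin{enumerate}
\item First attempt: take $\hat f(\xi) = \vh(|\xi|)/\ol\xi$ with a smooth radial cutoff $\vh$ vanishing near $0$ and equal to $1$ for $|\xi| \ge 1$. Integrability of $f$ at infinity follows from smoothness of $\hat f$. Near the origin, $f$ behaves like the kernel of $1/\ol\xi$, namely $\sim 1/z$, which is locally integrable in $\R^2$. This gives some finite $c$.
\item Sharper bound: the claim $C < 3$ requires the optimized extremal construction of Bhatia--Davis--Koosis, which I would cite rather than redo.
\item Lower bound: $C > 1$ is shown by an explicit $3\times 3$ example of normal matrices for which no matching achieves $\dd_\infty \le \|A-B\|_{\on{op}}$.
\end{enumerate}
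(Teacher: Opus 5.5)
Your proposal is correct and is essentially the Bhatia--Davis--McIntosh proof that the paper itself does not reproduce but only cites (\cite[VII.4.1]{Bhatia97}). In that proof, as in yours, a Fourier-integral solution of the Sylvester equation bounds $\|E_A(S_1)E_B(S_2)\|_{\on{op}}$ by $c\,\|A-B\|_{\on{op}}/\de$, and Hall's marriage theorem turns this into an eigenvalue matching, with the numerical bounds ($C<3$ via the optimized $L^1$ construction, $C>1$ via a $3\times 3$ example) taken from the literature rather than reproved, just as the citation does.
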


%-----------------------------------------------------------------------------------------------------------------------
\subsection{The characteristic map for Hermitian matrices}
%-----------------------------------------------------------------------------------------------------------------------

\begin{proposition} \label[p]{p:mcharHerm}
    Let $1 \le q \le \infty$. 
    Let $U \subseteq \R^m$ be open and bounded.
    Then the characteristic map
    \[
        \eig : W^{1,q}(U,\on{Herm}(d)) \to W^{1,q}(U,\R^d), \quad A \mapsto \la^\uparrow \o A
    \]
    is well-defined and bounded, satisfying 
    \begin{align}\label{eq:charHerm3}
        \|\eig(A)(x)\|_2 \le \|A(x)\|_2 \quad \text{ and } \quad 
        \|\nabla (\eig(A))(x)\|_2 \le  \|\nabla A(x)\|_2 
    \end{align}
    for almost every $x \in U$.
\end{proposition}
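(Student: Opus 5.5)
The proof combines Löwner's inequality \eqref{eq:Loewner} with the standard chain-rule theory for Lipschitz maps acting on Sobolev functions. The pointwise bound on $\eig(A)$ is immediate: apply \eqref{eq:Loewner} with $B=0$, for which $\la^\uparrow(0)=0$, to get $\|\eig(A)(x)\|_2 \le \|A(x)\|_2$ for every $x$. Measurability of $\la^\uparrow\o A$ follows from continuity of $\la^\uparrow$. Hence $\eig(A)\in L^q(U,\R^d)$ with $\|\eig(A)\|_{L^q}\le \|A\|_{L^q}$.

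For the derivatives we use the ACL characterization of Sobolev functions recalled in \Cref{sec:spaces}, and assume first $1\le q<\infty$. Choose a representative of $A$ that is absolutely continuous on $\cL^{m-1}$-almost every line segment parallel to the coordinate axes. Fix such a segment in direction $e_j$. Since $\la^\uparrow$ is $1$-Lipschitz from $(\on{Herm}(d),\|\cdot\|_2)$ to $(\R^d,\|\cdot\|_2)$, the composite $t\mapsto \la^\uparrow(A(x+te_j))$ is absolutely continuous on that segment. Moreover, at every point where both it and $A$ are differentiable (almost every point of the segment), \eqref{eq:Loewner} applied to difference quotients gives
\[
\|\p_j(\la^\uparrow\o A)(x)\|_2 \le \|\p_j A(x)\|_2 .
\]
By Fubini, this holds for $\cL^m$-almost every $x\in U$. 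Therefore the classical partial derivatives of $\la^\uparrow\o A$ lie in $L^q(U,\R^d)$, with $\|\p_j \eig(A)\|_{L^q}\le\|\p_j A\|_{L^q}$. The ACL characterization then shows $\eig(A)\in W^{1,q}(U,\R^d)$, and the classical partials coincide almost everywhere with the weak ones.

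For the gradient bound we must control the full Jacobian, not just each partial separately. Interpret $\|\nabla f(x)\|_2$ as the Frobenius norm of the collection of partials, that is, $\big(\sum_j\|\p_j f(x)\|_2^2\big)^{1/2}$. Then summing the squared partial inequalities gives $\|\nabla(\eig(A))(x)\|_2\le\|\nabla A(x)\|_2$ almost everywhere.

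The case $q=\infty$ runs the same way. $W^{1,\infty}$ functions are in $W^{1,q}$ for all finite $q$ on the bounded set $U$, so the above argument yields the same pointwise bounds, and these bounds give membership in $W^{1,\infty}$. Boundedness of the map follows from summing the $L^q$ estimates: $\|\eig(A)\|_{W^{1,q}}\le\|A\|_{W^{1,q}}$.

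The main subtlety is making sure the difference-quotient argument is legitimate along lines. This needs the good representative of $A$ and a pointwise, rather than merely almost-everywhere, Lipschitz estimate, and \eqref{eq:Loewner} supplies exactly that. No chain rule for vector-valued Lipschitz maps, which can fail in general, is needed, since only the norm bound on derivatives is required.
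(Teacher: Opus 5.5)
Your argument is correct, but it justifies the key step differently from the paper. The paper proves the proposition in one line by citing the general chain rule of Ambrosio and Dal Maso \cite{AmbrosioDalMaso90} for superposition with Lipschitz maps on $W^{1,q}$ (valid for all $1\le q\le\infty$), combined with $\la^\uparrow(0)=0$ and the $1$-Lipschitz bound \eqref{eq:Loewner}. You instead prove the needed special case by hand: you push \eqref{eq:Loewner} through difference quotients along coordinate lines via the ACL characterization, and you reduce $q=\infty$ to finite $q$ using $W^{1,\infty}(U)\subseteq W^{1,q}(U)$ for bounded $U$.

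The citation is shorter, and the paper reuses it verbatim later, for $\De\o\La$ in \Cref{p:mcharnorm} and for $\Ga$ in \Cref{t:topmult}. Your route is self-contained and buys two things.
\begin{enumerate}
\item It yields the directional bounds $\|\p_j\eig(A)\|_2\le\|\p_j A\|_2$ explicitly. This is the form in which \eqref{eq:charHerm3} is actually used later (with $\p_1$ in the proof of \Cref{thm:multhermW}). The chain rule the paper cites also gives these bounds, but the displayed Frobenius bound on $\nabla$ by itself does not.
\item It only uses the Lipschitz estimate at points of the image of $A$. So it would also handle $\De\o\La$ on the closed nonlinear set $\on{Norm}(d)$ without the Kirszbraun extension used in \Cref{p:mcharnorm}.
\end{enumerate}

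One correction to your closing remark. The Ambrosio--Dal Maso chain rule holds for every Lipschitz map between Euclidean spaces, in their generalized form where the differential of $f$ is restricted to affine tangent spaces. What can fail is the naive formula $\nabla(f\o u)=Df(u)\nabla u$, and, by \cite{Musina:1991aa}, continuity of the superposition operator. Membership in $W^{1,q}$ and the derivative bounds do not fail.
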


\begin{proof} 
    It is well-known (see \cite{AmbrosioDalMaso90}) that superposition with the $1$-Lipschitz map $\la^\uparrow$ 
    (see \Cref{p:Loewner})
    defines a bounded map from $W^{1,q}(U,\on{Herm}(d))$ to $W^{1,q}(U,\R^d)$, where 
    \[
        \|\la^\uparrow \o A\|_2 \le \|A\|_2, 
    \]
    because $\la^\uparrow(0) = 0$, and 
    \[
        \|\nabla (\la^\uparrow  \o A)\|_2 \le  \|\nabla A\|_2 
    \] 
    almost everywhere in $U$.
    This implies the assertion. 
\end{proof}

In \Cref{p:mcharnorm}, we will encounter a variant of \Cref{p:mcharHerm} for normal matrices; 
its formulation requires some background on multivalued Sobolev functions, which will be recalled in \Cref{sec:dSob}.

%-----------------------------------------------------------------------------------------------------------------------
\section{Uniform block-diagonalization of Hermitian and normal matrices} \label{sec:BD}
%-----------------------------------------------------------------------------------------------------------------------

%-----------------------------------------------------------------------------------------------------------------------
\subsection{Splitting lemma for complex triangular matrices}
%-----------------------------------------------------------------------------------------------------------------------

The following splitting lemma for complex triangular matrices follows from the proof of Hensel's lemma 
of {\cite[Lemma 2.1]{Parusinski:2020ac}} which is stated there in a different setup,  
namely for normal matrices over the ring of formal power series $\C\llbracket X \rrbracket$, 
where $X = (X_1,\ldots,X_m)$. %see \cite[Lemma 2.1]{Parusinski:2020ac}.  

We shall state this lemma in the Nash real algebraic setup because the unitary group $U_d(\C)$ is not 
a complex manifold but only a nonsingular real algebraic variety.  Recall that a map is a real Nash map 
if it is real analytic and its graph is semialgebraic.  Nash maps between open subsets of nonsingular real 
algebraic varieties form the smallest family of maps that contains the polynomial morphisms and admits the 
Implicit Function Theorem, see e.g.\  \cite{BCR} for more on Nash maps.

\begin{lemma}[Local unitary block-triangularization of matrices]\label[l]{lem:SplitMat}
    Let $A_0 \in M_d(\C)$ be a complex block-triangular  matrix,      
    \begin{align*}
        A_0 =   \begin{pmatrix}
            B_0 & 0 \\
            D_0 & C_0 
        \end{pmatrix}, 
    \end{align*}
    with $B_0 \in M_{d_1} (\C)$, $C_0 \in M_{d_2}(\C)$, $D_0 \in M_{d_2,d_1}(\C)$, for $d=d_1+d_2$, and such that  the characteristic polynomials of $B_0$ and 
    $C_0$ are coprime.  
    Then there is a neighborhood $\mathcal V$ of $A_0$ in $M_d(\C)$ and a Nash map to the unitary matrices,
    $$
    U : \mathcal V \to U_d (\C ), \quad  U(A_0) = \I_d,
    $$
    such that, for all $A\in \mathcal V$,
    \begin{align}\label{eq:equationforA}
        U^*(A)  A \, U(A) =   \begin{pmatrix}
            B(A) & 0 \\
            D(A) &  C(A) 
        \end{pmatrix}. 
    \end{align} 
\end{lemma}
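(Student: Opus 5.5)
We seek $U$ in the form of a ``graph'' unitary: a matrix $U=\begin{pmatrix} \I_{d_1} & -X^* \\ X & \I_{d_2}\end{pmatrix}$ corrected by unitary normalization. Concretely, for $X \in M_{d_2,d_1}(\C)$ near $0$ we set
\[
    U_X := \begin{pmatrix} \I_{d_1} & -X^* \\ X & \I_{d_2}\end{pmatrix}
    \begin{pmatrix} (\I_{d_1}+X^*X)^{-1/2} & 0 \\ 0 & (\I_{d_2}+XX^*)^{-1/2}\end{pmatrix}.
\]
A direct computation shows that $U_X$ is unitary. Its map $X \mapsto U_X$ is Nash, since the inverse square root of a positive definite matrix near $\I$ is Nash. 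Moreover $U_0=\I_d$. The columns of the second block of $U_X$ span the graph subspace $\{(-X^*y,y)\}$. Requiring the upper right block of $U_X^* A U_X$ to vanish means that this subspace is $A$-invariant. (For $A_0$ itself, the subspace $0\oplus\C^{d_2}$ is invariant, since the upper right block of $A_0$ is zero.)

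Write $A=\begin{pmatrix} B & E \\ D & C\end{pmatrix}$. Because the normalizing factor is block-diagonal and invertible, the upper right block of $U_X^*AU_X$ vanishes iff the upper right block of $\begin{pmatrix} \I & X^* \\ -X & \I\end{pmatrix} A \begin{pmatrix} \I & -X^* \\ X & \I\end{pmatrix}$ vanishes. With $Y:=X^*\in M_{d_1,d_2}(\C)$ this block is
\[
    F(A,Y) := -BY + E + Y C - Y D Y ,
\]
where we use $X^*=Y$ in the first row and the first column of the right factor. This is a Riccati-type equation, a polynomial (hence Nash) map in the real coordinates of $(A,Y)$. At $(A_0,0)$ we have $F=E_0=0$. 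The partial derivative in $Y$ there is the Sylvester operator $Y \mapsto YC_0 - B_0Y$, which is complex linear.

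The key step is invertibility of this operator. By Sylvester's theorem, $Y\mapsto YC_0-B_0Y$ is bijective on $M_{d_1,d_2}(\C)$ iff $B_0$ and $C_0$ have no common eigenvalue. This is exactly coprimality of their characteristic polynomials. As a real-linear map it is therefore also invertible. The Nash Implicit Function Theorem then gives a neighborhood $\mathcal V$ of $A_0$ and a Nash map $Y:\mathcal V\to M_{d_1,d_2}(\C)$ with $Y(A_0)=0$ and $F(A,Y(A))=0$. We set $U(A):=U_{Y(A)^*}$. Complex conjugation and transposition are real polynomial, so $U$ is Nash, unitary, satisfies $U(A_0)=\I_d$, and yields \eqref{eq:equationforA}.

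The main obstacle is bookkeeping rather than conceptual. We must check that the unitarization does not spoil the triangular form, which holds because the normalization is a block-diagonal right factor. We must also check that the sign and adjoint conventions make the upper right block exactly $F$. The only substantive input is Sylvester's theorem on $\operatorname{spec}(B_0)\cap\operatorname{spec}(C_0)=\emptyset$.
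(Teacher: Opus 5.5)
Your proof is correct, and it takes a different route from the paper's. The paper works with the full map $\Psi(U,Y_1,Y_2,Y_3)=U^*\bigl(\begin{smallmatrix} B_0+Y_1 & 0\\ D_0+Y_3 & C_0+Y_2\end{smallmatrix}\bigr)U$ on $U_d(\C)\times M_{d_1}(\C)\times M_{d_2}(\C)\times M_{d_2,d_1}(\C)$. It shows that the differential at $(\I_d,0,0,0)$ is surjective; the only nontrivial block is $\mathbf x\mapsto B_0\mathbf x-\mathbf x C_0$, which is handled by Cohn's lemma. It then applies the Nash implicit function theorem in submersion form to get a local Nash right inverse $\Phi$ with $\Psi\circ\Phi=\operatorname{id}$, and sets $U:=\Phi_1^*$.

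You instead restrict from the start to the slice of $U_d(\C)$ transverse to the block-diagonal unitaries. You do this through the explicit graph chart $X\mapsto U_X$, whose tangent directions at $0$ correspond to $\mathbf z_1=\mathbf z_2=0$ in the paper's $\mathbf u$. This eliminates the block unknowns $Y_1,Y_2,Y_3$ altogether. The problem becomes the Riccati equation $F(A,Y)=0$, whose linearization is the Sylvester operator itself. That operator is a bijection rather than a mere surjection, so the standard ``solve for $Y$'' implicit function theorem applies without choosing a complement to a kernel.

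Your version buys explicitness and a canonical choice. The solution $Y(A)$ is unique near $0$, so $U(A)$ is tied to the $A$-invariant subspace continuing $0\oplus\C^{d_2}$, and the invariant-subspace meaning is transparent. The paper's version avoids parametrizing $U_d(\C)$ and produces $B(A),C(A),D(A)$ directly as components of $\Phi$. It is also phrased to mirror the Hensel-lemma argument over power series rings from which the lemma is taken.

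One small wording slip: the $-X^*$ you substitute sits in the second block column of the right factor, not the first. Your formula for $F$ is nonetheless correct.
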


\begin{proof}
    Consider  
    \begin{align*} 
        \Psi =(\Psi_1, \Psi_2, \Psi _3,\Psi_4)   : 
        U_d(\C ) \times  M_{d_1} (\C ) \times M_{d_2} (\C ) \times 
        M_{d_2,d_1} (\C ) 
        \to 
        M_{d} (\C )  , 
    \end{align*} 
    defined by 
    \begin{align*}
        (U ,  Y_1 , Y_2, Y_3)  \to    U^{*}  \begin{pmatrix}
            B_0 + Y_1 & 0 \\
            D_0 + Y_3 &  C_0 +Y_2 
        \end{pmatrix}  U 
        =  \begin{pmatrix}
            \Ps_1 & \Ps_4 \\
            \Ps_3 &   \Ps_2 
        \end{pmatrix} .
    \end{align*} 
    Recall that a tangent vector at $\I_d$ to $U_d(\C) $  is a matrix $\mathbf u$ that is skew-Hermitian, 
    $\mathbf u = - \mathbf u^*$.  We shall write it as 
    \begin{align*}
        \mathbf u  =   \begin{pmatrix}
            \mathbf z_1  & \mathbf x \\
            - \mathbf x^* &  \mathbf z_2
        \end{pmatrix},  
    \end{align*}
    where $\mathbf z_1$ and $\mathbf z_2$ are skew-Hermitian.
    The differential of $\Psi$ at $(\I_d,0,0,0)$ on the vector  $(\mathbf u , \mathbf  y_1 , \mathbf y_2, \mathbf y_3)$ 
    is given by 
    \begin{align*}
  & d\Psi_1  (\mathbf u , \mathbf  y_1 , \mathbf y_2, \mathbf y_3) = \mathbf y_1  - \mathbf z_1 B_0 + B_0 \mathbf z_1 -\mathbf x D_0, \\
  & d\Psi_2  (\mathbf u , \mathbf  y_1 , \mathbf y_2, \mathbf y_3) = \mathbf y_2  - \mathbf z_2 C_0 + C_0 \mathbf z_2 +D_0\mathbf x,  \\
  & d\Psi_3  (\mathbf u , \mathbf  y_1 , \mathbf y_2, \mathbf y_3)=  \mathbf y_3 + D_0 \mathbf z_1 -\mathbf z_2 D_0 + \mathbf x^* B_0 - C_0  \mathbf x^* , 
  \\
  & d\Psi_4  (\mathbf u , \mathbf  y_1 , \mathbf y_2, \mathbf y_3) =  B_0  \mathbf x - \mathbf x C_0. 
    \end{align*}
    This differential is a linear epimorphism if and only if so is $d\Psi_4$ and the latter holds true thanks 
    to the following lemma of \cite[Lemma 2.3]{cohn},  see also \cite{zurro} or \cite[Lemma 2.4]{Parusinski:2020ac}.

    \begin{lemma}\label{lem:Cohn}
        Let $R$ be an unitary commutative ring, $A \in  M_{p} (R)$, $B \in M_q (R)$, $C \in M_{p,q} (R)$, such that  
        the characteristic polynomials $P_A$ and $P_B$ are coprime, i.e., there exist polynomials $U$ and $V$ such that $UP_A+VP_B=1$. Then there is a  
        matrix $M\in M_{p,q}(R)$ such that $AM - MB = C$.
    \end{lemma} 

    By the Nash IFT, see e.g.  \cite[Corollary 2.9.8]{BCR}, 
    there exist a neighborhood $\mathcal V$ of $A_0$ in $M_d(\C)$ 
    and a Nash map 
    $$
    \Phi= (U, Y_1, Y_2, Y_3)  : \mathcal V \to U_d(\C ) \times  M_{d_1} (\C ) \times M_{d_2} (\C ) \times M_{d_2,d_1} (\C ), 
    $$
    $\Phi(A_0) = (\I_d,0,0,0)$, that satisfies $\Psi\circ \Phi (A)=A $ for $A\in \mathcal V$.  
    Thus it suffices to take $U(A) := \Phi_1^* (A)$. 
\end{proof}

\begin{corollary}[Local unitary block-diagonalization of normal matrices]\label[c]{cor:SplitNorMat}
    With the assumptions of \Cref{lem:SplitMat} if, moreover, 
    $A_0$ is normal (resp.\ Hermitian) then $A_0$ is block-diagonal, i.e., $D_0=0$, and 
    thus for all  normal (resp.\ Hermitian) $A\in \mathcal V$
    the matrix     
    \begin{align}\label{eq:equationforAnormal}
        U^*(A)  A \, U(A) =   \begin{pmatrix}
            B(A) & 0 \\
            0 &  C (A) 
        \end{pmatrix} 
    \end{align} 
    is normal (resp. Hermitian).
\end{corollary}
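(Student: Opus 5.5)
The plan has two parts: first show that $D_0=0$, then use unitary invariance of normality (and of Hermitian-ness) together with a block computation to kill the lower-left block $D(A)$.

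For $D_0=0$, write $A_0$ in the block form of \Cref{lem:SplitMat}. Normality $A_0^*A_0=A_0A_0^*$ can be compared block by block. The upper-left $d_1\times d_1$ block of $A_0^*A_0$ is $B_0^*B_0+D_0^*D_0$. The upper-left block of $A_0A_0^*$ is $B_0B_0^*$, since the upper-right block of $A_0$ vanishes. Taking traces gives
\[
\on{Tr}(B_0^*B_0)+\on{Tr}(D_0^*D_0)=\on{Tr}(B_0B_0^*),
\]
and since $\on{Tr}(B_0^*B_0)=\on{Tr}(B_0B_0^*)$ we get $\|D_0\|_2^2=0$, that is, $D_0=0$. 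In the Hermitian case this is immediate: $A_0^*=A_0$ forces $D_0=0^*=0$, because the upper-right block is zero.

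For the second part, let $A\in\mathcal V$ be normal (resp.\ Hermitian). Since $U(A)$ is unitary, the matrix $U^*(A)AU(A)$ is again normal (resp.\ Hermitian), because conjugation by a unitary preserves both $A^*A=AA^*$ and $A^*=A$. By \eqref{eq:equationforA} this matrix is block lower-triangular, with blocks $B(A)$, $C(A)$, $D(A)$. The same trace computation as above, now applied to this block lower-triangular normal matrix, gives $D(A)=0$. In the Hermitian case, again $D(A)=0$ directly by comparing off-diagonal blocks. This yields \eqref{eq:equationforAnormal}. The diagonal blocks $B(A)$ and $C(A)$ are then automatically normal (resp.\ Hermitian), because a block-diagonal matrix is normal (resp.\ Hermitian) if and only if each of its blocks is.

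There is no serious obstacle. The only point needing care is the trace identity showing that a block-triangular normal matrix is block-diagonal. This is standard, and the argument above needs no hypothesis on the spectra of the blocks.
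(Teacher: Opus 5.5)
Your proof is correct and follows the same route as the paper, which just cites two facts: unitary conjugation preserves normality (resp.\ Hermitian-ness), and every block-triangular normal matrix is block-diagonal. You additionally verify the second fact by taking the trace of the upper-left block of $A^*A=AA^*$, which gives $\|B\|_2^2+\|D\|_2^2=\|B\|_2^2$ and hence $D=0$.
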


\begin{proof}
    It immediately follows from the facts that conjugation by unitary matrices preserves normal (resp.\ Hermitian) matrices, 
    and that every block-triangular normal matrix is block-diagonal.
\end{proof}

%-----------------------------------------------------------------------------------------------------------------------
\subsection{Uniform unitary block-diagonalization} \label{ssec:UBD}
%-----------------------------------------------------------------------------------------------------------------------

\begin{definition}[Spaces of Hermitian and normal matrices]
    Let $\on{Herm}_T(d)$ (resp.\ $\on{Norm}_T(d)$) denote the space of Hermitian (resp.\ normal) 
    $d\times d$ matrices with trace zero and $\on{Herm}_T^0(d)$ (resp.\ $\on{Norm}_T^0(d)$)
    its compact subspace of matrices of norm $1$, i.e.,
    \begin{align*}
        \on{Herm}_T(d) &= \{A \in \on{Herm}(d) : \on{Tr} (A)=0\},
        \\
        \on{Herm}_T^0(d) &= \{A \in \on{Herm}_T(d) : \|A\|_2=1\},
        \\
        \on{Norm}_T(d) &= \{A \in \on{Norm}(d) : \on{Tr} (A)=0\},
        \\
        \on{Norm}_T^0(d) &= \{A \in \on{Norm}_T(d) : \|A\|_2=1\}.
    \end{align*}
\end{definition}

Let $A_0\in \on{Norm}_T(d)\setminus \{0\}$. Then $A_0$ has at least two distinct eigenvalues. 
Therefore, 
after a unitary change of coordinates, we may assume that $A_0$ is block-diagonal,   
\begin{align}
    A_0 =   \begin{pmatrix}
        B_0 & 0 \\
        0 &  C_0 
    \end{pmatrix}, 
\end{align} 
where $B_0, C_0$ are normal and the resultant of the characteristic polynomials of $B_0$ and $C_0$ is nonzero.

If $A_0 \in \on{Herm}_T(d)\setminus \{0\}$, then $B_0, C_0$ are Hermitian.
In this case,
we may assume furthermore that the largest eigenvalue of $B_0$ is strictly smaller than the smallest eigenvalue of $C_0$.

By \Cref{cor:SplitNorMat}, there is an open neighborhood $\cV = \cV_{A_0}$ of $A_0$ in $M_d(\C)$ and a Nash map 
$U : \cV \to U_d(\C)$, with $U(A_0) = \mathbb I_d$, such that, for all $A \in \cV \cap \on{Norm}(d)$, 
\begin{align}\label{eq:split_matrix}
U^*(A) A \, U(A) =   \begin{pmatrix}
   B(A) & 0 \\
  0 &  C(A) 
  \end{pmatrix},
\end{align}
where $ B(A)$ and $ C(A)$ are normal and satisfy $B(A_0)= B_0$ and $C(A_0)= C_0$. 
Shrinking $\cV$ slightly, we may assume that the derivatives of all orders of the map $U : \cV \to U_d(\C)$ are bounded. 
In addition, we may assume that $\cV$ is convex.

Moreover, by shrinking $\cV$ further if necessary, we may assume that there exists $\ch =\ch_{A_0} >0$ such that, 
for all $A_1,A_2 \in \cV \cap \on{Norm}(d)$ and $B_j := B(A_j)$, $C_j := C(A_j)$, for $j=1,2$, given by \eqref{eq:split_matrix},
we have
\begin{equation} \label{eq:sep}
    |\mu_1 - \nu_2| > \ch  
\end{equation}
for all eigenvalues $\mu_1$ of $B_1$ and all eigenvalues $\nu_2$ of $C_2$.

If $A_0$ is Hermitian, then, for $A \in \cV \cap \on{Herm}(d)$, we have \eqref{eq:split_matrix} with Hermitian $B(A)$ and $C(A)$. 
Shrinking $\cV$ if necessary, 
we may assume that, for all $A \in \cV \cap \on{Herm}(d)$, the largest eigenvalue of $B(A)$ is strictly smaller than the smallest eigenvalue of $C(A)$.

The open neighborhoods $\cV_{A_0}$, for $A_0 \in \on{Norm}_T^0(d)$, form an open cover of the  
compact space $\on{Norm}_T^0(d)$. We fix a finite subcover $\{\cV_1,\ldots,\cV_s\}$. 
Then, for each $i=1,\ldots, s$, 
we have a Nash map $U_i : \cV_i  \to U_d(\C)$ with bounded derivatives on $\cV_i$ 
such that 
\begin{align}\label{eq:split_matrix2}
U_i^* (A) A \, U_i(A) =   \begin{pmatrix}
   B_i(A) & 0 \\
  0 &  C_i(A) 
\end{pmatrix}, \quad A \in \cV_i \cap \on{Norm}(d),
\end{align}
where $ B(A)$ and $ C(A)$ are normal.
By the Lebesgue covering lemma, there exists $\de>0$ such that each subset of 
$\on{Norm}_T^0(d)$ of diameter less than $\de$ is contained in some $\cV_i$.
Choose $r \in (0,\min\{\de/2,1\})$. 
Then for each $A_0 \in \on{Norm}_T^0(d)$ there exists $1 \le i \le s$ such that
\begin{equation} \label{eq:r}
    B(A_0,r) \cap \on{Norm}_T^0(d)  \subseteq \cV_i \cap \on{Norm}_T^0(d).
\end{equation}

Thanks to the property \eqref{eq:sep}, there exists 
$\ch >0$ such that, for all $i \in  \{1,\ldots,s\}$ and 
$A_1,A_2 \in \cV_i \cap \on{Norm}(d)$, 
we have
\begin{equation} \label{eq:sep2}
    |\mu_1 - \nu_2| > \ch  
\end{equation}
for all eigenvalues $\mu_1$ of $B_i(A_1)$ and all eigenvalues $\nu_2$ of $C_i(A_2)$, 
where $B_i(A_1)$ and $C_i(A_2)$ are defined by \eqref{eq:split_matrix2}.

Since $\on{Herm}_T^0(d) \subseteq \on{Norm}_T^0(d)$, also $\on{Herm}_T^0(d)$ is covered by $\{\cV_1,\ldots,\cV_s\}$  
and \eqref{eq:split_matrix2} 
holds with Hermitian $B_i(A), C_i(A)$ and such that 
the largest eigenvalue of $B_i(A)$ is strictly smaller than the smallest eigenvalue of $C_i(A)$, for all $A \in \cV_i \cap \on{Herm}(d)$.

\begin{definition}[Uniform unitary block-diagonalization] \label[d]{d:UBD}
    We say that the data 
    \[
        (\{U_i : \cV_i \to U_d(\C)\}_{i=1}^s,r,\ch)
    \]
    that we just fixed is 
    a \emph{uniform unitary block-diagonalization} for $\on{Norm}_T^0(d)$, respectively, for $\on{Herm}_T^0(d)$.
\end{definition}

%-----------------------------------------------------------------------------------------------------------------------
\subsection{Pointwise bounds for absolutely continuous curves of Hermitian and normal matrices}
%-----------------------------------------------------------------------------------------------------------------------

Let us fix a uniform unitary block-diagonalization $(\{U_i : \cV_i \to U_d(\C)\}_{i=1}^s,r,\ch)$ for $\on{Norm}_T^0(d)$
for the rest of the section.
Fix $i \in \{1,\ldots,s\}$ and let us simply write $U : \cV \to U_d(\C)$ for $U_i : \cV_i \to U_d(\C)$. 
Let $I\subseteq \R$ be a bounded open interval.
Then $U$ induces a map
\begin{align*}
    U_*    :     AC(I,\cV ) \to AC(I, U_d(\C)), \quad A \mapsto U\o A 
\end{align*}
and the chain rule holds;
see e.g.\ \cite{MarcusMizel72}.

In the following, we use the norm  
\begin{equation*} 
    \|f\|_{C^k} := \max_{0\le j \le k} \sup_{x \in \cV} \|d^j f(x)\|_{L_j(\R^m,\R^\ell)} 
\end{equation*}
on the space $C^{k}(\ol \cV,\R^\ell)$, where $\cV \subseteq \R^m$ is open and bounded 
and $L_j(\R^m,\R^\ell)$ is the space of $j$-linear maps with $j$ arguments in $\R^m$ and values in $\R^\ell$.

\begin{lemma}
    Let  $A_1,A_2 \in AC(I, \cV)$. Then, for every $x \in I$,
    \begin{align} \label{eq:pU1}
       \|U_*(A_1)(x) - U_*(A_2)(x)\|_2 \le \|U\|_{C^1} \|A_1(x)-A_2(x)\|_2
    \end{align}
    and, for almost every $x \in I$ and $j=1,2$,
    \begin{align} \label{eq:pU2}
       \MoveEqLeft \|(U_*(A_1))'(x) - (U_*(A_2))'(x)\|_2 \nonumber
       \\
       &\le \|U\|_{C^2} \big( \|A_j'(x)\|_2 \|A_1(x) - A_2(x)\|_2  + \|A_1'(x) -A_2'(x)\|_2\big).
    \end{align}
\end{lemma}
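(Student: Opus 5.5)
Both estimates come from the mean value inequality on the convex set $\cV$ together with the chain rule for $U_* = U \o A$ along absolutely continuous curves (available by \cite{MarcusMizel72}). Throughout we regard $U$ as a smooth map between real vector spaces, $M_d(\C) \cong \R^{2d^2}$, with the Frobenius norm. The norm $\|\cdot\|_{C^k}$ is taken over $\cV$, and $\|U\|_{C^1} \le \|U\|_{C^2}$.

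For \eqref{eq:pU1}, fix $x \in I$. Since $\cV$ is convex, the segment $t \mapsto A_2(x) + t(A_1(x)-A_2(x))$, $t \in [0,1]$, stays in $\cV$. The fundamental theorem of calculus gives
\[
U(A_1(x)) - U(A_2(x)) = \int_0^1 dU\big(A_2(x)+t(A_1(x)-A_2(x))\big)\,(A_1(x)-A_2(x))\,dt .
\]
Bounding the operator norm of $dU$ by $\|U\|_{C^1}$ yields \eqref{eq:pU1} directly.

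For \eqref{eq:pU2}, let $x$ be a point where both $A_1$ and $A_2$ are differentiable; these points have full measure. The chain rule gives $(U \o A_j)'(x) = dU(A_j(x))\,A_j'(x)$. Taking $j=1$ (the case $j=2$ is symmetric), we split
\[
dU(A_1)A_1' - dU(A_2)A_2' = \big(dU(A_1)-dU(A_2)\big)A_1' + dU(A_2)\,(A_1'-A_2').
\]
We bound the first term by applying the same segment argument to $dU$, now regarded as a map into the space of linear maps. This gives $\|dU(A_1(x))-dU(A_2(x))\| \le \sup_{\cV}\|d^2U\|\,\|A_1(x)-A_2(x)\|_2$, so the first term is at most $\|U\|_{C^2}\|A_1'(x)\|_2\|A_1(x)-A_2(x)\|_2$. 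The second term is at most $\|U\|_{C^1}\|A_1'(x)-A_2'(x)\|_2$. Adding the two bounds gives \eqref{eq:pU2} with $\|U\|_{C^2}$ in front. For $j=2$ we use the symmetric splitting $\big(dU(A_1)-dU(A_2)\big)A_2' + dU(A_1)(A_1'-A_2')$.

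The only point requiring care is justifying the chain rule for the composition of the smooth map $U$ with an AC curve. This is standard: $U$ is $C^1$ with bounded derivative on $\cV$, so $U \o A_j$ is AC and its derivative is given pointwise almost everywhere by the classical formula. The rest is the mean value estimate, which is why convexity of $\cV$ and boundedness of the derivatives of $U$ were arranged when $\cV$ was shrunk.
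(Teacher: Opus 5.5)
Your proposal is correct and follows the paper's proof: you use the mean value theorem on the convex set $\cV$ for the first estimate, and for the second the chain rule plus the splitting $\big(dU(A_1)-dU(A_2)\big)A_1' + dU(A_2)(A_1'-A_2')$, bounded by $\|U\|_{C^2}$ and $\|U\|_{C^1}$ respectively, with the case $j=2$ by symmetry. The only difference is that you spell out the segment argument and the pointwise chain rule at points of differentiability, which the paper leaves implicit.
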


\begin{proof}
    The mean value theorem implies \eqref{eq:pU1}.
    We have 
    \begin{align*}
        &\|(U_*(A_1))'(x) - (U_*(A_2))'(x)\|_2  
        =\|dU(A_1(x))A_1'(x) - dU(A_2(x))A_2'(x)\|_2 
        \\
        &\quad \le \|(dU(A_1(x)) - dU(A_2(x)))A_1'(x)\|_2+\|dU(A_2(x))(A_1'(x) -A_2'(x))\|_2
        \\
        &\quad \le \|U\|_{C^2} \|A_1(x) - A_2(x)\|_2 \|A_1'(x)\|_2+\|U\|_{C^1} \|A_1'(x) -A_2'(x)\|_2
    \end{align*}
    which implies \eqref{eq:pU2} for $j=1$. The assertion for $j=2$ follows by symmetry.
\end{proof}

Let $A \in AC(I,\on{Norm}_T(d))$ and assume that $0 \not\in A(I)$. Then 
\[
    \ul A(x) := \frac{A(x)}{\|A(x)\|_2} \in \on{Norm}_T^0(d) 
\]
    is well-defined for $x \in I$.  
If $\ul A(I) \subseteq \cV_i$ for some $i \in \{1,\ldots,s\}$, then 
\begin{align*}
   U_i^* (\ul A)  \ul A \, U_i(\ul A) =   \begin{pmatrix}
   \ul B_i(\ul A) & 0 \\
  0 &  \ul C_i(\ul A) 
  \end{pmatrix}
\end{align*}
and consequently
\begin{align} \label{eq:BD}
     U_i^* (\ul A) A \, U_i(\ul A) =   \begin{pmatrix}
   B_i(A) & 0 \\
  0 &  C_i(A) 
  \end{pmatrix},
\end{align}
where $B_i(A) = \|A\|_2 \cdot \ul B_i(\ul A)$ and $C_i(A) = \|A\|_2 \cdot \ul C_i(\ul A)$ are normal.

By the property \eqref{eq:sep2}, there exists 
$\ch >0$ such that the following holds. 
Assume that $A_1,A_2 \in AC(I,\on{Norm}_T(d)\setminus \{0\})$ and $\ul A_1(I), \ul A_2(I) \subseteq \cV_i$ for some $i \in \{1,\ldots,s\}$.
Then, for all $x \in I$,
\begin{equation} \label{eq:sep3}
    \big|\|A_2(x)\|_2\, \mu_1(x) - \|A_1(x)\|_2\, \nu_2(x)\big| > \ch\, \|A_1(x)\|_2 \|A_2(x)\|_2
\end{equation}
for all eigenvalues $\mu_1$ of $B_i(A_1)$ and all eigenvalues $\nu_2$ of $C_i(A_2)$, 
where $B_i(A_1)$ and $C_i(A_2)$ are defined by \eqref{eq:BD}.

If $A \in AC(I,\on{Herm}_T(d))$, then $\ul A$, $B_i(A)$, and $C_i(A)$ are Hermitian
and the largest eigenvalue of $B_i(A)$ is strictly smaller than the smallest eigenvalue of $C_i(A)$, on $I$, i.e.,
\begin{equation} \label{eq:eigABC}
    \eig(A) = (\eig(B_i(A)),\eig(C_i(A))). 
\end{equation}

In \Cref{l:bdiag}, we derive pointwise bounds for the difference of the expression \eqref{eq:BD} 
for two absolutely continuous curves $A_1,A_2$ of normal matrices. 
In particular, they apply to Hermitian matrices.
We need the following preliminary lemma.

\begin{lemma} \label[l]{l:A'}
Let $A \in AC(I,M_d(\C))$ and assume that $0 \not\in A(I)$. 
Then, almost everywhere in $I$,  
\begin{align}
    \|A\|_2' &= \Re \on{Tr}(\ul A^*A'), \label{eq:TrH}
       \\
    \big|\|A\|_2' \big| &\le  \|A'\|_2, \label{eq:pU4}
       \\
    \|\ul A'\|_2 &\le 2\, \frac{\|A'\|_2}{\|A\|_2}. \label{eq:ulA'}
\end{align}
\end{lemma}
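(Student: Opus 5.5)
The plan is to work with the function $N(x) := \|A(x)\|_2^2 = \on{Tr}(A(x)^*A(x))$, which is better behaved than $\|A\|_2$ itself. Since $A \in AC(I,M_d(\C))$, each entry $a_{ij}$ is absolutely continuous. Hence $N = \sum_{i,j} |a_{ij}|^2$ is absolutely continuous on compact subintervals, because products of absolutely continuous functions are absolutely continuous there. Almost everywhere it satisfies
\[
    N' = \on{Tr}(A'^*A) + \on{Tr}(A^*A') = 2\,\Re \on{Tr}(A^*A').
\]
Because $0 \notin A(I)$, $N$ is strictly positive on $I$, and $t \mapsto \sqrt t$ is smooth on $(0,\infty)$. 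So $\|A\|_2 = \sqrt N$ is locally absolutely continuous, and at every point where $N$ is differentiable,
\[
    \|A\|_2' = \frac{N'}{2\sqrt N} = \frac{\Re \on{Tr}(A^*A')}{\|A\|_2} = \Re \on{Tr}(\ul A^* A').
\]
This proves \eqref{eq:TrH}.

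For \eqref{eq:pU4}, apply the Cauchy--Schwarz inequality for the Frobenius inner product $\<X,Y\> = \on{Tr}(X^*Y)$. This gives $|\Re \on{Tr}(\ul A^*A')| \le \|\ul A\|_2 \|A'\|_2 = \|A'\|_2$, since $\|\ul A\|_2 = 1$.

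For \eqref{eq:ulA'}, differentiate the quotient $\ul A = A/\|A\|_2$. This is legitimate almost everywhere because numerator and denominator are locally absolutely continuous and the denominator is positive. We get
\[
    \ul A' = \frac{A'}{\|A\|_2} - \frac{\|A\|_2'}{\|A\|_2}\,\ul A .
\]
Taking norms and using $\|\ul A\|_2 = 1$ together with \eqref{eq:pU4}, each of the two terms is bounded by $\|A'\|_2/\|A\|_2$. This yields the factor $2$.

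The only step needing care is regularity: $\|A\|_2$ and $\ul A$ must be differentiable almost everywhere and their derivatives must be the classical ones. This follows from local absolute continuity on compact subintervals exhausting $I$, which is all that an almost-everywhere statement requires.
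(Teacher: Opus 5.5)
Your proposal is correct and follows essentially the same route as the paper: differentiate $\on{Tr}(A^*A)^{1/2}$ to obtain \eqref{eq:TrH}, apply Cauchy--Schwarz for the Frobenius inner product to obtain \eqref{eq:pU4}, and bound the two terms of the quotient-rule expression for $\ul A'$ to obtain \eqref{eq:ulA'}. Your remark on regularity is more careful than the paper, which simply computes. Your local absolute continuity argument is valid, but it is more than you need: at every point where $A$ is differentiable, which is almost every point, $\|A\|_2$ and $\ul A$ are already differentiable by the ordinary chain and quotient rules, since $\|A\|_2>0$ there.
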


\begin{proof}
    We have
    \begin{align*}
        \|A\|_2' &= (\on{Tr}(A^*A)^{1/2})' = \frac{\on{Tr}(A^*A')+\on{Tr}((A^*)'A)}{2\|A\|_2} 
    \\
    &=\frac{\on{Tr}(A^*A')+\on{Tr}((A')^*A)}{2\|A\|_2}
    =\frac{\on{Tr}(A^*A')+\ol {\on{Tr}(A^*A')}}{2\|A\|_2}
    = \Re \on{Tr}(\ul A^*A').
    \end{align*}
    Consequently,
    \begin{align*}
        \big|\|A\|_2' \big| \le \|\ul A^*\|_2 \|A'\|_2 = \|A'\|_2.
    \end{align*}
    Finally, 
    \begin{align*}
        \|\ul A'\|_2 = \Big\|\frac{A' \|A\|_2- A \|A\|_2'}{\|A\|_2^2} \Big\|_2 \le \frac{\|A'\|_2}{\|A\|_2} +  \frac{\big|\|A\|_2'\big|}{\|A\|_2} \le 2\,\frac{\|A'\|_2}{\|A\|_2} 
    \end{align*}
    and the lemma is proved.
\end{proof}

\begin{lemma} \label[l]{l:bdiag}
For $j=1,2$, let $A_j \in AC(I,\on{Norm}_T(d))$ be such that $0 \not\in A_j(I)$. 
Suppose that $\ul A_1(I), \ul A_2(I) \subseteq \cV_i$ for some $i \in \{1,\ldots,s\}$. 
For simplicity, we write $U : \cV \to U_d(\C)$ for $U_i : \cV_i \to U_d(\C)$.
Then, pointwise on $I$, 
\begin{align}\label{eq:bdiagbounds2}
\| U^*(\ul A_1)  A_1 U(\ul A_1) - U^*(\ul A_2) A_2 U(\ul A_2) \|_2  
 \le   C\,  \|A_1-A_2\|_2 
\end{align}
and, almost everywhere on $I$ and for $j=1,2$,
\begin{align} \label{eq:bdiagbounds3}
    \| &(U^*(\ul A_1)  A_1 U(\ul A_1))'  - (U^*(\ul A_2) A_2 U(\ul A_2))'  \|_2 \nonumber \\ 
                  &\quad \le  C\,\Bigl ( { \|A'_1 - A'_2\|_2} +    \big(\| A'_1\|_2 +\| A'_2\|_2 \big) \|A_j\|_2^{-1} \|A_1 -A_2\|_2 \Bigr ),
\end{align}
where $C$ is a universal constant depending only on $d$.
\end{lemma}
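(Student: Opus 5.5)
The plan is to write $M_j := U^*(\ul A_j) A_j U(\ul A_j)$ and $V_j := U(\ul A_j)$. All estimates will come from unitary invariance of $\|\cdot\|_2$ together with \eqref{eq:pU1}, \eqref{eq:pU2} applied to the curves $\ul A_j \in AC(I,\cV)$, and \Cref{l:A'}. Since $\|\ul A_j\|_2 = 1$, the scalar-normalization terms are harmless. The only delicate point is the comparison $\|\ul A_1 - \ul A_2\|_2 \lesssim \|A_1-A_2\|_2/\|A_j\|_2$. I will use the elementary inequality
\[
\Big\|\frac{A_1}{\|A_1\|_2}-\frac{A_2}{\|A_2\|_2}\Big\|_2 \le \frac{2\,\|A_1-A_2\|_2}{\|A_j\|_2}, \quad j=1,2.
\]
To prove it for $j=2$, split $\ul A_1-\ul A_2 = (A_1-A_2)/\|A_2\|_2 + A_1\big(\|A_1\|_2^{-1}-\|A_2\|_2^{-1}\big)$. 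The second term has norm $\big|\|A_2\|_2-\|A_1\|_2\big|/\|A_2\|_2$, which is at most $\|A_1-A_2\|_2/\|A_2\|_2$. The case $j=1$ follows by symmetry.

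For \eqref{eq:bdiagbounds2}, I decompose
\[
M_1-M_2 = V_1^*(A_1-A_2)V_1 + (V_1-V_2)^*A_2V_1 + V_2^*A_2(V_1-V_2).
\]
The first term has norm $\|A_1-A_2\|_2$. Each of the other two is bounded by $\|A_2\|_2\|V_1-V_2\|_2$, which by \eqref{eq:pU1} is at most $\|U\|_{C^1}\|A_2\|_2\|\ul A_1-\ul A_2\|_2$. By the inequality above with $j=2$, this is at most $2\|U\|_{C^1}\|A_1-A_2\|_2$. Since the finitely many $U_i$ have bounded derivatives on $\cV_i$ and were fixed once and for all, $C$ depends only on $d$ via the fixed uniform block-diagonalization.

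For \eqref{eq:bdiagbounds3}, I differentiate using the product rule: $M_j' = V_j'^*A_jV_j + V_j^*A_j'V_j + V_j^*A_jV_j'$. I then compare the three pairs of terms, telescoping each difference. The middle pair gives $\|A_1'-A_2'\|_2$ plus terms like $\|A_2'\|_2\|V_1-V_2\|_2$, and the latter is $\lesssim \|A_2'\|_2\|A_1-A_2\|_2/\|A_j\|_2$ by \eqref{eq:pU1} and the normalization inequality.

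The outer pairs produce three kinds of terms. The first is $\|V_1'-V_2'\|_2\|A_1\|_2$. The second is $\|V_2'\|_2\|A_1-A_2\|_2$. The third is $\|V_2'\|_2\|A_2\|_2\|V_1-V_2\|_2$.

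\textbf{Main obstacle.} The genuinely delicate term is $\|A\|_2\|V_1'-V_2'\|_2$, because of the mismatch of normalizations. By \eqref{eq:pU2} applied to $\ul A_1,\ul A_2$ with index $k$,
\[
\|V_1'-V_2'\|_2 \le \|U\|_{C^2}\big(\|\ul A_k'\|_2\|\ul A_1-\ul A_2\|_2 + \|\ul A_1'-\ul A_2'\|_2\big).
\]
The first summand times $\|A_1\|_2$ is handled by \eqref{eq:ulA'} with $k=1$, giving $4\|A_1'\|_2\|A_1-A_2\|_2/\|A_j\|_2$.

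For the second summand, I compute $\ul A' = A'/\|A\|_2 - \ul A\,\|A\|_2'/\|A\|_2$ and use \eqref{eq:TrH}. Then
\[
\|A_1\|_2\|\ul A_1'-\ul A_2'\|_2 \le \|A_1'-A_2'\|_2 + \|A_2'\|_2\frac{\big|\|A_1\|_2-\|A_2\|_2\big|}{\|A_2\|_2} + \big(\text{terms with } \|\ul A_1-\ul A_2\|_2\big).
\]
The last group involves $\|A_1\|_2\big|\|A_1\|_2'/\|A_1\|_2-\|A_2\|_2'/\|A_2\|_2\big|$. For this I write $\|A\|_2' = \Re\on{Tr}(\ul A^*A')$ and expand the difference in the same telescoping fashion. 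Every resulting piece is bounded by $\|A_1'-A_2'\|_2$ or by $(\|A_1'\|_2+\|A_2'\|_2)\|A_1-A_2\|_2/\|A_j\|_2$.

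If $j$ in the final bound differs from the index produced by this telescoping, I rerun the argument with the roles of $A_1$ and $A_2$ swapped. This is legitimate since everything is symmetric. Collecting terms yields \eqref{eq:bdiagbounds3} with $C = C(\max_i\|U_i\|_{C^2})$.
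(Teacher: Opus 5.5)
Your proposal is correct and follows essentially the same route as the paper. You use the same ingredients: the normalization estimate \eqref{eq:pU3}, a three-term telescoping of $U^*(\ul A)AU(\ul A)$ and of its product-rule derivative, and \eqref{eq:pU1}, \eqref{eq:pU2}, \eqref{eq:ulA'}, \eqref{eq:TrH} to control $\|A_1\|_2\|\ul A_1'-\ul A_2'\|_2$, with the bound for the other index $j$ obtained by symmetry. The paper differs only cosmetically: it isolates the intermediate estimates \eqref{eq:pU5}--\eqref{eq:pU7} as separate steps, whereas you fold them into the product-rule computation.
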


\begin{proof}
We have, for $j=1,2$,  
\begin{equation} \label{eq:pU3}
    \|\ul A_1 - \ul A_2\|_2 \le 2\, \|A_j\|_2^{-1} \|A_1 - A_2\|_2,  
\end{equation}
since
\begin{align*}
    \|\ul A_1 - \ul A_2\|_2  &=  \Big\|\frac {A_1}{\|A_1\|_2} - \frac {A_2}{\|A_2\|_2}  \Big\|_2 
\\
                             &\le   \frac {\| A_1 - A_2\|_2 \|A_2\|_2  +\|A_2\|_2 \, \big|\|A_1\|_2- \|A_2\|_2\big|}{\|A_1\|_2 \|A_2\|_2}  
                             \le 2\, \frac {\| A_1 - A_2\|_2 }{\|A_1\|_2}     
\end{align*}
and the case $j=2$ follows by symmetry.
Thus, by \eqref{eq:pU1} and  \eqref{eq:pU3},
\begin{align*}
\MoveEqLeft \| U^*(\ul A_1)   A_1 U(\ul A_1) - U^*(\ul A_2) A_2 U(\ul A_2) \|_2 
\\
                             &\le 
\| ( U^*(\ul A_1)- U^*(\ul A_2)) A_1\, U(\ul A_1) \|_2   
  + \| U^* (\ul A_2) A_1 (U(\ul A_1) - U(\ul A_2) \|_2
\\
                             &\quad + \| U^*(\ul A_2) (A_1 -A_2) U(\ul A_2) \|_2  \\
  &\lesssim \|A_1\|_2  \|\ul A_1 - \ul A_2\|_2  + \|A_1-A_2\|_2     \lesssim    \|A_1-A_2\|_2 
\end{align*}
showing \eqref{eq:bdiagbounds2}. 

Let us prove \eqref{eq:bdiagbounds3}.
By \eqref{eq:TrH},
\begin{align*}
        \big|\|A_1\|_2' - \|A_2\|_2' \big| &= \big|\Re \on{Tr}(\ul A_1^*A_1') - \Re \on{Tr}(\ul A_2^*A_2') \big|
        \\
                                           &= \big|\Re \on{Tr}((\ul A_1^* - \ul A_2^*)A_1') + \Re \on{Tr} (\ul A_2^* (A_1'-A_2'))\Big|
                                           \\
                                           &\le \|\ul A_1^* - \ul A_2^*\|_2 \|A_1'\|_2  +  \|\ul A_2^*\|_2 \|A_1'-A_2'\|_2
                                           \\
                                           &=\|\ul A_1 - \ul A_2\|_2 \|A_1'\|_2  +  \|A_1'-A_2'\|_2,
    \end{align*}
    so that by symmetry, for $j=1,2$,
    \[
        \big|\|A_1\|_2' - \|A_2\|_2' \big| \le \|\ul A_1 - \ul A_2\|_2 \|A_j'\|_2  +  \|A_1'-A_2'\|_2. 
    \]
Using \eqref{eq:pU3}, we conclude, for $j=1,2$,
\begin{equation}\label{eq:pU5}
    \big|\|A_1\|_2'- \|A_2\|_2'\big| \lesssim \frac{\|A_j'\|_2}{\|A_j\|_2} \|A_1 - A_2\|_2 + \|A_1'-A_2'\|_2.
\end{equation}

We claim that  
\begin{equation} \label{eq:pU6}
    \|\ul A'_1- \ul A'_2\|_2   \lesssim  \frac {\| A_2'\|_2   \|A_1- A_2\|_2 }{\|A_1\|_2 \|A_2\|_2 }   + \frac { \|A'_1 - A'_2\|_2}{\|A_1\|_2}.
\end{equation}
Indeed
\begin{align*}
    \|\ul A'_1- \ul A'_2\|_2 &= \Big \| \frac {A'_1 \|A_1\|_2 - A_1 \|A_1\|_2'}{\|A_1\|_2^2} - \frac {A'_2 \|A_2\|_2 - A_2 \|A_2\|_2'}{\|A_2\|_2^2} \Big \|_2 
    \\
                             &\le \Big \| \frac {A'_1 \|A_1\|_2 \|A_2\|_2^2 - A'_2 \|A_1\|_2^2 \|A_2\|_2}{\|A_1\|_2^2 \|A_2\|_2^2}\Big \|_2 
                             \\
                             &\quad +\Big \| \frac {A_2 \|A_1\|_2^2 \|A_2\|_2' - A_1 \|A_2\|_2^2 \|A_1\|_2'}{ \|A_1\|_2^2\|A_2\|_2^2} \Big \|_2. 
\end{align*}
The first summand equals
\begin{align*}
    \Big \| \frac {A'_1 \|A_2\|_2 - A'_2 \|A_1\|_2}{\|A_1\|_2 \|A_2\|_2}\Big \|_2  
    &= \Big \| \frac {(A'_1-A'_2) \|A_2\|_2  - A'_2 (\|A_1\|_2- \|A_2\|_2) }{\|A_1\|_2 \|A_2\|_2}\Big \|_2 \\ 
    & \lesssim    \frac { \|A'_1 - A'_2\|_2}{\|A_1\|_2} +   \frac {\| A'_2\|_2 \|A_1- A_2\|_2 }{\|A_1\| \|A_2\|_2 }  
\end{align*}
and, by \eqref{eq:pU4} and \eqref{eq:pU5},
the second summand is bounded by
\begin{align*}
    \Big \|& \frac { \|A_2\|_2' (A_2 \|A_1\|_2^2 - A_1 \|A_2\|_2^2 ) }{\|A_1\|_2^2 \|A_2\|_2^2} \Big \|_2 
    + \Big \| \frac {A_1 \|A_2\|_2^2 (\|A_2\|_2' - \|A_1\|_2' ) }{\|A_1\|_2^2 \|A_2\|_2^2}\Big \|_2 \\ 
    \\
           &= \frac{\big|\|A_2\|_2' \big| \, \big\| A_2 \|A_1\|_2^2 - A_2 \|A_1\|_2 \|A_2\|_2 + A_2 \|A_1\|_2 \|A_2\|_2 - A_1 \|A_2\|_2^2\big\|_2}{\|A_1\|_2^2 \|A_2\|_2^2} 
           \\
           &\quad +  \frac{\big| \|A_1\|_2' - \|A_2\|_2'\big|}{\|A_1\|_2}
           \\
           &\lesssim \frac{\|A_2'\|_2 \, \big| \|A_1\|_2 - \|A_2\|_2\big|}{\|A_1\|_2\|A_2\|_2} 
           + \frac{\|A_2'\|_2 \, \big\| A_2 \|A_1\|_2 - A_1 \|A_2\|_2\big\|_2}{\|A_1\|_2^2\|A_2\|_2}
           \\
           &\quad + \frac {\| A_2'\|_2   \|A_1- A_2\|_2 }{\|A_1\|_2 \|A_2\|_2 }   + \frac { \|A'_1 - A'_2\|_2}{\|A_1\|_2}
           \\
           %& \lesssim     \frac {\| A_2'\|_2  \|A_1\|_2 \|A_2\|_2 \big|\|A_1\|_2- \|A_2\|_2 \big| }{\|A_2\|_2^2 \|A_1\|_2^2 }  
           %+   \frac { \|A_2'\|_2 \|A_2\|_2 \|A_2 \|A_1\|_2 - A_1 \|A_2\|_2 \|_2 }{\|A_1\|_2^2 \|A_2\|_2^2} 
           %\\
           %&\quad + \frac { \|A'_1 - A'_2\|_2}{\|A_1\|_2} + \frac{\|A_2'\|_2\|A_1-A_2\|_2}{\|A_1\|_2 \|A_2\|_2} 
           %\\ 
            & \lesssim  \frac {\| A_2'\|_2   \|A_1- A_2\|_2 }{\|A_1\|_2 \|A_2\|_2 }   + \frac { \|A'_1 - A'_2\|_2}{\|A_1\|_2} 
       \end{align*}
       and \eqref{eq:pU6} follows.

       By \eqref{eq:pU2}, \eqref{eq:ulA'}, \eqref{eq:pU3}, and \eqref{eq:pU6},
       \begin{align}
           \| (  U (\ul A_1) - U (\ul A_2) )'\|_2 &\lesssim \|\ul A_2'\|_2 \|\ul A_1 - \ul A_2\|_2 + \|\ul A_1' - \ul A_2'\|_2
           \\ \label{eq:pU7}
                                                  &\lesssim \frac{\|A_2'\|_2}{\|A_2\|_2} \frac{\|A_1-A_2\|_2}{\|A_1\|_2}+ \frac { \|A'_1 - A'_2\|_2}{\|A_1\|_2},
       \end{align}
       similarly for $U^*$.

Finally we want to bound  
\begin{align*}
\MoveEqLeft \| (U^*(\ul A_1)  A_1 U(\ul A_1))' - (U^*(\ul A_2) A_2U(\ul A_2))'  \|_2  
\\
                             &\le \| ( ( U^* (\ul A_1) - U^* (\ul A_2) ) A_1 U(\ul A_1) )' \|_2 +  \| ( U^* (\ul A_2) A_1 (U(\ul A_1) - U(\ul A_2)))'  \|_2
 \\
                             &\quad + \| ( U^* (\ul A_2) (A_1 -A_2) U(\ul A_2))' \|_2.
\end{align*}  
The first summand can be bounded by
\begin{align*}
\MoveEqLeft \| (  U^* (\ul A_1) - U^* (\ul A_2) )'\|_2 \|A_1\|_2 
 + \| U^* (\ul A_1) - U^* (\ul A_2) \|_2 \| A'_1\|_2 
 \\ 
 &\quad + \|  U^* (\ul A_1) - U^* (\ul A_2) \|_2  \|A_1\|_2 \| (U(\ul A_1))'  \|_2 
 \\ 
 & \lesssim  \frac{(\|A_1'\|_2+ \|A_2'\|_2)}{\|A_2\|_2} \|A_1-A_2\|_2+  \|A'_1 - A'_2\|_2,
\end{align*}
using \eqref{eq:pU1}, \eqref{eq:ulA'}, \eqref{eq:pU3}, and \eqref{eq:pU7}.  
The bound for the second summand is similar and the third summand can be bounded by 
\begin{align*}
\MoveEqLeft \| ( U^* (\ul A_2))'\|_2 \|A_1 -A_2\|_2  + \| A_1' -A_2' \|_2
 + \|A_1 -A_2\|_2 \|U(\ul A_2))' \|_2  
\\ 
  & \lesssim  \frac{\|A_2'\|_2}{\|A_2\|_2} \|A_1-A_2\|_2+  \|A'_1 - A'_2\|_2.
\end{align*}
Thus \eqref{eq:bdiagbounds3} follows.
\end{proof}

\begin{remark}
   In the proof we did not use the fact that the matrices are normal. 
\end{remark}

\begin{remark} \label[r]{r:bdiag}
    \Cref{l:bdiag} remains valid if 
    $\ul A_1(I) \subseteq \cV_{i_1}$,
    $\ul A_2(I) \subseteq \cV_{i_2}$, for $i_1\ne i_2 \in \{1,\ldots,s\}$, 
    and $\|\ul A_1(x) - \ul A_2(x)\|_2 \ge \ep$, for some universal constant $\ep>0$, in the following sense:
    pointwise on $I$, 
    \begin{align*}
        \| U_{i_1}^*(\ul A_1)  A_1 U_{i_1}(\ul A_1) - U_{i_2}^*(\ul A_2) A_2U_{i_2}(\ul A_2) \|_2  
        \le   C\,  \|A_1-A_2\|_2 
    \end{align*}
    and, almost everywhere on $I$ and for $j=1,2$,
    \begin{align*}
        \| (U_{i_1}^*(\ul A_1) & A_1 U_{i_1}(\ul A_1))'  - (U_{i_2}^*(\ul A_2) A_2U_{i_2}(\ul A_2))'  \|_2 \nonumber \\ 
                               & \qquad \le  C\Bigl ( { \|A'_2 - A'_1\|_2} +  \big(\| A'_1\|_2 + \| A'_2\|_2 \big)\|A_j\|_2^{-1} \|A_1 -A_2\|_2 \Bigr ),
    \end{align*}
    where $C$ is a universal constant depending only on $d$.

    Indeed, 
    \begin{align*}
        \|U_{i_1}(\ul A_1)- U_{i_2}(\ul A_2)\|_2 
    &\le \|U_{i_1}(\ul A_1)\|_2+ \| U_{i_2}(\ul A_2)\|_2
    \lesssim \ep \le \|\ul A_1 - \ul A_2\|_2
    \end{align*}
    and 
    \begin{align*}
        \|(U_{i_1}(\ul A_1)- U_{i_2}(\ul A_2))'\|_2 
    &\le \|(U_{i_1}(\ul A_1))'\|_2+ \| (U_{i_2}(\ul A_2))'\|_2 
    \\
    &\lesssim (\|\ul A_1'\|_2+\|\ul A_2'\|_2) \ep \le (\|\ul A_1'\|_2+\|\ul A_2'\|_2)\|\ul A_1 - \ul A_2\|_2.
    \end{align*}
    Note that the sizes of the blocks in \eqref{eq:BD} corresponding to $i_1$ and $i_2$ may be different. 
\end{remark}

%-----------------------------------------------------------------------------------------------------------------------
\section{Eigenvalue stability for Hermitian matrices: one-parameter case} \label{sec:H1}
%-----------------------------------------------------------------------------------------------------------------------

This section is devoted to the proofs of \Cref{thm:eigW} and \Cref{thm:mptw} in 
the one-parameter case. In addition, we prove \Cref{t:cn} in \Cref{ssec:cn}. 

The following theorem is a version of \Cref{thm:eigW} for $m=1$.

\begin{theorem} \label[t]{thm:mainhermW}
    Let $1 \le q < \infty$.
    Let $I \subseteq \R$ be a bounded open interval.
    Let $A_n \to A$ in $W^{1,q} (I, \on{Herm}(d))$,
    i.e.,  
    \begin{equation} \label{eq:HermassW} 
        \| A- A_n\|_{W^{1,q}(I,M_d(\C))} \to 0 \quad \text{ as } n \to \infty.
    \end{equation}
    Then 
    \begin{equation} \label{eq:HermconclW}
        \|\eig(A) -  \eig(A_n)\|_{W^{1,q}(I,\R^d)}  \to 0 \quad \text{ as } n\to \infty. 
    \end{equation}
\end{theorem}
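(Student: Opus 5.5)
Our plan is to argue by induction on $d$, using the uniform unitary block-diagonalization of \Cref{d:UBD} together with the pointwise bounds of \Cref{l:bdiag}. The case $d=1$ is trivial, since then $\eig(A)=A$. The $L^q$ part of \eqref{eq:HermconclW} already follows from \eqref{eq:Loewner}, so it remains to show $\|\eig(A)'-\eig(A_n)'\|_{L^q(I,\R^d)}\to 0$. A standard subsequence argument lets us assume additionally that $A_n\to A$ uniformly on $I$ (by the 1D Sobolev embedding), that $A_n'\to A'$ almost everywhere, and that $\|A_n'\|_2\le g$ for a fixed $g\in L^q(I)$. In particular $\{\|\eig(A_n)'-\eig(A)'\|_2^q\}$ is uniformly integrable, because \eqref{eq:charHerm3} bounds it by $2^q(g^q+\|A'\|_2^q)$. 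We may also split off the trace: $\la^\uparrow(A)=\la^\uparrow(A-\frac{\on{Tr}A}{d}\I_d)+\frac{\on{Tr}A}{d}(1,\dots,1)$, and the trace part obviously converges in $W^{1,q}$. We therefore assume all matrices lie in $\on{Herm}_T(d)$.

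Next we decompose $I$ into $Z:=\{x: A(x)=0\}$ and its open complement $I\setminus Z=\bigcup_k J_k$, a countable union of intervals. On $Z$ we have $A'=0$ almost everywhere, and by \eqref{eq:charHerm3} also $\eig(A)'=0$ almost everywhere. Hence on $Z$ we get $\|\eig(A_n)'-\eig(A)'\|_2\le\|A_n'\|_2=\|A_n'-A'\|_2$, whose $L^q(Z)$ norm tends to $0$. Each $J_k$ is covered by countably many compact subintervals $K$ on which $\ul A$ has oscillation less than $r/2$ and $\|A\|_2\ge c_K>0$. By uniform convergence, for large $n$ we then have $\ul A_n(K),\ul A(K)\subseteq B(\ul A(x_0),r)\cap\on{Herm}_T^0(d)\subseteq\cV_i$ for one index $i$, by \eqref{eq:r}. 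By \Cref{l:Vapp} with the countable cover $\{Z\}\cup\{K\}$, it suffices to prove $\int_K\|\eig(A_n)'-\eig(A)'\|_2^q\to0$ for each such $K$.

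On such a $K$ we set $\tilde A_n:=U^*(\ul A_n)A_nU(\ul A_n)=\on{diag}(B_i(A_n),C_i(A_n))$, and similarly $\tilde A$. By \eqref{eq:eigABC}, $\eig(A_n)=(\eig(B_i(A_n)),\eig(C_i(A_n)))$ on $K$, with blocks of fixed sizes $d_1,d_2<d$. Now \Cref{l:bdiag}, using $\|A\|_2\ge c_K$, gives
\[
\|\tilde A_n-\tilde A\|_{W^{1,q}(K)}\lesssim \|A_n'-A'\|_{L^q(K)}+c_K^{-1}\big(\|A_n'\|_{L^q(K)}+\|A'\|_{L^q(K)}\big)\|A_n-A\|_{L^\infty(K)}+\|A_n-A\|_{L^q(K)}\to0.
\]
Thus $B_i(A_n)\to B_i(A)$ in $W^{1,q}(K,\on{Herm}(d_1))$, and likewise for $C_i$. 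The induction hypothesis (the theorem for smaller sizes, applied on the interior of $K$) then yields convergence of the eigenvalues of each block in $W^{1,q}$, which completes the argument.

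We expect the main obstacle to be the neighborhood of $Z$. There $\|A\|_2\to0$, so the constant $c_K^{-1}$ in \Cref{l:bdiag} blows up, and no uniform block splitting is available. The countable cover combined with the uniform integrability in \Cref{l:Vapp} is exactly what avoids needing uniform estimates near $Z$. The remaining care is to check that uniform integrability holds for the full sequence, not just for the subsequence, and that the choice of the index $i$ stays consistent for all large $n$ on each $K$.
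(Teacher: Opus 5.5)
Your proposal is correct and follows essentially the same route as the paper: uniform convergence of the eigenvalues, trace reduction, induction on $d$, the bound $\|\eig(A_n)'-\eig(A)'\|_2\le\|A_n'-A'\|_2$ a.e.\ on $Z_A$, and local simultaneous block-diagonalization via \Cref{l:bdiag} on a countable cover of $I\setminus Z_A$, glued by \Cref{l:Vapp}. The differences are cosmetic. You obtain uniform integrability from an $L^q$ dominating function along a subsequence instead of \Cref{l:ui}; by the sub-subsequence principle this also makes your closing worry about the full sequence moot. You also place $\ul A_n(K)$ in a single $\cV_i$ using uniform convergence and $\|A\|_2\ge c_K$, rather than the $L^1$-derivative estimates used in \Cref{l:red}.
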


The proof of \Cref{thm:mainhermW} comprises \Cref{ssec:redH} and \Cref{ssec:pfH}.

%-----------------------------------------------------------------------------------------------------------------------
\subsection{Preliminary observations and reductions} \label{ssec:redH}
%-----------------------------------------------------------------------------------------------------------------------

\begin{lemma} \label[l]{l:mat}
    Let $A \in W^{1,q}(I,M_d(\C))$, where $1 \le q < \infty$ and
    $I \subseteq \R$ is a bounded open interval. Then, for all $x \in I$,
    \begin{align} \label{eq:supW}
     \|A(x)\|_2 &\le    |I|^{-1/q}\, \|A\|_{L^q(I,M_d(\C))} + |I|^{1-1/q} \, \|A'\|_{L^q(I,M_d(\C))}.
    \end{align}
\end{lemma}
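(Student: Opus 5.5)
The estimate is a one-dimensional Sobolev embedding, and I would prove it with the fundamental theorem of calculus followed by averaging and Hölder's inequality. Since $q<\infty$, the entries of $A$ belong to $W^{1,q}(I)$ and so have absolutely continuous representatives; I work with the representative $A\in AC(I,M_d(\C))$. Then, for all $x,y\in I$,
\[
    A(x) = A(y) + \int_y^x A'(t)\,dt .
\]

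First, taking Frobenius norms and using that the norm of an integral is at most the integral of the norm (apply this entrywise, or via Minkowski's integral inequality), I get
\[
    \|A(x)\|_2 \le \|A(y)\|_2 + \int_I \|A'(t)\|_2\,dt
\]
for all $x,y\in I$.

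Second, I average this inequality over $y\in I$, dividing by $|I|$:
\[
    \|A(x)\|_2 \le |I|^{-1}\int_I \|A(y)\|_2\,dy + \int_I \|A'(t)\|_2\,dt .
\]

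Third, I apply Hölder's inequality with exponents $q$ and $q'$ to each integral, which gives $\int_I g \le |I|^{1-1/q}\|g\|_{L^q(I)}$ for nonnegative $g$. The first term becomes $|I|^{-1}|I|^{1-1/q}\|A\|_{L^q(I,M_d(\C))} = |I|^{-1/q}\|A\|_{L^q(I,M_d(\C))}$, and the second becomes $|I|^{1-1/q}\|A'\|_{L^q(I,M_d(\C))}$. Here I use the paper's convention $\|f\|_{L^q} = \big\|\|f\|_2\big\|_{L^q}$, so these are exactly the quantities in \eqref{eq:supW}. For $q=1$ the Hölder step is trivial.

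I expect no real obstacle. The only point needing care is that the bound is claimed for every $x$ rather than almost every $x$, and this is secured by fixing the absolutely continuous representative at the start.
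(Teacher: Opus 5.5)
Your proposal is correct and follows essentially the same route as the paper's proof: the fundamental theorem of calculus, averaging over $y\in I$, and H\"older's inequality, yielding the intermediate bound $|I|^{-1}\|A\|_{L^1(I,M_d(\C))}+\|A'\|_{L^1(I,M_d(\C))}$. The only cosmetic difference is that the paper first subtracts the mean $A_I$ and bounds $\|A(x)-A_I\|_2$ and $\|A_I\|_2$ separately, whereas you take norms before averaging; the estimate obtained is identical.
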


\begin{proof}
For $x,y \in I$, we have
\[
    A(x) - A(y) = \int_y^x A'(t)\, dt
\]
and integrating over $y$, we get 
\begin{align*}
    A(x) - A_I = \frac{1}{|I|}\int_I \int_y^x A'(t)\, dt \,dy,
\end{align*}
where $A_I := |I|^{-1} \int_I A(y)\, dy$. Therefore,
\[
    \|A(x) - A_I\|_2 \le \frac{1}{|I|}\int_I \int_I \|A'(t)\|_2\, dt \,dy = \|A'\|_{L^1(I,M_d(\C))}.
\]
Consequently,
\begin{align*} \nonumber
    \|A(x)\|_2 &\le \|A_I\|_2 + \|A(x) - A_I\|_2   
             \\ \nonumber
               &\le |I|^{-1}\, \|A\|_{L^1(I,M_d(\C))} + \|A'\|_{L^1(I,M_d(\C))} 
            \\ 
              &\le |I|^{-1/q}\, \|A\|_{L^q(I,M_d(\C))} + |I|^{1-1/q} \, \|A'\|_{L^q(I,M_d(\C))}, 
\end{align*}
by H\"older's inequality.
\end{proof}

\begin{corollary} \label[c]{c:supW}
    Let $1 \le q < \infty$.
    Let $I \subseteq \R$ be a bounded open interval.
    Let $A_n \to A$ in $W^{1,q} (I, M_d(\C))$ as $n \to \infty$.
    Then 
    \begin{align} \label{eq:supW1}
        \|A - A_n\|_{L^\infty(I,M_d(\C))} &\to 0 \quad \text{ as } n \to \infty.
    \end{align}
\end{corollary}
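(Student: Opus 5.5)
The corollary follows by applying \Cref{l:mat} to the difference $A - A_n$ and then taking the supremum over $x \in I$.

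First, since $W^{1,q}(I,M_d(\C))$ is a vector space, $A - A_n \in W^{1,q}(I,M_d(\C))$. By the characterization of $W^{1,q}(I)$ in one variable, each of $A$ and $A_n$ has an absolutely continuous representative. Hence $A - A_n$ is absolutely continuous, and its classical derivative is $A' - A_n'$ almost everywhere. Applying \eqref{eq:supW} to $A - A_n$ gives, for every $x \in I$,
\[
    \|A(x) - A_n(x)\|_2 \le |I|^{-1/q}\, \|A - A_n\|_{L^q(I,M_d(\C))} + |I|^{1-1/q}\, \|A' - A_n'\|_{L^q(I,M_d(\C))}.
\]
The right-hand side does not depend on $x$. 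Taking the supremum over $x \in I$ therefore yields
\[
    \|A - A_n\|_{L^\infty(I,M_d(\C))} \le \max\{|I|^{-1/q}, |I|^{1-1/q}\}\, \|A - A_n\|_{W^{1,q}(I,M_d(\C))}.
\]
This uses the fact that the $W^{1,q}$ norm is the sum of the $L^q$ norms of the function and of its derivative.

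Here $|I|$ is a fixed positive finite number, because $I$ is a bounded open interval, so the constant in front is finite and independent of $n$. The hypothesis $A_n \to A$ in $W^{1,q}(I,M_d(\C))$ then gives \eqref{eq:supW1}. The only subtle point is representatives: the $L^\infty$ norm is an essential supremum, but it is bounded by the pointwise supremum of the continuous representatives, so the estimate is unaffected. No step here is a genuine obstacle.
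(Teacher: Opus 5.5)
Your proposal is correct and follows the paper's proof exactly: the paper also just applies \eqref{eq:supW} to $A-A_n$, and your explicit constant $\max\{|I|^{-1/q},|I|^{1-1/q}\}$ and your remark about continuous representatives only spell out what that one-line argument leaves implicit.
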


\begin{proof}
    Apply \eqref{eq:supW} to $A-A_n$ instead of $A$.
\end{proof}

\begin{corollary} \label[c]{c:supeigW}
    Let $1 \le q < \infty$.
    Let $I \subseteq \R$ be a bounded open interval.
    Let $A_n \to A$ in $W^{1,q} (I, \on{Herm}(d))$ as $n \to \infty$.
    Then 
    \begin{align} \label{eq:supW2}
        \|\eig(A) - \eig(A_n)\|_{L^\infty(I,\R^d)} &\to 0 \quad \text{ as } n \to \infty.
    \end{align}
\end{corollary}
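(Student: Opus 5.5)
This follows by combining \Cref{c:supW} with L\"owner's inequality, applied pointwise. Since $A$ and $A_n$ are in $W^{1,q}(I,M_d(\C))$ with $I$ a bounded interval, both have absolutely continuous (in particular continuous) representatives on $I$. We work with these representatives, so that $\eig(A)(x)=\la^\uparrow(A(x))$ and $\eig(A_n)(x)=\la^\uparrow(A_n(x))$ are defined for every $x\in I$. By continuity of $\la^\uparrow$, these are continuous representatives of $\eig(A)$ and $\eig(A_n)$.

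First, for every $x\in I$, \Cref{p:Loewner} gives
\[
    \|\la^\uparrow(A(x))-\la^\uparrow(A_n(x))\|_2 \le \|A(x)-A_n(x)\|_2 .
\]
Taking the supremum over $x\in I$ yields
\[
    \|\eig(A)-\eig(A_n)\|_{L^\infty(I,\R^d)} \le \|A-A_n\|_{L^\infty(I,M_d(\C))}.
\]
Second, \Cref{c:supW} says that the right-hand side tends to $0$ as $n\to\infty$. Explicitly, \eqref{eq:supW} applied to $A-A_n$ bounds it by $|I|^{-1/q}\|A-A_n\|_{L^q}+|I|^{1-1/q}\|A'-A_n'\|_{L^q}$, and this tends to $0$ by hypothesis.

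There is no real obstacle here. The only point needing care is that the $L^\infty$ norm is an essential supremum, while the pointwise inequality holds for all $x$ once continuous representatives are fixed. Since the chosen representatives are continuous, the essential supremum and the supremum coincide, so the argument goes through.
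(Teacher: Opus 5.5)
Your proposal is correct and follows the paper's own proof exactly: apply L\"owner's inequality \eqref{eq:Loewner} pointwise, then let \Cref{c:supW} send $\|A-A_n\|_{L^\infty(I,M_d(\C))}$ to $0$. Your remark about fixing continuous representatives, so that the pointwise bound controls the essential supremum, is a valid detail the paper leaves implicit.
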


\begin{proof}
    This follows from
    \eqref{eq:Loewner} and \Cref{c:supW}. 
\end{proof}

To complete the proof of \Cref{thm:mainhermW} it suffices, by \Cref{c:supeigW}, to show that 
\begin{equation*} 
    \|\eig(A)' - \eig(A_n)'\|_{L^q(I,\R^d)} \to 0 \quad \text{ as } n \to \infty.
\end{equation*}

With $A \in W^{1,q} (I, \on{Herm}(d))$ we associate $\tilde A:=A-\frac{1}{d}\on{Tr}(A)\I_d \in W^{1,q} (I, \on{Herm}_T(d))$. 
Note that 
\begin{equation} \label{eq:Tr}
    \eig(A) - \eig(\tilde A) = \frac{1}{d}\on{Tr}(A)(1,1,\ldots,1).
\end{equation}

\begin{lemma} \label[l]{l:TrW}
    Let $1 \le q < \infty$.
    Let $I \subseteq \R$ be a bounded open interval.  
    Assume that $A_n \to A$ in $W^{1,q} (I, \on{Herm}(d))$ as $n \to \infty$.
    Then:
    \begin{enumerate}
        \item $\tilde A_n \to \tilde A$ in $W^{1,q} (I, \on{Herm}_T(d))$ as $n \to \infty$.
        \item Moreover, 
            \[
                \|\eig(A) -  \eig(A_n)\|_{W^{1,q}(I,\R^d)}  \to 0 
            \]
            if and only if 
            \[
                \|\eig(\tilde A) -  \eig(\tilde A_n)\|_{W^{1,q}(I,\R^d)}  \to 0 
            \]
            as  $n\to \infty$. 
    \end{enumerate}
\end{lemma}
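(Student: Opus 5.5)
The plan is to use the fact that the trace part is linear and splits off from the eigenvalue map through \eqref{eq:Tr}.

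For (1), the map $A \mapsto \tilde A = A - \frac1d \on{Tr}(A)\I_d$ is a real linear map $\on{Herm}(d) \to \on{Herm}_T(d)$ and acts pointwise. Since $|\on{Tr}(B)| \le \sqrt d\,\|B\|_2$ by Cauchy--Schwarz, we get $\|\frac1d\on{Tr}(B)\I_d\|_2 = \frac{1}{\sqrt d}|\on{Tr}(B)| \le \|B\|_2$, and hence $\|\tilde B\|_2 \le 2\|B\|_2$. (In fact $B \mapsto \tilde B$ is an orthogonal projection, so $\|\tilde B\|_2 \le \|B\|_2$.) Weak derivatives commute with constant linear maps, so $(\tilde A)' = \widetilde{A'}$ almost everywhere. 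Applying the pointwise bound to $B = A-A_n$ and to $B = A'-A_n'$ gives
\[
\|\tilde A - \tilde A_n\|_{W^{1,q}(I,M_d(\C))} \le 2\,\|A-A_n\|_{W^{1,q}(I,M_d(\C))} \to 0 .
\]
The trace zero condition is preserved pointwise, so $\tilde A_n,\tilde A \in W^{1,q}(I,\on{Herm}_T(d))$.

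For (2), set $\tau := \frac1d\on{Tr}(A)$ and $\tau_n := \frac1d \on{Tr}(A_n)$. These are scalar functions in $W^{1,q}(I)$, and by \eqref{eq:Tr}
\[
\eig(A) - \eig(A_n) = \big(\eig(\tilde A) - \eig(\tilde A_n)\big) + (\tau - \tau_n)(1,\ldots,1).
\]
The triangle inequality then gives
\[
\Big|\|\eig(A)-\eig(A_n)\|_{W^{1,q}} - \|\eig(\tilde A)-\eig(\tilde A_n)\|_{W^{1,q}}\Big| \le \sqrt d\,\|\tau-\tau_n\|_{W^{1,q}(I)} .
\]
The same trace estimate, applied to $A-A_n$ and to its derivative, yields $\|\tau - \tau_n\|_{W^{1,q}(I)} \le d^{-1/2}\|A-A_n\|_{W^{1,q}} \to 0$. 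Therefore one sequence of norms tends to zero exactly when the other does.

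There is no real obstacle here. The only point needing care is that \eqref{eq:Tr} holds pointwise for every $x$, because shifting by a scalar multiple of $\I_d$ shifts all ordered eigenvalues equally and preserves their order. Consequently it also holds for the weak derivatives, which justifies differentiating the decomposition term by term.
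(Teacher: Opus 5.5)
Your proof is correct and follows the paper's argument. The paper notes only that $\on{Tr}(A_n)\to\on{Tr}(A)$ in $W^{1,q}(I)$ and that the second assertion follows from \eqref{eq:Tr}; you make explicit the Cauchy--Schwarz trace bound and the triangle inequality that this short argument relies on.
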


\begin{proof}
    Clearly, $\on{Tr}(A_n) \to \on{Tr}(A)$ in $W^{1,q}(I)$ as $n \to \infty$. 
    The second assertion follows easily from \eqref{eq:Tr}.
\end{proof}

By \Cref{l:TrW}, we may assume that $\on{Tr}(A) = \on{Tr}(A_n) =0$ for all $n \ge 1$.

%-----------------------------------------------------------------------------------------------------------------------
\subsection{Proof of \Cref{thm:mainhermW}} \label{ssec:pfH}
%-----------------------------------------------------------------------------------------------------------------------

By the preliminary reductions in \Cref{ssec:redH}, it suffices to show the following proposition:

\begin{proposition} \label[p]{p:indhermW}
    Let $1 \le q < \infty$.
    Let $I \subseteq \R$ be a bounded open interval.
    Let $A_n \to A$ in $W^{1,q} (I, \on{Herm}_T(d))$ as $n \to \infty$.
    Then 
    \begin{equation} \label{eq:toshow}
        \|\eig(A)' - \eig(A_n)'\|_{L^q(I,\R^d)} \to 0 \quad \text{ as } n \to \infty.
    \end{equation}
\end{proposition}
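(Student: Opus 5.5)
Our plan is to argue by induction on $d$; for $d=1$ the space $\on{Herm}_T(1)$ is $\{0\}$ and there is nothing to prove. Fix $d\ge 2$ and assume \Cref{p:indhermW}, and hence \Cref{thm:mainhermW} via \Cref{l:TrW}, for all smaller sizes. We split $I$ into the zero set $Z:=\{x\in I: A(x)=0\}$ and its complement $I\setminus Z$, which is open and is a countable union of open intervals $J$. Since $\|\eig(A)'-\eig(A_n)'\|_2^q\lesssim \|\eig(A)'\|_2^q+\|A_n'\|_2^q$ by \eqref{eq:charHerm3}, and $A_n'\to A'$ in $L^q$, the family $\{\|\eig(A)'-\eig(A_n)'\|_2^q\}_n$ is uniformly integrable. \Cref{l:Vapp} then reduces the problem to proving $\int_X \|\eig(A)'-\eig(A_n)'\|_2^q\to 0$ for each member $X$ of a countable cover of $I$. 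We use the cover consisting of $Z$ and countably many compact subintervals $K\Subset J$.

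\textbf{On $Z$.} At almost every point of $Z$ we have $A'=0$. Hence $\eig(A)'=0$ a.e. on $Z$, since $\|\eig(A)'\|_2\le\|A'\|_2$ by \eqref{eq:charHerm3}. It follows that
\[
\int_Z \|\eig(A)'-\eig(A_n)'\|_2^q\le \int_Z\|A_n'\|_2^q=\int_Z\|A_n'-A'\|_2^q\to 0 .
\]

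\textbf{On a compact $K\subset I\setminus Z$.} Here $\|A\|_2\ge c>0$, and by \Cref{c:supW} also $\|A_n\|_2\ge c/2$ on $K$ for large $n$, with $\ul A_n\to\ul A$ uniformly on $K$. Using uniform continuity of $\ul A$ and the radius $r$ in \eqref{eq:r}, we cover $K$ by finitely many closed subintervals $K'$ with $\ul A(K')$ of diameter $<r/2$. Then, for large $n$, both $\ul A(K')$ and $\ul A_n(K')$ lie in a single chart $\cV_i$. On $K'$ we form the block diagonalizations \eqref{eq:BD}, namely $U^*(\ul A)AU(\ul A)=\mathrm{diag}(B(A),C(A))$ and the analogous one for $A_n$ with the same $U=U_i$, so the block sizes agree.

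\Cref{l:bdiag}, together with $\|A_n\|_2^{-1}\le 2/c$ and the $L^q$ boundedness of $A',A_n'$, gives $B(A_n)\to B(A)$ and $C(A_n)\to C(A)$ in $W^{1,q}(K')$. By \eqref{eq:eigABC}, $\eig(A)=(\eig(B(A)),\eig(C(A)))$, and likewise for $A_n$. The blocks have sizes $<d$, so the induction hypothesis, applied in the form of \Cref{thm:mainhermW} for smaller matrices on the interval $K'$, yields $\eig(B(A_n))'\to\eig(B(A))'$ in $L^q(K')$, and the same for $C$. Summing over the finitely many $K'$ gives the claim on $K$, and \Cref{l:Vapp} completes the argument.

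The main obstacle is the step near $Z$: the block-diagonalization degenerates as $\|A\|_2\to 0$, which is why we must avoid any uniform estimate there. This is handled by the uniform integrability and countable cover argument of \Cref{l:Vapp}, combined with the vanishing of $A'$ a.e. on $Z$. A secondary point to check is the $L^q$ convergence of the derivatives in \eqref{eq:bdiagbounds3}. The term $\|A_n'\|_2\|A-A_n\|_2$ tends to zero in $L^q(K')$ because $\|A-A_n\|_{L^\infty}\to 0$ while $\|A_n'\|_{L^q}$ stays bounded.
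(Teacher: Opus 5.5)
Your proposal is correct and follows the paper's proof. Both argue by induction on $d$, kill the contribution of $Z_A$ using $A'=0$ a.e.\ there (\Cref{l:Z}), use simultaneous block-diagonalization with \Cref{l:bdiag} plus the induction hypothesis off $Z_A$, and conclude via uniform integrability and \Cref{l:Vapp}. The only cosmetic difference is how you localize: you take compact subintervals and use uniform continuity of $\ul A$ together with uniform convergence $\ul A_n\to\ul A$, whereas \Cref{l:red} picks an interval $J$ around each $x_0$ by making $\|A'\|_{L^1(J)}$ and $\|A'-A_n'\|_{L^1(I)}$ small; both give $\ul A(J),\ul A_n(J)\subseteq\cV_i$ for large $n$.
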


We will proceed by induction on $d$. If $d=1$ the assertion is trivially true. 
So assume that $d\ge 2$.

Let us first consider the zero set 
\[
    Z_A := \{x \in I : A(x) =0\} = \{x \in I : \eig(A)(x) = 0\} =: Z_{\eig(A)}
\]
of $A$. 
Since $\eig(A) \in W^{1,q}(I,\R^d)$, the derivative $\eig(A)'$ exists almost everywhere in $I$. 
If $x_0$ belongs to the set of accumulation points of $Z_A$, denoted by $\on{acc}(Z_A)$,
and $\eig(A)'(x_0)$ exists, 
then necessarily 
$\eig(A)'(x_0)=0$.

\begin{lemma} \label[l]{l:Z}
    Let $1 \le q < \infty$.
    Let $I \subseteq \R$ be a bounded open interval.     
    Assume that $A_n \to A$ in $W^{1,q} (I, \on{Herm}(d))$ as $n \to \infty$.
    Then
    \begin{equation*}
        \|\eig(A_n)' - \eig(A)'\|_{L^q(Z_A,\R^d)} \to 0 \quad \text{ as } n\to \infty.
    \end{equation*}
\end{lemma}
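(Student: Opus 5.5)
We need to show $\int_{Z_A}\|\eig(A_n)'\|_2^q\,dx\to 0$. On the set $Z_A$, the derivative $\eig(A)'$ vanishes almost everywhere. Indeed, $Z_A$ is closed in $I$, and its isolated points form a countable set. At almost every point of $\on{acc}(Z_A)$ the derivative exists, and it is then zero, as observed just before the lemma. Hence $\|\eig(A_n)'-\eig(A)'\|_{L^q(Z_A,\R^d)}=\|\eig(A_n)'\|_{L^q(Z_A,\R^d)}$, and it suffices to show that this tends to zero.

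For this I use the pointwise bound \eqref{eq:charHerm3} from \Cref{p:mcharHerm}, which gives $\|\eig(A_n)'(x)\|_2\le\|A_n'(x)\|_2$ almost everywhere. Since $A$ is absolutely continuous, the same argument applied to $A$ itself shows that $A'=0$ almost everywhere on $Z_A$. This works because each entry of $A$ vanishes on $Z_A$, and an absolutely continuous function has zero derivative almost everywhere on its zero set; equivalently, $A'$ exists and vanishes at accumulation points of $Z_A$ where it exists.

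Consequently,
\begin{equation*}
\|\eig(A_n)'\|_{L^q(Z_A,\R^d)}\le\|A_n'\|_{L^q(Z_A,M_d(\C))}=\|A_n'-A'\|_{L^q(Z_A,M_d(\C))}\le\|A_n-A\|_{W^{1,q}(I,M_d(\C))}\to0 .
\end{equation*}

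No step here is a serious obstacle. The only care needed is the measure-theoretic point that a Sobolev function has zero derivative almost everywhere on its zero set. This follows from the fact that the isolated points of $Z_A$ form a countable null set, together with differentiability almost everywhere of absolutely continuous representatives.
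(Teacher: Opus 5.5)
Your proof is correct and matches the paper's argument. The paper works on the full-measure subset of $\on{acc}(Z_A)$ where all of $A'$, $\eig(A)'$, $A_n'$, $\eig(A_n)'$ exist, and notes that $A'=0$ and $\eig(A)'=0$ there. It then uses the pointwise bound $\|\eig(A_n)'\|_2\le\|A_n'\|_2$ (L\"owner, or \eqref{eq:charHerm3}) to conclude $\|\eig(A_n)'\|_{L^q(Z_A,\R^d)}\le\|A_n'-A'\|_{L^q(Z_A,M_d(\C))}\to 0$.
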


\begin{proof}
    Let $E$ be the set of $x \in \on{acc}(Z_A)$, where the derivatives $A'(x)$, $\eig(A)'(x)$ and $A_n'(x)$, $\eig(A_n)'(x)$ for all $n\ge 1$ exist.
    For each $x \in E$, $A'(x) = 0$, $\eig(A)'(x) = 0$, and 
    \[ 
        \big\| \eig(A_n)'(x) \big\|_2  \le \|A_n'(x)\|_2,
    \]
    by \eqref{eq:Loewner} or \eqref{eq:charHerm3}.
    Since $E$ has full measure in $Z_A$, we thus get 
    \begin{align*}
        \|\eig(A_n)' - \eig(A)'\|_{L^q(Z_A,\R^d)} &= \|\eig(A_n)' \|_{L^q(Z_A,\R^d)} 
        \\
                                                  &\le \|A_n' \|_{L^q(Z_A,M_d(\C))}
                                                  = \|A_n'- A' \|_{L^q(Z_A,M_d(\C))},
    \end{align*}
    which implies the assertion.
\end{proof}

The next lemma takes care of the complement of $Z_A$, where around each point we
can find an interval on which we have a simultaneous unitary block-diagonalization for sufficiently large $n$.

Let us fix a uniform unitary block-diagonalization 
$(\{U_i : \cV_i \to U_d(\C)\}_{i=1}^s, r, \ch)$
for $\on{Herm}_T^0(d)$; see \Cref{d:UBD}.

\begin{lemma} \label[l]{l:red}
    Let $1 \le q < \infty$.
    Let $I \subseteq \R$ be a bounded open interval.     
    Assume that $A_n \to A$ in $W^{1,q} (I, \on{Herm}_T(d))$ as $n \to \infty$.
    Let $x_0 \in I\setminus Z_A$. 
    Then there exist an open interval $J$ with $x_0 \in J \subseteq I \setminus Z_A$, 
    $n_0 \ge 1$, and $i \in \{1,\ldots,s\}$ such that 
    \begin{enumerate}
        \item for all $n \ge n_0$, the curves
            $\ul A := A/\|A\|_2 $ and $\ul A_n := A_n/\|A_n\|_2$ belong to $W^{1,q}(J, \on{Herm}_T^0(d))$ 
            and satisfy
            $\ul A(J), \ul A_n(J) \subseteq \cV_i$; 
        \item on $J$ and for all $n \ge n_0$, we have  
            \begin{align} \label{eq:simBD}
                U_i^* (\ul A) A \, U_i(\ul A) 
                =   
                \begin{pmatrix}
                    B & 0 \\
                    0 &  C
                \end{pmatrix}, \quad
                U_i^* (\ul A_n) A_n \, U_i(\ul A_n) 
                =   
                \begin{pmatrix}
                    B_n & 0 \\
                    0 &  C_n
                \end{pmatrix},
            \end{align}
            where $B,B_n \in W^{1,q}(J,\on{Herm}(d_1))$ and $C,C_n \in W^{1,q}(J,\on{Herm}(d_2))$ with $d_1+d_2 = d$;
        \item on $J$ and for all $n \ge n_0$, we have 
            \begin{equation} \label{eq:eigBD}
    \eig(A) = (\eig(B),\eig(C)), \quad  \eig(A_n) = (\eig(B_n),\eig(C_n));
\end{equation}
        \item we have 
            \begin{equation} \label{eq:convBD}
                \|B - B_n\|_{W^{1,q}(J,M_{d_1}(\C))} \to 0, \quad 
                \|C - C_n\|_{W^{1,q}(J,M_{d_2}(\C))} \to 0 
            \end{equation}
            as $n \to \infty$.  
    \end{enumerate}
\end{lemma}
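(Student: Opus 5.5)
The plan is to use continuity of $A$, the uniform convergence $A_n \to A$ from \Cref{c:supW}, and the Lebesgue number $r$ of the uniform unitary block-diagonalization to fix one chart $\cV_i$ that serves all large $n$ on a small interval around $x_0$.

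\emph{Choice of $J$, $n_0$ and $i$.} Since $A$ is absolutely continuous and $A(x_0)\ne 0$, choose a compact interval $\bar J \subseteq I$ containing $x_0$ in its interior with the following properties:
\begin{itemize}
\item $\|A\|_2 \ge c>0$ on $\bar J$;
\item the oscillation of $\ul A$ on $\bar J$ is less than $r/2$, which is possible because $\ul A$ is continuous where $A\ne 0$.
\end{itemize}
Let $J$ be its interior. By \Cref{c:supW}, $\|A_n - A\|_{L^\infty(I)} \to 0$. Hence for $n\ge n_0$ we have $\|A_n\|_2 \ge c/2$ on $J$. The estimate \eqref{eq:pU3} then gives $\|\ul A_n - \ul A\|_2 \le 2c^{-1}\|A_n-A\|_2 < r/2$ on $J$. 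Consequently $\ul A(J)\cup \ul A_n(J)\subseteq B(\ul A(x_0),r)\cap \on{Herm}_T^0(d)$. By \eqref{eq:r} this set lies in a single $\cV_i$, and the index $i$ does not depend on $n$.

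\emph{Regularity of the normalized curves.} Normalization $M\mapsto M/\|M\|_2$ is smooth with bounded derivatives on $\{\|M\|_2\ge c/2\}$, so $\ul A,\ul A_n \in W^{1,q}(J,\on{Herm}_T^0(d))$. This proves (1).

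\emph{Block structure.} Item (2) follows from \eqref{eq:BD}. The resulting blocks have sizes $d_1,d_2$ determined by $\cV_i$, hence they are the same for $A$ and for all $A_n$. The blocks inherit $W^{1,q}$ regularity because $U_i$ is smooth with bounded derivatives, the chain rule holds, and $\|A\|_2,\|A_n\|_2$ stay bounded. Item (3) is \eqref{eq:eigABC}: in the Hermitian case the largest eigenvalue of $B_i$ is strictly below the smallest eigenvalue of $C_i$, so the ordered eigenvalues split as claimed.

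\emph{Convergence of the blocks.} The main step is (4). I would apply \Cref{l:bdiag} pointwise with $A_1=A$ and $A_2=A_n$, choosing $j$ so that $\|A_j\|_2^{-1}\le 2/c$.
\begin{itemize}
\item The estimate \eqref{eq:bdiagbounds2} gives $\|\mathrm{diag}(B,C)-\mathrm{diag}(B_n,C_n)\|_2 \le C\|A-A_n\|_2$. Thus the $L^q$ parts converge.
\item The estimate \eqref{eq:bdiagbounds3} bounds the derivative difference a.e. by
\[
C\bigl(\|A'-A_n'\|_2 + \tfrac{2}{c}(\|A'\|_2+\|A_n'\|_2)\|A-A_n\|_2\bigr).
\]
Taking $L^q(J)$ norms, the first term tends to zero. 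The second term is at most $\tfrac{2C}{c}\|A-A_n\|_{L^\infty}(\|A'\|_{L^q}+\|A_n'\|_{L^q})$. This tends to zero, because the $L^q$ norms of $A_n'$ are bounded and $\|A-A_n\|_{L^\infty}\to 0$ by \Cref{c:supW}.
\end{itemize}

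The only delicate point is uniformity. One must ensure that a single chart index and a single lower bound on the norms work on all of $J$ for all large $n$. The $L^\infty$ convergence handles this; no measure-theoretic argument is needed.
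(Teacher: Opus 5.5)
Your proof is correct and follows the paper's argument. You secure a single chart $\cV_i$ via \eqref{eq:r} that contains $\ul A(J)$ and $\ul A_n(J)$ for all large $n$, then invoke \eqref{eq:BD} and \eqref{eq:eigABC}, and integrate the pointwise bounds of \Cref{l:bdiag} using \Cref{c:supW}. The only difference is cosmetic. The paper localizes $\ul A_n(J)$ through closeness at $x_0$ together with $L^1$-smallness of $A'$ and $A_n'-A'$ on $J$, and it records explicit bounds in terms of $\|A(x_0)\|_2$ that are reused in \Cref{l:respect}. You obtain the same localization directly from the uniform convergence $A_n\to A$ and \eqref{eq:pU3}, which is slightly more streamlined.
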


\begin{proof}
    Let $x_0 \in I\setminus Z_A$. Since $A_n \to A$ in $W^{1,q} (I, \on{Herm}(d))$ as $n \to \infty$, 
    there exists $n_0 \ge 1$ such that 
    \begin{equation} \label{eq:L1clW}
        \|A'-A_n'\|_{L^1(I,M_d(\C))} \le \frac{r}{32}\|A(x_0)\|_2 \quad \text{ if } n \ge n_0.
    \end{equation}
    By \eqref{eq:supW1}, 
    there exists $n_1\ge n_0$ such that 
    \begin{equation} \label{eq:clW}
        \|A(x_0)-A_n(x_0)\|_2 \le \frac{r}{8}\|A(x_0)\|_2 \quad \text{ if } n \ge n_1,
    \end{equation}
    and hence, in particular, 
    \begin{equation} \label{eq:cl1W}
        \frac{1}{2} \|A(x_0)\|_2 \le \|A_n(x_0)\|_2 \le \frac{3}2 \|A(x_0)\|_2.
    \end{equation}

    Choose a open subinterval $J \subseteq I$ containing $x_0$ such that 
    \begin{equation} \label{eq:JW}
        \|A'\|_{L^1(J,M_d(\C))} \le \frac{r}{32} \, \|A(x_0)\|_2.
    \end{equation}
    For $x \in J$ we have 
    \begin{align*}
        |\|A(x)\|_2 - \|A(x_0)\|_2| \le \|A(x)-A(x_0)\|_2 \le \|A'\|_{L^1(J,M_d(\C))} \le \frac{r}{32} \, \|A(x_0)\|_2,
    \end{align*}
    and thus 
    \begin{equation} \label{eq:cl2W}
        \frac{1}{2} \|A(x_0)\|_2 \le \|A(x)\|_2 \le \frac{3}2 \|A(x_0)\|_2.
    \end{equation}
    For $x \in J$ we also have, by \eqref{eq:L1clW}, \eqref{eq:cl1W}, and \eqref{eq:JW},
    \begin{align*}
        |\|A_n(x)\|_2 - \|A_n(x_0)\|_2| 
    &\le \|A_n(x)-A_n(x_0)\|_2 
    \\
    &\le \|A_n'\|_{L^1(J,M_d(d))} 
    \le \frac{r}{16}\, \|A(x_0)\|_2
    \le \frac{r}{8}\, \|A_n(x_0)\|_2
    \end{align*}
    and thus 
    \begin{equation} \label{eq:clnW}
        \frac{1}{2} \|A_n(x_0)\|_2 \le \|A_n(x)\|_2 \le \frac{3}2 \|A_n(x_0)\|_2.
    \end{equation}

    Therefore, $\ul A := A/\|A\|_2 $ and $\ul A_n := A_n/\|A_n\|_2$ are well-defined on $J$ and belong to $W^{1,q}(J, \on{Herm}_T^0(d))$.
    By \eqref{eq:pU3} and \eqref{eq:clW},
    \begin{align*}
        \|\ul A(x_0) - \ul A_n(x_0)\|_2\le \frac{2}{\|A(x_0)\|_2} \|A(x_0) - A_n(x_0)\|_2 \le \frac{r}{4}. 
    \end{align*}
    Moreover, by \eqref{eq:ulA'} and \eqref{eq:cl2W}, for $x \in J$, 
    \begin{align*}
        \|\ul A'(x)\|_2 \le 2\, \frac{\|A'(x)\|_2}{\|A(x)\|_2} \le 4\,\frac{\|A'(x)\|_2}{\|A(x_0)\|_2} 
    \end{align*}
    and, by \eqref{eq:cl2W} and \eqref{eq:cl1W}, 
    \begin{align*}
        \|\ul A_n'(x)\|_2 \le 2\, \frac{\|A_n'(x)\|_2}{\|A_n(x)\|_2} \le 8\, \frac{\|A_n'(x)\|_2}{\|A(x_0)\|_2}.
    \end{align*}
    Therefore, by \eqref{eq:JW},
    \begin{align*}
        \|\ul A'\|_{L^1(J,M_d(\C))} \le \frac{r}{8} 
    \end{align*}
    and, by \eqref{eq:L1clW} and \eqref{eq:JW},
    \begin{align*}
        \|\ul A_n'\|_{L^1(J,M_d(\C))} \le \frac{r}{2}. 
    \end{align*}
    Thus 
    $\ul A(J) \subseteq  B(\ul A(x_0),r/4)$ and $\ul A_n(J) \subseteq B(\ul A_n(x_0),r/2) \subseteq B(\ul A(x_0),3r/4)$.
    By \eqref{eq:r}, there exists $i \in \{1,\ldots,s\}$
    such that
    \[
        \ul A(J) \subseteq \cV_i \quad \text{ and } \quad  \ul A_n(J) \subseteq \cV_i 
    \]
    for all $n \ge n_1$. 
    Thus (1) is proved.

    By \eqref{eq:BD} and \eqref{eq:eigABC}, we have the simultaneous unitary block-diagonalization \eqref{eq:simBD} as well as \eqref{eq:eigBD} on $J$ for $n \ge n_1$. 
    Let us check that $B,B_n, C,C_n$ are of class $W^{1,q}$ on $J$ and satisfy \eqref{eq:convBD}.
    To this end, we suppress the subscript and just write $U : \cV \to U_d(\C)$ for $U_i : \cV_i \to U_d(\C)$.
    By \Cref{l:bdiag}, 
    pointwise on $J$, 
    \begin{align*}
        \| U^*(\ul A)  A\, U(\ul A) - U^*(\ul A_n) A_n\, U(\ul A_n) \|_2  
        \le   C \, \|A-A_n\|_2 
    \end{align*}
    and, using also \eqref{eq:cl2W}, almost everywhere on $J$,
    \begin{align*} 
        \| (U^*(\ul A) & A\, U(\ul A))'  - (U^*(\ul A_n) A_n\, U(\ul A_n))'  \|_2 \nonumber \\ 
                       & \quad \le  C\, \Bigl (  \|A' - A_n'\|_2 +   ( \| A'\|_2+ \| A_n'\|_2) \|A\|_2^{-1}  \|A -A_n\|_2 \Bigr ),
                       \\
                       & \quad \le  C\,\Bigl (  \|A' - A_n'\|_2 +   2\,\|A(x_0)\|_2^{-1}\, ( \| A'\|_2+ \| A_n'\|_2)   \|A -A_n\|_2 \Bigr ),
    \end{align*}
    where $C$ is a universal constant depending only on $d$. This implies 
    \begin{align*}
        \| U^*(\ul A)  A\, U(\ul A) - U^*(\ul A_n) A_n\, U(\ul A_n) \|_{L^q(J,M_d(\C))}  
        \le   C\,  \|A-A_n\|_{L^q(J,M_d(\C))} 
    \end{align*}
    and
    \begin{align*} 
    &\| (U^*(\ul A)  A\, U(\ul A))'  - (U^*(\ul A_n) A_n\, U(\ul A_n))'  \|_{L^q(J,M_d(\C))}^q
    \\
    &\quad\le (2C)^q \,\|A' - A_n'\|_{L^q(J,M_d(\C))}^q 
    \\
    &\qquad + \Big(\frac{8C}{\|A(x_0)\|_2}\Big)^q \big(\| A'\|_{L^q(J,M_d(\C))}^q+ \| A_n'\|_{L^q(J,M_d(\C))}^q\big) \|A -A_n\|_{L^\infty(J,M_d(\C))}^q.
    \end{align*}
    By \Cref{c:supW}, we conclude \eqref{eq:convBD}. This completes the proof.
\end{proof}

By \Cref{l:TrW}, \Cref{l:red}, and  
the induction hypothesis, 
for each $x_0 \in I \setminus Z_A$ there is an open interval $J$ with $x_0 \in J \subseteq I \setminus Z_A$ such that 
\[
\|\eig(A)' - \eig(A_n)'\|_{L^q(J,\R^{d})} \to 0  
     \quad \text{ as } n \to \infty.
\]
Thus, in view of \Cref{l:Z}, there is a countable cover $\{F_i\}_{i \ge 1}$ of $I$ by measurable 
sets such that, for each $F_i$, 
\[
    \|\eig(A)' - \eig(A_n)'\|_{L^q(F_i,\R^{d})} \to 0  
    \quad \text{ as } n \to \infty.
\]
In view of \Cref{l:Vapp} (applied to $f_n := \|\eig(A)' - \eig(A_n)'\|_2^q$), the desired assertion \eqref{eq:toshow} will 
follow from the following lemma.

\begin{lemma} \label[l]{l:ui}
    Let $1 \le q < \infty$.
    Let $I \subseteq \R$ be a bounded open interval.     
    Assume that $A_n \to A$ in $W^{1,q} (I, \on{Herm}(d))$ as $n \to \infty$. 
    Then $\{\|\eig(A)' - \eig(A_n)'\|_2^q : n \ge 1\} \subseteq L^1(I)$ is uniformly integrable.
\end{lemma}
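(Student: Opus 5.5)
The plan is to dominate $\|\eig(A)' - \eig(A_n)'\|_2^q$ pointwise by a family that is visibly uniformly integrable, using only the pointwise bound \eqref{eq:charHerm3} and the $L^q$ convergence of the derivatives $A_n' \to A'$. By \eqref{eq:charHerm3} (equivalently \eqref{eq:Loewner}), almost everywhere on $I$ we have $\|\eig(A)'\|_2 \le \|A'\|_2$ and $\|\eig(A_n)'\|_2 \le \|A_n'\|_2$. Hence, almost everywhere,
\[
    \|\eig(A)' - \eig(A_n)'\|_2^q \le 2^{q-1}\big(\|A'\|_2^q + \|A_n'\|_2^q\big).
\]
It therefore suffices to show that the family $\{\|A'\|_2^q + \|A_n'\|_2^q : n\ge 1\}$ is uniformly integrable. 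A family dominated pointwise by a uniformly integrable family is itself uniformly integrable. This is immediate from the definition: for each $C>0$, the integral of $|f|$ over $\{|f|>C\}$ is controlled by the integral of the dominating $g$ over $\{g>C\}$.

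The key step is the uniform integrability of $\{\|A_n'\|_2^q\}$. Since $A_n' \to A'$ in $L^q(I,M_d(\C))$, we use the elementary inequality
\[
    \big|\|A_n'\|_2^q - \|A'\|_2^q\big| \le q\,\|A_n'-A'\|_2 \big(\|A_n'\|_2 + \|A'\|_2\big)^{q-1}.
\]
Combined with Hölder's inequality, this gives $\|A_n'\|_2^q \to \|A'\|_2^q$ in $L^1(I)$. Alternatively, one can argue that the norms converge and use the Brezis--Lieb type argument. Then Vitali's convergence theorem \ref{thm:Vitali}, direction (2)$\Rightarrow$(1), shows that the convergent sequence $\{\|A_n'\|_2^q\}$ is uniformly integrable on the finite measure space $I$. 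Adding the single integrable function $\|A'\|_2^q$ preserves uniform integrability, as does a sum of two uniformly integrable families.

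Only one step requires any care. To obtain $L^1$ convergence of the $q$-th powers from $L^q$ convergence, one can avoid computation. Write $\|A_n'\|_2^q \le 2^{q-1}(\|A_n'-A'\|_2^q + \|A'\|_2^q)$. The family $\{\|A_n'-A'\|_2^q\}$ converges to $0$ in $L^1$, so it is uniformly integrable by Vitali, and $\|A'\|_2^q$ is a single integrable function. This gives the domination directly, and no genuine obstacle remains: the lemma is a soft consequence of the $1$-Lipschitz property of $\la^\uparrow$.
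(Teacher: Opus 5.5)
Your proof is correct and follows the same route as the paper: the pointwise bound $\|\eig(A)'-\eig(A_n)'\|_2^q \lesssim \|A'\|_2^q+\|A_n'\|_2^q$ from \eqref{eq:charHerm3}, combined with uniform integrability of the dominating family, which follows from $A_n'\to A'$ in $L^q$ via Vitali's theorem. The only difference is in how uniform integrability is transferred to the dominated family: the paper passes through de la Vall\'ee Poussin's criterion, applying an increasing superlinear $G$ to both sides, whereas you use directly that pointwise domination by a uniformly integrable family preserves uniform integrability. You also spell out why $\{\|A_n'\|_2^q\}$ is uniformly integrable, a step the paper leaves implicit.
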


\begin{proof}
    Since $A_n' \to A'$ in $L^q(I,M_d(\C))$ as $n \to \infty$,
    the set $\{\|A'\|_2^q+\|A_n'\|_2^q : n \ge 1\} \subseteq L^1(I)$ is uniformly integrable, by Vitali's convergence theorem \ref{thm:Vitali}.
Thus,
by de la Vall\'ee Poussin's criterion \ref{thm:VP}, there exists a nonnegative increasing function $G$ on $[0,\infty)$ such that 
$G(t)/t \to \infty$ as $t \to \infty$ and 
\begin{align*}
    \sup_{n \ge 1} \int_I G(2^q(\|A'\|_2^q+\|A'_n\|_2^q)) \, dx < \infty.
\end{align*}
By \eqref{eq:Loewner} or \eqref{eq:charHerm3}, almost everywhere in $I$,
\[
    \|\eig(A)' - \eig(A_n)'\|_2^q \le  2^q(\|A'\|_2^q+\|A'_n\|_2^q)
\]
so that the assertion follows, by de la Vall\'ee Poussin's criterion \ref{thm:VP} again.
\end{proof}

This completes the proof of \Cref{p:indhermW} and thus of \Cref{thm:mainhermW}.

%-----------------------------------------------------------------------------------------------------------------------
\subsection{Almost everywhere convergence of the derivatives of the eigenvalues} \label{ssec:mptw} 
%-----------------------------------------------------------------------------------------------------------------------

The following theorem is a version of \Cref{thm:mptw} for $m=1$.

\begin{theorem} \label[t]{thm:ptw}
    Let $I \subseteq \R$ be a bounded open interval. 
    Let $A_n \to A$ in $C^{0,1}(\ol I,\on{Herm}(d))$ as $n \to \infty$. 
    Then, for almost every $x \in I$,  
    \begin{equation} \label{eq:1ptw}
        \eig(A_n)'(x) \to \eig(A)'(x) \quad n \to \infty. 
    \end{equation}
\end{theorem}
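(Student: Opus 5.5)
The plan is to mirror the proof of \Cref{p:indhermW}: induct on $d$, reduce to trace zero, and treat the zero set $Z_A$ and its complement separately. Since $C^{0,1}$ convergence implies $W^{1,q}$ convergence and uniform convergence of the $A_n$, the block-diagonalization machinery of \Cref{l:red} is available. The difference is that we need pointwise a.e.\ convergence instead of $L^q$ convergence, which the Lipschitz hypothesis makes possible.

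\emph{Reduction to trace zero.} By \eqref{eq:Tr} we have $\eig(A_n)' = \eig(\tilde A_n)' + \frac1d \on{Tr}(A_n)'(1,\ldots,1)$. Moreover $\on{Tr}(A_n)\to\on{Tr}(A)$ in $C^{0,1}$, so $\on{Tr}(A_n)'\to \on{Tr}(A)'$ uniformly a.e. We may therefore assume $\on{Tr}(A)=\on{Tr}(A_n)=0$. The case $d=1$ is then trivial.

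\emph{The zero set.} Let $E$ be the full-measure subset of $\on{acc}(Z_A)$ (note $Z_A\setminus\on{acc}(Z_A)$ is countable) on which all derivatives $A'$, $A_n'$, $\eig(A)'$, $\eig(A_n)'$ exist. On $E$ we have $A'=0$ and $\eig(A)'=0$, and by \eqref{eq:charHerm3}
\[
\|\eig(A_n)'(x)\|_2\le \|A_n'(x)\|_2 = \|A_n'(x)-A'(x)\|_2 \le |A_n-A|_{C^{0,1}(\ol I,M_d(\C))}\to 0 .
\]
Here I use that for Lipschitz maps the a.e.\ derivative is bounded by the Lipschitz constant. This gives \eqref{eq:1ptw} a.e.\ on $Z_A$.

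\emph{The complement.} For $x_0\in I\setminus Z_A$, I apply \Cref{l:red} (valid since $C^{0,1}$ convergence implies $W^{1,q}$ convergence) to get an interval $J\ni x_0$ with the simultaneous block-diagonalization \eqref{eq:simBD} and the splitting \eqref{eq:eigBD}. The key step is to upgrade \eqref{eq:convBD} to convergence $B_n\to B$, $C_n\to C$ in $C^{0,1}(\ol J)$. This follows from the pointwise bounds of \Cref{l:bdiag}. By \eqref{eq:bdiagbounds3}, almost everywhere on $J$,
\[
\|(U^*(\ul A)AU(\ul A))'-(U^*(\ul A_n)A_nU(\ul A_n))'\|_2\le C\big(\|A'-A_n'\|_2+2\|A(x_0)\|_2^{-1}(\|A'\|_2+\|A_n'\|_2)\|A-A_n\|_2\big).
\]
The right-hand side is bounded in $L^\infty(J)$ by a quantity tending to $0$: the first term by $|A-A_n|_{C^{0,1}}$, and the second because $\|A_n'\|_\infty$ is uniformly bounded and $\|A-A_n\|_\infty\to0$. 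Together with \eqref{eq:bdiagbounds2}, this gives Lipschitz convergence of the blocks on $J$ (on an interval, the Lipschitz seminorm equals the $L^\infty$ norm of the derivative). The blocks have trace not necessarily zero, so I apply the full statement in lower dimension, including the trace reduction, as the induction hypothesis. This yields $\eig(B_n)'\to\eig(B)'$ and $\eig(C_n)'\to\eig(C)'$ a.e.\ on $J$, hence \eqref{eq:1ptw} a.e.\ on $J$ by \eqref{eq:eigBD}.

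\emph{Conclusion.} $I\setminus Z_A$ is open, hence Lindelöf, so countably many such intervals $J$ cover it. A countable union of null exceptional sets is null, and combining with the zero-set case gives a.e.\ convergence on $I$; no uniform integrability argument is needed here. The main obstacle is the Lipschitz upgrade of \eqref{eq:convBD}. In particular, one must check that $\|A_n'\|_{L^\infty}$ stays bounded and that $\|A_n\|_2$ stays comparable to $\|A(x_0)\|_2$ on $J$, via \eqref{eq:cl2W}, so that the constant in \eqref{eq:bdiagbounds3} is uniform in $n$.
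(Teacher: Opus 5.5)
Your proposal is correct and follows essentially the same route as the paper. The paper uses the same trace reduction, the same induction on $d$, the same split into $Z_A$ and its complement, and the same step of applying \Cref{l:red} and upgrading the block convergence to $C^{0,1}$ via \Cref{l:bdiag} (this is the paper's \Cref{l:redC}). The only deviation is on $Z_A$: you bound $\|A_n'-A'\|_2$ directly by $|A_n-A|_{C^{0,1}}$, whereas \Cref{l:Zp} goes through the slope-convergence \Cref{l:slope1}--\Cref{l:slope2}; your shortcut is equally valid and slightly more elementary.
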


Let us prove \Cref{thm:ptw} following the proof of \Cref{thm:mainhermW} and indicating the required modifications.

In analogy to \Cref{l:TrW}, it is easy to conclude the following lemma
which allows us to assume $\on{Tr}(A) = \on{Tr}(A_n) =0$ for all $n \ge 1$.

\begin{lemma} \label[l]{l:Tr}
    Let $I \subseteq \R$ be a bounded open interval.  
    Assume that $A_n \to A$ in $C^{0,1} (\ol I, \on{Herm}(d))$ as $n \to \infty$.
    Then:
    \begin{enumerate}
        \item $\tilde A_n \to \tilde A$ in $C^{0,1} (\ol I, \on{Herm}_T(d))$ as $n \to \infty$.
        \item Moreover, 
            \[
                \eig(A_n)' \to  \eig(A)'  \quad \text{ almost everywhere in }  I 
            \]
            if and only if 
            \[
                \eig(\tilde A_n)' \to  \eig(\tilde A)'  \quad \text{ almost everywhere in }  I 
            \]
            as  $n\to \infty$. 
    \end{enumerate}
\end{lemma}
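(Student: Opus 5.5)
The statement to prove is \Cref{l:Tr}, the Lipschitz analogue of \Cref{l:TrW}. The plan is to work with the trace function $t := \frac{1}{d}\on{Tr}(A)$ and $t_n := \frac1d \on{Tr}(A_n)$ and to use the identity \eqref{eq:Tr}.

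For part (1), I would use that $\on{Tr} : \on{Herm}(d) \to \R$ is real linear with $|\on{Tr}(X)| \le \sqrt d\, \|X\|_2$ (by Cauchy--Schwarz). It follows that $t_n \to t$ in $C^{0,1}(\ol I)$. Indeed, both the sup-norm and the Lipschitz seminorm of $t - t_n$ are bounded by $d^{-1/2}$ times the corresponding quantities for $A - A_n$. Consequently
\[
    \tilde A - \tilde A_n = (A - A_n) - (t - t_n)\I_d
\]
satisfies
\[
    \|\tilde A - \tilde A_n\|_{C^{0,1}} \le \|A-A_n\|_{C^{0,1}} + \sqrt d\, \|t-t_n\|_{C^{0,1}} \le 2\,\|A-A_n\|_{C^{0,1}} \to 0.
\]
Clearly $\tilde A$ and $\tilde A_n$ take values in $\on{Herm}_T(d)$.

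For part (2), by \eqref{eq:Tr} we have $\eig(A) = \eig(\tilde A) + t\,(1,\ldots,1)$, and similarly for $A_n$. All of these functions are Lipschitz on $I$, so by Rademacher's theorem there is a full-measure set $E \subseteq I$ on which $t$, all $t_n$, $\eig(A)$, $\eig(\tilde A)$, $\eig(A_n)$ and $\eig(\tilde A_n)$ are differentiable. This uses only countably many functions. On $E$ the identity differentiates to
\[
    \eig(A)' - \eig(A_n)' = \eig(\tilde A)' - \eig(\tilde A_n)' + (t' - t_n')(1,\ldots,1).
\]
The equivalence therefore reduces to showing that $t_n' \to t'$ almost everywhere.

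This is the only non-formal point. I would take $t_n'(x) = \frac1d \on{Tr}(A_n'(x))$ at points where $A_n$ is differentiable; further shrinking $E$ by a null set costs nothing. Then
\[
    |t'(x) - t_n'(x)| \le d^{-1/2}\, \|A'(x) - A_n'(x)\|_2 \le d^{-1/2}\, |A - A_n|_{C^{0,1}(\ol I, M_d(\C))},
\]
because at a point of differentiability the norm of the derivative is bounded by the Lipschitz constant. The right-hand side tends to $0$, so in fact $t_n' \to t'$ uniformly on $E$. Combined with the displayed identity, this gives both directions of the equivalence in part (2).
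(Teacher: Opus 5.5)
Your proof is correct and is essentially the paper's argument. The paper only says the lemma follows in analogy to \Cref{l:TrW}, i.e.\ from $\on{Tr}(A_n)\to\on{Tr}(A)$ in the relevant topology together with the identity \eqref{eq:Tr}, and your write-up simply fills in those details, including that $t_n'\to t'$ even uniformly on a full-measure set, since $|t'-t_n'|\le d^{-1/2}\,|A-A_n|_{C^{0,1}(\ol I,M_d(\C))}$ at every point of differentiability.
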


We prove \eqref{eq:1ptw} by induction on $d$. If $d=1$ the assertion is trivially true. 
So assume that $d\ge 2$.

On the zero set $Z_A$ 
of $A$, we have the following lemma. 

\begin{lemma} \label[l]{l:Zp}
    Let $I \subseteq \R$ be a bounded open interval
    and assume that $A_n \to A$ in $C^{0,1} (\ol I, \on{Herm}(d))$ as $n \to \infty$.
    Then, for almost every $x \in Z_A$, 
    \begin{equation} \label{eq:ptw3}
        \eig(A_n)'(x)  \to  0 \quad \text{ as } n\to \infty. 
    \end{equation}
\end{lemma}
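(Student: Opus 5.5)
The plan is to follow the proof of \Cref{l:Z}, upgrading $L^q$ convergence to pointwise convergence by using Lipschitz rather than Sobolev convergence of the derivatives. Let $E \subseteq Z_A$ be the set of points $x \in \on{acc}(Z_A)$ at which the derivatives $A'(x)$, $\eig(A)'(x)$, and, for every $n \ge 1$, $A_n'(x)$ and $\eig(A_n)'(x)$ all exist. The derivatives exist almost everywhere by Rademacher's theorem in one variable, since all these curves are Lipschitz by \eqref{eq:eigLip}. Only countably many conditions are involved, and $Z_A \setminus \on{acc}(Z_A)$ consists of isolated points and is therefore countable. Hence $E$ has full measure in $Z_A$.

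At every $x \in E$, we have $A'(x) = 0$ and $\eig(A)'(x) = 0$, because $A$ vanishes along a sequence of points of $Z_A$ converging to $x$. By \eqref{eq:charHerm3} (equivalently, differentiating \eqref{eq:Loewner}),
\[
    \|\eig(A_n)'(x)\|_2 \le \|A_n'(x)\|_2 = \|A_n'(x) - A'(x)\|_2 .
\]
The remaining task is to show that the right-hand side tends to $0$ for each such $x$. For this, observe that at any point where both $A_n$ and $A$ are differentiable,
\[
    \|(A_n - A)'(x)\|_2 = \lim_{h \to 0} \frac{\|(A_n-A)(x+h) - (A_n-A)(x)\|_2}{|h|} \le |A_n - A|_{C^{0,1}(\ol I, M_d(\C))}.
\]
This bound tends to $0$ by hypothesis and is uniform in $x \in E$. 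Thus $\eig(A_n)'(x) \to 0$ for every $x \in E$, which gives \eqref{eq:ptw3}, and in fact uniformly on $E$.

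There is no real obstacle. The only points needing care are that the full-measure set $E$ is chosen independently of $n$, which works because countably many null sets are discarded, and that derivatives of Lipschitz curves are controlled pointwise by the Lipschitz seminorm. No induction or block-diagonalization is needed on $Z_A$; these enter only on $I \setminus Z_A$, as in \Cref{l:red}.
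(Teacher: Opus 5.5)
Your proof is correct. It has the same skeleton as the paper's: restrict to the full-measure set of accumulation points of $Z_A$ where the relevant derivatives exist, then bound $\|\eig(A_n)'\|_2$ there by a quantity that tends to $0$ uniformly. The key estimate, however, is obtained differently.

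\emph{The paper's route.} It never differentiates $A_n$ or $A$. It bounds $\|\eig(A_n)'(x_0)\|_2$ by the supremum of the difference quotients of $\eig(A_n)$ over pairs in $Z_A^{<2>}$. It compares these with the difference quotients of $A_n$ via \eqref{eq:Loewner}. It then invokes \Cref{l:slope1} and \Cref{l:slope2} to get $\sup_{Z_A^{<2>}} s_{A_n} \to \sup_{Z_A^{<2>}} s_A = 0$.

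\emph{Your route.} You use the pointwise inequality $\|\eig(A_n)'(x)\|_2 \le \|A_n'(x)\|_2$, the vanishing $A'(x)=0$ at accumulation points of $Z_A$, and the elementary bound $\|(A_n-A)'(x)\|_2 \le |A_n-A|_{C^{0,1}(\ol I,M_d(\C))}$. This mirrors the proof of \Cref{l:Z}.

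\emph{Comparison.} Both arguments give the same uniform bound $\|\eig(A_n)'\|_2 \le |A-A_n|_{C^{0,1}(\ol I,M_d(\C))}$ on a full-measure subset of $Z_A$. Indeed, the slope lemmas here amount to $\sup_{Z_A^{<2>}} s_{A_n} \le \sup_{Z_A^{<2>}} s_A + |A-A_n|_{C^{0,1}(\ol I,M_d(\C))}$. Your version is more elementary and avoids the slope-convergence machinery entirely. The paper's version needs only differentiability of $\eig(A_n)$ at the point, plus a comparison of Lipschitz constants restricted to the set $Z_A$. So it would still work in settings where only set-wise Lipschitz bounds for the spectral map are available, rather than a pointwise derivative inequality like the one Löwner's theorem gives here.
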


\begin{proof}
    By \Cref{p:Loewner}, for every $n \ge 1$,
    \[
        \sup_{(x,y) \in Z_A^{<2>}} \frac{\|\mathcal E(A_n)(x) - \mathcal E(A_n)(y)\|_2}{|x-y|} \le \sup_{(x,y) \in Z_A^{<2>}} \frac{\|A_n(x)-A_n(y)\|_2}{|x-y|}.
    \]
    By \Cref{l:slope1} and \Cref{l:slope2},
    \[
        \sup_{(x,y) \in Z_A^{<2>}} \frac{\|A_n(x)-A_n(y)\|_2}{|x-y|} \to \sup_{(x,y) \in Z_A^{<2>}} \frac{\|A(x)-A(y)\|_2}{|x-y|} = 0 \quad \text{ as } n \to \infty. 
    \]

    Let $E$ be the set of $x \in \on{acc}(Z_A)$, where the derivatives $\mathcal E(A_n)'(x)$ for all $n\ge 1$ exist.
    For each $x_0 \in E$ there is a sequence $x_k \to x_0$ with $x_k \in Z_A$ and $x_k \ne x_0$ for all $k \ge 1$. Thus, for fixed $n$, 
    \begin{align*}
        \big\| \mathcal E(A_n)'(x_0) \big\|_2 
       & = \lim_{k \to \infty} \frac{\|\mathcal E(A_n)(x_0) - \mathcal E(A_n)(x_k)\|_2}{|x_0-x_k|}
        \\
       &\le \sup_{(x,y) \in Z_A^{<2>}} \frac{\|\mathcal E(A_n)(x) - \mathcal E(A_n)(y)\|_2}{|x-y|}.
    \end{align*}
    Hence we may conclude that, for each $x_0\in E$, 
    \[
        \mathcal E(A_n)'(x_0) \to  0  \quad \text{ as } n \to \infty. 
    \]
    This completes the proof, since $E$ has full measure in $Z_A$.
\end{proof}

We need the following variant of \Cref{l:red}.

\begin{lemma} \label[l]{l:redC}
    Let $I \subseteq \R$ be a bounded open interval.     
    Assume that $A_n \to A$ in $C^{0,1} (\ol I, \on{Herm}_T(d))$ as $n \to \infty$.
    Let $x_0 \in I\setminus Z_A$. 
    Then there exist an open interval $J$ with $x_0 \in J \subseteq I \setminus Z_A$, 
    $n_0 \ge 1$, and $i \in \{1,\ldots,s\}$ such that 
    \begin{enumerate}
        \item for all $n \ge n_0$, the curves
            $\ul A := A/\|A\|_2 $ and $\ul A_n := A_n/\|A_n\|_2$ belong to $C^{0,1}(\ol J, \on{Herm}_T^0(d))$ 
            and satisfy
            $\ul A(J), \ul A_n(J) \subseteq \cV_i$; 
        \item on $J$ and for all $n \ge n_0$, we have  
            \begin{align*} \label{eq:simBDC}
                U_i^* (\ul A) A \, U_i(\ul A) 
                =   
                \begin{pmatrix}
                    B & 0 \\
                    0 &  C
                \end{pmatrix}, \quad
                U_i^* (\ul A_n) A_n \, U_i(\ul A_n) 
                =   
                \begin{pmatrix}
                    B_n & 0 \\
                    0 &  C_n
                \end{pmatrix},
            \end{align*}
            where $B,B_n \in C^{0,1}(\ol J,\on{Herm}(d_1))$ and $C,C_n \in C^{0,1}(\ol J,\on{Herm}(d_2))$ with $d_1+d_2 = d$;
        \item on $J$ and for all $n \ge n_0$, we have 
            \begin{equation*} \label{eq:eigBDC}
                \eig(A) = (\eig(B),\eig(C)), \quad  \eig(A_n) = (\eig(B_n),\eig(C_n));
            \end{equation*}
        \item we have 
            \begin{equation*} \label{eq:convBDC}
                \|B - B_n\|_{C^{0,1}(\ol J,M_{d_1}(\C))} \to 0, \quad 
                \|C - C_n\|_{C^{0,1}(\ol J,M_{d_2}(\C))} \to 0 
            \end{equation*}
            as $n \to \infty$.  
    \end{enumerate}
\end{lemma}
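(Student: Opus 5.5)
The plan is to repeat the proof of \Cref{l:red} almost verbatim, replacing the $L^1$ smallness of derivatives by sup-norm (Lipschitz-constant) smallness. Since $A_n \to A$ in $C^{0,1}(\ol I,\on{Herm}_T(d))$, we have $\|A-A_n\|_{L^\infty(I)} \to 0$ and $\|A'-A_n'\|_{L^\infty(I)} \le |A-A_n|_{C^{0,1}} \to 0$. Thus $\|A_n'\|_{L^\infty}$ is bounded uniformly in $n$, say by $L$. Given $x_0 \notin Z_A$, choose $n_0$ with
\[
\|A-A_n\|_{L^\infty(I,M_d(\C))} \le \tfrac{r}{8}\|A(x_0)\|_2 \quad (n\ge n_0).
\]
Then choose $J \ni x_0$ so short that $|J|\,(\|A'\|_{L^\infty}+L) \le \tfrac{r}{32}\|A(x_0)\|_2$. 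This gives the $L^1$ bounds \eqref{eq:L1clW} and \eqref{eq:JW} on $J$ for all $n \ge n_0$. The chain of estimates \eqref{eq:cl1W}--\eqref{eq:clnW} then goes through unchanged. It shows that $\|A\|_2$ and $\|A_n\|_2$ are comparable to $\|A(x_0)\|_2$ on $J$, and that $\ul A(J),\ul A_n(J)\subseteq B(\ul A(x_0),3r/4)$. By \eqref{eq:r}, both images lie in a common $\cV_i$, which proves (1). Lipschitz continuity of $\ul A$ and $\ul A_n$ follows from \eqref{eq:ulA'} together with the lower bound on the norms.

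Items (2) and (3) follow exactly as before from \eqref{eq:BD} and \eqref{eq:eigABC}. The blocks $B,B_n,C,C_n$ are Lipschitz on $\ol J$ because $U_i$ has bounded derivatives on $\cV_i$ and $\ul A,\ul A_n$ are Lipschitz. Hence the products $U_i^*(\ul A)AU_i(\ul A)$ are Lipschitz. The blocks are obtained from these products by a linear projection, so they are Lipschitz as well.

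For (4) I would use \Cref{l:bdiag} pointwise. \eqref{eq:bdiagbounds2} gives
\[
\|B-B_n\|_{L^\infty(J)}+\|C-C_n\|_{L^\infty(J)}\le C\|A-A_n\|_{L^\infty(J)}\to 0.
\]
By \eqref{eq:bdiagbounds3} and \eqref{eq:cl2W}, almost everywhere on $J$,
\[
\|(B-B_n)'\|_2+\|(C-C_n)'\|_2 \lesssim \|A'-A_n'\|_2 + \frac{2(\|A'\|_2+\|A_n'\|_2)}{\|A(x_0)\|_2}\,\|A-A_n\|_2 .
\]
Taking the essential supremum, the right-hand side is at most
\[
C\Big(\|A'-A_n'\|_{L^\infty}+\tfrac{2(\|A'\|_{L^\infty}+L)}{\|A(x_0)\|_2}\|A-A_n\|_{L^\infty}\Big)\to 0.
\]
It remains to convert this into a bound on the Lipschitz seminorm. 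On an interval, the Lipschitz seminorm of a Lipschitz function equals the $L^\infty$ norm of its a.e.\ derivative, by the fundamental theorem of calculus for absolutely continuous functions. Hence $|B-B_n|_{C^{0,1}(\ol J)}\to 0$, and the same holds for $C-C_n$. Combined with the sup-norm convergence, this yields (4).

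No step is a real obstacle. The only point needing care is choosing $J$ independently of $n$. This is exactly where the uniform bound $L$ on $\|A_n'\|_{L^\infty}$ is used: it replaces the $L^1$ convergence used in \Cref{l:red}.
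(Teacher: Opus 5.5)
Your proposal is correct and follows essentially the same route as the paper: item (1) comes from the estimates in the proof of \Cref{l:red}, and items (2)--(4) come from \eqref{eq:BD}, \eqref{eq:eigABC}, and \Cref{l:bdiag}, with the pointwise derivative bound turned into a Lipschitz bound by taking the essential supremum. The only difference is that the paper does not redo the estimates for (1); it invokes \Cref{l:red} directly, since $C^{0,1}$-convergence on the bounded interval $I$ implies $W^{1,q}$-convergence, so your re-derivation via the uniform bound on $\|A_n'\|_{L^\infty}$ is valid but not needed.
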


\begin{proof}
    The existence of $J$, $n_0$, and $i$ such that 
    $\ul A(J), \ul A_n(J) \subseteq \cV_i$, for $n\ge n_0$, 
    follows from \Cref{l:red} (since $C^{0,1}$-convergence entails $W^{1,q}$-convergence).
    The other assertions follow easily from \eqref{eq:BD}, \eqref{eq:eigABC}, and \Cref{l:bdiag}.
\end{proof}

By \Cref{l:Tr}, \Cref{l:redC}, and  
the induction hypothesis, 
for each $x_0 \in I \setminus Z_A$ there is an open interval $J$ with $x_0 \in J \subseteq I \setminus Z_A$ such that 
\[
    \eig(A_n)' \to \eig(A)' \quad \text{ almost everywhere in } J \text{ as } n \to \infty.
\]
Together with \Cref{l:Zp}, this implies 
\[
    \eig(A_n)' \to \eig(A)' \quad \text{ almost everywhere in } I \text{ as } n \to \infty
\]
and hence completes the proof of \Cref{thm:ptw}.

%-----------------------------------------------------------------------------------------------------------------------
\subsection{Stability of the condition number of matrices} \label{ssec:cn}
%-----------------------------------------------------------------------------------------------------------------------

We finish this section with the proof of \Cref{t:cn}.

Recall that the condition number of a nonsingular matrix $A \in M_d(\C)$ is the ratio of the largest by the smallest 
singular value: 
\begin{equation}
    \ka(A) = \frac{\si_1(A)}{\si_d(A)}.
\end{equation}
Let $1 \le q < \infty$. Let $I \subseteq \R$ be a bounded open interval.
Suppose that $A_0 \in W^{1,q}(I, M_d(\C))$ and $\inf_{x \in I}\si_d(A_0(x)) > 0$.
We must show that
$\ka(A) \in W^{1,q}(I,\R)$ is well-defined in a neighborhood $\cU$ of $A_0$ in $W^{1,q}(I, M_d(\C))$ 
and that the induced map $\ka : \cU \to W^{1,q}(I,\R)$ is continuous.

    By the continuity of $\si_* : W^{1,q}(I,M_{d}(\C)) \to W^{1,q}(I,\R^d)$, due to \Cref{thm:sv}, 
    and by \Cref{l:mat} and \eqref{eq:estsv}, 
    there is a neighborhood $\cU$ of $A_0$ in $W^{1,q}(I, M_d(\C))$ such that
    \begin{equation}
        \de:=\inf_{A \in \cU}\inf_{x \in I}\si_d(A(x)) >0.
    \end{equation}
    Thus the map $\ka : \cU \to W^{1,q}(I,\R)$ is well-defined. 
    
    To see continuity, let $A,B \in \cU$. Then, in $I$, 
    \begin{align}
        (\ka(A)- \ka(B)) \si_d(A)\si_d(B)= (\si_1(A)-\si_1(B))\si_d(B) - \si_1(B)(\si_d(A)-\si_d(B)) 
    \end{align}
    and, almost everywhere in $I$,
    \begin{align}
        &(\ka(A)'- \ka(B)') \si_d(A)\si_d(B) +(\ka(A)- \ka(B)) \si_d(A)'\si_d(B) 
        \\
        &\hspace{2cm} +(\ka(A)- \ka(B)) \si_d(A)\si_d(B)'
                                            \\
        &\quad =(\si_1(A)'-\si_1(B)')\si_d(B) - \si_1(B)'(\si_d(A)-\si_d(B)) 
        \\
        &\hspace{2cm} +(\si_1(A)-\si_1(B))\si_d(B)' - \si_1(B)(\si_d(A)'-\si_d(B)').
    \end{align}
    We see that, in $I$, 
    \begin{align*}
        |\ka(A)- \ka(B)| \le  \frac{\|B\|_2}{\de^2} \big(|\si_1(A)-\si_1(B)| + |\si_d(A)-\si_d(B)|\big)
    \end{align*}
    and, almost everywhere in $I$,
    \begin{align*}
        |\ka(A)'- \ka(B)'| &\le \frac{1}{\de^2} \Big(\|B\|_2|\si_1(A)'-\si_1(B)'|  + |\si_1(B)'| |\si_d(A)-\si_d(B)|  
        \\
        &\qquad +|\si_d(B)'||\si_1(A)-\si_1(B)| + \|B\|_2|\si_d(A)'-\si_d(B)'|
        \\
        &\qquad + \big(\|B\|_2|\si_d(A)'| + \|A\|_2 |\si_d(B)'|\big) |\ka(A)- \ka(B)|  
        \Big).
    \end{align*}
    Now continuity of $\ka: \cU \to W^{1,q}(I,\R)$ can be easily deduced from \Cref{thm:sv}, in view of \Cref{l:mat} and \eqref{eq:estsv}. 
    The proof of \Cref{t:cn} is complete.

%---------------------------------------------------------------------------------------------
\section{Eigenvalue stability for Hermitian matrices: multiparameter case} \label{sec:proof2}
%---------------------------------------------------------------------------------------------

In this section, we will prove \Cref{thm:eigW}, \Cref{thm:eigmap}, \Cref{cor:eigmap} and \Cref{thm:mptw}.

%-----------------------------------------------------------------------------------------------------------------------
\subsection{Proof of \Cref{thm:eigW}}
%-----------------------------------------------------------------------------------------------------------------------

The next \Cref{thm:multhermW} implies \Cref{thm:eigW} 
because $W^{1,q}(U,\on{Herm}(d))$ is first-countable. %it is a Banach space hence metrizable 

\begin{theorem} \label[t]{thm:multhermW}
    Let $1 \le q < \infty$.
    Let $U \subseteq \R^m$ be open and bounded. 
    Let $A_n \to A$ in $W^{1,q}(U,\on{Herm}(d))$ as $n \to \infty$.
    Then $\eig(A_n) \to \eig(A)$ in $W^{1,q}(U,\R^d)$ as $n \to \infty$. 
\end{theorem}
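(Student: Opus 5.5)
The plan is to reduce \Cref{thm:multhermW} to the one-parameter \Cref{thm:mainhermW} by a sectioning (Fubini) argument, with the uniform integrability of \Cref{l:ui} closing the gap.

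The zeroth-order part is immediate. By \eqref{eq:Loewner}, $\|\eig(A)-\eig(A_n)\|_2 \le \|A-A_n\|_2$ pointwise, so $\eig(A_n)\to\eig(A)$ in $L^q(U,\R^d)$. It therefore suffices to show $\p_j\eig(A_n)\to\p_j\eig(A)$ in $L^q(U,\R^d)$ for a fixed $j$, say $j=1$.

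For this we argue by subsequences: it is enough to show that every subsequence has a further subsequence along which the convergence holds.

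1. Write $x=(t,y)$ with $t\in\R$ and $y\in\R^{m-1}$. For each $y$, the section $U_y=\{t:(t,y)\in U\}$ is a countable union of bounded open intervals.
2. Fubini gives $\int_{\R^{m-1}}\big(\|A-A_n\|_{L^q(U_y)}^q+\|\p_1A-\p_1A_n\|_{L^q(U_y)}^q\big)\,dy\to0$.
3. Pass to a subsequence along which this integrand tends to $0$ for $\cL^{m-1}$-a.e.\ $y$. We may also assume it is dominated by a fixed $L^1$ function $g(y)$, using the standard fast-converging subsequence trick (choose terms with $\sum_n$ of the norms finite).
4. Use the ACL characterization of $W^{1,q}$: for a.e.\ $y$, the restrictions $A(\cdot,y)$ and $A_n(\cdot,y)$ are absolutely continuous on each component interval $I$ of $U_y$, with classical derivatives equal to $\p_1A$ and $\p_1A_n$. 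The same holds for $\eig(A)=\la^\uparrow\o A$, whose restriction is $\la^\uparrow\o A(\cdot,y)$ because $\la^\uparrow$ is Lipschitz. Hence $\p_1\eig(A)(\cdot,y)=\eig(A(\cdot,y))'$ a.e.
5. Apply \Cref{thm:mainhermW} on each component interval $I$. It gives $\|\eig(A)'-\eig(A_n)'\|_{L^q(I)}\to0$ for a.e.\ $y$.

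The main obstacle is summing over the possibly infinitely many components of $U_y$, and then integrating over $y$. Both steps need domination rather than just interval-wise convergence. \Cref{p:mcharHerm} supplies it pointwise: $\|\p_1\eig(A)-\p_1\eig(A_n)\|_2\le\|\p_1A\|_2+\|\p_1A_n\|_2$ a.e. The right side converges in $L^q(U_y)$ for a.e.\ $y$ and in $L^q(U)$, so it is uniformly $q$-integrable by Vitali's \Cref{thm:Vitali}.

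This yields two conclusions:

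1. On a.e.\ section, convergence on each interval plus uniform integrability gives $\|\p_1\eig(A)-\p_1\eig(A_n)\|_{L^q(U_y)}\to0$ via \Cref{l:Vapp}, since the components form a countable cover.
2. The function $y\mapsto\|\p_1\eig(A)-\p_1\eig(A_n)\|_{L^q(U_y)}^q$ is bounded by $2^q\big(\|\p_1A\|^q_{L^q(U_y)}+\|\p_1A_n\|^q_{L^q(U_y)}\big)$. This is at most $C\big(\|\p_1A\|^q_{L^q(U_y)}+g(y)\big)$, which is integrable.

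Dominated convergence in $y$ then gives the $L^q(U)$ convergence along the subsequence. Since every subsequence has such a further subsequence, the whole sequence converges.

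An alternative that avoids the subsequence-and-domination bookkeeping in $y$ is a direct route. Uniform integrability of $\|\p_1\eig(A)-\p_1\eig(A_n)\|_2^q$ on $U$ holds as in \Cref{l:ui}. Convergence in measure on $U$ follows from step 1 on a.e.\ section via Fubini. Vitali's \Cref{thm:Vitali} then concludes directly.
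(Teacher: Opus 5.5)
Your proof is correct, and its skeleton is the paper's. You slice in the $x_1$-direction, extract a subsequence along which almost every slice converges in $W^{1,q}$, apply \Cref{thm:mainhermW} on the slices, and close with the pointwise bound $\|\p_1\eig(A)-\p_1\eig(A_n)\|_2\le\|\p_1A\|_2+\|\p_1A_n\|_2$. The differences are in the bookkeeping.

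\emph{Handling a general domain.} The paper first takes $U=I_1\times\cdots\times I_m$ to be a box, so that every slice is the single interval $I_1$. It reaches a general $U$ only at the end, by applying \Cref{l:Vapp} on $U$ to a countable cover of $U$ by boxes. You instead slice an arbitrary $U$ directly and apply \Cref{l:Vapp} inside each slice $U_y$, to the cover of $U_y$ by its component intervals.

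\emph{Integrating in the transverse variable.} The paper shows that the slice energies $F_{1,n_k}$ are uniformly integrable by combining de la Vall\'ee Poussin's criterion with Jensen's inequality for the probability measure $dx_1/|I_1|$ on the common slice $I_1$, and then invokes Vitali. You instead use dominated convergence with the majorant $C(\|\p_1A\|^q_{L^q(U_y)}+g(y))$, which comes from a fast-converging subsequence.

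Your version is a bit more elementary, since it needs neither the convex function $G$ nor the box cover. The paper's box-then-cover scheme, on the other hand, is the template it reuses for \Cref{t:topmult} and \Cref{thm:eiguW}.

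Your alternative route is also sound, and is the most economical of the three. Convergence of $\|\p_1\eig(A)-\p_1\eig(A_{n_k})\|_2^q$ to $0$ in $L^1$ of every component interval of almost every slice already gives convergence in measure on $U$ by Fubini. Uniform integrability inside the slices is not even needed for this step: summing the measures of the exceptional sets over the components is harmless because $\sum_I|I|=\cL^1(U_y)<\infty$. The uniform integrability on $U$, established as in \Cref{l:ui}, then lets \Cref{thm:Vitali} finish directly.
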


\begin{proof}
    Set $\la := \eig(A)$ and $\la_n := \eig(A_n)$. 
    By \eqref{eq:Loewner}, we have 
    \begin{equation*}
        \|\la - \la_n\|_{L^q(U,\R^d)} \le \|A - A_n\|_{L^q(U,M_d(\C))} \to 0 \quad \text{ as } n \to \infty.
    \end{equation*}
    Hence we have to show that 
    \begin{equation} \label{eq:showLq}
        \|\p_j \la -  \p_j \la_n\|_{L^{q}(U,\R^d)}  \to 0 \quad \text{ as } n\to \infty,
    \end{equation}
    for all $1 \le j \le m$. 
    It is enough to show that there is a subsequence $(n_k)$ with this property.

    Let us first assume that $U = I_1 \times \cdots \times I_m$ is an open box with sides parallel to the coordinate axes.
    Let $j=1$.
   We may assume that $A$ and $A_n$ are absolutely continuous on almost all line segments in $U$ parallel to the $x_1$-axis 
   and $\p_1 A \in L^q(U,\on{Herm}(d))$. 
    Let $x = (x_1,x')$ for 
    $x' \in U' = I_2\times \cdots \times I_m$.
    By Tonelli's theorem, 
    \[
        \int_{U'} \int_{I_1} \|A(x_1,x') - A_n(x_1,x')\|_2^q\, dx_1 \, dx'= \| A -  A_n\|_{L^q(U,M_d(\C))}^q
    \]
    and
    \[
        \int_{U'} \int_{I_1} \|\p_1 A(x_1,x') - \p_1 A_n(x_1,x')\|_2^q\, dx_1 \, dx'= \|\p_1 A - \p_1 A_n\|_{L^q(U,M_d(\C))}^q.
    \]
    Thus there is a subsequence $(n_k)$ such that 
    \[
         \int_{I_1} \|A(x_1,x') - A_{n_k}(x_1,x')\|_2^q\, dx_1  \to 0 
    \]
    and
    \[
         \int_{I_1} \|\p_1 A(x_1,x') - \p_1 A_{n_k}(x_1,x')\|_2^q\, dx_1  \to 0 
    \]
    for almost every $x' \in U'$ as $k \to \infty$.  
    By \Cref{thm:mainhermW}, 
    \[
        F_{1,n_k}(x'):= \int_{I_1} \|\p_1 \la(x_1,x') - \p_1 \la_{n_k}(x_1,x')\|_2^q\, dx_1\to 0 \quad \text{ as } k \to \infty
    \]
    for almost every $x' \in U'$. 

    By Vitali's convergence theorem \ref{thm:Vitali},  
    $\|\p_1 A- \p_1 A_n\|_{L^q(U,M_d(\C))} \to 0$ as $n \to \infty$ implies that the set 
    $\{\|\p_1 A\|_2^q + \|\p_1 A_n\|_2^q : n \ge 1\} \subseteq L^1(U)$ 
    is uniformly integrable and thus, by de la Vall\'ee Poussin's criterion \ref{thm:VP}, 
    there is a nonnegative increasing convex function $G$ on $[0,\infty)$ 
    such that $G(t)/t \to \infty$ as $t \to \infty$ and 
    \[
        \sup_{n \ge 1} \int_U G(2^q(\|\p_1 A(x)\|_2^q + \|\p_1 A_n(x)\|_2^q)) \, dx < \infty.
    \]
    We claim that $\{|I_1|^{-1} F_{1,n_k} : k \ge 1\}$ and 
    thus $\{F_{1,n_k} : k \ge 1\} \subseteq L^1(U')$ is uniformly integrable.
    By Tonelli's theorem, Jensen's inequality, and \eqref{eq:charHerm3},
    \begin{align*}
        \int_{U'} G(|I_1|^{-1} F_{1,n_k}(x'))\, dx' &= 
        \int_{U'} G \Big( \int_{I_1} \|\p_1 \la(x_1,x') - \p_1 \la_{n_k}(x_1,x')\|_2^q\, \frac{dx_1}{|I_1|} \Big)\, dx'
        \\
                                                    &\le \int_{U'}  \int_{I_1} G( \|\p_1 \la(x_1,x') - \p_1 \la_{n_k}(x_1,x')\|_2^q )\, \frac{dx_1}{|I_1|} \, dx'
                                                    \\
                                                    &\le \frac{1}{|I_1|} \int_{U}  G(2^q (\|\p_1 \la(x)\|_2^q + \| \p_1 \la_{n_k}(x)\|_2^q) ) \, dx
                                                    \\
                                                    &\le \frac{1}{|I_1|} \int_{U}  G(2^q (\|\p_1 A(x)\|_2^q + \| \p_1 A_{n_k}(x)\|_2^q) ) \, dx
    \end{align*}
    is bounded by a constant independent of $k$, which implies the claim, again by de la Vall\'ee Poussin's criterion \ref{thm:VP}.

    By Vitali's convergence theorem \ref{thm:Vitali} and Tonelli's theorem, 
    \[
        \|\p_1 \la - \p_1 \la_{n_k}\|_{L^q(U,M_d(\C))}\to 0 \quad \text{ as } k \to \infty.
    \]
    The reasoning for $2 \le j \le m$ is analogous.
    Thus \eqref{eq:showLq} is proved, in the case $U = I_1 \times \cdots \times I_m$.

    Let now $U$ be a general open bounded subset of $\R^m$. 
    Since $U$ is a countable union of open bounded boxes $V_i$ with sides parallel to the coordinate axes, 
    \eqref{eq:showLq} follows from \Cref{l:Vapp} applied to $f_n := \|\p_j \la - \p_j \la_n \|_2^q$, 
    noting that $\{f_n\}$ is uniformly integrable which can be seen as in the proof of \Cref{l:ui}.
    This ends the proof.
\end{proof}

%---------------------------------------------------------------------------------------------
\subsection{Proofs of \Cref{thm:eigmap} and \Cref{thm:mptw}}
%---------------------------------------------------------------------------------------------

First observe that \Cref{thm:mptw} follows easily from \Cref{thm:ptw} (applying the latter coordinate-wise).

As already pointed out, \Cref{thm:eigmap} is an immediate consequence of \Cref{thm:eigW}.
Using \Cref{thm:mptw}, we deduce the following slightly stronger version.

\begin{theorem} \label[t]{thm:eigmapS}
    Let $U \subseteq \R^m$ be open and bounded.
    Assume that $A_n \to A$ in $C^{0,1}(\ol U,\on{Herm}(d))$ as $n \to \infty$. 
    Then 
    \begin{equation} \label{eq:S1}
        \|\eig(A) - \eig(A_n) \|_{L^\infty(U,\R^d)} \le \|A-A_n\|_{L^\infty(U,M_d(\C))} \to 0
    \end{equation}
    and, for each $1 \le j \le m$ and all $1 \le q <\infty$,
    \begin{equation} \label{eq:S2}
        \|\p_j(\eig(A)) - \p_j(\eig(A_n)) \|_{L^q(U,\R^d)} \to 0
    \end{equation}
    as $n \to \infty$.
\end{theorem}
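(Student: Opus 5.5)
The estimate \eqref{eq:S1} is pointwise. By L\"owner's inequality \eqref{eq:Loewner}, for every $x \in U$,
\[
\|\eig(A)(x)-\eig(A_n)(x)\|_2 \le \|A(x)-A_n(x)\|_2 .
\]
Taking the supremum over $x$ gives the inequality in \eqref{eq:S1}. The right-hand side tends to $0$ because convergence in $C^{0,1}(\ol U,\on{Herm}(d))$ includes uniform convergence, since the $C^{0,1}$ norm dominates the sup norm.

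For \eqref{eq:S2} I would use dominated convergence, with \Cref{thm:mptw} supplying the pointwise limit. Fix $1\le j\le m$ and $1 \le q<\infty$. By \Cref{thm:mptw}, $\p_j(\eig(A_n))(x) \to \p_j(\eig(A))(x)$ for almost every $x\in U$. Hence
\[
f_n:=\|\p_j(\eig(A))-\p_j(\eig(A_n))\|_2^q \to 0 \quad \text{almost everywhere in } U .
\]
For the domination, \eqref{eq:charHerm3} gives, almost everywhere,
\[
\|\p_j(\eig(A_n))\|_2 \le \|\nabla A_n\|_2 \le C(m)\,|A_n|_{C^{0,1}(\ol U, M_d(\C))} .
\]
This uses that at points of differentiability a directional derivative of a Lipschitz map is bounded by its Lipschitz constant. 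Since $A_n\to A$ in $C^{0,1}$, the constants $|A_n|_{C^{0,1}}$ are uniformly bounded by some $L$, and likewise for $A$. So $f_n \le (2C(m)L)^q$ almost everywhere, and this constant is integrable because $U$ is bounded. Dominated convergence then yields $\int_U f_n\,dx \to 0$, which is \eqref{eq:S2}.

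There is no real obstacle here, since the substantive input, almost everywhere convergence of the derivatives, comes from \Cref{thm:mptw}. The only point needing care is that the derivatives of Lipschitz functions are defined almost everywhere and coincide with the weak derivatives. Then the bound from \eqref{eq:charHerm3} and the pointwise convergence both hold off a single null set, namely the countable union of the exceptional sets over all $n$.
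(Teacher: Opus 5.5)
Your proposal is correct and follows the paper's proof: \eqref{eq:S1} comes from L\"owner's inequality, and \eqref{eq:S2} comes from \Cref{thm:mptw} combined with dominated convergence, using the uniform bound on the Lipschitz constants of $A_n$. The only cosmetic difference is that the paper bounds $\|\p_j(\eig(A_n))\|_2 \le L$ directly via \eqref{eq:eigLip}, whereas you go through \eqref{eq:charHerm3} and a $C(m)$ factor; this is immaterial.
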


\begin{proof}
    First, \eqref{eq:S1} follows immediately from \eqref{eq:Loewner}.

    By \eqref{eq:eigLip},
    \begin{equation*}
        |\eig(A_n)|_{C^{0,1}(\ol U,\R^d)} \le |A_n|_{C^{0,1}(\ol U,M_d(\C))} \le L,
    \end{equation*}
    for a constant $L>0$ independent of $n$. In particular, 
    for each $1 \le j \le m$ and almost every $x \in U$,
    \[
        \|\p_j(\eig(A_n))(x)\|_2 \le L.
    \]
    We conclude \eqref{eq:S2},
    by \Cref{thm:mptw} and the dominated convergence theorem.
\end{proof}

%---------------------------------------------------------------------------------------------
\subsection{Proof of \Cref{cor:eigmap}}
%---------------------------------------------------------------------------------------------

\Cref{cor:eigmap} is an immediate consequence of the following corollary of \Cref{thm:multhermW} 
and \Cref{ex:A}.

\begin{corollary}
    Let $U \subseteq \R^m$ be a bounded open Lipschitz domain.
    Let $A_n \to A$ in $C^{0,1}(\ol U,\on{Herm}(d))$ as $n\to  \infty$.
    Then, for each $0 < \al  < 1$,
    \begin{equation*} \label{eq:mainconcl3}
        \|\eig(A) -  \eig(A_n)\|_{C^{0,\al}(\ol U,\R^d)}  \to 0 \quad \text{ as } n\to \infty. 
    \end{equation*}
\end{corollary}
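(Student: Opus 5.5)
The plan is to combine \Cref{thm:multhermW} with Morrey's inequality and a simple interpolation between the sup-norm and a Lipschitz bound. Set $\la := \eig(A)$, $\la_n := \eig(A_n)$ and $g_n := \la - \la_n$.

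The sup-norm part is immediate. By \eqref{eq:Loewner}, $\|g_n\|_{L^\infty(U,\R^d)} \le \|A - A_n\|_{L^\infty(U,M_d(\C))} \to 0$. It therefore remains to show that the Hölder seminorm $|g_n|_{C^{0,\al}(\ol U,\R^d)}$ tends to $0$.

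For this I fix $q > m$ with $1 - m/q \ge \al$; this is possible since $\al < 1$, by taking $q$ large. We have $C^{0,1}$-convergence on the bounded set $U$, so $A_n \to A$ in $W^{1,q}(U,\on{Herm}(d))$. Hence \Cref{thm:multhermW} gives $g_n \to 0$ in $W^{1,q}(U,\R^d)$. The domain $U$ is a bounded Lipschitz domain, so Morrey's inequality applies and gives
\[
|g_n|_{C^{0,\be}(\ol U,\R^d)} \le C\,\|g_n\|_{W^{1,q}(U,\R^d)}, \qquad \be := 1 - m/q,
\]
working with continuous representatives, which exist since everything is Lipschitz.

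If $\be = \al$ we are done. Otherwise $\be > \al$, and I interpolate using the uniform diameter bound. For $x \neq y$ in $U$,
\[
\frac{\|g_n(x)-g_n(y)\|_2}{\|x-y\|_2^\al} \le \|x-y\|_2^{\be-\al}\,|g_n|_{C^{0,\be}} \le (\operatorname{diam} U)^{\be-\al}\,|g_n|_{C^{0,\be}} \to 0.
\]
(One could equally interpolate between $L^\infty$ and the uniform Lipschitz bound from \eqref{eq:eigLip}, but the route above is more direct.) Both parts together give $\|g_n\|_{C^{0,\al}(\ol U,\R^d)} \to 0$.

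The only real point to verify is the Morrey estimate on $\ol U$ up to the boundary. This is where the Lipschitz domain hypothesis enters: it provides a bounded extension operator $W^{1,q}(U)\to W^{1,q}(\R^m)$, after which the global Morrey inequality applies. Otherwise the argument is routine.
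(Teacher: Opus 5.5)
Your proposal is correct and matches the paper's proof: \Cref{thm:multhermW} gives convergence in $W^{1,q}(U,\R^d)$, and Morrey's inequality on the Lipschitz domain turns this into $C^{0,\al}$ convergence. The paper simply takes $q = m/(1-\al)$, so that $1-m/q=\al$ exactly, which makes your interpolation step for $\be>\al$ (harmless as it is) unnecessary.
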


\begin{proof}
    The assertion follows from \Cref{thm:multhermW} and Morrey's inequality,
    \[
    \|\eig(A)-  \eig(A_n)\|_{C^{0,\al}(\ol U,\R^d)} \le C\, \|\eig(A) -  \eig(A_n)\|_{W^{1,q}(U,\R^d)},
    \]
    where $\al = 1-m/q$, $q> m$, and $C=C(d,m,q,U)$.
\end{proof}

%-----------------------------------------------------------------------------------------------------------------------
\section{Examples} \label{sec:examples}
%-----------------------------------------------------------------------------------------------------------------------

\Cref{ex:A} shows that the characteristic map $\eig : C^{0,1}(\ol U,\on{Herm}(d)) \to C^{0,1}(\ol U,\R^d)$ 
is not continuous. This example appeared in \cite[Examples 1.12 and 7.13]{Parusinski:2024aa}; we repeat it for 
the reader's convenience.

\begin{example} \label[e]{ex:A}
    The sequence $(A_n)_n$ of curves of symmetric $2 \times 2$ matrices 
    \begin{equation} \label{eq:exA}
        A_n(x) = \begin{pmatrix} \frac{1}n & x \\ x & - \frac{1}{n}  \end{pmatrix}, \quad x \in \R,
    \end{equation}
    converges to 
    \[
        A(x) = \begin{pmatrix} 0 & x \\ x & 0  \end{pmatrix}, \quad x \in \R,
    \]
    uniformly in all derivatives on every compact interval. 
    We have $\eig(A_n) = (-a_n,a_n)$ and $\eig(A) = (-a,a)$, where
    \[
        a_n(x) :=  \sqrt{x^2 + \tfrac{1}{n^2}} \quad \text{ and } \quad a(x) := |x|.
    \]
    
    {\it Then $\eig(A_n) \not\to \eig(A)$ as $n \to \infty$ in the $C^{0,1}$ topology.}
   
    Indeed, for each bounded open interval $I \subseteq \R$ containing $0$, 
    \begin{align*}
        |a- a_n|_{C^{0,1}(\ol I)} &\ge 
        \sup_{0< x \in I} \Big|\frac{(a(x) - a_n(x)) - (a(0)-a_n(0))}{x} \Big| 
        \\
                                  &=  \sup_{0< x \in I} \Big|\frac{x - \sqrt{x^2 + \frac{1}{n^2}} + \frac{1}n}{x}\Big|
                                  \ge \Big|\frac{\frac{1}n- \sqrt{\frac{1}{n^2}+ \frac{1}{n^2}} + \frac{1}n}{\frac{1}n}\Big| 
                                  = 2-\sqrt 2,
    \end{align*}
    for large enough $n$. 
    
    Furthermore, observe that
    \[
        a_n'(x) = \frac{x}{\sqrt{x^2 + \frac{1}{n^2}}}
    \]
    tends pointwise to $a'(x) = \on{sgn}(x)$ for all $x \ne 0$ but not uniformly on any neighborhood of $0$:
    \[
        a_n'(\pm \tfrac{1}{n}) = \pm \frac{1}{\sqrt 2}.
    \]
    This also violates the first conclusion of \Cref{cor:se} for $q=\infty$.
\end{example}

The following \Cref{ex:ucq} proves that, for no $1 \le q < \infty$, 
the characteristic map $\eig : C^{0,1}(\ol U, \on{Herm}(d)) \to C^{0,1}_q(\ol U, \R^d)$ is uniformly continuous.

\begin{example} \label[e]{ex:ucq}
    Let $\vh_n(x) : I := (0,1) \to \R$ be the sawtooth function defined by
    \[
        \vh_n(x) = (-1)^k (x - \tfrac{k}{n}) + \vh_n(\tfrac{k}n) \quad \text{ if } k=0,\ldots, n-1 
        \text{ and } \tfrac{k}{n} \le x \le \tfrac{k+1}{n}.
    \]
    Then $|\vh_n'| =1$ almost everywhere in $I$.
    Moreover, $0 \le \vh_n \le \tfrac{1}{n}$ in $I$ and $\vh_n \ge \tfrac{1}{2n}$ on a measurable subset $E_n$ of $I$ of measure $|E_n|=\frac{1}{2}$.

   Consider 
   \[
       A_n(x) = \begin{pmatrix} \frac{1}{n} & \vh_n(x) \\ \vh_n(x) & - \frac{1}{n}  \end{pmatrix}, \quad 
       B_n(x) = \begin{pmatrix} \frac{1}{2n} & \vh_n(x) \\ \vh_n(x) & - \frac{1}{2n}  \end{pmatrix}, \quad x \in I.
   \]
   Then 
   \[
       \|A_n\|_{C^{0,1}(\ol I,M_2(\C))} = \frac{2}{n} + \sqrt 2, 
       \\
       \quad \|B_n\|_{C^{0,1}(\ol I,M_2(\C))} = \frac{\sqrt{5}}{\sqrt{2}n} + \sqrt 2,
   \]
   are bounded 
   and 
   \[
       \|A_n-B_n\|_{C^{0,1}(\ol I,M_2(\C))} = \frac{\sqrt{2}}{2n} \to 0 \quad \text{ as } n \to \infty.
   \]

   The nonnegative eigenvalues $a_n$ and $b_n$ of $A_n$ and $B_n$,
   \[
       a_n = \sqrt{\vh_n^2 + \tfrac{1}{n^2}},
       \quad b_n = \sqrt{\vh_n^2 + \tfrac{1}{4n^2}},
   \]
   are Lipschitz and satisfy, almost everywhere in $I$,
   \begin{align}
       |a_n'-b_n'| &= \vh_n|\vh_n'| \Big|\frac{n}{\sqrt{n^2 \vh_n^2 + 1}} -  \frac{2n}{\sqrt{4n^2 \vh_n^2 + 1}}\Big| \label{eq:compute} 
       \\
                   &= \vh_n \frac{3n}{\sqrt{n^2 \vh_n^2 + 1}\sqrt{4n^2 \vh_n^2 + 1}\big(2\sqrt{n^2 \vh_n^2 + 1}+\sqrt{4n^2 \vh_n^2 + 1}\big)}.
                 \\
                   &\ge  \frac{n\,\vh_n}{6},
   \end{align} 
   since $\vh_n \le \frac{1}{n}$ so that the denominator is bounded by 
   \[
      \sqrt 2 \sqrt 5 (2 \sqrt 2 + \sqrt 5) = 4 \sqrt{5} + 5 \sqrt{2} \le 18.  
   \]
   Thus, almost everywhere in $E_n$,
   \begin{align*}
       |a_n'-b_n'| &\ge \frac{1}{2n} \cdot \frac{n}{6} = \frac{1}{12}
   \end{align*} 
   and, consequently,
   \[
       \|a_n'-b_n'\|_{L^q(I)}^q \ge \|a_n'-b_n'\|_{L^q(E_n)}^q \ge \frac{1}{12^q} |E_n| =  \frac{1}{2\cdot 12^q},
   \]
   for every $1 \le q < \infty$,
   showing that 
   {\it the map $\eig : C^{0,1}(\ol I, \on{Herm}(2)) \to C^{0,1}_q(\ol I, \R^2)$ is not uniformly continuous.}
\end{example}

The next \Cref{ex:Auc} shows that, for no $0<\al<1$, 
the characteristic map $\eig : C^{0,1}(\ol U,\on{Herm}(d)) \to C^{0,\al}(\ol U,\R^d)$ is uniformly continuous. 
In contrast to \Cref{ex:ucq}, the curves of symmetric matrices are smooth but their Lipschitz constants are unbounded.

\begin{example} \label[e]{ex:Auc}
    Let $\al \in (0,1)$ and set $r= \frac{\al}{1-\al}$.
   Consider 
   \[
       A_n(x) = \begin{pmatrix} \frac{1}{n^r} & nx \\ nx & - \frac{1}{n^r}  \end{pmatrix}, 
       \quad
       B_n(x) = \begin{pmatrix} \frac{1}{2n^r} & nx \\ nx & - \frac{1}{2n^r}  \end{pmatrix}, \quad x \in \R.
   \]
   Then 
   \[
       A_n - B_n =\begin{pmatrix} \frac{1}{2n^r} & 0 \\ 0 & - \frac{1}{2n^r}  \end{pmatrix} 
   \]
   tends to zero uniformly in all derivatives as $n \to \infty$. 
   The nonnegative eigenvalues of $A_n$ and $B_n$ are 
   \[
       a_n(x) = \sqrt{n^2 x^2 + \tfrac{1}{n^{2r}}} \quad \text{ and } \quad b_n(x) = \sqrt{n^2 x^2 + \tfrac{1}{4n^{2r}}}.
   \]

   For each bounded open interval $I \subseteq \R$ containing $0$, 
    \begin{align*}
        |a_n- b_n|_{C^{0,\al}(\ol I)} &\ge 
        \sup_{0< x \in I} \frac{|(a_n(x) - b_n(x)) - (a_n(0)-b_n(0))|}{x^\al}  
        \\
                                      &=  \sup_{0< x \in I} \frac{\big| \sqrt{n^2 x^2 + \frac{1}{n^{2r}}} - \sqrt{n^2 x^2 + \frac{1}{4n^{2r}}} -\frac{1}{n^r} +\frac{1}{2n^r} \big|}{x^\al}
                                      \intertext{and setting $x= n^{-1/(1-\al)}$,}
                                      &\ge\frac{\big| \sqrt{\frac{1}{n^{2r}} + \frac{1}{n^{2r}}} - \sqrt{\frac{1}{n^{2r}}  + \frac{1}{4n^{2r}}} -\frac{1}{n^r} +\frac{1}{2n^r} \big|}{\frac{1}{n^r}}
                                      \\
                                      &= \frac{\sqrt{5}}{2} + \frac{1}2-\sqrt 2  
    \end{align*}
    for large enough $n$, 
    showing that {\it  the map $\eig : C^{0,1}(\ol I, \on{Herm}(2)) \to C^{0,\al}(\ol I, \R^2)$ is not uniformly continuous.}
    
    It also follows that $\eig : C^{0,1}(\ol I, \on{Herm}(2)) \to C^{0,1}_q(\ol I, \R^2)$ is not uniformly continuous, 
    for every $1 < q<\infty$, 
    by Morrey's inequality.
\end{example}

As discussed in \Cref{ssec:optimal}, it would be desirable to find effective moduli of 
continuity for restrictions of the characteristic map $\eig$ to (relatively) compact subspaces of $C^{0,1}(\ol U,\on{Herm}(d))$, 
e.g., the subset $K$ defined in \eqref{eq:K}.
The following \Cref{ex:A2} excludes $\al$-H\"older continuity of $\eig|_K : K \to C^{0,1}_q(\ol U,\R^d)$ for a
certain range of $\al$ depending on $q$.

\begin{example} \label[e]{ex:A2}
    Let $A_n$, for $n \ge 1$, be the smooth curves of symmetric matrices defined in \eqref{eq:exA} and $a_n$ the nonnegative eigenvalue of $A_n$.
    For $x>0$, we find (by a computation similar to \eqref{eq:compute})
\begin{align*}
    |a_n'(x) - a_{2n}'(x)| &= \frac{3nx}{\sqrt{n^2x^2 + 1}\sqrt{4n^2x^2 + 1} (\sqrt{4n^2x^2 + 1}+2\sqrt{n^2x^2 + 1})}.
\end{align*}
If $x \in (0, \frac{1}{n})$, then the denominator is bounded by $18$.
Thus, for fixed $1 \le q < \infty$,
\begin{align*}
    \|a_n' - a_{2n}'\|_{L^q((0,1))}^q \ge  \|a_n' - a_{2n}'\|_{L^q((0,\frac{1}{n}))}^q 
    \ge \frac{n^q}{6^q}\int_0^{1/n} x^q \, dx =  \frac{1}{6^q(q+1)n}, 
\end{align*}
so that 
\[
    \|a_n' - a_{2n}'\|_{L^q((0,1))} \ge \frac{1}{6(q+1)^{1/q} \, n^{1/q}},
\]
while
\[
    \|A_n-A_{2n}\|_{L^\infty(\R,M_2(\C))} = \frac{\sqrt 2}{2n} \quad \text{ and } \quad  A_n'-A_{2n}' \equiv 0.
\]
This implies that, 
{\it for no $\al \in (q^{-1},1]$, 
    the map $\eig : C^{0,1}(\ol I,\on{Herm}(2)) \to C^{0,1}_q(\ol I,\R^2)$ is $\al$-H\"older continuous,
where $I \subseteq \R$ is any bounded open interval containing $0$.}
\end{example}

%-----------------------------------------------------------------------------------------------------------------------
\section{\texorpdfstring{$d$}{d}-valued Sobolev functions} \label{sec:dSob}
%-----------------------------------------------------------------------------------------------------------------------

The aim of this section is to set up the background and tools for \Cref{thm:eiguW} and its proof. 
We follow \cite[Section 3]{Parusinski:2024ab}.

%-----------------------------------------------------------------------------------------------------------------------
\subsection{Unordered $d$-tuples of complex numbers} \label{ssec:AdC}
%-----------------------------------------------------------------------------------------------------------------------

The symmetric group $\on{S}_d$ acts on $\C^d$ by permuting the coordinates,
\[
    \si z = \si (z_1,\ldots,z_d) := (z_{\si(1)},\ldots, z_{\si(d)}), \quad \si \in \on{S}_d,~ z \in \C^d,
\]
and thus induces an equivalence relation.
The equivalence class of $z=(z_1,\ldots,z_d)$ is the \emph{unordered tuple} $[z]=[z_1,\ldots,z_d]$.  
Let us consider the set 
\[
    \cA_d(\C) := \{[z] : z \in \C^d\} 
\]
of unordered complex $d$-tuples 
and equip it with the metric
\begin{equation*}
    \dd_2([z],[w]) := \min_{\si \in \on{S}_d} \|z - \si w\|_2. 
\end{equation*}
Then $(\Iq,\dd_2)$ is a complete metric space and
the induced map $[~] : \C^d \to \cA_d(\C)$ is Lipschitz with $\Lip([~])=1$.

The element $[z_1,\ldots,z_d]$ of $\cA_d(\C)$ can also be represented by the sum $\sum_{i=1}^d \a{z_i}$, where 
$\a{z_i}$ denotes the Dirac mass at $z_i \in \C$.
If normalized, i.e., $\frac{1}{d}\sum_{i=1}^d \a{z_i}$, then, in this picture, $\frac{1}{\sqrt d}\dd_2$ is induced by 
the $L^2$-based Wasserstein metric on the space of probability measures on $\C$. 
(For this reason, in \cite[Section 3]{Parusinski:2024ab}, the factor $\frac{1}{\sqrt d}$ 
was integrated in the definition of $\dd_2$.)

%-----------------------------------------------------------------------------------------------------------------------
\subsection{$d$-valued Sobolev functions} \label{ssec:dvS}
%-----------------------------------------------------------------------------------------------------------------------

Due to Almgren \cite{Almgren00}, see also \cite{De-LellisSpadaro11}, 
there exist an integer $N= N(d)$
and an injective Lipschitz mapping
\[
    \De : \cA_d(\C) \to \R^N
\]
with $\Lip(\De)\le 1$ and   
Lipschitz inverse $\De|^{-1}_{\De(\cA_d(\C))}$ with $\Lip(\De|^{-1}_{\De(\cA_d(\C))}) \le C(d)$. 
Moreover, there is a Lipschitz retraction of $\R^N$ onto $\De(\cA_d(\C))$.

Following Almgren, the bi-Lipschitz embedding $\De$ can be used to define Sobolev spaces of $\cA_d(\C)$-valued functions: 
for open $U \subseteq \R^m$ and $1 \le q \le \infty$ set 
\[
    W^{1,q}(U,\cA_d(\C)) := \{f : U \to \cA_d(\C) : \De \o f \in W^{1,q}(U,\R^N)\}.  
\]
For an equivalent intrinsic definition of $W^{1,q}(U,\cA_d(\C))$, see \cite[Definition~0.5 and Theorem~2.4]{De-LellisSpadaro11}.
Then $W^{1,q}(U,\cA_d(\C))$ carries the metric
\begin{equation} \label{eq:Almgren}
    \rh^{1,q}_\De(f,g) :=  \|\De \o f - \De \o g\|_{W^{1,q}(U,\R^N)}
\end{equation}
which makes it a complete metric space (where functions that coincide almost everywhere are identified); see \cite[Lemma 3.1]{Parusinski:2024ab}.

We will see in \Cref{t:topmult} that, for $1 \le q <\infty$, the topology on $W^{1,q}(U,\cA_d(\C))$ induced by $\rh^{1,q}_\De$
is independent of the choice of the Almgren embedding $\De$.

\begin{remark}
    In \cite{Almgren00} and \cite{De-LellisSpadaro11}, $W^{1,q}(U,\cA_d(\R^\ell))$ is studied for arbitrary $\ell$.
\end{remark}

%-----------------------------------------------------------------------------------------------------------------------
\subsection{Almgren's embedding}
%-----------------------------------------------------------------------------------------------------------------------

Let us recall Almgren's construction of $\De$ for $\cA_d(\C)$. 

\begin{definition}[Almgren map]\label[d]{def:Amap}
    We say that 
    $$
    \Et : \cA_d(\C) \to \R^d
    $$
    is an \emph{Almgren map} if there is a unit complex number 
    $\th \in \C$ such that $\Et ([z])$ is an array of $d$ real numbers $\et (z_i):=\Re (\th z_i) $ arranged in increasing order, i.e.,
    $$
    \Et ([z]) = \Et ([z_1,\ldots,z_d]) = (\et (z_{\sigma(1)}) , \ldots , \et (z_{\sigma(d)}), 
    $$
    where $\si \in \on{S}_d$ is chosen such that $\et (z_{\sigma(1)})\le \et (z_{\sigma(2)}) \le 
    \cdots \le \et (z_{\sigma(d)})$.
    We also say that $\Et$ is the Almgren map associated to the real linear form $\et$. 
\end{definition}

By Almgren's combinatorial lemma (see e.g.\ \cite[Lemma 2.3]{De-LellisSpadaro11}) there exists $\al=\al(d)>0$ and a finite set of linear forms 
$\Lambda=\{\et_1, \ldots \et_h\}$, where $\et_l (z):=\Re (\th_l z) $ 
for unit complex numbers $\th_l$, with the following property: given any set of $d^2$ complex numbers, $\left\{z_1,\ldots,z_{d^2}\right\}\subseteq\C$, 
there exists $\et_l\in\Lambda$ such that
\begin{equation}\label{eq:combineq}
    |\et_l (z_k)| \geq \alpha |z_k| \quad\textrm{ for all }k\in\big\{1,\ldots,d^2\big\}.
\end{equation}
For instance, we may take $h=2d^2+1$ and $\{\et_1,\ldots,\et_l\}$ induced by  the set $\{\th_1, \ldots \th_h\}$ of all $h$-th roots of unity. 

Let $\Et_l$ denote the Almgren map associated to $\et_l \in \La$.
The \emph{Almgren embedding} $\De : \cA_d(\C) \to \R^N$, $N=dh$, is then defined by
\begin{align}\label{eq:AlmgrenDE}
    \De ([z]) = h^{-1/2} (\Et_1 ([z]), \ldots, \Et_h ([z])).
\end{align}  
The property \eqref{eq:combineq} guarantees that $\De$ is as required in \Cref{ssec:dvS}; 
see e.g.\ \cite[Section 2.1.2]{De-LellisSpadaro11}.

%-----------------------------------------------------------------------------------------------------------------------
\subsection{The space \texorpdfstring{$\cA_d(\R)$}{AdR}} \label{ssec:AdR}
%-----------------------------------------------------------------------------------------------------------------------

Consider the subspace $\cA_d(\R) := \{[x] : x \in \R^d \}$ of $\cA_d(\C)$.
For $x \in \R^d$, let $x^\uparrow \in \R^d$ be the representative of the equivalence class $[x]$ with increasingly ordered coordinates. 
Clearly, $x^\uparrow$ only depends on $[x]$ and thus we have an injective map $(~)^\uparrow : \cA_d(\R) \to \R^d$.
It is a right-inverse of $[~] : \R^d \to \cA_d(\R)$. 

\begin{lemma}[{\cite[Lemma 7.1]{Parusinski:2024aa}}] \label[l]{lem:unordered}
   We have 
   \[
       \dd_2([x],[y]) =  \|x^\uparrow - y^\uparrow \|_2, \quad x,y \in \R^d.
   \]
   In particular, $(~)^\uparrow : \cA_d(\R) \to \R^d$ and $[~] : \R^d \to \cA_d(\R)$ are Lipschitz maps.
\end{lemma}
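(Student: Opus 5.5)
To prove $\dd_2([x],[y]) = \|x^\uparrow - y^\uparrow\|_2$, I will reduce it to the classical rearrangement inequality. By definition, $\dd_2([x],[y]) = \min_{\si \in \on{S}_d} \|x^\uparrow - \si y^\uparrow\|_2$, since $[x]=[x^\uparrow]$, $[y]=[y^\uparrow]$, and the minimum over $\si$ is invariant under replacing $x$ by a permutation of $x$. So the claim is that the identity permutation realizes the minimum when both vectors are increasingly ordered.

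Expanding the square gives
\[
\|x^\uparrow - \si y^\uparrow\|_2^2 = \|x\|_2^2 + \|y\|_2^2 - 2\sum_{i=1}^d x^\uparrow_i\, y^\uparrow_{\si(i)}.
\]
The first two terms do not depend on $\si$. Minimizing the distance is therefore equivalent to maximizing $\sum_i x^\uparrow_i y^\uparrow_{\si(i)}$. By the rearrangement inequality, this sum is maximal for $\si=\id$.

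For completeness, I would prove the rearrangement inequality by an exchange argument. Suppose $\si \ne \id$. Then there are indices $i<j$ with $\si(i)>\si(j)$. Swapping these two values changes the sum by
\[
(x^\uparrow_j - x^\uparrow_i)(y^\uparrow_{\si(i)} - y^\uparrow_{\si(j)}) \ge 0,
\]
because both factors are nonnegative by the monotone ordering. The swap also strictly decreases the number of inversions of $\si$. Iterating, we reach $\id$ without ever decreasing the sum. This proves the identity.

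For the Lipschitz statements, the identity itself shows that $(~)^\uparrow : \cA_d(\R) \to \R^d$ is an isometry onto its image, hence Lipschitz with constant $1$. The map $[~]$ is $1$-Lipschitz, because $\dd_2([x],[y]) \le \|x - \id\, y\|_2$ by taking $\si=\id$ in the minimum.

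There is no serious obstacle in this argument. The only point that needs care is the exchange step, which is exactly where the increasing order of both $x^\uparrow$ and $y^\uparrow$ is used.
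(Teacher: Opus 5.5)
Your proof is correct and complete. The reduction to maximizing $\sum_i x^\uparrow_i\, y^\uparrow_{\si(i)}$, the exchange step (its gain $(x^\uparrow_j - x^\uparrow_i)(y^\uparrow_{\si(i)} - y^\uparrow_{\si(j)})$ is nonnegative, and the swap strictly lowers the inversion count even when $i<j$ are not adjacent) and both Lipschitz claims all hold. The paper does not reprove this lemma but takes it from \cite[Lemma 7.1]{Parusinski:2024aa}; your rearrangement-inequality argument is the standard proof of this fact, so nothing needs to be added.
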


It is easy to check that the map $H = (~)^\uparrow : \cA_d(\R) \to \R^d$ is an Almgren map (for $\R$ instead of $\C$,  
associated to the real linear form $\et=\id$) and \eqref{eq:combineq} is trivially true with $\La=\{\id\}$. 
Thus $\De=H = (~)^\uparrow : \cA_d(\R) \to \R^d$ is an Almgren embedding. 

Consequently, we can equip the space
\[
    W^{1,q}(U,\cA_d(\R)) = \{f : U \to \cA_d(\R) : f^\uparrow \in W^{1,q}(U,\R^d)\}
\]
with the metric 
\[
    \rh^{1,q}_\uparrow(f,g) :=  \|f^\uparrow - g^\uparrow\|_{W^{1,q}(U,\R^d)}.
\]

Thus \Cref{thm:eigW} can be seen as a special case of \Cref{thm:eiguW}.

%-----------------------------------------------------------------------------------------------------------------------
\subsection{Curves of class $W^{1,q}$ in $\cA_d(\C)$}  
%-----------------------------------------------------------------------------------------------------------------------

Let $I \subseteq \R$ be an open bounded interval.
We recall an equivalent definition of $W^{1,q} (I,\cA_d(\C))$ (see \cite[Definition 0.5]{De-LellisSpadaro11}) which is independent of Almgren's embedding.  

\begin{definition}[Intrinsic definition]\label[d]{d:W1p}
    A measurable function $f:I \ra\Iq$ belongs to $W^{1,q}(I,\cA_d(\C))$ ($1\leq q\leq\infty$) if there exists a function
    $\varphi \in L^q(I,\R_{\ge 0})$
    such that 
    \begin{itemize}
        \item[(i)] $x\mapsto \dd_2 (f(x),T)\in W^{1,q}(I)$ for all $T\in \Iq$;
        \item[(ii)] $\abs{(\dd_2 (f, T))'}\leq\varphi$ almost everywhere in $I$
            for all $T\in \Iq$.
    \end{itemize}
The minimal function $\tilde\varphi$ fulfilling (ii), that is, 
\begin{equation*}
    \tilde\varphi\leq\varphi \quad \text{almost everywhere for every other $\varphi$ satisfying (ii),}
\end{equation*}
is measurable and we denote it by $|D f|$.
\end{definition}

\begin{proposition}[{\cite[Proposition 1.2]{De-LellisSpadaro11}}]\label[p]{p:Wselection-1} 
    Let $f\in W^{1,q} (I, \Iq)$.
    Then,
    \begin{itemize}
        \item[$(a)$] $f\in AC(I,\Iq)$ and, moreover,
            $f\in C^{0,1-\frac{1}{q}}(I,\Iq)$ for $q>1$;
        \item[$(b)$] 
            there exists a parameterization 
            $f_1,\ldots,f_d\in W^{1,q}(I,\C)$ of $f$, i.e.,
            \[
                f = \a{f_1} + \cdots + \a{f_d},
            \]
            such that $\abs{Df_i} = \abs{f_i'}\leq\abs{Df}$ almost everywhere.
    \end{itemize}
\end{proposition}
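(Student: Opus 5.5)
The statement is Proposition~\ref{p:Wselection-1}, quoted from De~Lellis--Spadaro. My plan is to reduce everything to the one-dimensional Almgren embedding and to Lemma~\ref{lem:unordered}.

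For $(a)$, I would use the intrinsic definition. Fix $x<y$ in $I$ and take $T:=f(y)$ in (i)--(ii). The function $g(t)=\dd_2(f(t),f(y))$ is in $W^{1,q}(I)$, so it has an absolutely continuous representative with $|g'|\le\varphi$. This gives
\[
\dd_2(f(x),f(y)) = |g(x)-g(y)| \le \int_x^y \varphi .
\]
One subtlety is that each $T$ carries its own null set. I would handle it by first restricting to a countable dense set of $T$'s (the separability of $\cA_d(\C)$), and then extending by the $1$-Lipschitz dependence of $\dd_2(\cdot,T)$ on $T$. The result is $f\in AC^q(I,\cA_d(\C))$ with metric speed $|\dot f|\le\varphi$, hence $|\dot f|\le|Df|$. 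Hölder's inequality then gives $\dd_2(f(x),f(y))\le \|\varphi\|_{L^q}|x-y|^{1-1/q}$.

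For $(b)$, which is the main point, I would build a $W^{1,q}$ selection.

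\textbf{Step 1: good points.} Using $(a)$, $f$ is continuous. I would work on the set of full measure where $|\dot f|$ exists and is bounded by $|Df|$.

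\textbf{Step 2: local selection.} I would argue by induction on $d$, with a splitting argument analogous to the block-diagonalization of \Cref{sec:BD}. Near a point $x_0$ where $f(x_0)$ has at least two distinct values, continuity of $f$ lets me split
\[
f=f^{(1)}+f^{(2)}
\]
on an interval, with $f^{(1)},f^{(2)}$ continuous and of fewer points. The key point here is that $\dd_2(f(x),f(y))^2 = \dd_2(f^{(1)}(x),f^{(1)}(y))^2+\dd_2(f^{(2)}(x),f^{(2)}(y))^2$ for $x,y$ close, because the optimal matching cannot cross the gap. Consequently $|\dot f^{(k)}|\le|\dot f|$, and induction gives $W^{1,q}$ parameterizations $f_i$ with $|f_i'|\le|\dot f|$. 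On the open set of points where $f$ is not totally coincident, this yields countably many intervals with local selections.

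\textbf{Step 3: the coincidence set.} On the closed set $K=\{x: f(x)=d\a{z(x)}\}$, all branches equal $z(x)$. The function $z$ is $d^{-1/2}$ times the metric on single points, so it is absolutely continuous on $K$ with respect to $f$.

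\textbf{Step 4: gluing.} On each complementary interval I glue the selections, using the freedom to permute branches inside each interval; the branches agree at the endpoints lying in $K$. Compact pieces can be glued at common points within an interval by permuting.

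The main obstacle is verifying that the glued $f_i$ are absolutely continuous across the possibly Cantor-like set $K$, with $|f_i'|\le|Df|$. I would try to bound
\[
|f_i(x)-f_i(y)|\le\int_x^y|\dot f|
\]
by splitting $[x,y]$ at the points of $K$. On each complementary subinterval I use the local bound. Across $K$ itself I use that $|f_i(a)-f_i(b)|=d^{-1/2}\dd_2(f(a),f(b))$ for $a,b\in K$. Summing over subintervals then gives absolute continuity with derivative bounded by $|\dot f|\le|Df|$ almost everywhere. Finally, $|Df_i|=|f_i'|$ holds because for single-valued functions the intrinsic $|Df_i|$ equals the classical derivative.
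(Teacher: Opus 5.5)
The paper gives no proof of its own here; it simply quotes \cite[Proposition 1.2]{De-LellisSpadaro11}, and your outline is a correct, essentially complete proof along the standard lines: the countable dense family of $T$'s gives $\dd_2(f(x),f(y))\le\int_x^y\varphi$ on a full-measure set, hence $(a)$ and $|\dot f|\le|Df|$, while for $(b)$ the induction on $d$, via local splitting off the total-coincidence set $K$, gluing by permutations inside each component of $I\setminus K$, and the three-piece estimate through $\min([x,y]\cap K)$ and $\max([x,y]\cap K)$, gives $|f_i(x)-f_i(y)|\le\int_x^y|\dot f|$. You should make explicit that the direct choice $T=f(y)$ in your first paragraph only becomes legitimate after passing to the dense family and the continuous representative, and that the glued branches are automatically continuous at each $a\in K$, because $f(t)\to d\a{z(a)}$ forces every branch to converge to $z(a)$.
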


Actually, the proof of \Cref{p:Wselection-1} in \cite{De-LellisSpadaro11} 
implies that $f \in W^{1,q}(I,\Iq)$ belongs to $AC^q(I,\Iq)$ in 
the sense of \Cref{ssec:ACq}. 
In the situation of \Cref{p:Wselection-1}, 
we will always mean without further mention that $f$ and $f_i$ are the continuous representatives.

Following \cite[Definition 1.9]{De-LellisSpadaro11}, we can also define the differential $Df$. 

\begin{definition}[Differential]\label[d]{d:diff}
    Let $f = \sum_i \a{f_i} : I\ra\Iq$ and $x_0\in I$. We say that $f$ is \emph{differentiable}
    at $x_0$ if there exist $d$ complex numbers $L_i$ satisfying:
    \begin{itemize}
        \item[(i)]$\dd_2(f(x),T_{x_0} f(x))=o(\abs{x-x_0})$, where
            \begin{equation}\label{e:taylor1st}
                T_{x_0} f(x):=\sum_i\a{f_i(x_0) + L_i\cdot(x-x_0)};
            \end{equation}
        \item[(ii)] $L_i=L_j$ if $f_i(x_0)=f_j(x_0)$.
    \end{itemize}
    The $d$-valued map $T_{x_0} f$
    is called the {\em first-order approximation} of $f$ at $x_0$.
    We denote $L_i$ by $Df_i(x_0)$ and the point $Df(x_0) := \sum_i \a{Df_i(x_0)} \in \Iq$
    is called the \emph{differential} of $f$ at $x_0$. 
\end{definition}

By \Cref{d:diff}(ii), the notation is consistent (see \cite[Remark 1.11]{De-LellisSpadaro11}):
if $g_1,\ldots, g_d$ is another parameterization of $f$, $f$ is differentiable at $x_0$, and $\si \in \on{S}_d$ is such that 
$g_i(x_0) = f_{\si(i)}(x_0)$ for all $1 \le i \le d$, then $Dg_i(x_0) = Df_{\si(i)}(x_0)$.

By \Cref{p:Wselection-1}, every $f\in W^{1,q} (I, \Iq)$ is differentiable almost everywhere and,
if $f_1,\ldots,f_d \in W^{1,q}(I,\C)$ is a parameterization of $f$, then $Df = \sum_i \a{f_i'}$ almost everywhere; 
see \cite[Section 3.4]{Parusinski:2024ab}.

%-----------------------------------------------------------------------------------------------------------------------
\subsection{A distance notion on $W^{1,q}(I,\Iq)$}
%-----------------------------------------------------------------------------------------------------------------------

The following definition is taken from \cite[Definition 3.6]{Parusinski:2024ab}.

\begin{definition}[The distance $\dd_E^{1,q}$] \label[d]{def:dd}
    Let $f,g \in W^{1,q}(I,\Iq)$ and let 
    \[
        f=\a{f_1} + \cdots + \a{f_d}, \quad g=\a{g_1} + \cdots + \a{g_d}
    \]
    be parameterizations of $f$, $g$ with $f_i,g_i \in W^{1,q}(I,\C)$ as in \Cref{p:Wselection-1}.
    Set
    \begin{equation*}
        \bs_0(f,g)(x) :=  \dd_2(f(x),g(x)).
    \end{equation*}
    Fix an arbitrary ordering of the elements of $\on{S}_d$.
    For $x \in I$, let
    \begin{equation*}
        \ta(x) := \min \Big\{\ta\in \on{S}_d : \Big(\sum_i | f_i(x) - g_{\ta(i)}(x)|^2\Big)^{1/2} =  \dd_2(f(x),g(x)) \Big\}.
    \end{equation*}
    For $x \in I$ such that $Df(x) = \sum_i \a{Df_i(x)}$ and $Dg(x)  = \sum_i \a{Dg_i(x)}$ exist in the sense of \Cref{d:diff},
    set 
    \begin{equation}\label{eq:maxorderings}
        \bs_1(f, g)(x) := \max  \Big(\sum_i | Df_i(x) - Dg_{\ta(x)(i)}(x)|^2\Big)^{1/2},
    \end{equation}
    where the maximum is taken over all orderings of $\on{S}_d$.
    By the remarks above, $\bs_1(f,g)(x)$ is defined for almost every $x \in I$ and 
    independent of the choices of parameterizations $f_1,\ldots,f_d$ and $g_1,\ldots,g_d$ of $f$ and $g$.

    For each measurable subset $E \subseteq I$, we define
    \begin{equation*}
        \mathbf{d}^{1,q}_{E}( f,g )
        := \|\bs_0(f,g)\|_{L^\infty(E)}
        + \| \bs_1(f,g) \|_{L^q(E)}
    \end{equation*}
    which is justified, since 
    the functions $\mathbf{s}_i(f,g) : I \to \R$, for $i=0,1$, are Borel measurable, by \cite[Lemma 3.7]{Parusinski:2024ab}.
\end{definition}

\begin{lemma}[{\cite[Lemma 3.8]{Parusinski:2024ab}}] \label[l]{lem:dd1qsemidist}
    Let $I \subseteq \R$ be a bounded open interval and $E \subseteq I$ a measurable set. 
    Let $f, g \in W^{1,q}(I,\Iq)$. Then:
    \begin{enumerate}
        \item $\mathbf{d}^{1,q}_{E}( f,f )=0$.
        \item $\mathbf{d}^{1,q}_{E}( f,g )=0$ implies $f = g$ on $E$.  
        \item $\mathbf{d}^{1,q}_{E}( f,g )=\mathbf{d}^{1,q}_{E}( g,f )$.
    \end{enumerate}
    In particular, $\dd^{1,q}_I$ is a semimetric on $W^{1,q}(I,\Iq)$.
\end{lemma}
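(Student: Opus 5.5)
The three properties will be checked one at a time, straight from \Cref{def:dd}. Two facts are used throughout. First, by \Cref{p:Wselection-1}(a), $f,g$ are continuous (their continuous representatives are used). Second, by the remarks after \Cref{d:diff}, $\bs_1(f,g)(x)$ does not depend on the chosen parameterizations. The maximum in \eqref{eq:maxorderings} is taken over orderings of $\on{S}_d$, and that ordering determines which minimizing permutation $\ta(x)$ is selected. So $\bs_1(f,g)(x)$ is the maximum, over all permutations $\ta$ with $\big(\sum_i |f_i(x)-g_{\ta(i)}(x)|^2\big)^{1/2}=\dd_2(f(x),g(x))$, of $\big(\sum_i |Df_i(x)-Dg_{\ta(i)}(x)|^2\big)^{1/2}$. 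This reformulation is the key step.

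\emph{(1)} Clearly $\bs_0(f,f)=\dd_2(f,f)=0$. Take a point $x$ where $Df(x)$ exists, and any minimizing $\ta$. Then $\sum_i|f_i(x)-f_{\ta(i)}(x)|^2=0$, so $f_{\ta(i)}(x)=f_i(x)$ for all $i$. By \Cref{d:diff}(ii), this gives $Df_{\ta(i)}(x)=Df_i(x)$. Hence every term in the maximum vanishes, so $\bs_1(f,f)=0$ almost everywhere and $\dd^{1,q}_E(f,f)=0$.

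\emph{(2)} If $\dd^{1,q}_E(f,g)=0$, then $\bs_0(f,g)=0$ almost everywhere on $E$, so $\dd_2(f(x),g(x))=0$ for almost every $x\in E$. Since $\dd_2$ is a metric on $\cA_d(\C)$, $f=g$ almost everywhere on $E$. By continuity this improves to $f=g$ on the closure of the set of density points. Because functions are identified up to null sets, I read this as the claimed conclusion.

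\emph{(3)} Symmetry of $\bs_0$ is immediate from symmetry of $\dd_2$. For $\bs_1$, the point is that $\ta$ minimizes $\sum_i |f_i-g_{\ta(i)}|^2$ if and only if $\ta^{-1}$ minimizes $\sum_j |g_j-f_{\ta^{-1}(j)}|^2$; this follows by reindexing $j=\ta(i)$. The same reindexing gives
\[
\sum_i |Df_i-Dg_{\ta(i)}|^2=\sum_j|Dg_j-Df_{\ta^{-1}(j)}|^2 .
\]
So the sets over which the maxima defining $\bs_1(f,g)(x)$ and $\bs_1(g,f)(x)$ are taken correspond bijectively under $\ta\mapsto\ta^{-1}$, with equal values. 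Hence the two maxima coincide.

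The semimetric claim then follows for $E=I$. The main obstacle is the bookkeeping in the reformulation of $\bs_1$. I expect the care to go into making precise that the maximum over orderings of $\on{S}_d$ ranges exactly over all minimizing permutations, and into invoking \Cref{d:diff}(ii) correctly where values coincide.
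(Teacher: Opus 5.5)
Your proof is correct and is the direct verification from \Cref{def:dd} on which the cited lemma rests; the paper itself gives no proof and only quotes \cite[Lemma 3.8]{Parusinski:2024ab}. The ingredients are the right ones: rewriting $\bs_1$ as a maximum over all minimizing permutations, \Cref{d:diff}(ii) for (1), and the bijection $\ta\mapsto\ta^{-1}$ for (3). Your reading of (2) as equality almost everywhere on $E$, upgraded by continuity to all of $I$ when $E=I$, is also correct: for a null set $E$ one has $\dd^{1,q}_E(f,g)=0$ for every $f,g$, so the literal pointwise claim cannot hold.
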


%-----------------------------------------------------------------------------------------------------------------------
\subsection{Convergence in $W^{1,q}(I,\Iq)$}
%-----------------------------------------------------------------------------------------------------------------------

There is a notion of \emph{weak convergence} in $W^{1,q}(I,\Iq)$, see \cite[Definition 2.9]{De-LellisSpadaro11}, 
which is not appropriate for our purpose.
We introduce a stronger notion of convergence based on the semimetric $\dd^{1,q}_I$.

\begin{definition}[Strong convergence]\label[d]{d:convergence}
    Let $f_n, f\in W^{1,q}(I,\Iq)$. We say 
    that $f_n$ \emph{converges to $f$ in $W^{1,q}(I,\Iq)$} as $n \to \infty$,
    and we write $f_n\rightarrow f$,
    if
    \[ 
        \mathbf{d}^{1,q}_{I}( f, f_n )\ra 0 \quad \text { as }  n\ra\infty .
    \] 
\end{definition}

Strong convergence in $W^{1,q}(I,\Iq)$ is equivalent to convergence with respect to the 
topology induced by the metric $\rh^{1,q}_\De$ (see \eqref{eq:Almgren}), for some, equivalently every, 
Almgren embedding $\De$:

\begin{theorem}[{\cite[Theorem 3.11]{Parusinski:2024ab}}]\label[t]{thm:Almgren and convergence}
    Let $\De : \Iq \to \R^N$ be an Almgren embedding.  
    Then $f_n \to f$ in $W^{1,q}(I,\Iq)$ as $n \to \infty$ if and only if  
    \begin{equation*}
        \rh^{1,q}_\De(f,f_n) \to 0 \quad \text{ as } n \to \infty.
    \end{equation*}
\end{theorem}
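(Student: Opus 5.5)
We have to show that, for an Almgren embedding $\De$, $\dd^{1,q}_I(f,f_n)\to 0$ holds if and only if $\rh^{1,q}_\De(f,f_n)\to 0$. The plan is to compare both quantities with the Almgren maps $\Et_l$ and to use the bi-Lipschitz property of $\De$ for the zeroth-order parts.

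\emph{Zeroth order.} Since $\Lip(\De)\le 1$ and $\Lip(\De^{-1})\le C(d)$, pointwise we have
\[
\|\De\o f-\De\o f_n\|_2 \simeq \dd_2(f,f_n)=\bs_0(f,f_n).
\]
Uniform convergence of $\bs_0$ therefore gives $L^q$ convergence of $\De\o f_n$, because $I$ is bounded. Conversely, if $\De\o f_n\to\De\o f$ in $W^{1,q}(I,\R^N)$, then \Cref{c:supW} (its proof works verbatim for $\R^N$-valued maps) gives uniform convergence of $\De\o f_n$. Hence $\bs_0(f,f_n)\to 0$ uniformly.

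\emph{First order, main tool.} Fix parameterizations $f_i,f_{n,i}$ as in \Cref{p:Wselection-1}. At a point $x$ where everything is differentiable, each component $\Et_l\o f$ is the increasing rearrangement of $(\et_l(f_i(x)))_i$. Its derivative is $\et_l(f_i'(x))$, arranged according to the ordering of the values, with ties resolved by ordering the derivatives increasingly. We then compare $(\Et_l\o f)'-(\Et_l\o f_n)'$ with $\bs_1(f,f_n)$.

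\emph{Direction $\dd\Rightarrow\rh$.} We argue by contradiction together with subsequences, as in \Cref{l:Vapp}. The differences of derivatives are dominated by $|Df|+|Df_n|$, which is uniformly $q$-integrable once $\bs_1\to 0$ in $L^q$. By Vitali's theorem, it suffices to show that $(\Et_l\o f_n)'\to(\Et_l\o f)'$ in measure along a subsequence. Pass to a subsequence with $\bs_1(f,f_{n})\to0$ a.e. and $\bs_0\to 0$ uniformly. Fix a point $x$. For large $n$, the optimal matching $\ta_n(x)$ sends $f_{n,\ta_n(i)}(x)$ close to $f_i(x)$. The derivatives $f'_{n,\ta_n(i)}(x)$ are close to $f_i'(x)$ uniformly over orderings, because $\bs_1$ takes the maximum over all orderings. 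Taking this maximum is exactly what controls the ties.

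Now apply the $1$-Lipschitz increasing rearrangement. Clusters of $\et_l(f_i(x))$ with distinct values remain separated for large $n$. Within a cluster of equal values, the $\et_l$-values of $f_n$ may be ordered differently, but the rearranged derivatives of $f_n$ lie in a small neighbourhood of the sorted derivatives of $f$. The subtle case is a tie with unequal derivatives. There, however, $\Et_l\o f$ is differentiable at $x$ only if the sorting is consistent, and a.e. point of the tie set is an accumulation point. On the tie set $\{\et_l f_i=\et_l f_j\}$ we have $\et_l f_i'=\et_l f_j'$ a.e., so ties with unequal derivatives occur only on a null set. With this, pointwise a.e. convergence follows.

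\emph{Direction $\rh\Rightarrow\dd$.} Uniform convergence of $\bs_0$ is already established. For $\bs_1$, we use Almgren's combinatorial lemma \eqref{eq:combineq}. At a.e. $x$, apply it to the $d^2$ numbers $f_i(x)-f_j(x)$ and $f_i'(x)-f_j'(x)$, producing some $\et_l$ that separates distinct values and distinct derivatives proportionally. Then $(\Et_l\o f)'(x)$ determines the matching. If $(\Et_l\o f_n)'\to(\Et_l\o f)'$ a.e. along a subsequence, which holds after passing to one because of $L^q$ convergence, then near $x$ the sorted lists identify $f'_{n,\ta_n(i)}(x)\to f_i'(x)$ for every optimal matching. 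This gives $\bs_1\to 0$ a.e., and Vitali's theorem then yields convergence in $L^q$. Uniform integrability comes from $|Df_n|\le C\,|(\De\o f_n)'|$, using $\Lip(\De^{-1})$.

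\emph{Expected main obstacle.} The main difficulty is the pointwise bookkeeping at coincidence points, where the orderings under $\et_l$ and under the matching $\ta$ disagree. The plan is to show that such bad points form a null set, by the accumulation-point argument of \Cref{l:Z}.
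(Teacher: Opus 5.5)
The paper does not prove this statement; it is quoted from \cite[Theorem 3.11]{Parusinski:2024ab}, so your argument has to stand on its own. Your zeroth-order step and the direction $\dd^{1,q}_I(f,f_n)\to0\Rightarrow\rh^{1,q}_\De(f,f_n)\to0$ are sound. The a.e.\ equality of $\eta_l$-derivatives on $\eta_l$-ties makes the reordering inside a tie block harmless, and $\|f_n'\|_2\le\|f'\|_2+\bs_1(f,f_n)$ gives uniform integrability.

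The gap is in the converse, at the sentence ``the sorted lists identify $f'_{n,\tau_n(i)}(x)\to f_i'(x)$''. Take the single form $\eta_l=\Re(\theta_l\,\cdot\,)$ produced by the combinatorial lemma. The convergence $(H_l\circ f_n)'(x)\to(H_l\circ f)'(x)$ only gives $\Re(\theta_l f'_{n,j}(x))\to\Re(\theta_l f'_i(x))$ for the matched indices. The component of $f'_{n,j}(x)$ along $\ker\eta_l$ is not controlled at all. That $\eta_l$ separates the distinct derivatives of $f$ does not help, because nothing confines the derivatives of $f_n$ to a neighbourhood of $\{f_i'(x)\}$; that is what has to be proved. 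You must bring in a second component $H_{l'}$ with $\eta_{l'}$ independent of $\eta_l$. You then have to explain why the entries of $(H_{l'}\circ f_n)'(x)$ belong to the same elements $f_{n,j}(x)$, even though $\eta_{l'}$ need not separate the clusters of $f(x)$ and its sort of $f_n(x)$ may interleave them. Your plan does not address this.

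The missing idea is one you already use in the other direction. At a.e.\ $x$ two facts hold, and the same is true for every $f_n$:
\begin{itemize}
\item $f_i'(x)=f_j'(x)$ whenever $f_i(x)=f_j(x)$ (\Cref{d:diff}(ii));
\item for every $l$, $\eta_l(f_i'(x))=\eta_l(f_j'(x))$ whenever $\eta_l(f_i(x))=\eta_l(f_j(x))$.
\end{itemize}
Hence, for \emph{every} $l$, $(H_l\circ f)'(x)$ is constant on the block of positions of each $\eta_l$-tie group of $f(x)$. Since $\bs_0(f,f_n)(x)\to0$, for large $n$ this block in the $\eta_l$-sort of $f_n(x)$ is filled exactly by those $f_{n,j}(x)$ lying near clusters of $f(x)$ in that group. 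So, whatever their internal order, $\eta_l(f'_{n,j}(x))\to\eta_l(L)$, where $L$ is the common derivative of the cluster of $f(x)$ that $f_{n,j}(x)$ is close to. Apply this to two $\R$-independent forms of $\Lambda$; these exist, by applying \eqref{eq:combineq} to a nonzero vector in $\ker\eta_l$. This yields $f'_{n,j}(x)\to L$ for all $j$. Every optimal $\tau_n(x)$ pairs $f_i(x)$ with an element near the same cluster, so $\bs_1(f,f_n)(x)\to0$, and your Vitali step concludes. Applying the combinatorial lemma to differences of derivatives plays no role.
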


In particular, the topology induced by the metric $\rh^{1,q}_\De$ 
on $W^{1,q}(I,\Iq)$ does not depend on the choice of the Almgren embedding $\De$. 
We shall see below that this is also true in several variables.

%-----------------------------------------------------------------------------------------------------------------------
\subsection{The topology on $W^{1,q}(U,\cA_d(\C))$ induced by $\rh^{1,q}_\De$ is independent of $\De$}
%-----------------------------------------------------------------------------------------------------------------------

The following theorem is an improved version of \cite[Theorem 10.2]{Parusinski:2024ab}.

\begin{theorem} \label[t]{t:topmult}
    Let $\De^i : \Iq \to \R^{N_i}$, for $i =1,2$, be two Almgren embeddings. 
    Let $U \subseteq \R^m$ be open and bounded and $1 \le q < \infty$.
    Let $f,f_n \in W^{1,q}(U,\Iq)$, for $n\ge 1$. 
    Then, 
    \begin{align*}
        \| \De^1 \o f - \De^1 \o f_n \|_{W^{1,q}(U,\R^{N_1})} &\to 0 
        \intertext{if and only if}  
        \| \De^2\o f - \De^2 \o f_n \|_{W^{1,q}(U,\R^{N_2})} &\to 0
    \end{align*}
    as $n \to \infty$. 
\end{theorem}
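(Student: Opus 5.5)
By symmetry it suffices to prove one direction: if $\De^1\o f_n \to \De^1 \o f$ in $W^{1,q}(U,\R^{N_1})$, then $\De^2\o f_n \to \De^2\o f$ in $W^{1,q}(U,\R^{N_2})$. The plan is to reduce to the one-variable \Cref{thm:Almgren and convergence} by slicing along coordinate lines, following the pattern of the proof of \Cref{thm:multhermW}.

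\emph{Zeroth-order part.} Since $\De^2\o(\De^1)^{-1}$ is Lipschitz on $\De^1(\Iq)$ with constant $C(d)$, we get pointwise
\[
\|\De^2\o f-\De^2\o f_n\|_2\le C(d)\,\|\De^1\o f-\De^1\o f_n\|_2 .
\]
Hence the $L^q$ part converges.

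\emph{Derivative part.} It remains to prove $\p_j(\De^2\o f_n)\to\p_j(\De^2\o f)$ in $L^q$, and it suffices to do this along a subsequence of an arbitrary subsequence. Two pointwise bounds will be used throughout. First, $\De^1\o f$ and $\De^2\o f$ are compositions of each other with Lipschitz maps, so a.e.\ $\|\p_j(\De^2\o g)\|_2\le C(d)\|\p_j(\De^1\o g)\|_2$ for $g=f,f_n$.

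\emph{Step 1: boxes.} Let $U=I_1\times U'$ be a box and take $j=1$. Choose ACL representatives. By Tonelli there is a subsequence along which, for a.e.\ $x'\in U'$,
\[
\int_{I_1}\Big(\|\De^1\o f-\De^1\o f_n\|_2^q+\|\p_1(\De^1\o f-\De^1\o f_n)\|_2^q\Big)(x_1,x')\,dx_1\to0 .
\]
That is, $\rh^{1,q}_{\De^1}(f(\cdot,x'),f_n(\cdot,x'))\to0$ on $I_1$. Here one checks that the slices $f(\cdot,x')$ lie in $W^{1,q}(I_1,\Iq)$ and that the slice of $\p_1(\De^i\o f)$ is the derivative of the slice, which holds for a.e.\ $x'$ by the ACL characterization. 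Applying \Cref{thm:Almgren and convergence} twice (to $\De^1$, then to $\De^2$) gives $\rh^{1,q}_{\De^2}\to0$ on a.e.\ slice. Thus $F_n(x'):=\int_{I_1}\|\p_1(\De^2\o f-\De^2\o f_n)\|_2^q\,dx_1\to0$ for a.e.\ $x'$.

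\emph{Step 2: Vitali on the slices.} Vitali's theorem applied to $\p_1(\De^1\o f_n)\to\p_1(\De^1\o f)$ in $L^q$ gives uniform integrability of $\|\p_1(\De^1\o f)\|_2^q+\|\p_1(\De^1\o f_n)\|_2^q$. Together with the Lipschitz bound above, this dominates $\|\p_1(\De^2\o f-\De^2\o f_n)\|_2^q$ up to a constant, so the latter family is uniformly integrable. Exactly as in the proof of \Cref{thm:multhermW}, de la Vall\'ee Poussin with a convex $G$ and Jensen then show $\{F_n\}$ is uniformly integrable on $U'$. Vitali and Tonelli yield $L^q(U)$ convergence of $\p_1(\De^2\o f_n)$; the other directions $j$ are analogous.

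\emph{Step 3: general $U$.} Cover $U$ by countably many boxes and apply \Cref{l:Vapp} to $f_n:=\|\p_j(\De^2\o f-\De^2\o f_n)\|_2^q$, which is uniformly integrable by the domination in Step 2.

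\emph{Main obstacle.} The delicate point is Step 1's use of the one-dimensional theorem. One must make sure that, for a.e.\ $x'$, the slice $f(\cdot,x')$ belongs to $W^{1,q}(I_1,\Iq)$ in the sense of \Cref{d:W1p}, and that $\rh^{1,q}_{\De^i}$ computed on the slice is exactly the sliced integral. I expect this to follow because $\De^i\o f$ is ACL with sliced classical derivatives equal to the weak ones, and membership in $W^{1,q}(I,\Iq)$ can be tested via any Almgren embedding.
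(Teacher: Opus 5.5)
Your proposal is correct and follows essentially the same route as the paper. The shared steps are: a Lipschitz transfer map $\De^2\o(\De^1)^{-1}$ giving pointwise domination and hence uniform integrability; slicing on boxes and applying \Cref{thm:Almgren and convergence} twice on almost every slice; de la Vall\'ee Poussin with Jensen, then Vitali and Tonelli; and \Cref{l:Vapp} for general $U$. The only minor difference is that you treat the zeroth-order term directly by the pointwise Lipschitz estimate, whereas the paper slices it too via the quantities $B^i_n$. You are also slightly more explicit about the subsequence-of-subsequence argument and about the slices lying in $W^{1,q}(I_1,\Iq)$.
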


\begin{proof}
The map
\[
    \De^2 \o (\De^1)|_{\De^1(\Iq)}^{-1} : \De^1(\Iq) \to \R^{N_2}
\]
is Lipschitz and has a Lipschitz extension $\Ga$ to all of $\R^{N_1}$.
    Thus (see \cite{AmbrosioDalMaso90})  superposition with $\Ga$
    defines a bounded map from $W^{1,q}(U,\R^{N_1})$ to $W^{1,q}(U,\R^{N_2})$, where, setting $C:= \Lip(\Ga)$, for $F \in W^{1,q}(U,\R^{N_1})$, 
    \[
        \|\Ga \o F\|_2 \le C\, \|F\|_2, 
    \]
    because $\Ga(0) = 0$, and 
    \begin{equation} \label{eq:Ga}
        \|\nabla (\Ga  \o F)\|_2 \le C\, \|\nabla F\|_2 
    \end{equation} 
    almost everywhere in $U$.

   Set $F^i := \De^i \o f$ and $F^i_n := \De^i \o f_n$, for $i=1,2$.
   Assume that
   \begin{equation} \label{eq:F1}
    \|F^1-F^1_n\|_{W^{1,q}(U,\R^{N_1})} \to 0 \quad \text{ as } n \to \infty.
\end{equation}

Then $\{\|F^1\|_2^q+ F^1_n\|_2^q : n\ge 1\}$ and $\{\|\p_j F^1\|_2^q+\|\p_j F^1_n\|_2^q : n\ge 1\}$, for $1 \le j \le m$, are uniformly integrable 
subsets of $L^1(U)$. 
Since $F^2 = \Ga \o F^1$ and $F_n^2 = \Ga \o F_n^1$, we may conclude similarly as in the proof of \Cref{l:ui} 
(using \eqref{eq:Ga}) that the sets  
$\{\|F^2 - F^2_n\|_2^q : n \ge 1\}$ and $\{\|\p_j F^2 - \p_j F^2_n\|_2^q : n \ge 1\}$, for $1 \le j \le m$, are uniformly integrable.

Let us assume that $U = I_1 \times \cdots \times I_m$ is an open bounded box with sides parallel to the coordinate axes. Let $j=1$.
   For $x' \in  U'=I_2 \times \cdots \times I_m$ and $i=1,2$, 
   consider
   \begin{align*}
       A^i_n(x') &= \int_{I_1} \big\| \p_1 F^i(x_1,x') -  \p_1 F^i_n (x_1,x')\big\|_2^q\, dx_1, 
       \\
       B^i_n(x') &= \int_{I_1} \big\|  F^i(x_1,x') -  F^i_n (x_1,x')\big\|_2^q\, dx_1.
   \end{align*}
   By \eqref{eq:F1} and Tonelli's theorem,
   \[
       \int_{U'} A^1_n(x')\, dx' \to 0 \quad \text{ and }  \quad \int_{U'} B^1_n(x')\, dx' \to 0 \quad \text{ as } n \to \infty.
   \]
   Thus there is a subsequence $(n_k)$ such that $A^1_{n_k}(x') \to 0$ and $B^1_{n_k}(x') \to 0$ for almost every $x' \in U'$ as $k \to \infty$.
   For each such $x'$,
   \Cref{thm:Almgren and convergence} implies that $A^2_{n_k}(x') \to 0$ and $B^2_{n_k}(x') \to 0$ as $k \to \infty$. 
   
Tonelli's theorem and uniform integrability of the sets    
$\{\|F^2 - F^2_n\|_2^q : n\ge 1\}$ and $\{\|\p_j F^2 - \p_j F^2_n\|_2^q : n\ge 1\}$, for $1 \le j \le m$,  
imply, invoking de la Vall\'ee Poussin's criterion \ref{thm:VP}, that 
$\{A^2_n : n \ge 1\}$ and $\{B^2_n : n \ge 1\}$ are uniformly integrable. (See the proof of \Cref{thm:multhermW}.)

   Then Vitali's convergence theorem \ref{thm:Vitali} and Tonelli's theorem yield that
\[
    \|  F^2 -  F^2_{n_k} \|_{L^{q}(U,\R^{N_2})} \to 0 \quad \text{ and } \quad  \| \p_1 F^2 - \p_1 F^2_{n_k} \|_{L^{q}(U,\R^{N_2})} \to 0 
\]
as $k \to \infty$. Since the partial derivatives $\p_j$, for $2 \le j \le m$, can be treated in the same way, we 
have showed that there is a subsequence $(n_k)$ such that
\[
    \|  F^2 -  F^2_{n_k} \|_{W^{1,q}(U,\R^{N_2})} \to 0 \quad \text{ as } k \to \infty
\]
which implies 
\begin{equation*} 
    \|  F^2 -  F^2_{n} \|_{W^{1,q}(U,\R^{N_2})} \to 0 \quad \text{ as } n \to \infty,
\end{equation*}
in the case that $U = I_1 \times \cdots \times I_m$.

The case that $U$ is a general open bounded subset of $\R^m$ follows from \Cref{l:Vapp} 
(applied to $f_n := \|  F^2 -  F^2_{n} \|_2^q$ and $f_n := \|\p_j F^2   - \p_j F_{n}^2 \|_2^q$), 
since $U$ is a countable union of bounded open boxes with sides parallel to the coordinate axes.
\end{proof}

%-----------------------------------------------------------------------------------------------------------------------
\section{The characteristic map for normal matrices} \label{sec:charn}
%-----------------------------------------------------------------------------------------------------------------------

We are ready to introduce the characteristic map for normal matrices.

\begin{proposition} \label[p]{p:mcharnorm}
    Let $\De : \cA_d(\C) \to \R^N$ be an Almgren embedding.
    Let $1 \le q \le \infty$. 
    Let $U \subseteq \R^m$ be open and bounded.
Then the map
\[
    \eigu : W^{1,q}(U,\on{Norm}(d)) \to W^{1,q}(U,\cA_d(\C)), \quad A \mapsto \La \o A,
\]
is well-defined and bounded, satisfying
\begin{equation*}
    \|\De(\eigu(A)(x))\|_2 \le C\, \|A(x)\|_2 
    \quad \text{ and } \quad 
    \|\nabla (\De(\eigu(A)(x)))\|_2 \le C\, \|\nabla A(x)\|_2 
\end{equation*}
for almost every $x \in U$, where $C$ is the Lipschitz constant of $\De$.
\end{proposition}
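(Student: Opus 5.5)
The plan mirrors the proof of \Cref{p:mcharHerm}: show that $\De\o\La$ is a globally Lipschitz map on $\on{Norm}(d)$, extend it to all of $M_d(\C)$, and then invoke the standard superposition result for Lipschitz maps from \cite{AmbrosioDalMaso90}.

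First, by \Cref{p:HW} and $\Lip(\De)\le 1$, for $A,B\in\on{Norm}(d)$ we have
\[
\|\De(\La(A))-\De(\La(B))\|_2\le \dd_2(\La(A),\La(B))\le \|A-B\|_2 .
\]
So $\Phi:=\De\o\La:\on{Norm}(d)\to\R^N$ is Lipschitz with constant at most $C=\Lip(\De)$. Since $\on{Norm}(d)$ is only a closed subset (a real cone) of $M_d(\C)\cong\R^{2d^2}$, we extend $\Phi$ to a Lipschitz map $\tilde\Phi:M_d(\C)\to\R^N$. Kirszbraun's theorem preserves the constant. Alternatively, one may extend componentwise by McShane; this costs a factor $\sqrt N$, which is harmless for boundedness. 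To get exactly $C$ in the stated inequalities, I use Kirszbraun.

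Normalization at the origin: $\La(0)=[0,\dots,0]$, and by \eqref{eq:AlmgrenDE} every Almgren map sends the zero tuple to $0$, so $\De(\La(0))=0$. Hence $\tilde\Phi(0)=0$, which gives $\|\tilde\Phi(X)\|_2\le C\|X\|_2$ for all $X\in M_d(\C)$.

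Next, let $A\in W^{1,q}(U,\on{Norm}(d))\subseteq W^{1,q}(U,M_d(\C))$. By \cite{AmbrosioDalMaso90}, superposition with the Lipschitz map $\tilde\Phi$ sends $W^{1,q}(U,M_d(\C))$ into $W^{1,q}(U,\R^N)$ with $\|\nabla(\tilde\Phi\o A)\|_2\le C\|\nabla A\|_2$ almost everywhere. Since $A(U)\subseteq\on{Norm}(d)$, we have $\tilde\Phi\o A=\De\o\La\o A$. So $\De\o(\La\o A)\in W^{1,q}(U,\R^N)$, which by definition \eqref{eq:dSobintro} means $\eigu(A)\in W^{1,q}(U,\cA_d(\C))$. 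The two pointwise inequalities follow, and integrating them yields
\[
\|\De\o\eigu(A)\|_{W^{1,q}}\le C\|A\|_{W^{1,q}},
\]
which is the boundedness claim. Measurability of $\La\o A$ is automatic from the continuity of $\La$.

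The only step needing care is the chain-rule estimate for a Lipschitz map on a vector-valued space. The Ambrosio--Dal Maso result provides exactly the a.e.\ bound $\|\nabla(\tilde\Phi\o A)\|\le\Lip(\tilde\Phi)\|\nabla A\|$, with the gradient norm understood as the Frobenius norm of the Jacobian. This is the same norm convention used in \Cref{p:mcharHerm}, so no further obstacle is expected. The case $q=\infty$ is covered by the same argument.
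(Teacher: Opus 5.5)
Your proof is correct and follows the paper's argument exactly. Hoffman--Wielandt makes $\De\o\La$ $\Lip(\De)$-Lipschitz on $\on{Norm}(d)$, Kirszbraun extends it to $M_d(\C)$ with the same constant and value $0$ at $0$, and the Ambrosio--Dal Maso superposition result gives both pointwise bounds. (One cosmetic point: the displayed chain should read $\|\De(\La(A))-\De(\La(B))\|_2\le \Lip(\De)\,\dd_2(\La(A),\La(B))\le \Lip(\De)\,\|A-B\|_2$, which yields the constant $C$ directly rather than the weaker constant $1$ in your display.)
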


\begin{proof}
    The map $\De \o \La : \on{Norm}(d) \to \cA_d(\C) \to \R^N$ is Lipschitz with Lipschitz constant $C=\Lip(\De)$, 
    by \Cref{p:HW}. It admits a Lipschitz extension $L : M_d(\C) \to \R^N$ 
    preserving the Lipschitz constant $C$, by Kirszbraun's theorem.

    It is well-known (see \cite{AmbrosioDalMaso90}) that superposition with $L$ defines a bounded map from $W^{1,q}(U,M_d(\C))$ to $W^{1,q}(U,\R^N)$, where 
    \[
        \|L\o A\|_2 \le C\, \|A\|_2, 
    \]
    because $L(0)= \De(\La(0)) = 0$, and 
    \[
        \|\nabla (L \o A)\|_2 \le C\, \|\nabla A\|_2 
    \] 
    almost everywhere in $U$.
    This implies the  assertion. 
\end{proof}

Let us recall an important relationship between the metric speed of an $AC$-curve in $\cA_d(\C)$ and 
the derivative of any parameterization.

\begin{lemma}[{\cite[Lemma 11.1]{Parusinski:2024ab}}] \label[l]{l:ms}
    Let $\la : I \to \C^d$ be an absolutely continuous curve and $\ga: I \to \cA_d(\C)$ defined by $\ga(x) = [\la(x)]$, for $x \in I$.
    Then the metric speed of $\ga$ is given by
    \[
        |\dot \ga|(x) = \|\la'(x)\|_2 \quad \text{ for almost every } x \in I.
    \]
\end{lemma}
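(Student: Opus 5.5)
The plan is to prove the two inequalities $|\dot\ga|\le\|\la'\|_2$ and $|\dot\ga|\ge\|\la'\|_2$ separately, at almost every $x\in I$. The metric speed $|\dot\ga|(x)$ exists for a.e.\ $x$, and $\la'(x)$ exists for a.e.\ $x$; we work at a point where both exist.

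The upper bound is immediate. Since $[~]:\C^d\to\cA_d(\C)$ is $1$-Lipschitz,
\[
\dd_2(\ga(x+h),\ga(x))\le\|\la(x+h)-\la(x)\|_2 .
\]
Dividing by $|h|$ and letting $h\to0$ gives $|\dot\ga|(x)\le\|\la'(x)\|_2$ wherever both sides exist. This also shows that $\ga\in AC(I,\cA_d(\C))$, with $m=\|\la'\|_2$ in \eqref{eq:metricspeed1}.

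The lower bound is the main obstacle, because the minimizing permutation in $\dd_2$ may mix indices $i,j$ with $\la_i(x)=\la_j(x)$ but $\la_i'(x)\ne\la_j'(x)$. I would localize. Fix $x$ and partition $\{1,\dots,d\}$ into clusters of indices sharing a common value $\la_i(x)$. Let $\ep>0$ be smaller than the distances between distinct values. For small $|h|$, any permutation that moves an index to a different cluster costs at least about $\ep$ in $\|\la(x+h)-\si\la(x)\|_2$, whereas the identity costs $O(|h|)$. Hence the minimizing $\si$ preserves clusters, and within a cluster all $\la_i(x)$ equal a common value $c$. So it suffices to treat one cluster, where
\[
\la_i(x+h)-\la_{\si(i)}(x)=\la_i(x+h)-c=h\la_i'(x)+o(h)
\]
for every $\si$ preserving the cluster. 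Therefore
\[
\min_\si\|\la(x+h)-\si\la(x)\|_2=|h|\,\|\la'(x)\|_2+o(h).
\]
The key point is that $\dd_2$ is measured between $\ga(x+h)$ and $\ga(x)$, not between two moving tuples. As a result, permuting within a cluster changes nothing, since the entries of $\la(x)$ there coincide. This gives equality, and in particular $|\dot\ga|(x)=\|\la'(x)\|_2$ at every point where $\la$ is differentiable.

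So the argument is pointwise at differentiability points of $\la$, which form a set of full measure since $\la$ is absolutely continuous. Combined with the upper bound, this proves the lemma. No results beyond the $1$-Lipschitz property of $[~]$ and the definition of metric speed from \Cref{ssec:ACq} are needed.
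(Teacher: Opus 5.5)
Your proof is correct. The paper gives no argument of its own for this lemma; it simply imports it from \cite[Lemma 11.1]{Parusinski:2024ab}. Your pointwise proof is therefore a self-contained replacement, and it even yields the identity at \emph{every} point where $\lambda$ is differentiable, not only almost everywhere.

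One simplification is available. Your cluster argument already gives the exact equality $\dd_2(\gamma(x+h),\gamma(x))=\|\lambda(x+h)-\lambda(x)\|_2$ for all sufficiently small $|h|$, and it uses only continuity of $\lambda$ at $x$. Indeed, once $\|\lambda(x+h)-\lambda(x)\|_2$ is below half the minimal gap between distinct entries of $\lambda(x)$, every minimizing $\sigma$ satisfies $\sigma\lambda(x)=\lambda(x)$. So the separate upper bound via the $1$-Lipschitz property of $[~]$ and the $o(h)$ bookkeeping are redundant. Absolute continuity enters only through the almost-everywhere differentiability of $\lambda$, and to place $\gamma$ in $AC(I,\cA_d(\C))$.
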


On an interval $I$, we can express the boundedness of $\eigu : W^{1,q}(I,\on{Norm}(d)) \to W^{1,q}(I,\cA_d(\C))$ 
in terms of $\bs_0$ and $\bs_1$, 
introduced in \Cref{def:dd}.

\begin{proposition}
Let $1 \le q < \infty$ and $I \subseteq \R$ a bounded open interval.     
Then, for $A \in W^{1,q}(I,\on{Norm}(d))$,
\begin{align} \label{eq:charnorm1}
    \|\bs_0(\eigu(A),[0])\|_{L^\infty(I)} &\le  |I|^{-1/q}\, \|A\|_{L^q(I,M_d(\C))} + |I|^{1-1/q} \, \|A'\|_{L^q(I,M_d(\C))}, \qquad
    \\ \label{eq:charnorm2}
    \|\bs_1(\eigu(A),[0])\|_{L^q(I)} &\le  \|A'\|_{L^q(I,M_d(\C))}. 
\end{align}
\end{proposition}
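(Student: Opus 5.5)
The plan is to reduce both inequalities to pointwise consequences of the Hoffman--Wielandt inequality \eqref{eq:HW}, after choosing a parameterization of the eigenvalues. Set $f := \eigu(A)$, which lies in $W^{1,q}(I,\cA_d(\C))$ by \Cref{p:mcharnorm}. By \Cref{p:Wselection-1}, $f$ is continuous (absolutely continuous) and admits a parameterization $f = \a{\la_1}+\cdots+\a{\la_d}$ with $\la_i \in W^{1,q}(I,\C)$, so that $\la=(\la_1,\dots,\la_d)$ parameterizes the eigenvalues of $A$. We may take $A$ to be its absolutely continuous representative.

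For \eqref{eq:charnorm1}, note that $\bs_0(f,[0])(x) = \dd_2(\La(A(x)),\La(0))$. By \eqref{eq:HW} this is at most $\|A(x)-0\|_2 = \|A(x)\|_2$. In fact the two are equal, since for normal $A$ we have $\|A\|_2 = (\sum_j|\la_j|^2)^{1/2}$. Applying \Cref{l:mat} to $A$ then bounds $\sup_{x\in I}\|A(x)\|_2$ by the right-hand side of \eqref{eq:charnorm1}. This step is immediate.

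For \eqref{eq:charnorm2}, the parameterization for $g=[0]$ is $g_i\equiv 0$, so $Dg_i=0$ and the choice of $\ta(x)$ does not matter. At every point where $f$ is differentiable with $Df=\sum_i\a{\la_i'}$ (almost every $x$, by the discussion after \Cref{d:diff}), we get $\bs_1(f,[0])(x) = \|\la'(x)\|_2$. By \Cref{l:ms}, $\|\la'(x)\|_2 = |\dot f|(x)$, the metric speed of $f$ in $(\cA_d(\C),\dd_2)$, for almost every $x$. The metric speed is computed as
\[
|\dot f|(x) = \lim_{h\to0}\frac{\dd_2(\La(A(x+h)),\La(A(x)))}{|h|} \le \limsup_{h\to 0}\frac{\|A(x+h)-A(x)\|_2}{|h|} = \|A'(x)\|_2,
\]
using \eqref{eq:HW} again, at every $x$ where $A'(x)$ exists and the metric speed exists. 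Hence $\bs_1(f,[0])\le \|A'\|_2$ almost everywhere, and integrating the $q$-th power gives \eqref{eq:charnorm2}.

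The only point needing care is the a.e. identification $\bs_1(f,[0]) = \|\la'\|_2 = |\dot f|$. This requires that $f$ belongs to $AC(I,\cA_d(\C))$, so that \Cref{l:ms} applies; that holds because $f=[\la]$ with $\la$ absolutely continuous. It also requires that the differentiability points of $f$ and of $\la$ agree up to a null set, which is already supplied by the cited results. Everything else is a direct consequence of \eqref{eq:HW} together with \Cref{l:mat}.
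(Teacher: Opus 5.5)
Your proposal is correct and follows the paper's proof. For \eqref{eq:charnorm1} you use $\bs_0(\eigu(A),[0]) = \|\la\|_2 = \|A\|_2$ together with \Cref{l:mat}, and for \eqref{eq:charnorm2} you identify $\bs_1(\eigu(A),[0]) = \|\la'\|_2 = |\dot\La|$ via \Cref{l:ms} and bound the metric speed by $\|A'\|_2$ using \eqref{eq:HW}; you only spell out the metric-speed estimate and the a.e.\ identification in more detail than the paper does.
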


\begin{proof}
    Let $A \in W^{1,q}(I,\on{Norm}(d))$. 
    Let $\la \in W^{1,q}(I,\C^d)$ be a parameterization of $\La := \eigu(A)$ (see \Cref{p:Wselection-1}).
    By \eqref{eq:supW}, for all $x \in I$,
    \begin{align*}
     \|\la(x)\|_2 = \|A(x)\|_2 \le    |I|^{-1/q}\, \|A\|_{L^q(I,M_d(\C))} + |I|^{1-1/q} \, \|A'\|_{L^q(I,M_d(\C))},
    \end{align*}
    which implies \eqref{eq:charnorm1}.
    By \Cref{l:ms} and \eqref{eq:HW}, for almost every $x \in I$,
    \begin{align*}   
        \|\la'(x)\|_2 = |\dot \La|(x) \le \|A'(x)\|_2, 
    \end{align*}
    entailing \eqref{eq:charnorm2}.
\end{proof}

%-----------------------------------------------------------------------------------------------------------------------
\section{Eigenvalue stability for normal matrices: one-parameter case} \label{sec:n1}
%-----------------------------------------------------------------------------------------------------------------------

In view of \Cref{thm:Almgren and convergence}, the following theorem is a version of \Cref{thm:eiguW} for $m=1$.
Recall the semimetric $\dd_I^{1,q}$ on $W^{1,q}(I,\cA_d(\C))$, introduced in \Cref{def:dd}.

\begin{theorem} \label[t]{thm:1eiguW}
    Let $1 \le q <\infty$.
    Let $I \subseteq \R$ be a bounded open interval.
    Let $A_n \to A$ in $W^{1,q}(I,\on{Norm}(d))$ as $n\to \infty$.  
    Then 
    \begin{equation*}
        \dd^{1,q}_I(\eigu(A),\eigu(A_n))   \to 0 \quad \text{ as } n \to \infty.
    \end{equation*}
\end{theorem}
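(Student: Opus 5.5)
We will mirror the proof of \Cref{thm:mainhermW}, now working with the semimetric $\dd^{1,q}_I$ in place of the $W^{1,q}$ norm of the ordered eigenvalues.

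\emph{Step 1: the $\bs_0$ part and the trace reduction.} By \eqref{eq:HW} and \Cref{c:supW}, $\|\bs_0(\eigu(A),\eigu(A_n))\|_{L^\infty(I)} \le \|A-A_n\|_{L^\infty(I,M_d(\C))} \to 0$. So it remains to show $\|\bs_1(\eigu(A),\eigu(A_n))\|_{L^q(I)} \to 0$.

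Replacing $A$ by $\tilde A = A - \frac1d \on{Tr}(A)\I_d$ translates all eigenvalues by $\frac1d\on{Tr}(A)$. This translation leaves the optimal matchings $\ta(x)$ unchanged. It also shifts every $Df_i$ by $\frac1d\on{Tr}(A)'$, where $f_i$ is a parameterization of $\eigu(A)$ as in \Cref{p:Wselection-1}. Hence $\bs_1(\eigu(A),\eigu(A_n))$ and $\bs_1(\eigu(\tilde A),\eigu(\tilde A_n))$ differ pointwise by at most $\sqrt d\,|\on{Tr}(A)'-\on{Tr}(A_n)'|/d$, whose $L^q$ norm tends to $0$. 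We may therefore assume trace zero and argue by induction on $d$; the case $d=1$ is trivial.

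\emph{Step 2: uniform integrability and the zero set.} Let $\la,\la_n$ be parameterizations. At every point, $\bs_1 \le \|\la'\|_2 + \|\la_n'\|_2 \le \|A'\|_2+\|A_n'\|_2$ by \Cref{l:ms} and \eqref{eq:HW}. Exactly as in \Cref{l:ui}, the family $\{\bs_1(\eigu(A),\eigu(A_n))^q\}$ is uniformly integrable.

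On $Z_A$, at almost every accumulation point we have $\la'=0$ and $A'=0$. There all matchings give $\bs_1 = \|\la_n'\|_2 \le \|A_n'-A'\|_2$, as in \Cref{l:Z}.

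\emph{Step 3: localization off the zero set (the main obstacle).} For $x_0 \notin Z_A$, \Cref{l:red} carries over verbatim to normal matrices, using \Cref{l:bdiag} (which is stated for normal matrices). It yields an interval $J$ on which, for large $n$, we have simultaneous block-diagonalizations $A \sim \on{diag}(B,C)$ and $A_n\sim \on{diag}(B_n,C_n)$. Here $B_n\to B$ and $C_n \to C$ in $W^{1,q}$, with $B,B_n \in W^{1,q}(J,\on{Norm}(d_1))$ and $C,C_n \in W^{1,q}(J,\on{Norm}(d_2))$.

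The new difficulty is that eigenvalues cannot be ordered, so we must ensure that the optimal matching $\ta(x)$ between $\eigu(A)(x)$ and $\eigu(A_n)(x)$ does not mix the two blocks. This is where the separation \eqref{eq:sep3} enters. On $J$, $\|A\|_2$ and $\|A_n\|_2$ are comparable to $\|A(x_0)\|_2>0$ by \eqref{eq:cl2W} and \eqref{eq:clnW}. Hence every eigenvalue of $B_n(x)$ is at distance $\gtrsim \ch\|A(x_0)\|_2$ from every eigenvalue of $C(x)$, and symmetrically. Meanwhile $\dd_2(\eigu(A)(x),\eigu(A_n)(x)) \to 0$ uniformly on $J$. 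So for large $n$, any matching realizing $\dd_2$ must send block to block. Indeed, a cross-block pair would cost at least $\ch\|A(x_0)\|_2/C$, exceeding the total distance.

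Consequently, on $J$ and for large $n$,
\[
\bs_1(\eigu(A),\eigu(A_n))^2 = \bs_1(\eigu(B),\eigu(B_n))^2 + \bs_1(\eigu(C),\eigu(C_n))^2
\]
almost everywhere. The two sides have matching parameterizations, and the maximum over equivalent optimal matchings splits accordingly. The induction hypothesis, combined with Step 1 applied to the trace parts of $B$ and $C$, gives $\|\bs_1\|_{L^q(J)}\to 0$.

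\emph{Step 4: conclusion.} Cover $I$ by $Z_A$ and countably many such intervals $J$. Applying \Cref{l:Vapp} to $f_n = \bs_1(\eigu(A),\eigu(A_n))^q$ yields $\|\bs_1(\eigu(A),\eigu(A_n))\|_{L^q(I)}\to0$, which together with Step 1 proves the theorem.
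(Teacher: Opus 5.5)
Your proposal is correct and follows the paper's proof essentially step by step: the zero-set estimate of \Cref{l:Zn}, the local block-diagonalization of \Cref{l:rednorm}, the block-respecting property of optimal matchings from \Cref{l:respect}, and uniform integrability with \Cref{l:Vapp}. The only real difference is your trace reduction, together with one small precision: translating both spectra leaves the set of optimal matchings unchanged, which gives the pointwise bound $|\bs_1(\La,\La_n)-\bs_1(\tilde\La,\tilde\La_n)|\le \tfrac{1}{\sqrt d}\,|\on{Tr}(A)'-\on{Tr}(A_n)'|$ directly and is more elementary than the paper's \Cref{lem:redTr} via Almgren maps and \Cref{thm:Almgren and convergence}, while turning \eqref{eq:sep3} into a lower bound on cross-block eigenvalue distances needs $\sup_J\big|\|A\|_2-\|A_n\|_2\big|\to 0$ (from \Cref{c:supW}), not merely comparability of both norms with $\|A(x_0)\|_2$, exactly as in the proof of \Cref{l:respect}.
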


\Cref{thm:1eiguW} and \Cref{thm:Almgren and convergence} yield the following corollary. 

\begin{corollary} \label[c]{c:1eiguW}
    Let $1 \le q <\infty$.
    Let $I \subseteq \R$ be a bounded open interval.
    Let $\De : \cA_d(\C) \to \R^N$ be an Almgren embedding.
    Let $A_n \to A$ in $W^{1,q}(I,\on{Norm}(d))$ as $n\to \infty$.  
    Then 
    \[
        \|\De \o \eigu(A) - \De \o \eigu(A_n)\|_{W^{1,q}(I,\R^N)} \to 0 \quad \text{ as } n \to \infty.
    \]
\end{corollary}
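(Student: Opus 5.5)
The statement to prove is \Cref{c:1eiguW}. It follows immediately from \Cref{thm:1eiguW} combined with \Cref{thm:Almgren and convergence}, so the plan is just to chain the two. Set $\La := \eigu(A)$ and $\La_n := \eigu(A_n)$.

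First, by \Cref{p:mcharnorm}, both $\La$ and $\La_n$ belong to $W^{1,q}(I,\cA_d(\C))$. Hence the semimetric $\dd^{1,q}_I(\La,\La_n)$ is defined; recall it uses the parameterizations provided by \Cref{p:Wselection-1}.

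Second, assuming \Cref{thm:1eiguW}, we have $\dd^{1,q}_I(\La,\La_n) \to 0$. By \Cref{d:convergence}, this says exactly that $\La_n \to \La$ strongly in $W^{1,q}(I,\cA_d(\C))$.

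Third, \Cref{thm:Almgren and convergence} states that strong convergence in this sense is equivalent to $\rh^{1,q}_\De(\La,\La_n) \to 0$, for any Almgren embedding $\De$. By \eqref{eq:Almgren}, this quantity is precisely $\|\De \o \La - \De \o \La_n\|_{W^{1,q}(I,\R^N)}$, which is the claimed convergence.

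The only genuine content lies in \Cref{thm:1eiguW}, which is invoked here as a black box. If one also had to prove it, I would mirror \Cref{p:indhermW}. The steps would be:
\begin{enumerate}
\item Reduce to trace zero. The unordered eigenvalues shift by $\frac1d\on{Tr}(A)$, and this shift is compatible with $\bs_0$ and $\bs_1$.
\item Handle the zero set $Z_A$ using \eqref{eq:charnorm2}.
\item Away from $Z_A$, block-diagonalize locally via the normal-matrix version of \Cref{l:red}, which is based on \Cref{l:bdiag}. Then apply induction on $d$ to the blocks. The separation \eqref{eq:sep3} ensures that the optimal matching between eigenvalues respects the block splitting.
\item Globalize with \Cref{l:Vapp}, using uniform integrability obtained from $\bs_1 \le \|A'\|_2+\|A_n'\|_2$.
\end{enumerate}
The main obstacle would be the block compatibility in step 3, namely that the optimal permutation $\ta(x)$ does not mix the two blocks; this is where \eqref{eq:sep3} enters.
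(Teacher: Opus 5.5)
Your proposal is correct and is exactly the paper's argument: the corollary follows by chaining \Cref{thm:1eiguW} (convergence of $\eigu(A_n)$ to $\eigu(A)$ in the semimetric $\dd^{1,q}_I$) with \Cref{thm:Almgren and convergence}, which identifies this strong convergence with convergence in the metric $\rh^{1,q}_\De$. Your sketch of \Cref{thm:1eiguW} also follows the paper's own induction: trace reduction, the zero set $Z_A$, local block-diagonalization with the block-respecting matching of \Cref{l:respect}, and globalization via \Cref{l:Vapp}.
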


The proof of \Cref{thm:1eiguW} comprises \Cref{ssec:preln} and \Cref{ssec:hatd}.

%-----------------------------------------------------------------------------------------------------------------------
\subsection{Preliminary observations and reductions} \label{ssec:preln}
%-----------------------------------------------------------------------------------------------------------------------

Let $1 \le q <\infty$.
Let $I \subseteq \R$ be a bounded open interval and
assume that $A_n \to A$ in $W^{1,q}(I,\on{Norm}(d))$ as $n\to \infty$.  

By \Cref{p:mcharnorm}, $\La := \eigu(A)$ and $\La_n := \eigu(A_n)$ belong to $W^{1,q}(I,\cA_d(\C))$.
Let $\la=(\la_1,\ldots,\la_d) \in W^{1,q}(I,\C^d)$
and $\la_n=(\la_{n,1},\ldots,\la_{n,d}) \in W^{1,q}(I,\C^d)$
be parameterizations of $\La$ and $\La_n$, respectively; see \Cref{p:Wselection-1}.
Then, for $1 \le i \le d$, $n\ge 1$, and
almost every $x \in I$,
\[
    D\la_i(x) = \la_i'(x) \quad \text{ and } \quad  D\la_{n,i}(x) = \la_{n,i}'(x);
\]
see \Cref{d:diff}.

By \Cref{p:HW} and \Cref{l:mat},
\begin{align*}
    \dd_2(\La(x),\La_n(x)) &\le \|A(x)-A_n(x)\|_2 
    \\
                           &\le   |I|^{-1/q}\, \|A-A_n\|_{L^q(I,M_d(\C))} + |I|^{1-1/q} \, \|A'-A_n'\|_{L^q(I,M_d(\C))}
\end{align*}
so that
\begin{equation} \label{eq:s0}
    \|\mathbf{s}_{0}(\La,\La_n)\|_{L^\infty(I)}  
    = \| \dd_2( \La , \La_n ) \|_{L^\infty(I)} \to 0 \quad \text{ as } n \to \infty. 
\end{equation}
Thus, to complete the proof of \Cref{thm:1eiguW}, it suffices to show that
\begin{equation} \label{eq:nts}
    \ddd^{1,q}_{I}(\La,\La_n)  \to 0 \quad \text{ as } n \to \infty,
\end{equation}
where, for each measurable set $E \subseteq I$, we define
\begin{align*} 
    \ddd^{1,q}_{E}(\La,\La_n) &:= \|\bs_1(\La,\La_n)\|_{L^q(E)};
\end{align*}
see \eqref{eq:maxorderings} for the definition of $\bs_1$.

With $A \in W^{1,q} (I, \on{Norm}(d))$ we associate 
$\tilde A:=A-\frac{1}{d}\on{Tr}(A)\I_d \in W^{1,q} (I, \on{Norm}_T(d))$.

\begin{lemma} \label[l]{lem:redTr}
    Let $1 \le q < \infty$. 
    Let $I \subseteq \R$ be a bounded open interval.
    Let $A_n \to A$ in $W^{1,q}(I,\on{Norm}(d))$ as $n \to \infty$. 
    Then:
    \begin{enumerate}
        \item $\tilde A_n \to \tilde A$ in $W^{1,q}(I,\on{Norm}_T(d))$ as $n \to \infty$.
    \end{enumerate}    
    Let $\La, \La_n, \tilde \La, \tilde \La_n : I \to \Iq$ be the curves of unordered eigenvalues 
    of $A,A_n,\tilde  A, \tilde A_n$,
    respectively. Then:
    \begin{enumerate}
        \item[(2)] $\ddd^{1,q}_I(\La,\La_n) \to 0$ if and only if\hspace{1mm} 
            $\ddd^{1,q}_I(\tilde \La, \tilde \La_n) \to 0$ as $n \to \infty$.
    \end{enumerate}
\end{lemma}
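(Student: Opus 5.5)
Part (1) is immediate. The map $A\mapsto \tilde A = A-\frac1d\on{Tr}(A)\I_d$ is real linear on $M_d(\C)$, with $\|\tilde B\|_2\le 2\|B\|_2$ (in fact $\le\|B\|_2$, since it is an orthogonal projection for the Frobenius inner product). Moreover, it preserves normality, because $\tilde A$ commutes with $\tilde A^*$ whenever $A$ commutes with $A^*$. Hence $\|\tilde A-\tilde A_n\|_{W^{1,q}}\le \|A-A_n\|_{W^{1,q}}$, since derivatives commute with the projection; this is exactly as in \Cref{l:TrW}.

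For (2), set $\tau:=\frac1d\on{Tr}(A)$ and $\tau_n:=\frac1d\on{Tr}(A_n)$. Then $\tau,\tau_n\in W^{1,q}(I,\C)$ and $\tau_n\to\tau$ in $W^{1,q}(I)$, because the trace is linear and bounded. If $\la,\la_n$ are $W^{1,q}$ parameterizations of $\La,\La_n$ as in \Cref{p:Wselection-1}, then $\tilde\la:=\la-\tau(1,\dots,1)$ and $\tilde\la_n:=\la_n-\tau_n(1,\dots,1)$ are $W^{1,q}$ parameterizations of $\tilde\La,\tilde\La_n$. The key observation concerns the optimal permutation $\ta(x)$ in \Cref{def:dd}. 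The quantity $\sum_i|\tilde\la_i(x)-\tilde\la_{n,\ta(i)}(x)|^2$ differs from $\sum_i|\la_i(x)-\la_{n,\ta(i)}(x)|^2$ only through the shift $c:=\tau_n(x)-\tau(x)$ added to each term. Expanding, the difference equals $d|c|^2+2\Re\big(\bar c\sum_i(\la_i-\la_{n,\ta(i)})\big)$, and $\sum_i(\la_i-\la_{n,\ta(i)})=\on{Tr}A-\on{Tr}A_n$ does not depend on $\ta$. So the sets of minimizing permutations coincide, and with the fixed ordering of $\on{S}_d$ the chosen $\ta(x)$ is the same for both pairs.

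At every point where all derivatives exist, we have $D\tilde\la_i=\la_i'-\tau'$, and likewise for $n$. Hence for each ordering,
\[
\Big|\Big(\sum_i|\tilde\la_i'-\tilde\la_{n,\ta(i)}'|^2\Big)^{1/2}-\Big(\sum_i|\la_i'-\la_{n,\ta(i)}'|^2\Big)^{1/2}\Big|\le \sqrt d\,|\tau'-\tau_n'|
\]
by the triangle inequality in $\C^d$. Taking maxima over the orderings, we get $|\bs_1(\tilde\La,\tilde\La_n)-\bs_1(\La,\La_n)|\le\sqrt d\,|\tau'-\tau_n'|$ almost everywhere. Integrating in $L^q(I)$ gives $|\ddd^{1,q}_I(\tilde\La,\tilde\La_n)-\ddd^{1,q}_I(\La,\La_n)|\le \sqrt d\,\|\tau'-\tau_n'\|_{L^q(I)}\to0$, which proves the equivalence.

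The one point needing care, and the main obstacle, is the invariance of the minimizing permutation $\ta(x)$ under the simultaneous shift. The expansion above settles it. The definition of $\bs_1$ also takes a maximum over orderings, but the maxima are compared termwise, so that step causes no difficulty.
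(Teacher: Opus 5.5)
Your proof is correct. Part (1) is handled the same way as in the paper, but for part (2) you take a genuinely different route.

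The paper goes through the Almgren embedding:
\begin{enumerate}
  \item It upgrades $\ddd^{1,q}_I(\La,\La_n)\to0$ to $\dd^{1,q}_I(\La,\La_n)\to0$ using the uniform bound \eqref{eq:s0}.
  \item It invokes \Cref{thm:Almgren and convergence}, imported from \cite{Parusinski:2024ab}, to get $\De\o\La_n\to\De\o\La$ in $W^{1,q}$.
  \item It observes that every Almgren map satisfies $H\o\La-H\o\tilde\La=\frac1d\,\et(\on{Tr}(A))(1,\ldots,1)$, since a common shift does not change the $\et$-ordering, and uses this to transfer the convergence to $\De\o\tilde\La_n$.
  \item It applies \Cref{thm:Almgren and convergence} once more in the reverse direction.
\end{enumerate}

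You stay at the level of $\bs_1$. The sum $\sum_i(\la_i-\la_{n,\ta(i)})=\on{Tr}(A)-\on{Tr}(A_n)$ does not depend on the permutation, so the set of optimal matchings is unchanged by the trace shift. This gives the explicit bound $|\bs_1(\tilde\La,\tilde\La_n)-\bs_1(\La,\La_n)|\le d^{-1/2}\,|\on{Tr}(A)'-\on{Tr}(A_n)'|$ almost everywhere.

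What each route buys:
\begin{itemize}
  \item Your argument is self-contained and quantitative. It needs neither \eqref{eq:s0} nor the nontrivial equivalence theorem.
  \item The paper's argument is shorter given that black box, and it fits the Almgren-embedding framework used throughout the paper.
\end{itemize}

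Computing $\bs_1$ with the shifted parameterizations $\la-\frac1d\on{Tr}(A)(1,\ldots,1)$ and $\la_n-\frac1d\on{Tr}(A_n)(1,\ldots,1)$ is legitimate. This is exactly the parameterization-independence of $\bs_1$ stated in \Cref{def:dd}.

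One cosmetic point: in the paper's notation $\ta$ and $\tau$ are the same symbol. The trace average $\frac1d\on{Tr}(A)$ should get a different name so it is not confused with the optimal permutation.
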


\begin{proof}
    (1) This is clear since $A_n \to A$ in $W^{1,q}(I,\on{Norm}(d))$ implies 
    $\on{Tr}(A_n) \to \on{Tr}(A)$ in $W^{1,q}(I,\C)$ as $n \to \infty$.

    (2) 
    By \eqref{eq:s0} and (1), $\|\dd_2(\La,\La_n)\|_{L^\infty(I)} \to 0$ 
    as well as $\|\dd_2(\tilde \La,\tilde \La_n)\|_{L^\infty(I)} \to 0$ as $n \to \infty$.
    Assume that  $\ddd^{1,q}_I(\La,\La_n) \to 0$ as $n \to \infty$.
    By \Cref{thm:Almgren and convergence},
    we have 
    \[
        \|\De \o \La - \De \o \La_n\|_{W^{1,q}(I,\R^N)} \to 0 \quad \text{ as } n \to \infty,
    \]
for every Almgren embedding $\De : \Iq \to \R^N$. 
Let $H : \Iq \to \R^d$ be an Almgren map with associated real linear form $\et$ (see \Cref{def:Amap}).
    Then
    \begin{align*}
        H \o \La - H \o \tilde \La &=  \tfrac{1}{d} \et(\on{Tr}(A))(1,1,\ldots,1),
        \\
        H \o \La_n - H \o \tilde \La_n &=  \tfrac{1}{d} \et(\on{Tr}(A_n))(1,1,\ldots,1).
    \end{align*}
    Thus $\|H \o \tilde \La - H\o \tilde \La_n\|_{W^{1,q}(I,\R^d)} \to 0$ and, consequently, 
    in view of \eqref{eq:AlmgrenDE},
    \[
        \|\De \o \tilde \La - \De \o \tilde \La_n\|_{W^{1,q}(I,\R^N)} \to 0 \quad \text{ as } n \to \infty,
    \]
    which implies $\dd^{1,q}_I(\tilde \La,\tilde \La_n) \to 0$ and hence $\ddd^{1,q}_I(\tilde \La,\tilde \La_n) \to 0$, 
    again by \Cref{thm:Almgren and convergence}. 
    The opposite direction follows from the same arguments.
\end{proof}

By \Cref{lem:redTr}, we may assume that all matrices $A,A_n$ are traceless.

%-----------------------------------------------------------------------------------------------------------------------
\subsection{Proof of \eqref{eq:nts}} \label{ssec:hatd}
%-----------------------------------------------------------------------------------------------------------------------

We will proceed by induction on $d$. If $d=1$ then 
\[
    \ddd^{1,q}_I (\La,\La_n) = \|A' - A_n'\|_{L^q(I,M_1(\C))}  \to 0 \quad \text{ as } n \to \infty.  
\]
So assume $d\ge 2$.

On the zero set $Z_A$ of $A$ we have the following lemma.

\begin{lemma} \label[l]{l:Zn}
    Let $1 \le q < \infty$.
    Let $I \subseteq \R$ be a bounded open interval.
    Assume that $A_n \to A$ in $W^{1,q}(I, \on{Norm}(d))$ as $n \to \infty$.
    Let $\La := \eigu(A)$ and $\La_n := \eigu(A_n)$.
    Then
            \begin{equation} \label{eq:dddZ}
                \ddd^{1,q}_{Z_A}(\La, \La_n) \to 0 \quad \text{ as } n\to \infty. 
            \end{equation}
\end{lemma}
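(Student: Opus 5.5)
The proof will follow the Hermitian analogue, \Cref{l:Z}, but with the semimetric $\bs_1$ in place of the ordered derivative. Fix parameterizations $\la,\la_n\in W^{1,q}(I,\C^d)$ of $\La$ and $\La_n$ as in \Cref{p:Wselection-1}.

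Let $E$ be the set of points $x\in\on{acc}(Z_A)$ at which the following all exist: $A'(x)$, $A_n'(x)$ for all $n$, $\la'(x)$, $\la_n'(x)$ for all $n$, and the differentials $D\La(x)$, $D\La_n(x)$ (in the sense of \Cref{d:diff}). Since $Z_A\setminus \on{acc}(Z_A)$ is countable and each of these derivatives exists almost everywhere, $E$ has full measure in $Z_A$.

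Fix $x\in E$. The plan has three pointwise steps.
\begin{enumerate}
\item Since $A$ vanishes on a sequence of points converging to $x$, we get $A'(x)=0$. Likewise every eigenvalue vanishes on $Z_A$, so $\la(x)=0$ and $\la'(x)=0$; that is, $D\La(x)=d\a{0}$.
\item Because all $\la_i(x)$ coincide, the matching $\ta(x)$ in \Cref{def:dd} is irrelevant. Hence
\[
\bs_1(\La,\La_n)(x)=\Big(\sum_i|\la_{n,\ta(x)(i)}'(x)|^2\Big)^{1/2}=\|\la_n'(x)\|_2 .
\]
\item By \Cref{l:ms} and \eqref{eq:HW}, $\|\la_n'(x)\|_2=|\dot\La_n|(x)\le\|A_n'(x)\|_2$ for almost every $x$. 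This is the estimate used in the proof of \eqref{eq:charnorm2}.
\end{enumerate}

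Combining these and using $A'=0$ almost everywhere on $Z_A$, we obtain
\[
\ddd^{1,q}_{Z_A}(\La,\La_n)\le\|A_n'\|_{L^q(Z_A,M_d(\C))}=\|A_n'-A'\|_{L^q(Z_A,M_d(\C))}\to0 .
\]

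The only delicate point is the measure-theoretic bookkeeping. One must check that the almost-everywhere set where \Cref{l:ms} gives $|\dot\La_n|=\|\la_n'\|_2$ and \eqref{eq:HW} gives $|\dot\La_n|\le\|A_n'\|_2$ can be intersected with $E$. This works because there are countably many $n$, so the union of the exceptional null sets is still null.

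One must also confirm that $\bs_1$ is independent of the chosen parameterization at points where all values of $\la(x)$ coincide. This is guaranteed by the remark after \Cref{d:diff}.
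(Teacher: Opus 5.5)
Your proposal is correct and is essentially the paper's proof. You restrict to the full-measure subset $E\subseteq \on{acc}(Z_A)$ where all the derivatives exist, observe that $A'=0$ and $\la'=0$ there so that $\bs_1(\La,\La_n)=\|\la_n'\|_2\le\|A_n'\|_2$ by \Cref{l:ms} and \eqref{eq:HW}, and integrate over $Z_A$ to get the bound $\|A'-A_n'\|_{L^q(Z_A,M_d(\C))}$. Your extra bookkeeping, requiring the differentials $D\La(x)$ and $D\La_n(x)$ to exist and agree with the derivatives of the parameterizations, just makes explicit what the paper leaves implicit.
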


\begin{proof}
    Let $\la, \la_n \in W^{1,q} (I,\C^d)$ be parameterizations of $\La,\La_n$, respectively; see \Cref{p:Wselection-1}.
    Let $E$ be the set of $x \in \on{acc}(Z_A)$, where the derivatives $A'(x)$, $\la'(x)$ and $A_n'(x)$, $\la_n'(x)$ for all $n\ge 1$ exist.
    For each $x \in E$, $A'(x)=0$, $\la'(x) = 0$, and, by \Cref{l:ms} and \eqref{eq:HW},
    \[ 
        \big\| \la_n'(x) \big\|_2  = |\dot \La_n|(x) \le \|A_n'(x)\|_2.
    \]
    Since $E$ has full measure in $Z_A$, we thus get 
    \begin{align*}
        \ddd^{1,q}_{Z_A}([\la], [\la_n]) &= \|\bs_1([\la], [\la_n]) \|_{L^q(Z_A)} = \big\|\|\la_n'\|_2 \big\|_{L^q(Z_A)}
        \\
                                         &\le \big\|\|A_n'\|_2 \big\|_{L^q(Z_A)} = \|A' - A_n'\|_{L^q(Z_A,M_d(\C))}
    \end{align*}
    which implies the assertion.
\end{proof}

Let us fix a uniform unitary block-diagonalization $(\{U_i : \cV_i \to U_d(\C)\}_{i=1}^s,r,\ch)$ for $\on{Norm}_T^0(d)$; see \Cref{d:UBD}.

\begin{lemma} \label[l]{l:rednorm}
    Let $1 \le q < \infty$.
    Let $I \subseteq \R$ be a bounded open interval.     
    Assume that $A_n \to A$ in $W^{1,q} (I, \on{Norm}_T(d))$ as $n \to \infty$.
    Let $x_0 \in I\setminus Z_A$. 
    Then there exist an open interval $J$ with $x_0 \in J \subseteq I \setminus Z_A$, 
    $n_0 \ge 1$, and $i \in \{1,\ldots,s\}$ such that 
    \begin{enumerate}
        \item for all $n \ge n_0$, the curves
            $\ul A := A/\|A\|_2 $ and $\ul A_n := A_n/\|A_n\|_2$ belong to $W^{1,q}(J, \on{Norm}_T^0(d))$ 
            and satisfy
            $\ul A(J), \ul A_n(J) \subseteq \cV_i$; 
        \item on $J$ and for all $n \ge n_0$, we have  
            \begin{align} \label{eq:simBDn}
                U_i^* (\ul A) A \, U_i(\ul A) 
                =   
                \begin{pmatrix}
                    B & 0 \\
                    0 &  C
                \end{pmatrix}, \quad
                U_i^* (\ul A_n) A_n \, U_i(\ul A_n) 
                =   
                \begin{pmatrix}
                    B_n & 0 \\
                    0 &  C_n
                \end{pmatrix},
            \end{align}
            where $B,B_n \in W^{1,q}(J,\on{Norm}(d_1))$ and $C,C_n \in W^{1,q}(J,\on{Norm}(d_2))$ with $d_1+d_2 = d$;
        \item for all $x \in J$ and $n \ge n_0$, we have 
            \begin{equation} \label{eq:sep4}
                \big|\|A_n(x)\|_2\, \mu(x) - \|A(x)\|_2\, \nu_n(x)\big| > \ch\, \|A(x)\|_2 \|A_n(x)\|_2
            \end{equation}
            for all eigenvalues $\mu(x)$ of $B(x)$ and all eigenvalues $\nu_n(x)$ of $C_n(x)$, 
            where $B$ and $C_n$ are defined by \eqref{eq:simBDn};
        \item we have 
            \begin{equation} \label{eq:convBDn}
                \|B - B_n\|_{W^{1,q}(J,M_{d_1}(\C))} \to 0, \quad 
                \|C - C_n\|_{W^{1,q}(J,M_{d_2}(\C))} \to 0 
            \end{equation}
            as $n \to \infty$.  
    \end{enumerate}
\end{lemma}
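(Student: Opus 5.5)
The plan is to follow the proof of \Cref{l:red} essentially verbatim, since nothing in that argument used the Hermitian structure except the final ordering statement \eqref{eq:eigBD}. That statement is now replaced by the separation property \eqref{eq:sep4}, which we obtain from \eqref{eq:sep3}.

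\emph{Step 1: choosing $J$, $n_0$ and $i$.} Fix $x_0 \in I\setminus Z_A$.
\begin{itemize}
\item Using $A_n' \to A'$ in $L^1$ and \Cref{c:supW}, choose $n_0$ such that \eqref{eq:L1clW} and \eqref{eq:clW} hold for $n \ge n_0$.
\item Choose an open interval $J \ni x_0$ with \eqref{eq:JW}.
\item Exactly as before, this gives the two-sided bounds \eqref{eq:cl2W} and \eqref{eq:clnW} on $J$. In particular $A$ and $A_n$ do not vanish on $J$.
\item Hence $\ul A$ and $\ul A_n$ are well defined and belong to $W^{1,q}(J,\on{Norm}_T^0(d))$. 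Here normality and tracelessness are preserved under scaling, and the $W^{1,q}$ regularity comes from \eqref{eq:ulA'} together with the lower bounds on $\|A\|_2$ and $\|A_n\|_2$.
\item The estimates \eqref{eq:pU3} and \eqref{eq:ulA'} give $\ul A(J)\subseteq B(\ul A(x_0),r/4)$ and $\ul A_n(J)\subseteq B(\ul A(x_0),3r/4)$.
\item By \eqref{eq:r}, applied now to the cover of $\on{Norm}_T^0(d)$, a single index $i$ works for all $n\ge n_0$.
\end{itemize}
This proves (1).

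\emph{Step 2: block-diagonalization and separation.} Assertion (2) follows from \eqref{eq:BD}. The blocks $B = \|A\|_2\,\ul B_i(\ul A)$ and $C$, and likewise $B_n$ and $C_n$, are normal.

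For their $W^{1,q}$ regularity, apply \Cref{l:bdiag} with $A_2$ equal to a constant matrix. Alternatively, use the chain rule for $U_i$, which has bounded derivatives, together with \eqref{eq:ulA'}.

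Assertion (3) is exactly \eqref{eq:sep3}, applied to the pair $A_1 = A$ and $A_2 = A_n$. This is legitimate because both $\ul A(J)$ and $\ul A_n(J)$ lie in $\cV_i$.

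\emph{Step 3: convergence of the blocks.} This is assertion (4). The pointwise estimates \eqref{eq:bdiagbounds2} and \eqref{eq:bdiagbounds3} of \Cref{l:bdiag}, with $j=1$, combined with the lower bound \eqref{eq:cl2W} on $\|A\|_2$, give an $L^q$ bound on the difference of the derivatives of the block-diagonal forms. This bound is
\[
C\|A'-A_n'\|_{L^q(J)} + C\|A(x_0)\|_2^{-1}\bigl(\|A'\|_{L^q(J)}+\|A_n'\|_{L^q(J)}\bigr)\|A-A_n\|_{L^\infty(J)} .
\]
It tends to $0$ by \Cref{c:supW}, because the norms $\|A_n'\|_{L^q}$ are bounded. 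The $L^q$ part of the difference is controlled directly by \eqref{eq:bdiagbounds2}. Restricting to the diagonal blocks then yields \eqref{eq:convBDn}.

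\emph{Main obstacle.} The only genuinely new point relative to \Cref{l:red} is the uniform index $i$ in Step 1, and that is handled by the Lebesgue-number choice of $r$ for the cover of $\on{Norm}_T^0(d)$. Everything else is routine bookkeeping that has already been done in the Hermitian case.
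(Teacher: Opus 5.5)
Your proposal is correct and follows the paper's own proof. The paper simply says the argument is almost identical to that of \Cref{l:red}, with item (3) following from \eqref{eq:sep3} applied to $A_1=A$, $A_2=A_n$, exactly as you do. One small correction to your closing remark: the uniform choice of $i$ via \eqref{eq:r} is not new relative to \Cref{l:red}, which already uses it in the same way, so the only genuinely new ingredient is item (3), and you handle it correctly.
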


\begin{proof}
    The proof is almost identical to the one of \Cref{l:red}. 
    (In particular, \eqref{eq:L1clW}--\eqref{eq:clnW} hold 
    and we will use them in the proof of \Cref{l:respect} below.)
    Item (3) follows from \eqref{eq:sep3}.
\end{proof}

The unitary block-diagonalization \eqref{eq:simBDn} will permit us to use the induction hypothesis.

Consider the parameterizations $\la,\la_n \in W^{1,q}(I,\C^d)$ of $\La = \eigu(A)$ and $\La_n = \eigu(A_n)$.
In the situation of \Cref{l:rednorm},
by reordering (independently of $x$) if necessary, we may assume that 
$\La^1 = [\la_1(x),\ldots,\la_{d_1}(x)]$ is the unordered $d_1$-tuple of the eigenvalues of $B(x)$ and 
$\La^2 = [\la_{d_1+1}(x),\ldots,\la_{d}(x)]$ is the unordered $d_2$-tuple of the eigenvalues of $C(x)$, for $x \in J$.
Analogously, we may assume that
$\La^1_n = [\la_{n,1}(x),\ldots,\la_{n,d_1}(x)]$ is the unordered $d_1$-tuple of the eigenvalues of $B_n(x)$ and 
$\La^2_n = [\la_{n,d_1+1}(x),\ldots,\la_{n,d}(x)]$ is the unordered $d_2$-tuple of the eigenvalues of $C_n(x)$, for $x \in J$ and $n \ge n_0$.
Since the reordering is independent of $x$, $\la$ and $\la_n$ remain parameterizations in $W^{1,q}(J,\C^d)$ of 
$\La$ and $\La_n$ on $J$, respectively.

By \Cref{lem:redTr}, \Cref{l:rednorm}, and the induction hypothesis,
\[
    \ddd^{1,q}_J(\La^1,\La^1_n) \to 0 
    \quad \text{ and } \quad 
    \ddd^{1,q}_J(\La^2,\La^2_n) \to 0 
    \quad \text{ as } n \to \infty,
\]
where $\ddd^{1,q}_J([\La^j],[\La^j_n])$ is interpreted in dimension $d_j$, for $j=1,2$.
Thanks to the following lemma, this implies 
\begin{equation} \label{eq:Jnorm}
    \ddd^{1,q}_J(\La,\La_n) \to 0 
    \quad \text{ as } n \to \infty.
\end{equation}

\begin{lemma} \label[l]{l:respect}
    In the situation of \Cref{l:rednorm}, after possibly 
    increasing $n_0$, for all $x \in J$ and $n \ge n_0$,
    the following holds. 
    If $\ta \in \on{S}_d$ satisfies 
    \begin{equation} \label{eq:taid}
        \|\la(x) - \ta \la_n(x)\|_2 = \dd_2([\la(x)],[\la_n(x)])
    \end{equation}
    then $\ta$ respects \eqref{eq:simBDn} in the sense that 
    $\ta(\{1,\ldots,d_1\}) =\{1,\ldots,d_1\}$
   and (consequently) $\ta(\{d_1+1,\ldots,d\}) =\{d_1+1,\ldots,d\}$.
\end{lemma}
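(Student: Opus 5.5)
The plan is to argue by contradiction. The strategy is to compare the separation gap from \eqref{eq:sep4} with the size of the optimal matching distance $\dd_2(\La(x),\La_n(x))$, which tends to zero uniformly. Suppose $\ta$ satisfies \eqref{eq:taid} but does not preserve $\{1,\ldots,d_1\}$. Then there is some $k\le d_1$ with $\ta(k)\ge d_1+1$. In other words, the matching pairs an eigenvalue $\mu=\la_k(x)$ of $B(x)$ with an eigenvalue $\nu_n=\la_{n,\ta(k)}(x)$ of $C_n(x)$. Since the matching is optimal,
\[
|\mu-\nu_n| \le \|\la(x)-\ta\la_n(x)\|_2 = \dd_2(\La(x),\La_n(x)) \le \|A(x)-A_n(x)\|_2,
\]
where the last inequality is \Cref{p:HW}.

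The next step is to turn \eqref{eq:sep4} into a lower bound on $|\mu-\nu_n|$ that is comparable to $\|A(x_0)\|_2$. Write
\[
\|A_n\|_2\mu-\|A\|_2\nu_n = \|A_n\|_2(\mu-\nu_n) + (\|A_n\|_2-\|A\|_2)\nu_n .
\]
We have $|\nu_n|\le \|C_n(x)\|_2\le\|A_n(x)\|_2$, by unitary invariance of the Frobenius norm. Also $\big|\|A_n\|_2-\|A\|_2\big|\le \|A-A_n\|_2$. Combining these with \eqref{eq:sep4} and dividing by $\|A_n(x)\|_2$ gives
\[
|\mu-\nu_n| > \ch\|A(x)\|_2 - \|A(x)-A_n(x)\|_2 .
\]
By \eqref{eq:cl2W}, the right-hand side is at least $\tfrac{\ch}{2}\|A(x_0)\|_2-\|A(x)-A_n(x)\|_2$. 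The bounds \eqref{eq:cl2W} and \eqref{eq:clnW} hold on $J$ for $n\ge n_0$, as noted in the proof of \Cref{l:rednorm}, so all norms stay comparable to $\|A(x_0)\|_2>0$.

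Now increase $n_0$, using \Cref{c:supW}, so that $\|A-A_n\|_{L^\infty(I,M_d(\C))}<\tfrac{\ch}{4}\|A(x_0)\|_2$ for all $n\ge n_0$. Then the two estimates give
\[
\tfrac{\ch}{4}\|A(x_0)\|_2 > |\mu-\nu_n| > \tfrac{\ch}{4}\|A(x_0)\|_2 ,
\]
which is a contradiction. Hence $\ta(\{1,\ldots,d_1\})\subseteq\{1,\ldots,d_1\}$. Since $\ta$ is a bijection, equality follows, and so does the statement for the complementary block.

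The main point requiring care is the reconciliation of scales. \eqref{eq:sep4} is a separation of the normalized spectra, mixing $\|A\|_2$ and $\|A_n\|_2$. Converting it into an absolute separation $|\mu-\nu_n|\gtrsim \ch\|A(x_0)\|_2$ uniformly on $J$ needs the uniform two-sided control of $\|A\|_2$ and $\|A_n\|_2$ on $J$, together with the uniform convergence $A_n\to A$ from \Cref{c:supW}.
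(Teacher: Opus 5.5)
Your proof is correct and takes essentially the same route as the paper: you turn the scaled separation \eqref{eq:sep4} into an absolute lower bound $|\mu-\nu_n|\gtrsim \ch\|A(x_0)\|_2$ on $J$, and you contradict it with the uniform smallness of $\dd_2(\La,\La_n)$. The paper gets that smallness from \eqref{eq:s0}, which is exactly the Hoffman--Wielandt plus \Cref{c:supW} argument you use. The only difference is bookkeeping: you split off $\|A_n\|_2(\mu-\nu_n)$ and divide by $\|A_n(x)\|_2$ using $|\nu_n|\le\|A_n(x)\|_2$, whereas the paper splits off $\|A\|_2(\la_i-\la_{n,j})$ and uses the product bound $\|A\|_2\|A_n\|_2\ge\frac18\|A(x_0)\|_2^2$, which gives the constant $\ch/24$ instead of your $\ch/4$.
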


\begin{proof}
    By \eqref{eq:sep4}, for all $x \in J$, $n\ge n_0$, $1 \le i \le d_1$, and $d_1+1\le j \le d$,
    \begin{equation} \label{eq:sep5}
        \big|\|A_n(x)\|_2\, \la_i(x) - \|A(x)\|_2\, \la_{n,j}(x)\big| > \ch \, \|A(x)\|_2 \|A_n(x)\|_2.
    \end{equation}
    By \eqref{eq:cl1W}, \eqref{eq:cl2W}, and \eqref{eq:clnW},
    \[
        \|A(x)\|_2 \|A_n(x)\|_2 \ge \frac{1}{8}\, \|A(x_0)\|_2^2
    \]
    so that, by \eqref{eq:sep5}, 
    \begin{align*}
       \frac{\ch}{8}\, \|A(x_0)\|_2^2 &< \big|\|A_n(x)\|_2\, \la_i(x) - \|A(x)\|_2\, \la_{n,j}(x)\big|
       \\
                                      &\le \big|\|A_n(x)\|_2 - \|A(x)\|_2\big| |\la_i(x)| + |\la_i(x) - \la_{n,j}(x)| \|A(x)\|_2.
    \end{align*}
    By \eqref{eq:cl2W}, 
    \[
        |\la_i(x)| \le    \|A(x)\|_2 \le \frac{3}{2} \, \|A(x_0)\|_2.
    \]
    Since $\|A -A_n\|_{L^\infty(I,M_d(\C))} \to 0$ as $n \to \infty$, by \Cref{c:supW}, 
    \[
        \big|\|A_n(x)\|_2 - \|A(x)\|_2\big| \le \frac{\ch}{24}\, \|A(x_0)\|_2
    \]
    for all $x \in J$ if $n$ is large enough. 
    Thus
    \[
       \frac{\ch}{8}\, \|A(x_0)\|_2^2 < \frac{\ch}{16}\, \|A(x_0)\|_2^2 +  \frac{3}{2} \, \|A(x_0)\|_2|\la_i(x) - \la_{n,j}(x)| 
    \]
    which implies
    \begin{equation} \label{eq:lb}
       \frac{\ch}{24}\, \|A(x_0)\|_2 <  |\la_i(x) - \la_{n,j}(x)|,
    \end{equation}
    for all $x \in J$ and all sufficiently large $n$, say for all $n \ge n_1$ where $n_1 \ge n_0$.

    By \eqref{eq:s0}, there exists $n_2 \ge n_1$ such that 
    \begin{equation} \label{eq:ub}
        \dd_2([\la(x)],[\la_n(x)]) < \frac{\ch}{24}\, \|A(x_0)\|_2,
    \end{equation}
    for all $x \in J$ and $n \ge n_2$.
   
    Assume that $\ta \in \on{S}_d$ is such that $\ta(\{1,\ldots,d_1\}) \ne \{1,\ldots,d_1\}$.    
    Then there exist $i, j$ with $1 \le i \le d_1 < j \le d$ and $\ta(i) = j$.

    Fix $x \in J$ and $n \ge n_2$ and 
    suppose that $\ta$ satisfies \eqref{eq:taid}. 
    By \eqref{eq:lb} and \eqref{eq:ub},
    \begin{align*}
        \frac{\ch}{24}\, \|A(x_0)\|_2 < |\la_i(x)- \la_{n,j}(x)| &\le \|\la(x) - \ta \la_n(x)\|_2
      \\
       &= \dd_2([\la(x)],[\la_n(x)]) < \frac{\ch}{24}\, \|A(x_0)\|_2,
    \end{align*}
    a contradiction. The lemma is proved.
\end{proof}

By \Cref{l:Zn} and \eqref{eq:Jnorm}, the interval $I$ 
can be covered by countably many measurable sets $F_i$ such that 
\begin{equation*} 
\ddd^{1,q}_{F_i}(\La,\La_n) \to 0 
    \quad \text{ as } n \to \infty.
\end{equation*}
By \Cref{l:Vapp} (applied to $f_n = \bs_1(\La,\La_n)^q$), we conclude \eqref{eq:nts}. 
Indeed, $\{\bs_1(\La,\La_n)^q : n \ge 1\} \subseteq L^1(I)$ is uniformly integrable
which can be checked following the arguments in the proof of \Cref{l:ui} and noting that 
\[
    \bs_1(\La,\La_n) \le \|\la'\|_2 + \|\la_n'\|_2 \le \|A'\|_2 + \|A_n'\|_2,
\]
almost everywhere in $I$,
by \Cref{l:ms} and \eqref{eq:HW}.

This completes the induction and hence the proof of \Cref{thm:1eiguW}.

%-----------------------------------------------------------------------------------------------------------------------
\subsection{Variants of \Cref{thm:1eiguW}} \label{ssec:variants}
%-----------------------------------------------------------------------------------------------------------------------

\Cref{thm:main1varW} and \Cref{thm:main1var} are versions of  \Cref{thm:main1varWm} and \Cref{thm:main1varm} for $m=1$,
respectively.

\begin{theorem} \label[t]{thm:main1varW}
    Let $1 \le q < \infty$.
    Let $I \subseteq \R$ be a bounded open interval.
    Let $A_n \to A$ in $W^{1,q}(I,\on{Norm}(d))$ as $n\to \infty$.  
    Assume that $\la,\la_n \in W^{1,q}(I, \C^d)$ are parameterizations 
    of the eigenvalues of $A,A_n$, respectively, and that $\lim_{n\to \infty} \la_n(x_0)= \la(x_0)$, for some $x_0 \in I$.
    Then $\la_n \to \la$ in $L^\infty(I,\C^d)$ 
    if and only if
    $\la_n' \to \la'$ in $L^q(I,\C^d)$
    as $n \to \infty$.
\end{theorem}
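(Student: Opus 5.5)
The easy direction is ($\Leftarrow$). If $\la_n' \to \la'$ in $L^q(I,\C^d)$, then for every $x \in I$
\[
    \|\la(x)-\la_n(x)\|_2 \le \|\la(x_0)-\la_n(x_0)\|_2 + \Big|\int_{x_0}^x \|\la'-\la_n'\|_2\,dt\Big| \le \|\la(x_0)-\la_n(x_0)\|_2 + |I|^{1-1/q}\|\la'-\la_n'\|_{L^q(I,\C^d)}.
\]
Both terms tend to $0$ by hypothesis, so $\la_n \to \la$ in $L^\infty$. This direction uses only absolute continuity of the parameterizations and not the matrix structure.

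For ($\Rightarrow$) the plan is to reduce to \Cref{thm:1eiguW}, which gives $\|\bs_1(\La,\La_n)\|_{L^q(I)} \to 0$ with $\La=[\la]$ and $\La_n=[\la_n]$. The key pointwise step is to compare $\|\la'(x)-\la_n'(x)\|_2$ with $\bs_1(\La,\La_n)(x)$. Fix $x$ at which all derivatives exist and $\La$, $\La_n$ are differentiable in the sense of \Cref{d:diff}. Let $\ga$ be the minimal positive gap between distinct values among $\la_1(x),\dots,\la_d(x)$ ($\ga=\infty$ if all coincide).
\begin{itemize}
\item Suppose $n$ is so large that $\|\la-\la_n\|_{L^\infty} < \ga/2$. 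Then any optimal permutation $\ta(x)$ for $\dd_2(\La(x),\La_n(x))$ maps each index $i$ to some $j$ with $\la_j(x)=\la_i(x)$. Otherwise $\ta$ would pair $\la_i(x)$ with some $\la_{n,j}(x)$ that is at least $\ga/2$ away from $\la_i(x)$, and its cost would exceed that of the identity, which is $<\ga/2$.
\item Moreover, $\la_{n,j}(x)$ is then close to $\la_j(x) = \la_i(x)$. If $\la_{n,i}(x) \ne \la_{n,j}(x)$, I cannot directly identify $\la_{n,i}'(x)$ with $D\La_{n,\ta(i)}$.
\end{itemize}
It is cleaner to use the maximum over orderings in \eqref{eq:maxorderings}. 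I expect that the identity permutation can be realized as one admissible choice, or at least that its cost is dominated by $\bs_1$. Since $\bs_1$ takes the maximum over all optimal $\ta$ (through the tie-breaking orderings), I will try to show that the identity is optimal up to a permutation within the clusters $\{j : \la_j(x)=\la_i(x)\}$ on which $\la'$ is constant. By \Cref{d:diff}(ii), $D\la_i=D\la_j$ whenever $\la_i(x)=\la_j(x)$, so any within-cluster permutation gives the same value of $\sum_i |\la_i'(x) - \la_{n,\ta(i)}'(x)|^2$.

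The main obstacle is the cluster of $\la_n$ itself. The permutation $\ta$ may mix $\la_{n,i}$ and $\la_{n,j}$ inside a cluster of $\la$ even though $\la_{n,i}'(x)\ne\la_{n,j}'(x)$. In that case $\sum_i|\la_i'-\la_{n,i}'|^2$ equals $\sum_i|\la_i'-\la_{n,\ta(i)}'|^2$, because $\la'$ is constant on the cluster and the $\la_n'$ are merely permuted within it. The sum of squares over the cluster is therefore invariant, and we get
\[
    \|\la'(x)-\la_n'(x)\|_2 \le \bs_1(\La,\La_n)(x)
\]
for all $n \ge N(x)$. The threshold $N(x)$ depends on $x$ through $\ga(x)$, so uniform $L^q$ control does not follow directly. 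To handle this, I will use uniform integrability: $\|\la'-\la_n'\|_2^q \le 2^q(\|A'\|_2^q+\|A_n'\|_2^q)$ by \Cref{l:ms} and \eqref{eq:HW}, as in \Cref{l:ui}. Pass to a subsequence along which $\bs_1(\La,\La_n)\to 0$ almost everywhere. The pointwise bound then gives $\la_n'\to\la'$ almost everywhere, and Vitali's theorem \ref{thm:Vitali} yields $L^q$ convergence along the subsequence. Since every subsequence has such a further subsequence, the full sequence converges. The hypothesis at $x_0$ is needed only for ($\Leftarrow$).
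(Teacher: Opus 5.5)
Your proof is correct, but it takes a different route from the paper for ($\Rightarrow$). The ($\Leftarrow$) direction is identical to the paper's.

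\textbf{The paper's route.} The paper does not use \Cref{thm:1eiguW} as a black box. It re-runs the whole induction on $d$ from that proof:
\begin{enumerate}
\item it handles the zero set $Z_A$ as in \Cref{l:Zn};
\item on $I\setminus Z_A$ it uses \Cref{l:rednorm} and splits the given parameterizations as $\la=(\mu,\nu)$, $\la_n=(\mu_n,\nu_n)$ compatibly with the blocks;
\item it notes that uniform convergence passes to $\mu_n\to\mu$ and $\nu_n\to\nu$, applies the induction hypothesis to the blocks, and concludes with \Cref{l:Vapp}.
\end{enumerate}

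\textbf{Your route.} You derive the parameterized statement directly from the unordered one by a pointwise cluster argument. Once $\|\la(x)-\la_n(x)\|_2$ is below half the minimal gap of $\la(x)$, every optimal $\ta$ preserves the clusters of $\la(x)$. On those clusters $\la'(x)$ is constant by \Cref{d:diff}(ii). Hence the cost of every optimal $\ta$ equals $\|\la'(x)-\la_n'(x)\|_2$, so you in fact get equality with $\bs_1(\La,\La_n)(x)$, not just $\le$. The subsequence argument with uniform integrability and Vitali then upgrades this to $L^q$ convergence.

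\textbf{What each buys.} Your approach is shorter and modular. It also avoids checking that the block splitting matches the index sets of the given parameterizations for large $n$. It shows as well that ($\Rightarrow$) needs only $\la_n(x)\to\la(x)$ for almost every $x$; uniform convergence then follows afterwards from ($\Leftarrow$). The paper's approach, in turn, is a template that transfers directly to the Lipschitz almost-everywhere variant \Cref{thm:main1var} for the full sequence. Your reduction would need there an almost-everywhere analogue of \Cref{thm:1eiguW} for normal matrices, which the paper does not state.
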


\begin{proof}
    Let us assume that $\la_n \to \la$ in $L^\infty(I,\C^d)$ as $n \to \infty$.
    The proof (by induction on $d$) that then $\la_n' \to \la'$ in $L^q(I,\C^d)$
    as $n \to \infty$ follows from the arguments in the proof of \Cref{thm:1eiguW} with 
    slight modifications. It is easy to adjust \Cref{lem:redTr} and \Cref{l:Zn}, while 
    \Cref{l:rednorm} can be used unchanged. 
    In the situation of \Cref{l:rednorm}, there exist
    $\mu,\mu_n \in W^{1,q}(J,\C^{d_1})$ and $\nu,\nu_n \in W^{1,q}(J,\C^{d_2})$ 
    such that 
    \begin{align} \label{eq:munu}
        \la = (\mu,\nu)  \quad \text{ and } \quad \la_n = (\mu_n,\nu_n).
    \end{align}
    Since $\la_n \to \la$ in $L^\infty(I,\C^d)$, we have 
    $\mu_n \to \mu$ in $L^\infty(I,\C^{d_1})$ and $\nu_n \to \nu$ in $L^\infty(I,\C^{d_2})$ as $n \to \infty$.
    The induction hypothesis implies that there is countable cover $\{F_i\}$ of $I$ by measurable sets such that, for all $F_i$,  
    \[
        \|\la' -\la_n'\|_{L^q(F_i,\C^d)} \to 0 \quad \text{ as } n \to \infty
    \]
    which implies $\la_n' \to \la'$ in $L^q(I,\C^d)$, thanks to \Cref{l:Vapp}.

    Conversely, assume that $\la_n' \to \la'$ in $L^q(I,\C^d)$ as $n \to \infty$.
    Then, for $x \in I$,  
    \begin{align*}
        \| \la(x) - \la_n(x)\|_2  &= \Big\| \la(x_0) - \la_n(x_0) + \int_{x_0}^x \la'(t) - \la_n'(t) \, dt \Big\|_2
        \\
                                  &\le \| \la(x_0) - \la_n(x_0) \|_2 + \|\la'- \la_n'\|_{L^1(I,\C^d)}
        \\
                                  &\le \| \la(x_0) - \la_n(x_0) \|_2 + |I|^{1-1/q} \|\la'- \la_n'\|_{L^q(I,\C^d)}
    \end{align*}
    which implies $\la_n \to \la$ in $L^\infty(I,\C^d)$ as $n \to \infty$.
\end{proof}

\begin{theorem} \label[t]{thm:main1var}
    Let $I \subseteq \R$ be a bounded open interval.
    Let $A_n \to A$ in $C^{0,1}(\ol I,\on{Norm}(d))$ as $n\to \infty$.  
    Assume that $\la,\la_n : I \to \C^d$ are continuous (thus Lipschitz) parameterizations 
    of the eigenvalues of $A,A_n$, respectively, and that $\lim_{n\to \infty} \la_n(x_0)= \la(x_0)$, for some $x_0 \in I$.
    Then $\la_n \to \la$ in $L^\infty(I,\C^d)$ 
    if and only if
    $\la_n' \to \la'$ almost everywhere in $I$
    as $n \to \infty$.
\end{theorem}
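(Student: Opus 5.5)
The two directions are handled separately, and the nontrivial one, $\la_n \to \la$ uniformly $\Rightarrow$ $\la_n' \to \la'$ a.e., follows the induction of \Cref{thm:ptw}, with the block splitting of \Cref{thm:main1varW}.

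\emph{Converse.} Assume $\la_n' \to \la'$ almost everywhere in $I$. Since $A_n \to A$ in $C^{0,1}(\ol I)$, the Lipschitz constants $|A_n|_{C^{0,1}}$ are bounded by some $L$. By \Cref{l:ms} and \eqref{eq:HW}, $\|\la_n'\|_2 \le \|A_n'\|_2 \le L$ almost everywhere. Dominated convergence then gives $\la_n' \to \la'$ in $L^1(I,\C^d)$. Writing $\la(x)-\la_n(x) = \la(x_0)-\la_n(x_0) + \int_{x_0}^x(\la'-\la_n')\,dt$, as in the proof of \Cref{thm:main1varW}, yields uniform convergence.

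\emph{Direct implication.} We induct on $d$; the case $d=1$ is trivial because $\la = A$.

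\textbf{Trace reduction.} Subtracting $\frac1d\on{Tr}(A)$ from every $\la_i$ (and likewise for $A_n$) preserves both the hypothesis and the conclusion, since $\on{Tr}(A_n)\to\on{Tr}(A)$ in $C^{0,1}$ and hence $\on{Tr}(A_n)' \to \on{Tr}(A)'$ uniformly almost everywhere. So we may assume all matrices are traceless.

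\textbf{On the zero set $Z_A$.} We copy \Cref{l:Zp}. For $x,y\in Z_A$, \eqref{eq:HW} gives
\[
\dd_2([\la_n(x)],[\la_n(y)])\le \|A_n(x)-A_n(y)\|_2 .
\]
By \Cref{l:slope1} and \Cref{l:slope2}, the supremum over $Z_A^{<2>}$ of the right-hand side divided by $|x-y|$ tends to $0$. This needs care because $\la_n$ is an ordered parameterization and the inequality only controls the unordered distance. At a point $x_0\in\on{acc}(Z_A)$ where $\la_n$ is differentiable, however, \Cref{l:ms} shows that $\|\la_n'(x_0)\|_2$ equals the metric speed of $[\la_n]$. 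This speed can be computed along a sequence $x_k\to x_0$ in $Z_A$, so it is bounded by the above supremum. Hence $\la_n'\to 0=\la'$ almost everywhere on $Z_A$.

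\textbf{Off $Z_A$.} For $x_0\in I\setminus Z_A$ we apply \Cref{l:rednorm} (in its $C^{0,1}$ form, as in \Cref{l:redC} via \Cref{l:bdiag}). This produces an interval $J$ and a splitting $B,B_n,C,C_n$ with $B_n\to B$ and $C_n\to C$ in $C^{0,1}(\ol J)$. After a constant reordering, $\la=(\mu,\nu)$ on $J$, where $\mu$ parameterizes the eigenvalues of $B$, and \Cref{l:respect} (whose proof uses only the separation \eqref{eq:sep4}) keeps the blocks apart. The decisive step is to show that the \emph{given} $\la_n$ splits compatibly, i.e.\ $\la_n=(\mu_n,\nu_n)$ with $\mu_n$ the eigenvalues of $B_n$.

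This is the main obstacle, because the statement fixes $\la_n$ and we cannot reorder it. I would argue as follows. By \eqref{eq:lb}, every eigenvalue of $B_n$ lies at distance more than $\frac{\ch}{24}\|A(x_0)\|_2$ from every eigenvalue of $C_n$. Uniform convergence $\la_n\to\la$ then forces $\la_{n,i}(x)$, for $i\le d_1$, to lie within that distance of $\la_i(x)$, an eigenvalue of $B(x)$. Combined with the block separation, this places $\la_{n,i}(x)$ among the eigenvalues of $B_n(x)$ for large $n$, uniformly in $x\in J$. A counting argument shows that the first $d_1$ components exhaust the spectrum of $B_n$, so $\la_n=(\mu_n,\nu_n)$ as required.

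Once the splitting holds, $\mu_n\to\mu$ and $\nu_n\to\nu$ uniformly on $J$. The points $\mu_n(x_0)\to\mu(x_0)$ supply the base point needed by the induction hypothesis (after the trace reduction within each block). The hypothesis in dimensions $d_1,d_2<d$ then gives a.e.\ convergence of the derivatives on $J$.

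\textbf{Conclusion.} $I\setminus Z_A$ is covered by countably many such intervals $J$, and together with the zero-set step this gives $\la_n'\to\la'$ almost everywhere in $I$.
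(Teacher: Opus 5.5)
Your proposal is correct and follows the paper's proof essentially step for step. For the converse you use dominated convergence plus the fundamental theorem of calculus; for the direct implication you use the trace reduction, the slope-convergence argument of \Cref{l:ZnLip} on $Z_A$, and the block splitting \eqref{eq:munu} with the induction hypothesis off $Z_A$. You also usefully make explicit why the given $\la_n$ splits with the same index sets as $\la$, which the paper only asserts.

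One small correction: \eqref{eq:lb} separates the eigenvalues of $B$ (not $B_n$) from those of $C_n$, and that is exactly what your placement step needs. The disjointness of the spectra of $B_n$ and $C_n$, used in your counting step, comes instead from \eqref{eq:sep3} with $A_1=A_2=A_n$.
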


\begin{proof}
    Every continuous parameterization $\la$, $\la_n$ of the eigenvalues of $A$, $A_n$, respectively,
    is actually Lipschitz, by \cite[Proposition 6.3]{RainerN}, \eqref{eq:HW}, and \Cref{l:ms}.  

    If $\la_n' \to \la'$ almost everywhere in $I$, then $\la_n' \to \la'$ in $L^q(I,\C^d)$ as $n \to \infty$, by the dominated convergence theorem.
    So $\la_n \to \la$ in $L^\infty(I,\C^d)$ as $n \to \infty$, by \Cref{thm:main1varW}.

    Assume that $\la_n \to \la$ in $L^\infty(I,\C^d)$ as $n \to \infty$.
    We see that then $\la_n' \to \la'$ almost everywhere 
    as $n \to \infty$  
    by adjusting the arguments in the proof of \Cref{thm:1eiguW}.
    It is easy to adjust \Cref{lem:redTr} and  
    \Cref{l:rednorm}. We replace \Cref{l:Zn} by \Cref{l:ZnLip}.
    Here we have \eqref{eq:munu} with 
    $\mu,\mu_n \in C^{0,1}(\ol J,\C^{d_1})$ and $\nu,\nu_n \in C^{0,1}(\ol J,\C^{d_2})$. Hence,
    $\mu_n \to \mu$ in $L^\infty(I,\C^{d_1})$ and $\nu_n \to \nu$ in $L^\infty(I,\C^{d_2})$ as $n \to \infty$.
    By the induction hypothesis and \Cref{l:ZnLip}, we may conclude that 
    $\la_n' \to \la'$ almost everywhere in $I$
    as $n \to \infty$.
\end{proof}

\begin{lemma} \label[l]{l:ZnLip}
    Let $I \subseteq \R$ be a bounded open interval
    and assume that $A_n \to A$ in $C^{0,1} (\ol I, \on{Norm}(d))$ as $n \to \infty$.
    Let $\la_n \in C^{0,1} (\ol I,\C^d)$ be a parameterization of $\La_n = \eigu(A_n)$.
    Then, for almost every $x \in Z_A$, 
    \begin{equation} \label{eq:ptw4}
        \la_n'(x)  \to  0 \quad \text{ as } n\to \infty. 
    \end{equation}
\end{lemma}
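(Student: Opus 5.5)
The plan mirrors the proof of \Cref{l:Zp}, with the metric speed of $\La_n$ playing the role of $\|\eig(A_n)'\|_2$. Throughout, write $\La_n = \eigu(A_n)$ and $Z_A^{<2>}$ for the set of pairs of distinct points of $Z_A$.

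\emph{Step 1: slopes of $\La_n$ on $Z_A$ tend to zero.} For $x,y \in Z_A$ with $x \ne y$, the Hoffman--Wielandt inequality \eqref{eq:HW} gives
\[
\frac{\dd_2(\La_n(x),\La_n(y))}{|x-y|} \le \frac{\|A_n(x)-A_n(y)\|_2}{|x-y|}.
\]
Since $A_n \to A$ in $C^{0,1}$, \Cref{l:slope2} shows that $A_n$ slope-converges to $A$. Then \Cref{l:slope1}, applied with $F = Z_A^{<2>}$, yields
\[
\sup_{(x,y)\in Z_A^{<2>}} \frac{\|A_n(x)-A_n(y)\|_2}{|x-y|} \to \sup_{(x,y) \in Z_A^{<2>}} \frac{\|A(x)-A(y)\|_2}{|x-y|} = 0,
\]
because $A$ vanishes on $Z_A$. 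Denote the left-hand supremum by $\ep_n$, so $\ep_n \to 0$.

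\emph{Step 2: pass from slopes to derivatives.} Let $E$ be the set of points $x_0 \in \on{acc}(Z_A)$ at which, for every $n$, the Lipschitz function $\la_n$ is differentiable and the metric speed $|\dot\La_n|(x_0)$ exists with $|\dot \La_n|(x_0) = \|\la_n'(x_0)\|_2$ (\Cref{l:ms}). This is a countable intersection of full-measure conditions. The isolated points of $Z_A$ form a countable set, so $E$ has full measure in $Z_A$. Fix $x_0 \in E$ and choose $x_k \in Z_A \setminus\{x_0\}$ with $x_k \to x_0$. By the definition of the metric speed,
\[
\|\la_n'(x_0)\|_2 = |\dot\La_n|(x_0) = \lim_{k\to\infty} \frac{\dd_2(\La_n(x_0),\La_n(x_k))}{|x_0-x_k|} \le \ep_n .
\]
Letting $n\to\infty$ gives $\la_n'(x_0)\to 0$, which is \eqref{eq:ptw4}.

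The only point that needs care is Step 2. We cannot bound $\|\la_n(x_0)-\la_n(x_k)\|_2$ directly, since the parameterization may permute eigenvalues between $x_0$ and $x_k$. This is why we pass through the metric speed of the unordered curve $\La_n$ and use the identity $|\dot\La_n| = \|\la_n'\|_2$ from \Cref{l:ms}. That identity holds almost everywhere, and the exceptional sets over all $n$ form a countable union, so they are absorbed into the complement of $E$.
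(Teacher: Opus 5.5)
Your proposal is correct and follows the paper's proof essentially step for step. The paper also bounds the slopes of $\La_n$ on $Z_A^{<2>}$ via \eqref{eq:HW} and the slope-convergence lemmas, and then uses \Cref{l:ms} at accumulation points of $Z_A$ to convert the metric speed into $\|\la_n'\|_2$. The one small difference is that you build the a.e.\ identity $|\dot\La_n| = \|\la_n'\|_2$ into the definition of $E$, which makes explicit a measure-zero exclusion the paper leaves implicit.
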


\begin{proof}
    This is a simple modification of the proof of \Cref{l:Zp}.
    By \Cref{p:HW}, \Cref{l:slope1}, and \Cref{l:slope2}, 
    \[
        \sup_{(x,y) \in Z_A^{<2>}} \frac{\dd_2(\La_n(x),\La_n(y))}{|x-y|} \le \sup_{(x,y) \in Z_A^{<2>}} \frac{\|A_n(x)-A_n(y)\|_2}{|x-y|} \to 0
    \]
    as $n \to \infty$.
    Let $E$ be the set of $x \in \on{acc}(Z_A)$, where the derivatives $\la_n'(x)$ for all $n\ge 1$ exist.
    For each $x_0 \in E$ there is a sequence $x_k \to x_0$ with $x_k \in Z_A$ and $x_k \ne x_0$ for all $k \ge 1$. Thus, by \Cref{l:ms}, for fixed $n$, 
    \begin{align*}
        \| \la_n'(x_0) \|_2   = |\dot \La_n|(x_0)
        = \lim_{k \to \infty} \frac{\dd_2(\la_n(x_0), \la_n(x_k))}{|x_0-x_k|}
       \le \sup_{(x,y) \in Z_A^{<2>}} \frac{\dd_2(\la_n(x) , \la_n(y))}{|x-y|}.
    \end{align*}
    This implies \eqref{eq:ptw4}, since $E$ has full measure in $Z_A$.
\end{proof}

%-----------------------------------------------------------------------------------------------------------------------
\subsection{Proofs of \Cref{thm:mainse} and \Cref{cor:main1}} \label{ssec:mse}
%-----------------------------------------------------------------------------------------------------------------------

Let $1 \le q < \infty$.
Let $I \subseteq \R$ be a bounded open interval.
Let $A_n \to A$ in $W^{1,q}(I,\on{Norm}(d))$ as $n\to \infty$.  
Let $\la,\la_n \in W^{1,q} (I, \C^d)$ be parameterizations of the eigenvalues of $A$, $A_n$, respectively.

Fix an ordering of $\on{S}_d$ and let $\ta(x) \in \on{S}_d$ be as in \Cref{def:dd}. 
Then
\begin{align*}
    \big| \|\la'\|_2 - \|\la_n'\|_2\big|
    = \big| \|\la'\|_2 - \|\ta\la_n'\|_2\big|
    \le \|\la' - \ta\la_n'\|_2 \le \bs_1([\la],[\la_n])
\end{align*}
almost everywhere in $I$. By \Cref{thm:1eiguW}, this easily implies \Cref{cor:main1}.

Thanks to \Cref{l:ms},
\Cref{thm:mainse} follows from \Cref{cor:main1} and \eqref{eq:s0}.

%-----------------------------------------------------------------------------------------------------------------------
\section{Eigenvalue stability for normal matrices: multiparameter case} \label{sec:normmult}
%-----------------------------------------------------------------------------------------------------------------------

In this section, we prove \Cref{thm:eiguW}, \Cref{cor:eigu}, \Cref{thm:main1varWm}, and \Cref{thm:main1varm}.

%-----------------------------------------------------------------------------------------------------------------------
\subsection{Proof of \Cref{thm:eiguW}}
%-----------------------------------------------------------------------------------------------------------------------

Let $1 \le q < \infty$. Let $U \subseteq \R^m$ be open and bounded.
Let $\De : \cA_d(\C) \to \R^N$ be an Almgren embedding.
Let $A_n \to A$ in $W^{1,q}(U,\on{Norm}(d))$ as $n \to \infty$.
Set $\La := \eigu(A), \La_n := \eigu(A_n) : U \to \cA_d(\C)$ 
and $F:= \De \o \La, F_n:= \De \o \La_n : U \to \R^N$.
We have to show that 
\begin{equation*}
    \|F - F_n \|_{W^{1,q}(U,\R^N)} \to 0 \quad \text{ as } n \to \infty.
\end{equation*}

By \eqref{p:HW}, for $x \in U$, 
\begin{align*}
  \|F(x) - F_n(x)\|_2 \le \Lip(\De)\,\dd_2(\La(x), \La_n(x)) \le \Lip(\De)\,\|A(x) - A_n(x)\|_2 
\end{align*}
so that 
\begin{equation*}
    \|F - F_n \|_{L^{q}(U,\R^N)} \to 0 \quad \text{ as } n \to \infty.
\end{equation*}
It remains to show 
\begin{equation} \label{eq:pjF}
    \|\p_j F - \p_j F_n \|_{L^{q}(U,\R^N)} \to 0 \quad \text{ as } n \to \infty,
\end{equation}
for all $1 \le j \le m$. It is enough to prove that there is a subsequence $(n_k)$ with this property. 

Let us first assume that $U = I_1 \times \cdots \times I_m$ is an open box with 
sides parallel to the coordinate axes. 
Let $j=1$.
As in the proof of \Cref{thm:multhermW}, 
we conclude that there is a subsequence $(n_k)$ such that 
\[
    \int_{I_1} \|A(x_1,x') - A_{n_k}(x_1,x')\|_2^q\, dx_1  \to 0 
\]
and
\[
    \int_{I_1} \|\p_1 A(x_1,x') - \p_1 A_{n_k}(x_1,x')\|_2^q\, dx_1  \to 0
\]
for almost every $x' \in U'=I_2 \times \cdots \times I_m$ as $k \to \infty$. 
By \Cref{c:1eiguW},
\[
    G_{1,n_k}(x') :=  \int_{I_1} \|\p_1 F(x_1,x') - \p_1 F_{n_k}(x_1,x')\|_2^q\, dx_1  \to 0 \quad \text{ as } k \to \infty
\]
for almost every $x' \in U'$. 

Using \Cref{p:mcharnorm} instead of \Cref{p:mcharHerm}, we see as in the proof of \Cref{thm:multhermW}, that the set 
$\{G_{1,n_k} : k \ge 1\} \subseteq L^1(U')$ is uniformly integrable. 
So Vitali's convergence theorem \ref{thm:Vitali} and Tonelli's theorem imply \eqref{eq:pjF} for $j=1$,
in the case that $U = I_1 \times \cdots \times I_m$. 
For $2 \le j \le m$, the reasoning is analogous.

Let now $U$ be a general open bounded subset of $\R^m$. 
Then \eqref{eq:pjF} follows from \Cref{l:Vapp} (applied to $f_n:=\|\p_j F - \p_j F_{n} \|_2^q$), 
since $\{f_n\}$ is uniformly integrable
which can be seen in analogy to the proof of \Cref{l:ui}, using \Cref{p:mcharnorm}.
This ends the proof of \Cref{thm:eiguW}.

%---------------------------------------------------------------------------------------------
\subsection{Proof of \Cref{cor:eigu}}
%---------------------------------------------------------------------------------------------

The corollary follows from \Cref{thm:eiguW} and Morrey's inequality,
\[
    \|\De \o \eig(A)-  \De \o \eig(A_n)\|_{C^{0,\al}(\ol U,\R^N)} 
    \le C\, \|\De \o \eig(A) -  \De \o \eig(A_n)\|_{W^{1,q}(U,\R^N)},
\]
where $\al = 1-m/q$, $q> m$, and $C=C(m,N,q,U)$.

%-----------------------------------------------------------------------------------------------------------------------
\subsection{Proof of \Cref{thm:main1varWm}}
%-----------------------------------------------------------------------------------------------------------------------

We adapt the proof of \eqref{eq:pjF} with $F,F_n$ replaced by $\la,\la_n$.
Assume first that $U=I_1 \times \cdots \times I_m$ and $j=1$.
By \Cref{thm:main1varW}, we may conclude that there is a subsequence $(n_k)$ of $(n)$ such that 
\[
    G_{1,n_k}(x') :=  \int_{I_1} \|\p_1 \la(x_1,x') - \p_1 \la_{n_k}(x_1,x')\|_2^q\, dx_1  \to 0 
    \quad \text{ as } k \to \infty
\]
for almost every $x' \in U'$. It is easy to check (as before) that $\{G_{1,n_k}: k\ge 1\}$ is uniformly integrable. 
So the assertion follows from Vitali's convergence theorem \ref{thm:Vitali} and Tonelli's theorem.
The case of general $U$ then follows from \Cref{l:Vapp}.

%-----------------------------------------------------------------------------------------------------------------------
\subsection{Proof of \Cref{thm:main1varm}}
%-----------------------------------------------------------------------------------------------------------------------

It follows easily from \Cref{thm:main1var} (applied coordinate-wise).

%-----------------------------------------------------------------------------------------------------------------------
\section{An application for compact self-adjoint operators} \label{sec:cop}
%-----------------------------------------------------------------------------------------------------------------------

Let $H$ be a Hilbert space.
A bounded operator $A \in L(H)$ is \emph{compact} if for each bounded sequence $(v_n) \subseteq H$ the image $(Av_n)$ 
contains a convergent subsequence.

The set $K(H)$ of all compact operators $A \in L(H)$ form a closed linear subspace of $L(H)$ and thus a Banach space 
endowed with the operator norm.

The \emph{resolvent set} of $A$ is by definition the set of $z \in \C$ such that $A -z$ is invertible with 
\emph{resolvent} $R(z) := (A-z)^{-1} \in L(H)$. 
The resolvent set $P(A)$ is an open subset of $\C$, its complement is the \emph{spectrum} of $A$. 
As $A$ is compact, the spectrum of $A$ is a countable set which accumulates at zero, and zero is its only accumulation 
point. Every nonzero point in the spectrum is an eigenvalue of $A$ with finite multiplicity. See \cite[III.6.26]{Kato76}.

Let $A \in K(H)$ be a self-adjoint nonnegative compact operator. 
Let 
\[
    \la_1(A) \ge \la_2(A) \ge \cdots
\]
denote its decreasingly ordered eigenvalues.
Then, by the Courant--Fischer--Weyl min-max principle (see e.g.\ \cite[Chapter IV, Theorem 9.1]{Gohberg:2003aa} or 
\cite[Theorem XIII.1]{Reed:1980aa}), 
\begin{equation} \label{eq:minmax}
    \la_i  = \min_{\substack{M\subseteq H\\\dim M =i-1}} \max_{\substack{\|x\|=1\\ x \bot M}} \langle Ax,x\rangle.
\end{equation}

As a consequence we get the following version of Weyl's theorem (see e.g.\ \cite[Chapter IV, Section 4.9]{Gohberg:2003aa}).

\begin{proposition} \label[p]{p:coWeyl}
    Let $A,B \in K(H)$ be self-adjoint nonnegative compact operators. Then, for all $i \ge 1$,
    \[
        |\la_i(A) - \la_i(B)| \le \|A-B\|_{\on{op}}.
    \]
\end{proposition}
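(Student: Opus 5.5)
The plan is to deduce the statement directly from the min-max formula \eqref{eq:minmax}, exactly as in the finite-dimensional Weyl perturbation theorem (\Cref{p:Weyl}). Set $\ep := \|A-B\|_{\on{op}}$. Since $A-B$ is self-adjoint, for every unit vector $x \in H$ we have
\[
    |\langle Ax,x\rangle - \langle Bx,x\rangle| = |\langle (A-B)x,x\rangle| \le \|A-B\|_{\on{op}} = \ep,
\]
and hence $\langle Ax,x\rangle \le \langle Bx,x\rangle + \ep$ pointwise on the unit sphere.

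Next, fix $i \ge 1$ and a subspace $M \subseteq H$ with $\dim M = i-1$. Taking the supremum over unit vectors $x \perp M$ gives
\[
    \sup_{\|x\|=1,\, x\perp M} \langle Ax,x\rangle \le \sup_{\|x\|=1,\, x\perp M} \langle Bx,x\rangle + \ep .
\]
Taking the infimum over all such $M$ and using \eqref{eq:minmax} for both $A$ and $B$ yields $\la_i(A) \le \la_i(B) + \ep$. Exchanging the roles of $A$ and $B$ gives the reverse inequality, and therefore $|\la_i(A) - \la_i(B)| \le \ep$.

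The only point needing care, and the main obstacle, is the exact meaning of \eqref{eq:minmax} when $A$ has only finitely many nonzero eigenvalues, or when $H$ is finite-dimensional. In that case the eigenvalues $\la_i$ must be padded with zeros, which is legitimate because the operators are nonnegative. In the degenerate situations the min and max in \eqref{eq:minmax} must also be read as inf and sup; they are attained for compact nonnegative operators, but attainment is not needed for the argument. I would note that the comparison above only uses the monotonicity of $\inf\sup$ under a pointwise inequality shifted by the constant $\ep$. It is therefore valid with inf/sup, and with the same convention applied to both $A$ and $B$.
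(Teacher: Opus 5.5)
Your proposal is correct and follows essentially the same route as the paper: bound $|\langle (A-B)x,x\rangle|$ by $\|A-B\|_{\on{op}}$ for unit vectors $x$, then pass the pointwise inequality through the min-max formula \eqref{eq:minmax} in both directions. Your extra remarks, reading min/max as inf/sup and padding with zeros in the finite-rank or finite-dimensional case, are harmless refinements that the paper leaves implicit.
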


\begin{proof}
    Let $x \in H$ with $\|x\|=1$. Then 
    \[
        |\langle Ax,x \rangle - \langle Bx,x \rangle | = |\langle (A-B)x,x \rangle| \le \|A-B\|_{\on{op}}
    \]
    which means that 
    \[
        \langle Ax,x \rangle \le \langle Bx,x \rangle + \|A-B\|_{\on{op}}
    \]
    and 
    \[
        \langle Bx,x \rangle \le \langle Ax,x \rangle + \|A-B\|_{\on{op}}.
    \]
    Applying \eqref{eq:minmax}, we conclude that 
    \[
        \la_i(A)\le \la_i(B) + \|A-B\|_{\on{op}} \quad \text{ and } \quad \la_i(B)\le \la_i(A) + \|A-B\|_{\on{op}}
    \]
    which gives the statement.
\end{proof}

Let $U \subseteq \R^m$ be a bounded open set.
Consider a Lipschitz family $A \in C^{0,1}(\ol U, K(H))$ of compact operators 
such that each $A(x)$, for $x \in U$, is self-adjoint. 
Fix $x_0 \in U$.
Let $\la$ be an eigenvalue of $A(x_0)$ of multiplicity $d$.
Let $\Ga$ be a simple closed $C^1$ curve in the resolvent set $P(A(x_0))$ 
enclosing only $\la$ among all eigenvalues of $A(x_0)$.
Then there is an open neighborhood $V$ of $x_0$ such that 
$\Ga$ lies in $P(A(x))$, for $x \in V$,
and $A(x)$, for $x \in V$, has precisely $d$ eigenvalues (counted with multiplicities) 
in the interior of $\Ga$; see \cite[IV.2.16, IV.3.15, IV.3.16]{Kato76}.
Moreover, for $x \in V$ and $z$ close to any $z_0 \in P(A(x_0))$, the operator $A(x)-z$ is invertible with
resolvent $R(A(x),z) = (A(x) - z)^{-1} \in L(H)$.
We conclude that the map
\[
    (x,z) \mapsto R(A(x),z) = (A(x) - z)^{-1} \in L(H)
\]
is of class $C^{0,1}$ in $x$ and holomorphic in $z$, since inversion is analytic on $L(H)$.
Consequently, 
\[
   \Pi:  x \mapsto - \frac{1}{2\pi i} \int_\Ga R(A(x),z)\, dz = - \frac{1}{2\pi i} \int_\Ga (A(x)-z)^{-1}\, dz \in L(H)
\]
defines a Lipschitz family $\Pi$ of projections onto the direct sum of eigenspaces of the corresponding 
eigenvalues in the interior of $\Ga$.
We have that $A(x)$ commutes with $\Pi(x)$, and the spectrum of $A(x)$ inside $\Ga$ coincides 
with the spectrum of $A(x)\Pi(x) = \Pi(x)A(x)\Pi(x) \in L(\Pi(x)H)$; see \cite[III.6.17]{Kato76}. 

The image of $x \mapsto \Pi(x)$ describes a $d$-dimensional Lipschitz subbundle of the trivial bundle $U \times H \to U$. 
Indeed, choose $v_1,\ldots,v_d \in H$ such that the vectors $\Pi(x_0)v_i$ span $\Pi(x_0)H$. 
This remains true locally for $x$ near $x_0$.
By the Gram--Schmidt orthonormalization procedure (which is real analytic), 
we obtain a local orthonormal Lipschitz frame of the bundle.  
In this frame, $\Pi(x)A(x)\Pi(x)$ is given by a Hermitian $d\times d$ matrix parameterized in a Lipschitz way by $x$. 

For the reader's convenience, we restate \Cref{t:copintro}:

\begin{theorem} \label[t]{t:cop}
    Let $U \subseteq \R^m$ be a bounded open set. Let $H$ be a Hilbert space and $A, A_n \in C^{0,1}(\ol U,K(H))$, 
    for $n\ge 1$, Lipschitz families of compact self-adjoint nonnegative operators. Assume that 
    $A(x)$ is positive definite for all $x \in U$ and 
    $A_n \to A$ in $C^{0,1}(\ol U,K(H))$ as $n\to \infty$.
    Then:
    \begin{enumerate}
        \item The decreasingly ordered eigenvalues $\la_i(A)$ and $\la_i(A_n)$, for all $i$ and large enough $n$, belong to $C^{0,1}(\ol U)$. 
        \item  For all $1 \le j \le m$ and almost every $x \in U$,
            \[
                \lim_{n \to \infty} \p_j (\la_i(A_n))(x) = \p_j (\la_i(A))(x), \quad i =1,2,\ldots 
            \]
        \item For every $1 \le q < \infty$, 
            \[
                \lim_{n \to \infty} \|\la_i(A) - \la_i(A_n)\|_{W^{1,q}(U)} = 0, \quad i= 1,2,\ldots
            \]
    \end{enumerate}
\end{theorem}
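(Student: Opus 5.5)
The plan is to reduce everything, locally in $x$ and for each fixed index $i$, to the finite-dimensional Hermitian results via the Riesz projection construction described just before the statement; a compactness argument then makes the local pieces global.

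\emph{Step 1: part (1).} For all $i$, \Cref{p:coWeyl} gives
\[
|\la_i(A(x))-\la_i(A(y))|\le \|A(x)-A(y)\|_{\on{op}},
\]
so each $\la_i(A)$ is Lipschitz with constant at most $|A|_{C^{0,1}}$. The same holds for $A_n$, with constants bounded uniformly in $n$, since $A_n\to A$ in $C^{0,1}$. This proves (1). It also supplies the uniform bound needed for dominated convergence, so (3) will follow from (2) exactly as in \Cref{thm:eigmapS}. The $L^q$ part of (3) comes from
\[
\|\la_i(A)-\la_i(A_n)\|_{L^\infty}\le \|A-A_n\|_{L^\infty}.
\]

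\emph{Step 2: reduction of (2) to the Hermitian case.} Fix $i$ and $x_0\in U$. Since $A(x_0)$ is positive definite and compact, $\la_i(A(x_0))>0$, and only finitely many eigenvalues of $A(x_0)$ exceed $\la_i(A(x_0))/2$, counted with multiplicity, say $k\ge i$ of them. Choose a curve $\Ga$ in $P(A(x_0))$ enclosing exactly these eigenvalues. Pick $0<\eta<\la_i(A(x_0))/2$ smaller than the gap between $\la_k(A(x_0))$ and $\la_{k+1}(A(x_0))$.

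By \Cref{p:coWeyl} and uniform convergence there are a neighbourhood $V$ of $x_0$ and an $n_0$ such that, for $x\in V$ and $n\ge n_0$, $\Ga\subseteq P(A(x))\cap P(A_n(x))$. Moreover, the eigenvalues inside $\Ga$ are exactly $\la_1,\ldots,\la_k$ of $A(x)$, resp.\ of $A_n(x)$. The Riesz projections $\Pi,\Pi_n$ are Lipschitz on $V$. Since the resolvent depends analytically on the operator and $\Ga$ stays a uniform distance from the spectra, we get $\Pi_n\to\Pi$ in $C^{0,1}(\ol V,L(H))$.

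Choose fixed vectors $v_1,\dots,v_k$ such that $\Pi(x_0)v_j$ span $\Pi(x_0)H$, and apply Gram--Schmidt to $\Pi v_j$ and to $\Pi_n v_j$. Shrinking $V$ if needed, this gives Lipschitz orthonormal frames $e_j$, $e_{n,j}$ with $e_{n,j}\to e_j$ in $C^{0,1}$, because Gram--Schmidt is real analytic on a neighbourhood of the relevant compact set. The Hermitian matrices
\[
M(x)=(\langle A(x)e_l,e_j\rangle),\qquad M_n(x)=(\langle A_n(x)e_{n,l},e_{n,j}\rangle)
\]
then satisfy $M_n\to M$ in $C^{0,1}(\ol V,\on{Herm}(k))$; convergence of these products uses the product rule for Lipschitz maps. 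Their decreasingly ordered eigenvalues coincide with $\la_1,\ldots,\la_k$ of $A$, resp.\ $A_n$, on $V$. \Cref{thm:mptw}, applied with reversed ordering, now gives $\p_j\la_i(A_n)\to\p_j\la_i(A)$ a.e.\ on $V$.

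\emph{Step 3: globalisation.} $U$ is covered by countably many such neighbourhoods $V$, because it is Lindelöf. A countable union of null sets is null, so (2) holds a.e.\ on $U$ for each $i$, and hence for all $i$ simultaneously. Statement (3) then follows by dominated convergence, using the uniform Lipschitz bound from Step 1.

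The main obstacle is Step 2: proving that $\Pi_n\to\Pi$ and the frames converge in the $C^{0,1}$ norm rather than only uniformly. For this I would write
\[
R(A_n(x),z)-R(A(x),z)=R(A_n(x),z)\,(A(x)-A_n(x))\,R(A(x),z).
\]
Its Lipschitz seminorm is controlled using uniform bounds on $\|R\|$ along $\Ga$ together with the Lipschitz seminorm of $A-A_n$. One also has to check that the eigenvalues inside $\Ga$ are exactly the top $k$, uniformly in $n$ and $x$; Weyl's bound from \Cref{p:coWeyl} and the choice of $\eta$ handle this.
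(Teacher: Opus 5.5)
Your proposal is correct and follows essentially the same route as the paper: Weyl's bound (\Cref{p:coWeyl}) gives (1) and the uniform Lipschitz bound; a local Riesz-projection and Gram--Schmidt reduction turns the problem into Lipschitz families of Hermitian matrices, to which \Cref{thm:mptw} applies; a countable cover of $U$ gives (2); and dominated convergence gives (3). The differences are minor. You enclose the top $k$ eigenvalues instead of only the cluster at $\la_i(A(x_0))$, which makes the index bookkeeping automatic, and you spell out, via the second resolvent identity, the $C^{0,1}$ convergence $\Pi_n\to\Pi$ and of the frames, which the paper only asserts.
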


\begin{proof}
    (1) 
    Since $A(x)$ is positive definite for all $x \in U$,
    all eigenvalues $\la_i(A(x))$ are positive for all $x \in U$. 
    By \Cref{p:coWeyl}, this also holds for $A_n$ if $n$ is large enough. 
    Then \Cref{p:coWeyl} implies that each $\la_i(A)$ and $\la_i(A_n)$, for large enough $n$, is Lipschitz 
    and belongs to $C^{0,1}(\ol U)$.
    
    (2)
    Fix $x_0 \in U$ and $i \ge 1$. 
    Let $\Ga$ be a simple closed $C^1$ curve in $P(A(x_0))$ enclosing only $\la_i(A(x_0))$ among all eigenvalues of $A(x_0)$. 
    As above, by \cite[IV.2.16, IV.3.15, IV.3.16]{Kato76}, we see that there is an open neighborhood $V$ of $x_0$ 
    and $n_0 \ge 1$ such that 
    $\Ga$ lies in $P(A(x))$ and $P(A_n(x))$, for $x \in V$ and $n \ge n_0$,
    and $A(x)$ and $A_n(x)$, for $x \in V$ and $n \ge n_0$, have precisely $d$ eigenvalues (counted with multiplicities) 
    in the interior of $\Ga$. 
    By \Cref{p:coWeyl},   
    $\la_i(A_n(x))$ is among those eigenvalues, if $n_0$ is large enough.
    
    The discussion before the theorem shows 
    that the $d$ eigenvalues of $A(x)$ as well as the $d$ eigenvalues of $A_n(x)$ 
    in the interior of $\Ga$ are precisely the $d$ eigenvalues of $B(x) := \Pi(x)A(x)\Pi(x)$ and $B_n(x) :=\Pi_n(x)A_n(x)\Pi_n(x)$, 
    given by a Lipschitz family of Hermitian $d \times d$ matrices, respectively. 
    Moreover, $B_n \to B$ in $C^{0,1}(\ol V, \on{Herm}(d))$ as $n \to \infty$, after slightly shrinking $V$ if necessary.
    Thus, \Cref{thm:mptw} implies that, for all $1 \le j \le m$ and almost every $x \in J$, we 
    have
    \[
        \lim_{n \to \infty} \p_j(\la_i(A_n))(x) = \p_j (\la_i(A))(x). 
    \]
    Since $U$ can be covered by countably many such open subsets $V$, this holds for almost every $x \in U$.

    (3) By \Cref{p:coWeyl}, for each $i \ge 1$ and $n \ge 1$, 
    \[
        |\la_i(A_n)|_{C^{0,1}(\ol U)} \le |A_n|_{C^{0,1}(\ol U,K(H))} \le L,
    \]
    for a positive constant $L$ independent of $n$ and $i$. Thus, for each $1 \le j \le m$ and almost every $x \in U$,
    \[
        |\p_j (\la_i(A_n))(x)| \le L.
    \]
    Consequently, in view of (2), the dominated convergence theorem implies that, 
    for each $1 \le j \le m$ and every $1\le q < \infty$,
    \[
        \|\p_j(\la_i(A)) - \p_j(\la_i(A_n))\|_{L^{q}(U)} \to 0 \quad \text{ as } n \to \infty.
    \]
    By \Cref{p:coWeyl}, we have 
    \[
                \|\la_i(A) - \la_i(A_n)\|_{L^\infty(U)} \to 0  \quad \text{ as } n \to \infty
    \]
    and (3) is proved.
\end{proof}

%---------------------------------------------------------------------------------------------
\subsection*{Acknowledgement}
%---------------------------------------------------------------------------------------------

We are grateful to Antonio Lerario for posing the question about the continuity of the solution map for polynomials 
which led us to study the continuity of the characteristic map for Hermitian and normal matrices.

This research was funded in whole or 
in part by the Austrian Science Fund (FWF) DOI 10.55776/PAT1381823.
For open access purposes, the authors have applied a CC BY public copyright license to any author-accepted manuscript version arising from this submission.

%\bibliography{../../references/biblio}

\begin{thebibliography}{AKLM98}

\bibitem[AKLM98]{AKLM98}
D.~Alekseevsky, A.~Kriegl, M.~Losik, and P.~W. Michor, \emph{Choosing roots of polynomials smoothly}, Israel J. Math. \textbf{105} (1998), 203--233.

\bibitem[Alm00]{Almgren00}
F.~J. Almgren, Jr., \emph{Almgren's big regularity paper}, World Scientific Monograph Series in Mathematics, vol.~1, World Scientific Publishing Co. Inc., River Edge, NJ, 2000. 

\bibitem[ADM90]{AmbrosioDalMaso90}
L.~Ambrosio and G.~Dal~Maso, \emph{A general chain rule for distributional derivatives}, Proc. Amer. Math. Soc. \textbf{108} (1990), no.~3, 691--702.

\bibitem[AGS08]{Ambrosio:2008aa}
L.~Ambrosio, N.~Gigli, and G.~Savar{\'e}, \emph{Gradient flows in metric spaces and in the space of probability measures}, 2nd ed., Basel: Birkh{\"a}user, 2008 (English).

\bibitem[BDM83]{BhatiaDavisMcIntosh83}
R.~Bhatia, C.~Davis, and A.~McIntosh, \emph{Perturbation of spectral subspaces and solution of linear operator equations}, Linear Algebra Appl. \textbf{52/53} (1983), 45--67.

\bibitem[Bha97]{Bhatia97}
R.~Bhatia, \emph{Matrix analysis}, Graduate Texts in Mathematics, vol. 169, Springer-Verlag, New York, 1997.

\bibitem[BCR98]{BCR} J. Bochnak, M. Coste,  M.-F. Roy, Real algebraic geometry, \textit{Ergebnisse der Mathematik und ihrer Grenzgebiete (3) [Results in Mathematics and Related Areas (3)]}, 
36. Springer-Verlag, Berlin, 1998.

\bibitem[Bog07]{Bogachev:2007aa}
V.~I. Bogachev, \emph{Measure theory. {Vol}. {I} and {II}}, Berlin: Springer, 2007 (English).

\bibitem[Bro79]{Bronshtein79}
M.~D. Bronshtein, \emph{Smoothness of roots of polynomials depending on parameters}, Sibirsk. Mat. Zh. \textbf{20} (1979), no.~3, 493--501, 690, English transl. in Siberian Math. J. \textbf{20} (1980), 347--352.

\bibitem[Coh84]{cohn} P. M. Cohn, Puiseux's theorem revisited,  \emph{J. Pure Appl. Algebra} \textbf{31} (1984) 1-4; Corrigendum:  \emph{J. Pure Appl. Algebra}, \textbf{52}, (1988), 197-198.

\bibitem[DLS11]{De-LellisSpadaro11}
C.~De~Lellis and E.~N. Spadaro, \emph{{$Q$}-valued functions revisited}, Mem. Amer. Math. Soc. \textbf{211} (2011), no.~991, vi+79. 

\bibitem[Ego11]{Egoroff:1911aa}
D.~Th. Egoroff, \emph{Sur les suites de fonctions mesurables.}, C. R. Acad. Sci., Paris \textbf{152} (1911), 244--246 (French).

\bibitem[GGK03]{Gohberg:2003aa}
I.~Gohberg, S.~Goldberg, and M.~A. Kaashoek, \emph{Basic classes of linear operators}, Birkh{\"a}user Basel, 2003.

\bibitem[HW53]{Hoffman:1953aa}
A.~J. Hoffman and H.~W. Wielandt, \emph{The variation of the spectrum of a normal matrix}, Duke Math. J. \textbf{20} (1953), 37--39 (English).

\bibitem[Huh01]{Huhtanen:2001aa}
M.~Huhtanen, \emph{A stratification of the set of normal matrices}, SIAM Journal on Matrix Analysis and Applications \textbf{23} (2001), no.~2, 349--367.

\bibitem[Kat76]{Kato76}
T.~Kato, \emph{Perturbation theory for linear operators}, second ed., Grundlehren der Mathematischen Wissenschaften, vol. 132, Springer-Verlag, Berlin, 1976.

\bibitem[KM03]{KM03}
A.~Kriegl and P.~W. Michor, \emph{Differentiable perturbation of unbounded operators}, Math. Ann. \textbf{327} (2003), no.~1, 191--201.

\bibitem[KMR11]{KMRp}
A.~Kriegl, P.~W. Michor, and A.~Rainer, \emph{{D}enjoy--{C}arleman differentiable perturbation of polynomials and unbounded operators}, Integr. Equ. Oper. Theory \textbf{71} (2011), no.~3, 407--416.

\bibitem[KMR12]{KMR}
\bysame, \emph{{M}any parameter {H}\"older perturbation of unbounded operators}, Math. Ann. \textbf{353} (2012), 519--522.

\bibitem[L\"ow34]{Lowner:1934aa}
K.~L\"owner, \emph{\"Uber monotone {M}atrixfunktionen}, Math. Z. \textbf{38} (1934), no.~1, 177--216. 

\bibitem[MSZ03]{MalySwansonZiemer03}
J.~Mal{{\'y}}, D.~Swanson, and W.~P. Ziemer, \emph{The co-area formula for {S}obolev mappings}, Trans. Amer. Math. Soc. \textbf{355} (2003), no.~2, 477--492 (electronic). 

\bibitem[MM72]{MarcusMizel72}
M.~Marcus and V.~J. Mizel, \emph{Absolute continuity on tracks and mappings of {S}obolev spaces}, Arch. Rational Mech. Anal. \textbf{45} (1972), 294--320.

\bibitem[MM79]{Marcus:1979aa}
\bysame, \emph{Every superposition operator mapping one {S}obolev space into another is continuous}, J. Functional Analysis \textbf{33} (1979), no.~2, 217--229. 

\bibitem[Mus91]{Musina:1991aa}
R.~Musina, \emph{On the continuity of the {N}emitsky operator induced by a {L}ipschitz continuous map}, Proc. Amer. Math. Soc. \textbf{111} (1991), no.~4, 1029--1041. 


\bibitem[PR15]{ParusinskiRainerHyp}
A.~Parusi{{\'n}}ski and A.~Rainer, \emph{A new proof of {B}ronshtein's theorem}, J. Hyperbolic Differ. Equ. \textbf{12} (2015), no.~4, 671--688.

\bibitem[PR16]{ParusinskiRainerAC}
\bysame, \emph{Regularity of roots of polynomials}, Ann.\ Sc.\ Norm.\ Super.\ Pisa Cl.\ Sci.\ (5) \textbf{16} (2016), 481--517.

\bibitem[PR18]{ParusinskiRainer15}
\bysame, \emph{Optimal {S}obolev regularity of roots of polynomials}, Ann.\ Sci.\ \'Ec.\ Norm.\ Sup\'er.\ (4) \textbf{51} (2018), no.~5, 1343--1387, doi:10.24033/asens.2376.

\bibitem[PR20a]{Parusinski:2020aa}
\bysame, \emph{Selections of bounded variation for roots of smooth polynomials}, Sel. Math. New Ser. \textbf{26} (2020), no.~13, doi:10.1007/s00029-020-0538-z.

\bibitem[PR25]{Parusinski:2023ab}
\bysame, \emph{Perturbation theory of polynomials and linear operators}, Handbook of Geometry and Topology of Singularities, vol. VII, pp.~121--202, Springer, 2025.

\bibitem[PR24a]{Parusinski:2024aa}
\bysame, \emph{Continuity of the solution map for hyperbolic polynomials}, https://arxiv.org/abs/2410.01321.

\bibitem[PR24b]{Parusinski:2024ab}
\bysame, \emph{On the continuity of the solution map for polynomials}, https://arxiv.org/abs/2410.01326.

\bibitem[PR20]{Parusinski:2020ac}
A.~Parusi{\'{n}}ski and G.~Rond, \emph{Multiparameter perturbation theory of matrices and linear operators}, Transactions of the American Mathematical Society \textbf{373} (2020), no.~4, 2933--2948.

\bibitem[Rai09]{RainerAC}
A.~Rainer, \emph{Perturbation of complex polynomials and normal operators}, Math. Nachr. \textbf{282} (2009), no.~12, 1623--1636.

\bibitem[Rai13]{RainerN}
\bysame, \emph{Perturbation theory for normal operators}, Trans. Amer. Math. Soc. \textbf{365} (2013), no.~10, 5545--5577. 

\bibitem[{Rai}22]{Rainer:2021vk}
\bysame, \emph{{Roots of G\r{a}rding hyperbolic polynomials}}, Proc. Amer. Math. Soc. \textbf{150} (2022), no.~6, 2433--2446.

\bibitem[RS80]{Reed:1980aa}
M.~Reed and B.~Simon, \emph{Methods of modern mathematical physics. {I}}, second ed., Academic Press, Inc. [Harcourt Brace Jovanovich, Publishers], New York, 1980, Functional analysis. \MR{751959}

\bibitem[Rel37]{Rellich37}
F.~Rellich, \emph{St{\"o}rungstheorie der {S}pektralzerlegung, {I}}, Math. Ann. \textbf{113} (1937), 600--619.

\bibitem[Wey12]{Weyl12}
H.~Weyl, \emph{{Das asymptotische Verteilungsgesetz der Eigenwerte linearer partieller Differentialgleichungen (mit einer Anwendung auf die Theorie der Hohlraumstrahlung)}}, Math. Ann. \textbf{71} (1912), 441--479 (German).

\bibitem[Zur93]{zurro} M.A. Zurro, The Abhyankar-Jung theorem revisited, \emph{J. Pure Appl. Algebra} \textbf{90} (1993), 275-282.

\end{thebibliography}
%\bibliographystyle{amsalpha}

\def\cprime{$'$}
\providecommand{\bysame}{\leavevmode\hbox to3em{\hrulefill}\thinspace}
\providecommand{\MR}{\relax\ifhmode\unskip\space\fi MR }
% \MRhref is called by the amsart/book/proc definition of \MR.
\providecommand{\MRhref}[2]{%
  \href{http://www.ams.org/mathscinet-getitem?mr=#1}{#2}
}
\providecommand{\href}[2]{#2}

\end{document}